\documentclass{cedram-aif}

\usepackage[english]{babel}
\usepackage[latin1]{inputenc}
\usepackage[T1]{fontenc}
\usepackage{lmodern}



%
%

\allowdisplaybreaks[4] 
\sloppy 

\usepackage{eucal}    
\usepackage{graphicx} 
\usepackage{url}      

\newcommand{\dd}{\,\mathrm{d}}
\renewcommand{\Tilde}{\widetilde}

\renewcommand{\Bar}{\overline}

\newcommand{\RR}{\mathbb{R}}

\newcommand{\NN}{\mathbb{N}}
\newcommand{\ZZ}{\mathbb{Z}}

\newcommand{\cE}{\mathcal{E}}

\newcommand{\cH}{\mathcal{H}}
\newcommand{\cJ}{\mathcal{J}}

\newcommand{\cL}{\mathcal{L}}

\newcommand{\cO}{\mathcal{O}}
\newcommand{\cP}{\mathcal{P}}
\newcommand{\cQ}{\mathcal{Q}}

\newcommand{\cS}{\mathcal{S}}

\newcommand{\cW}{\mathcal{W}}

\newcommand{\boplus}{\mathop{\oplus}}

\newcommand{\rmi}{\mathrm{i}}

\newcommand{\dom}{\mathop{\mathcal{D}}}
\newcommand{\qdom}{\mathop{\mathcal{Q}}}
\newcommand{\ext}{\mathrm{ext}}
\newcommand{\out}{\mathrm{out}}
\newcommand{\cvx}{\mathrm{cvx}}

\DeclareMathOperator{\spec}{spec}
\DeclareMathOperator{\specdisc}{spec_\mathrm{disc}}
\DeclareMathOperator{\specess}{spec_\mathrm{ess}}
\DeclareMathOperator{\dist}{dist}
\DeclareMathOperator{\coloneqq}{:=}
\DeclareMathOperator{\cotan}{cotan}
\DeclareMathOperator{\vspan}{span}




%

\title[Robin eigenvalues]{Effective operators for Robin eigenvalues in domains with corners}
\alttitle{Op\'erateurs effectifs pour les valeurs propres de Robin dans des domaines \`a coins}

\author[M. Khalile]{\firstname{Magda} \lastname{Khalile}}
\address{Institut f\"ur Analysis, Leibniz Universit\"at Hannover, Welfengarten~1, 30167 Hannover, Germany}
\email{magda.khalile@math.uni-hannover.de}

\author[T. Ourmi\`eres-Bonafos]{\firstname{Thomas} \lastname{Ourmi\`eres-Bonafos}}
\address{Institut de Math\'ematiques de Marseille, Centre de Math\'ematiques et Informatique, Technop\^ole de Ch\^ateau Gombert, 39 rue Fr\'ed\'eric Joliot Curie, 13453 Marseille Cedex 13, France}
\email{thomas.ourmieres-bonafos@univ-amu.fr}

\author[K. Pankrashkin]{\firstname{Konstantin} \lastname{Pankrashkin}}
\address{Universit\'e Paris-Saclay, CNRS,  Laboratoire de math\'ematiques d'Orsay, 91405, Orsay, France}
\curraddr{(From March 1, 2020) Carl von Ossietzky Universit\"at, Institut f\"ur Mathematik, 26111 Oldenburg, Germany}
\email{konstantin.pankrashkin@uni-oldenburg.de}

\keywords{Eigenvalue, Laplacian, Robin boundary condition, effective operator, non-smooth domain}
\altkeywords{Valeur propre, Laplacien, condition aux limites de Robin, op\'erateur effectif, domaine non-lisse}

\subjclass[2010]{35J05, 49R05, 35J25}

\begin{document}

\begin{abstract}
We study the eigenvalues of the Laplacian with a strong attractive Robin boundary condition in curvilinear polygons.
It was known from previous works that the asymptotics of several first eigenvalues
is essentially determined by the corner openings, while only rough estimates were available for the next eigenvalues.
Under some geometric assumptions, we go beyond the critical eigenvalue number
and give a precise asymptotics of any individual eigenvalue
by establishing a link with an effective Schr\"odinger-type operator on the boundary of the domain
with boundary conditions at the corners.
\end{abstract}

\begin{altabstract}
Nous \'etudions les valeurs propres du laplacien avec une condition de Robin fortement attractive dans des polygones curvilignes.
Gr\^ace \`a de pr\'ec\'edents travaux, on sait que le comportement asymptotique de quelques premi\`eres valeurs propres
est essentiellement d\'etermin\'e par les ouvertures des coins, alors que seules quelques estim\'ees grossi\`eres sont disponibles
pour les valeurs propres suivantes. Sous certaines hypoth\`eses g\'eom\'etriques, nous allons au-d\'el\`a du nombre critique
de valeurs propres et nous donnons un d\'eveloppement asymptotique pr\'ecis pour chaque valeur propre individuelle
en \'etablissant un lien avec un op\'erateur effectif de type Schr\"odinger
agissant sur le bord du domaine et muni de conditions aux limites aux coins.
\end{altabstract}

\maketitle

\tableofcontents

\section{Introduction}

\subsection{Problem setting and previous results}

Given a domain $\Omega\subset\RR^d$,  $d\geq 2$, with a suitably regular boundary $\partial\Omega$ and a parameter $\alpha>0$, we 
denote by $R^\Omega_\alpha$ the Laplacian in $L^2(\Omega)$ with the Robin condition
$\partial u/\partial\nu=\alpha u$ at the boundary, where $\nu$ is the outer unit normal.
The operator is rigorously defined using its quadratic form
\[
H^1(\Omega)\ni u\mapsto \int_\Omega |\nabla u|^2\dd x -\alpha \int_{\partial\Omega} u^2 \dd s
\]
with $\dd s$ being the $(d-1)$-dimensional Hausdorff measure, 
provided that the form is lower semibounded and closed.
The spectral properties of the operator $R^\Omega_\alpha$ have attracted a lot of attention during the last years, and
a recent review of various results and open problems can be found in the paper \cite{DFK} by Bucur, Freitas, Kennedy.
In the present paper we will be interested
in the behavior of the eigenvalues $E_n(R^\Omega_\alpha)$ in the asymptotic regime $\alpha\to+\infty$. Let us
recall some available results in this direction.

It seems that the study of the above asymptotic regime was first proposed by Lacey, Ockedon, Sabina \cite{los} when considering
a reaction-diffusion system, and Giorgi and Smits \cite{gs,gs2} obtained a number of estimates with links to the theory of enhanced surface superconductivity. 
Remark that for bounded Lipschitz domains $\Omega$ it follows from the general theory of Sobolev spaces
that there exists $C>0$ with $E_1(R^\Omega_\alpha)\ge -C\alpha^2$ for large $\alpha$ (see Lemma~\ref{rob-scale} below).
Lacey, Ockedon, Sabina in \cite{los} conjectured that under suitable regularity assumptions on $\Omega$
the lower bound can be upgraded to an asymptotics
\begin{equation}
 \label{eq-e11}
E_1(R^\Omega_\alpha)\sim -C_\Omega\alpha^2,
\end{equation}
with some $C_\Omega>0$, and they have shown that $C_\Omega=1$ for $C^4$ smooth domains. Levitin and Parnovski in \cite{LP}
have shown the asymptotics \eqref{eq-e11} for piecewise smooth domains satisfying the interior cone condition,
and they have shown that the constant $C_\Omega$ is explicitly determined through the spectra
of model Robin Laplacians by
\begin{equation}
    \label{ctx}
(-C_\Omega)=\inf_{x\in\partial\Omega} \inf\spec  (R^{T_x}_1),
\end{equation}
where $T_x$ is the tangent cone to $\Omega$ at $x$ and $\spec$ stands for the spectrum of the operator. Bruneau and Popoff in \cite{bp} gave an improved remainder estimate under the slightly stronger assumption that $\Omega$ is a so-called corner domain. We also mention the recent paper~\cite{kop2}
by Kova\v r\'{\i}k and Pankrashkin on non-Lipschitz domains, for which the eigenvalue behavior is completely different.

More precise estimates are available for smooth domains. The lower bound by Lou and Zhu~\cite{luzhu}
and the upper bound due to Daners and Kennedy~\cite{dk} imply that if $\Omega$ is a bounded $C^1$ domain,
then for each fixed $n\in\NN$ one has $E_n(R^\Omega_\alpha)\sim-\alpha^2$.
It seems that a more precise asymptotics was first obtained by Pankrashkin in \cite{nano13}:
it was shown that if $\Omega\subset\RR^2$ is bounded with a $C^3$ boundary, then
$E_1(R^\Omega_\alpha)=-\alpha^2-H_*\alpha+\cO(\alpha^{\frac{2}{3}})$, where $H_*$
is the maximum of the curvature of the boundary. Exner, Minakov and Parnovski in \cite{emp}
show that the asymptotics
\begin{equation}
   \label{hmax}
E_n(R^\Omega_\alpha)=-\alpha^2-H_*\alpha+\cO(\alpha^{\frac{2}{3}})
\end{equation}
holds for any fixed $n\in\NN$, and then Exner and Minakov \cite{em} obtained similar results
for a class of non-compact domains. Helffer and Kachmar \cite{HK} obtained a complete asymptotic expansion
for eigenvalues under the additional assumption that the curvature of the boundary
admits a single non-degenerate maximum. Pankrashkin and Popoff in \cite{pp15}
started the study of the multidimensional case: if $\Omega\subset \RR^d$
is a $C^3$ domain, then the asymptotics \eqref{hmax} holds with $H_*:=\max H$
and $H$ is defined as the sum of the principal curvatures at the boundary, i.e. $H=(d-1)$ times the mean curvature.
An analog of the asymptotics \eqref{hmax} for the first eigenvalue of Robin $p$-Laplacians
was obtained by Kova\v r\'{\i}k and Pankrashkin in~\cite{kop}.
Among possible applications of the asymptotics \eqref{hmax} one may mention 
various optimization issues concerning the eigenvalues of $R^\Omega_\alpha$.
It was conjectured by Bareket \cite{bareket} that among the domains $\Omega$ of fixed volume, for any $\alpha>0$
the quantity $E_1(R^\Omega_\alpha)$ is maximized by the balls. In this most general form, the conjecture was disproved by Freitas and Krej{\v{c}}i{\v{r}}{\'{\i}}k \cite{fk},
but an additional analysis shows that the conjecture may hold in a weaker form under additional restrictions on the geometry of $\Omega$, we refer to the papers
by Antunes, Freitas, Krej{\v{c}}i{\v{r}}{\'{\i}}k~\cite{afk}, Bandle and Wagner \cite{bandle}, Bucur, Ferone, Nitsch, Trombetti~\cite{DFNT},
Ferone, Nitsch, Trombetti~\cite{trombetti0}, Trani~\cite{trani} and Savo~\cite{savo} for domains on manifolds.
As noted by Pankrashkin and Popoff in \cite{pp15}, if the ball is the maximizer of $E_1(R^\Omega_\alpha)$ for all $\alpha>0$ in some class of smooth domains $\Omega$, then it is also the minimizer for the maximum mean curvature $H_*$ in the same class of domains, and this observation leads to some new inequalities for $H_*$, see e.g. Ferone, Nitsch, Trombetti \cite{trombetti},
and it was used to construct a number of counterexamples, for example, the asymptotics \eqref{hmax} was used by Krej{\v{c}}i{\v{r}}{\'{\i}}k and Lotoreichik \cite{kl1,kl2}
in the study of isoperimetric inequalities for Robin laplacians in exterior domains.

In \cite{pp15b} Pankrashkin and Popoff proposed an effective operator
to study the eigenvalues of  $R^\Omega_\alpha$. Namely, it was shown for $C^3$ domains $\Omega$, either bounded
or with a controllable behavior at infinity, that for any fixed $n\in\NN$ one has the asymptotics
\begin{equation}
  \label{eq-eff}
E_n(R^\Omega_\alpha)=-\alpha^2+E_n(L_\alpha)+\cO(1),
\end{equation}
where $L_\alpha$ is the Schr\"odinger operator in $L^2(\partial\Omega)$
acting as $L_\alpha=-\Delta_{\partial\Omega}-\alpha H$ with $\Delta_{\partial\Omega}$
being the Laplace-Beltrami operator on $\partial\Omega$. Kachmar, Keraval, Raymond \cite{kkr}
and Helffer, Kachmar, Raymond \cite{hkr} have shown that the same effective
operator appears in other spectral questions for $R^\Omega_\alpha$, e.g. the Weyl asymptotics
and the tunneling effect for $R^\Omega_\alpha$ are also controlled by those for $L_\alpha$
at the leading orders. Pankrashkin \cite{mmn} and Bruneau, Pankrashkin, Popoff \cite{bpp}
used the effective operator in order to study the accumulation of eigenvalues
for Robin Laplacians on some non-compact domains.

We also mention some related papers going slightly beyond the initial problem setting. Colorado and Garc\'\i a-Meli\'an \cite{colgar}
obtained some results in the same spirit for Laplacians with the boundary condition $\partial u/\partial\nu=\alpha p u$ for variable functions $p$ and $\alpha\to+\infty$.
Filinovskii in \cite{filin} obtained the estimate $\liminf_{\alpha\to+\infty} \alpha^{-1}\partial E_1(R^\Omega_\alpha)/\partial\alpha\le -1$.
Helffer and Pankrashkin \cite{hp} studied the exponential splitting between the first two eigenvalues of $R^\Omega_\alpha$
in a domain $\Omega$ with two congruent corners. Cakoni, Chaulet and Haddar \cite{cch} have shown that, in a sense, the only finite accumulation points of the eigenvalues
of $R^\Omega_\alpha$ for large positive $\alpha$ are the Dirichlet Laplacian eigenvalues of $\Omega$.

\subsection{Main results}

\begin{figure}

\centering

\includegraphics[width=55mm]{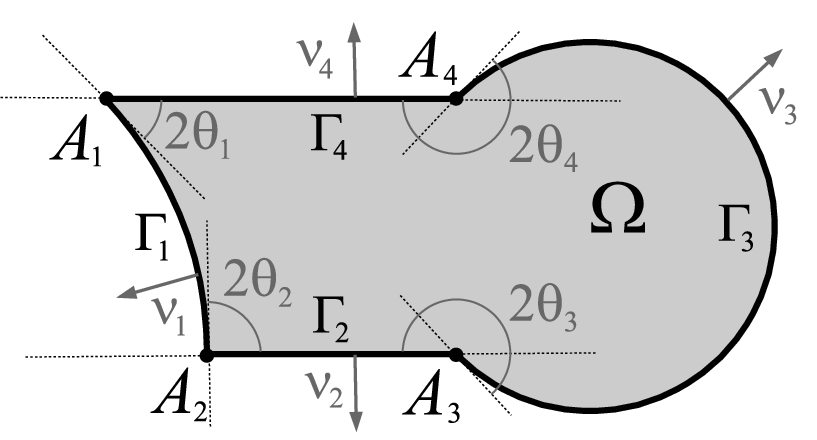}

\caption{An example of a curvilinear polygon $\Omega$ with four vertices and
sides of constant curvature. The vertices $A_1$ and $A_2$ are convex, and the vertices $A_3$
and $A_4$ are concave. One has $H_1<0$, $H_3>0$ and $H_2=H_4=0$.\label{fig-domain}}

\end{figure}

In the present paper, we would like to combine the existing results and techniques in order to study
the eigenvalues of $R^\Omega_\alpha$ for the case of $\Omega\subset \RR^2$ being a \emph{curvilinear polygon}
and to better understand the role of corners in the spectral properties.
A complete definition of curvilinear polygons will be given later in the text (Subsection~\ref{ssec-poly}), and
for the moment we restrict ourselves to a less formal intuitive
definition: one says that a bounded planar domain $\Omega$ is a curvilinear polygon
if its boundary is smooth except near $M$ points (vertices) $A_1,\dots, A_M$, and
if $\Gamma_{j-1}$ and $\Gamma_j$ are two smooth pieces of boundary meeting at $A_j$,
then the \emph{half}-angle $\theta_j$ between them  (measured inside $\Omega$) is non-degenerate and non-trivial, i.e.
$\theta_j\notin\{0,\pi/2,\pi\}$. We say that a vertex $A_j$ is convex if $\theta_j<\pi/2$,
otherwise it is called concave. Furthermore, let $ H_j$
be the curvature defined on $\Gamma_j$, with the convention that $H_j\ge 0$ for convex domain,
and $\ell_j$ denotes the length of $\Gamma_j$.
We refer to Figure~\ref{fig-domain} for an illustration.

\begin{figure}

\centering

\includegraphics[width=65mm]{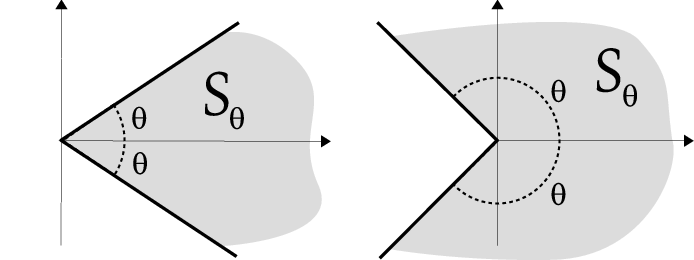}

\caption{The infinite sector $\cS_\theta$ for $\theta<\pi/2$ (left) and $\theta>\pi/2$ (right).\label{fig-sector}}

\end{figure}

Using the general result \eqref{ctx} one is reduced first to the study of Robin Laplacians
in all possible tangent sectors, which have a simple structure in two dimensions. Namely,
consider the infinite planar sectors
$\cS_\theta:=\big\{(x_1,x_2):\, \big|\arg(x_1+\rmi x_2)\big|<\theta\big\}\subset\RR^2$,
see Figure~\ref{fig-sector}, then the tangent sector to $\Omega$ at $A_j$
is a rotated copy of $\cS_{\theta_j}$, while at all other points the tangent sectors are
isometric to $\cS_{\frac{\pi}{2}}$, which is just the half-plane. Denote by
$T_\theta$ the Laplacian in $\cS_\theta$ with the normalized Robin boundary condition $\partial u/\partial\nu=u$.
Its spectral properties were studied in detail by Khalile and Pankrashkin \cite{KP}
and are summarized below in Proposition~\ref{prop-sector}.
For the current presentation we remark
that the essential spectrum is always $[-1,+\infty)$, and, in addition, it has $\kappa(\theta)<\infty$ discrete
eigenvalues $\cE_1(\theta),\dots\cE_{\kappa(\theta)}(\theta)$, while $\kappa(\theta)=0$ for $\theta\ge \pi/2$
(i.e. there are no discrete eigenvalues at all if the sector is concave),
and $\cE_1(\theta)=-1/\sin^2\theta$ for $\theta<\pi/2$. Furthermore,
one has $\kappa(\theta)=1$ for $\frac{\pi}{6}\le \theta<\frac{\pi}{2}$. Hence, with $\Omega$ we  associate
the following objects:
\begin{align*}
K&:=\kappa(\theta_1)+\dots +\kappa(\theta_M),\\
\cE&:=\text{ the disjoint union of } \big\{ \cE_n(\theta_j), \ n=1,\dots,\kappa(\theta_j)\big\}, \  j\in\{1,\dots, M\},\\
\cE_n&:=\text{ the $n$th element of $\cE$ when numbered in the non-decreasing order.}
\end{align*}
Khalile in~\cite{khalile2} gives an improved version of \eqref{ctx} for curvilinear polygons, namely,
for each $n\in\{1,\dots,K\}$ one has $E_n(R^\Omega_\alpha)=\cE_n\alpha^2+\cO(\alpha^{\frac{4}{3}})$,
while the remainder estimate can be improved for polygons with straight sides, and $E_{K+n}(R^\Omega_\alpha)\sim-\alpha^2$ for each $n\in\NN$.
(We remark that paper \cite{khalile2} was in turn motivated by the earlier work by Bonnaillie-No\"el and Dauge~\cite{BND}
on magnetic Neumann Laplacians in corner domains.) Therefore, the behavior of the first $K$ eigenvalues 
at the leading order is determined by the corners only, so one might call them \emph{corner-induced}.
In the present work we would like to understand in greater detail the asymptotics
of the higher eigenvalues  $E_{K+n}(R^\Omega_\alpha)$ with a fixed $n\in\NN$, which will be referred to as \emph{side-induced}.
 As the main term $(-\alpha^2)$
in the asymptotics is the same as in the smooth case, one might expect that their behavior
should take into account the geometry of the boundary away from the corners,
so that a kind of an effective Schr\"odinger-type operator may appear by analogy with \eqref{eq-eff}.
On the other hand, one might expect that the corners should contribute to the effective operator:
due to the singularities at the vertices, some boundary conditions might be needed in order
to make the effective operator self-adjoint. It seems that the only result obtained in this direction
is the one by Pankrashkin \cite{nano15}:  if $\Omega$ is the exterior of a convex polygon with
side lengths $\ell_j$, then for any fixed $n$ one has $E_n(R^\Omega_\alpha)=-\alpha^2+E_n(\oplus_j D_j) +\cO(\alpha^{-\frac{1}{2}})$
as $\alpha\to+\infty$, where $D_j$ is the Dirichlet Laplacian on $(0, \ell_j)$. Remark that this result is in agreement with what precedes:
as all the corners are concave, one simply has $K=0$. We are going to obtain a result in the same spirit
for a more general case, in particular, by allowing the presence of convex corners.

Our analysis will be based 
on the notion of  \emph{non-resonant} convex vertex (it will be seen from the proof
that concave vertices are much easier to deal with), which is formulated in terms of a model Robin eigenvalue problem on a truncated sector.
Namely, for $\theta\in(0,\pi/2)$ and $r>0$ let $A^\pm_r$ be the two points lying on the two boundary rays of the sector $\cS_\theta$
at the distance $r>0$ from the origin $O$, and let $B_r$ be the intersection point of the straight lines
passing through $A^\pm_r$ perpendicular to the boundary, see Figure~\ref{figure-intro}. Denote by $\cS^r_\theta$
the quadrangle $OA^+_r B_r A^-_r$ and by $N^r_{\theta}$ the Laplacian $u\mapsto-\Delta u$ in $\cS^r_\theta$
with the Robin boundary condition $\partial u/\partial\nu =u$ at $OA^\pm_r$
and the Neumann boundary condition at $A^\pm_rB_r$. Using rather standard methods one sees that the first $\kappa(\theta)$
eigenvalues of $N^r_{\theta}$ converge to those of $T_\theta$ as $r\to+\infty$ (Lemma~\ref{nalph}), and the non-resonance
condition is a hypothesis on the behavior of the next eigenvalue.
We say that a half-angle $\theta$ is \emph{non-resonant} if for some $C>0$ one has $E_{\kappa(\theta)+1}(N^r_\theta)\ge-1+ C/r^2$ for large $r$.
One shows in Proposition~\ref{prop-good}, using a combination of a separation of variables with a monotonicity argument 
that  all half-angles $\theta\in\big[\frac{\pi}{4},\frac{\pi}{2}\big)$ are non-resonant.

\begin{figure}

\centering

\includegraphics[width=50mm]{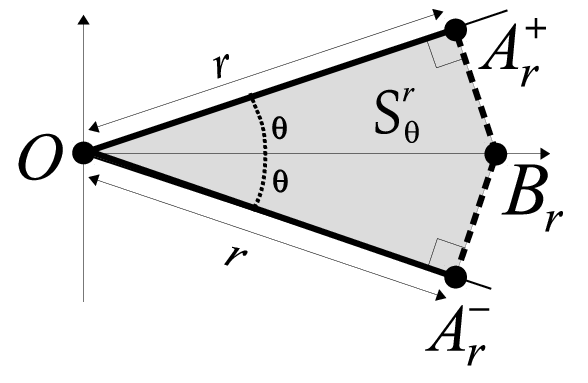}

\caption{The quadrangle $\cS^r_\theta$.\label{figure-intro}}

\end{figure}

In order to concentrate on the contribution of the corners, let us discuss first the case when $\Omega$ is a polygon with straight edges.
We denote
\[
D_j:=\text{the Dirichlet Laplacian on $(0,\ell_j)$}.
\]
Our main result reads as follows:
\begin{theo}\label{thm-poly}
Let $\Omega\subset\RR^2$ be a polygon with $M$ vertices, half-angles $\theta_j$ and side lengths $\ell_j$.
Assume that each $\theta_j$ is either concave or non-resonant, then for any fixed $n\in\NN$ and $\alpha\to+\infty$
there holds
\[
E_{K+n}(R^\Omega_\alpha)=-\alpha^2+E_n\big( \boplus\nolimits_{j=1}^M D_j\big)+\cO\big(\tfrac{\log\alpha}{\sqrt{\alpha}}\big).
\]
\end{theo}
As it will be seen in the proof, using the Dirichlet-Neumann bracketing and the non-resonance condition, 
it is quite elementary to obtain the two-sided estimate
\begin{align*}
-\alpha^2+E_n\big( \boplus\nolimits_{j=1}^M N_j\big)+\cO\big(\tfrac{\log\alpha}{\alpha}\big)
&\le E_{K+n}(R^\Omega_\alpha),\\
E_{K+n}(R^\Omega_\alpha)&\le-\alpha^2+E_n\big( \boplus\nolimits_{j=1}^M D_j\big)+\cO\big(\tfrac{\log\alpha}{\alpha}\big),
\end{align*}
where $N_j$ is the \emph{Neumann} laplacian on $(0,\ell_j)$, and one easily sees that the difference between the lower and upper bounds is of order $1$.
It takes then the most efforts to close this gap and to show that it is the upper bound which gives the main term of the eigenvalue asymptotics,
and this is the main contribution of the present paper.

Using the above observation that all obtuse angles $\theta$ are non-resonant with $\kappa(\theta)=1$, one arrives
at the following  corollary:
\begin{coro}
Let $\Omega\subset\RR^2$ be a  polygon with $M$ vertices, half-angles $\theta_j$ and sides of length $\ell_j$.
Assume that $\theta_j\ge \pi/4$ for all $j$, then for any $n\in\NN$ and $\alpha\to +\infty$ there holds
\[
E_{K+n}(R^\Omega_\alpha)=-\alpha^2+ E_n\big(\boplus\nolimits_{j=1}^M D_j\big) + \cO \big( \tfrac{\log\alpha}{\sqrt{\alpha}}\big),
\]
where $K$ is the number of convex vertices.
\end{coro}
It is an important point that a different eigenvalue asymptotics can arise if no condition is imposed on the corners.
In order to see it, remark first that in the situation of  Theorem~\ref{thm-poly} one has
\begin{equation}
     \label{ekn}
\lim_{\alpha\to+\infty}\big(E_{K+1}(R^\Omega_\alpha)+\alpha^2\big)=E_1\big( \boplus\nolimits_{j=1}^M D_j\big)>0.
\end{equation}
On the other hand, the computations by McCartin~\cite{mcc4} for an explicit configuration (which we review in Subsection~\ref{rem-triangle})
give the following result
\begin{prop}
Let $\Omega$ be an equilateral triangle of side length $\ell>0$. Then for $\alpha\to+\infty$ there holds
$E_n(R^\Omega)=-4\alpha^2+o(1)$ for $n\in\{1,2,3\}$ and
\begin{equation}
   \label{e3n}
E_{3+n}(R^\Omega)=-\alpha^2+E_n(L)+o(1) \text{ for any fixed } n\in\NN,
\end{equation}
where $L$ is the Laplacian on $(0,3\ell)$ with the \emph{periodic} boundary condition.
\end{prop}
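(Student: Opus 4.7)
The plan is to exploit McCartin's explicit diagonalization~\cite{mcc4} of Robin Laplacians on equilateral triangles. Thanks to the full dihedral symmetry, the eigenfunctions of $R^\Omega_\alpha$ can be written as finite trigonometric sums whose quasi-momenta satisfy a transcendental secular equation coupling two integer labels to $\alpha$. The first step is to record these formulas and to separate the eigenvalues into two families: those for which the secular equation forces $E+4\alpha^2\to 0$ (corner-induced) and those for which $E+\alpha^2$ stays bounded as $\alpha\to+\infty$ (side-induced).

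At each vertex the half-angle is $\pi/6$, and Proposition~\ref{prop-sector} gives $\kappa(\pi/6)=1$ together with $\cE_1(\pi/6)=-1/\sin^2(\pi/6)=-4$, so $K=3$ and there are exactly three corner-induced eigenvalues. The corresponding explicit eigenfunctions are exponentially localized near the three vertices on scale $1/\alpha$, and plugging this Ansatz into McCartin's secular equation yields $E_n(R^\Omega_\alpha)=-4\alpha^2+o(1)$ for $n\in\{1,2,3\}$, which is considerably sharper than the $\cO(\alpha^{4/3})$ remainder provided by \cite{khalile2}.

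For the side-induced family, the approach is to substitute $E=-\alpha^2+\mu$ in the secular equation and to study the asymptotic quantization of $\mu$. Each side-induced quasi-mode has the expected profile $e^{-\alpha t}$ in the normal variable $t$ to $\partial\Omega$ times an oscillating wave along the boundary, so the side contribution reduces at leading order to a one-dimensional problem on the perimeter. Matching the tangential waves across each vertex produces a transfer relation, and composing the three relations along the perimeter must yield $\spec(L)$.

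The main obstacle is identifying the vertex transfer itself. Since $\pi/6$ lies outside the non-resonant range on which Theorem~\ref{thm-poly} relies, the truncated sector problem $N^r_{\pi/6}$ carries a second eigenvalue tending to $-1$ more slowly than $1/r^2$; this extra near-threshold mode spreads along both sides meeting at the corner and, at the relevant energy scale, admits full transmission of side-induced waves across the vertex. The rigorous step will be to extract this transmission coefficient from McCartin's explicit formulas, to verify that the three composed vertex transfers decouple the normal direction and act on the tangential wave precisely as continuation, and to conclude that the resulting quantization condition is that of the periodic Laplacian on the circle of length $3\ell$ rather than the direct sum of Dirichlet intervals that appears in the non-resonant setting.
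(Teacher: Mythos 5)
Your starting point (McCartin's explicit diagonalization) is the right one, but the route you then take leaves the decisive step unproven. The paper's argument in Subsection~\ref{rem-triangle} is pure bookkeeping: McCartin~\cite{mcc4} already gives the asymptotic expansions $E_\alpha(0,0),E_\alpha(0,1)=-4\alpha^2+o(1)$, $E_\alpha(0,n)=-\alpha^2+\tfrac{4}{27}\big[\tfrac{\pi}{r}(n-\tfrac32)\big]^2+o(1)$, $E_\alpha(1,1)=-\alpha^2+o(1)$, $E_\alpha(1,n)=-\alpha^2+\tfrac{4}{27}\big[\tfrac{\pi}{r}(n-1)\big]^2+o(1)$, together with $E_\alpha(m,n)\ge 0$ for $m\ge 2$; one then sorts these values with the correct multiplicities (two for $n>m$, one for $n=m$) and recognizes the limit multiset $\big\{(2\pi m/3\ell)^2:\ m\in\ZZ\big\}$ as the spectrum of the periodic Laplacian on the circle of length $3\ell$. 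Nothing beyond reading off and rearranging McCartin's formulas is needed. You instead propose to substitute $E=-\alpha^2+\mu$ into the secular equation, derive a vertex ``transfer relation,'' argue full transmission at each corner, and recover the periodic quantization by composing three transfers. That matching/quantization analysis is exactly where the mathematical content lies, and in your write-up it is only announced (``the rigorous step will be to extract this transmission coefficient\ldots''), not carried out; no formula for the transfer is produced and no argument is given that the composed condition is the periodic one rather than some other vertex coupling.

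There is also a circularity in the justification you offer for full transmission: you invoke the failure of the non-resonance condition at $\theta=\pi/6$ (a second eigenvalue of $N^r_{\pi/6}$ approaching $-1$ more slowly than $C/r^2$). In the paper this resonance of $\pi/6$ is a \emph{consequence} of the present proposition (it is deduced by contrasting \eqref{e3n} with \eqref{ekn}), and no independent proof of it is available at this stage; so it cannot be used as an input. Moreover, even granting a near-threshold mode in the truncated sector, passing from its existence to ``full transmission of side-induced waves'' would require a threshold scattering analysis in infinite Robin sectors, which the paper explicitly lists as an open problem in Subsection~\ref{ssec-nonres2}. If you want a complete proof along your lines you would have to develop that analysis; otherwise the short and complete route is the one above: quote McCartin's asymptotics and identify the limiting multiset with $\spec(L)$.
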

For the equilateral triangle one has indeed $K=3$, while $E_1(L)=0$. Eq.~\eqref{e3n} implies $\lim_{\alpha\to+\infty}\big(E_{K+1}(R^\Omega_\alpha)+\alpha^2)=0$,
which contradicts~\eqref{ekn}. This means that the half-angle $\theta=\frac{\pi}{6}$ is resonant (i.e. it does not satisfy the above non-resonance condition).
We remark that the non-resonance condition we use is strictly adapted to our proof method and is not supposed to be optimal,
but we are not aware of any suitable alternative. In fact our choice is strongly motivated by some recent studies of Laplacians
in domains collapsing on graphs, and some analogies with waveguides
and possible reformulations of the non-resonance condition are discussed in Subsection~\ref{ssec-nonres2}.

For the case of curvilinear polygons, a number of additional difficulties arise due to the presence of non-trivial curvatures on the sides, and we were not able to
study the most general case in the present text (the most important technical obstacles are discussed in Subsection~\ref{ssec-var}).
Nevertheless, we were able to consider two important cases.

First, we consider the case when the maximum curvature is not attained at the corners.
\begin{theorem}\label{thm2}
Denote $H_{j,*}:= \max_{s\in[0,\ell_j]}H_j(s)$,
$H_*:=\max_j H_{j,*}$, and assume that
\[
\text{for all $j\in\{1,\dots,M\}$ there holds $H_j(0)\ne H_*$ and $H_j(\ell_j)\ne H_*$,}
\]
and that  each corner of $\Omega$ is either concave or non-resonant.
Then for each $n\in\NN$ and $\alpha\to +\infty$ one has
\begin{align}
E_{K+n}(R^\Omega_\alpha)&=-\alpha^2+E_n\big( \boplus\nolimits_{j:H_{j,*}=H_*}(D_j-\alpha H_j)\big)+\cO(1), \label{eq-dir00}\\
& = -\alpha^2+E_n\big( \boplus\nolimits_{j:H_{j,*}=H_*}(N_j-\alpha H_j)\big)+\cO(1). \nonumber
\end{align}
\end{theorem}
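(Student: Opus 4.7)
The strategy mirrors that of Theorem~\ref{thm-poly}: combine a quasi-mode construction for the upper bound with a Dirichlet--Neumann bracketing for the lower bound, then close the residual gap by exploiting that the relevant eigenfunctions of $D_j-\alpha H_j$ and $N_j-\alpha H_j$ concentrate away from the endpoints of $\Gamma_j$. The novelty relative to Theorem~\ref{thm-poly} is the reduction on each side to a one-dimensional Schr\"odinger operator with potential $-\alpha H_j$, in the spirit of~\eqref{eq-eff}.

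For the \textbf{upper bound}, I would construct trial functions in tubular coordinates $(s,t)$ near each side $\Gamma_j$ satisfying $H_{j,*}=H_*$, of the form $u(s,t)=\chi(s)\,\psi_n(s)\,\sqrt{2\alpha}\,e^{-\alpha t}\,\rho(t)$, where $\psi_n$ is an $n$-th eigenfunction of $D_j-\alpha H_j$ and $\chi,\rho$ are smooth cut-offs vanishing near the vertices and far from $\Gamma_j$ respectively. Computing the Robin quadratic form and using the expansion of the metric factor $(1+tH_j(s))$ in tubular coordinates yields $-\alpha^2+E_n(D_j-\alpha H_j)+\cO(1)$ for the Rayleigh quotient. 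An Agmon-type decay estimate for $\psi_n$ around the interior maxima of $H_j$ ensures that the endpoint cut-offs cost only exponentially small errors, after which the min--max principle delivers the upper inequality.

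For the \textbf{lower bound}, I would apply Neumann bracketing to split $\Omega$ into (i) truncated sectors $\cS^{r_\alpha}_{\theta_j}$ at each vertex with a radius $r_\alpha\to\infty$ tuned so that the non-resonance hypothesis (or concavity) yields a uniform spectral margin for $E_{\kappa(\theta_j)+1}$ of the model; (ii) curved tubular strips along the portions of each side away from the vertices; (iii) an interior bulk region whose spectrum is bounded below by $-C$. On the tubular strips, after coordinate straightening, a Born--Oppenheimer-type reduction onto the transverse ground state $\sqrt{2\alpha}\,e^{-\alpha t}$ produces an effective longitudinal operator comparable to $N_j-\alpha H_j-\alpha^2$ modulo an $\cO(1)$ error. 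Summing these contributions yields $E_{K+n}(R^\Omega_\alpha)\ge-\alpha^2+E_n(\boplus\nolimits_j(N_j-\alpha H_j))+\cO(1)$, and the sides with $H_{j,*}<H_*$ drop out of the first $n$ levels of the direct sum because they contribute eigenvalues exceeding $-\alpha H_*$ by at least a multiple of $\alpha$.

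It remains to close the gap between Dirichlet and Neumann versions of the effective operator. By an Agmon estimate applied to both $D_j-\alpha H_j$ and $N_j-\alpha H_j$, the hypothesis $H_j(0),H_j(\ell_j)\ne H_*$ forces the low-lying eigenfunctions to concentrate exponentially near the interior maxima of $H_j$, whence
\[
E_n\bigl(\boplus\nolimits_{j:H_{j,*}=H_*}(N_j-\alpha H_j)\bigr)=E_n\bigl(\boplus\nolimits_{j:H_{j,*}=H_*}(D_j-\alpha H_j)\bigr)+\cO\bigl(e^{-c\sqrt{\alpha}}\bigr),
\]
which matches the two bounds modulo the allowed $\cO(1)$ error and proves both displayed equalities. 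The main technical obstacle, flagged in Subsection~\ref{ssec-var}, lies in the reduction on the tubular strips: the coupling between $s$ and $t$ through the metric factor $(1+tH_j(s))$ generates cross-terms whose control requires careful form-estimates and an IMS-type localization, and the resulting bounds must patch correctly with the truncated corner sectors at the vertex--side interfaces. This is also why the separation of $H_*$ from the vertices is essential: were $H_*$ attained at a corner, the one-dimensional operator $D_j-\alpha H_j$ on a single side would no longer be the correct effective model near the maximum.
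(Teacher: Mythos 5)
Your proposal is correct and follows essentially the same route as the paper: Dirichlet--Neumann bracketing with the corner contributions controlled by non-resonance (or concavity) of the vertices, a transverse-mode reduction on the side neighborhoods to the one-dimensional operators $D_j-\alpha H_j$ and $N_j-\alpha H_j$ (this is Proposition~\ref{prop-pp}), and semiclassical Agmon estimates exploiting $H_j(0),H_j(\ell_j)\ne H_*$ to make the Dirichlet/Neumann and endpoint-truncation discrepancies exponentially small (Proposition~\ref{prop-agmon}). Your explicit quasimode construction for the upper bound is only a cosmetic variant of the paper's Dirichlet bracketing via the operators $D^W_j$, and, as in the paper, no identification-operator machinery as in Theorem~\ref{thm-poly} is needed for this theorem.
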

In fact, the proof of Theorem~\ref{thm2} appears to be less involved than the one of Theorem~\ref{thm-poly}:
the main ingredient is that the eigenvalues of $D_j-\alpha H_j$ are exponentially close to those
of $N_j-\alpha H_j$, which is a simple consequence of Agmon-type estimates, hence,
the contribution of the boundary conditions to be imposed at the vertices is very small (as the eigenfunctions
are concentrated near the set on which the curvature attains its maximal value). 
The remainder $\cO(1)$ is the same as for the effective operator in \eqref{eq-eff}
obtained for smooth domains. Furthermore, under suitable geometric assumptions
a complete asymptotic expansion can be obtained, see Subsection~\ref{ssec-thm2}. 
We remark that the non-resonance condition is still used in the proof of Theorem~\ref{thm2},
and we have no intuition on what kind of asymptotics can be expected without additional
conditions on the corners.

The second important case we were able to study is as follows:
\begin{equation}
  \label{h-intro}
\text{the curvatures $H_j$ are constant, and we denote $H_*:=\max H_j$;}
\end{equation}
i.e. each side is either a line segment or a circle arc. We
explicitly mention that $H_j$ can be different for different $j$. Then we obtain the following result, which is in the same spirit as Theorem~\ref{thm-poly}:

\begin{theorem}\label{thm3}
Assume that all corners are concave or non-resonant and that \eqref{h-intro} is satisfied,
then for any fixed $n\in\NN$ and $\alpha\to +\infty$ 
one has the asymptotics
\begin{equation}
   \label{ekn2}
E_{K+n}(R^\Omega_\alpha)= -\alpha^2-H_*\alpha-\tfrac{1}{2}\, H_*^2 + E_n\big(\boplus\nolimits_{j:H_j=H_*} D_j\big) + \cO \big( \tfrac{\log\alpha}{\sqrt{\alpha}}\big).
\end{equation}
\end{theorem}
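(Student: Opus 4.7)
The plan is to reduce Theorem~\ref{thm3} to a direct adaptation of the argument used for Theorem~\ref{thm-poly}, by exploiting the constant-curvature assumption to perform a transverse separation of variables on each side.

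First I would introduce Fermi coordinates $(s,t)$ along each smooth arc $\Gamma_j$, with $s\in(0,\ell_j)$ arc length and $t\in(0,\delta)$ the distance to $\partial\Omega$. Since $H_j$ is constant, the induced metric on the tube $\Gamma_j\times(0,\delta)$ depends only on $t$, so the Robin Laplacian there decomposes as $-\partial_s^2\otimes 1 + 1\otimes T^j_\alpha$, where $T^j_\alpha$ is a one-dimensional weighted Robin Laplacian on $(0,\delta)$. An ODE analysis in the spirit of \cite{pp15b} shows that $T^j_\alpha$ has lowest eigenvalue $\lambda^j(\alpha) = -\alpha^2 - H_j\alpha - \tfrac12 H_j^2 + \cO(e^{-c\alpha})$, with an exponentially transverse-localized eigenfunction, and a spectral gap of order $\alpha^2$ to the next transverse eigenvalue.

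Second, I would set up a Dirichlet--Neumann bracketing exactly as in Theorem~\ref{thm-poly}: partition $\Omega$ into $M$ corner pieces modeled on truncated sectors $\cS^{r}_{\theta_j}$ with $r=r(\alpha)\to\infty$, $M$ side tubes $\Gamma_j\times(0,\delta(\alpha))$, and an interior bulk. The bulk contributes only eigenvalues bounded from below, the corner pieces provide (via Lemma~\ref{nalph}) the $K$ corner-induced eigenvalues $\cE_k\alpha^2$ together with a next eigenvalue bounded from below by $-\alpha^2 + C\alpha^2/r^2$ thanks to non-resonance, and each side tube reduces, by the first step, to $\lambda^j(\alpha) + D_j$ or $\lambda^j(\alpha) + N_j$ according to the end conditions imposed. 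Since $\lambda^j(\alpha) - \lambda^{j'}(\alpha) = (H_{j'}-H_j)\alpha + \cO(1)$, only tubes with $H_j=H_*$ can yield eigenvalues near the threshold $-\alpha^2 - H_*\alpha - \tfrac12 H_*^2$; all other tubes lie higher by an amount of order $\alpha$. The straightforward bracketing therefore gives
\[
-\alpha^2 - H_*\alpha - \tfrac12 H_*^2 + E_n\big(\boplus\nolimits_{j:H_j=H_*} N_j\big) + \cO(1)
\le E_{K+n}(R^\Omega_\alpha) \le
-\alpha^2 - H_*\alpha - \tfrac12 H_*^2 + E_n\big(\boplus\nolimits_{j:H_j=H_*} D_j\big) + \cO(1).
\]

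The principal obstacle, mirroring the central step in the proof of Theorem~\ref{thm-poly}, is to promote the Neumann conditions at convex vertices to Dirichlet ones, i.e.\ to close this $\cO(1)$ gap. The idea is that any candidate quasimode built from a Neumann eigenfunction on an $H_*$-tube that fails to vanish at a convex vertex $A_j$ can, after transverse extension by the localized ground state into the corner neighborhood $\cS^{r}_{\theta_j}$ and $L^2$-orthogonalization against the $\kappa(\theta_j)$ genuine sector bound states (which are already absorbed in the index shift $K$), be tested against the non-resonance bound $E_{\kappa(\theta_j)+1}(N^r_{\theta_j}) \ge -1 + C/r^2$. Unrescaling and balancing $r = r(\alpha)$ against the exponential concentration rate of the transverse profile through an Agmon-type estimate produces, for each convex vertex, a Rayleigh-quotient penalty of order $1/\sqrt{\alpha}$ (up to logarithms), which upgrades the lower bound to the Dirichlet one with remainder $\cO(\log\alpha/\sqrt{\alpha})$; concave vertices are handled for free via Dirichlet bracketing since $\kappa(\theta)=0$. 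Combining the upgraded lower bound with the matching upper bound then yields \eqref{ekn2}. The hard part is this last upgrade; the first two steps are essentially mechanical once the Fermi separation in Step~1 is available, which is precisely the place where the constant-curvature assumption enters decisively.
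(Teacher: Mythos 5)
Your first two steps reproduce, in outline, what the paper actually does: the Fermi (tubular) coordinates with the weight $(1-tH_j)^{1/2}$ and the constant-curvature separation correspond to Lemma~\ref{change3}, Lemma~\ref{wd-dirdir} and Lemma~\ref{wneum} (note the separation is only approximate -- the longitudinal coefficient carries $(1\pm b\delta)$ factors and the transverse eigenvalue $-\alpha^2-H_j\alpha-\tfrac12 H_j^2$ comes with errors $\cO(\delta+\alpha^2e^{-\alpha\delta})$, not $\cO(e^{-c\alpha})$; harmless since $\delta\sim\log\alpha/\alpha$), and your bracketing bound is Corollary~\ref{corol1} together with Proposition~\ref{prop-up-rob}. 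The genuine gap is in your third step, which is precisely the step you yourself call the hard part. As described, the mechanism is a Rayleigh-quotient test of a \emph{candidate quasimode} against the non-resonance bound; but testing individual trial functions can only produce upper bounds. To prove the lower bound on $E_{K+n}(R^\Omega_\alpha)$ you must control \emph{every} function in the form domain orthogonal to the span $L$ of the first $K$ eigenfunctions of $R^\Omega_\alpha$, and the non-resonance estimate (Lemma~\ref{lem-trace0}, Corollary~\ref{lem-trace}) applies only to functions orthogonal to the corner eigenspaces $L_j$ of $N^V_j$ -- which orthogonality to $L$ does not give you. The missing ingredient is the eigenspace-proximity estimate $d(\sigma_j^*L_j,L)=\cO(e^{-c\alpha\delta})$ (Lemmas~\ref{dist-ll0} and~\ref{dist-ll}, proved via Agmon decay and Proposition~\ref{propdist2}), which is what lets the paper transfer orthogonality and obtain the norm and trace bounds of Lemmas~\ref{vertices1} and~\ref{vertices2} for arbitrary $u\perp L$. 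Your sketch never addresses this point, and without it the "orthogonalization against the sector bound states" has no justification for general test functions.

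Moreover, even granting those trace bounds, converting them into the Dirichlet effective operator requires the identification-operator machinery of Proposition~\ref{prop6}: one maps $u\perp L$ to $\bigoplus_{j\in\cJ_*}H^1_0(I_{j,\delta})$ by projecting onto the transverse ground state and subtracting endpoint corrections $\rho_j^\pm$, the endpoint values being controlled exactly by the traces on $\partial_\ext V_{j,\delta}=\partial_\ext W_{j,\delta}$; the remainder $\cO(\log\alpha/\sqrt\alpha)$ then comes from optimizing an auxiliary parameter $\varepsilon=\log\alpha/\sqrt\alpha$ in elementary inequalities with $\delta\sim\log\alpha/\alpha$, together with the a priori boundedness $E_n(B)=\cO(1)$ supplied by the matching upper bound -- not from "balancing $r(\alpha)$ against the exponential concentration rate", which is not how the exponent $1/2$ arises in the paper's Proposition~\ref{prop-low-rob}. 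So the skeleton is right, but the decisive lower-bound argument is missing rather than merely unwritten.
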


Remark that \eqref{ekn2} can be formally viewed as a particular case of the asymptotics~\eqref{eq-dir00} as the terms $-\tfrac{1}{2}\, H_*^2+\cO \big( \tfrac{\log\alpha}{\sqrt{\alpha}}\big)$
in~\eqref{ekn2} can be viewed as a resolution of the remainder $\cO(1)$ in~\eqref{eq-dir00}. The presence of the new term $\tfrac{1}{2}\, H_*^2$ was not observed in earlier papers
on Robin eigenvalues.

The text is organized as follows. In Section~\ref{sec-prel} we recall basic tools from the functional analysis (min-max based eigenvalue estimates, distance between subspaces, Sobolev trace theorems) and study or recall the spectral properties
of some model operators (Robin Laplacians on intervals and infinite sectors).
Section~\ref{sec-trunc} is devoted to the study of Robin Laplacians in convex sectors truncated in a special way: we obtain some estimates for the eigenvalues
and decay estimate for the eigenfunctions, then we introduce the new notion of non-resonant angle
and show that it is satisfied by the obtuse angles. 
In Section~\ref{sec-thm-poly} we prove Theorem~\ref{thm-poly}, i.e. the case of polygons with straight sides.
We first decompose the polygon into vertex neighborhoods and side neighborhoods,
and apply Dirichlet-Neumann bracketing in order to give first a rough eigenvalue estimate
in terms of the direct sum of operators in each part. This approach appears to be sufficient for the upper bound.
The proof of the lower bound is much more involved and represents the main contribution of the paper.
Our approach is based on the construction of an identification operator between functions in $\Omega$ and functions
on the boundary satisfying the Dirichlet boundary condition at the vertices. This machinery was initially proposed
by Post \cite{post} for the analysis of thin branching domains, and it was already used by Pankrashkin~\cite{nano15}
to study the Robin Laplacians in the exterior of convex polygons. 
The difference with the present case comes from the fact that we want to obtain estimates on the $K+n$ eigenvalue, namely the identification only applies to the orthogonal complement of the $K$ first  eigenfunctions of $R^\Omega_\alpha$.
The strategy consists in proving that the lowest eigenspaces of the polygon are close in a suitable sense to the ones of the vertex neighborhoods. The non-resonance condition is then used to analyze their orthogonal complement: it allows us to obtain a control on the trace of the eigenfunctions at the boundary of the vertex neighborhoods 
 which gives a necessary input for the eigenvalue estimates.
We explicitly remark that
our analysis is not based on the construction of quasimodes for the operators in play, but on a construction
of test functions which are not in the operator domains. In particular, we do not see
any sufficiently direct way to obtain a complete asymptotic expansion for the eigenvalues.

In section~\ref{sec-thm23} we discuss the case of curvilinear polygons. We still need a special decomposition of the domain
into pieces of a very special form as well as the existence of some diffeomorphisms and
special cut-off functions. The procedure is summarized
 at the beginning of the section while a
 complete justification of the geometric constructions is given in Appendix~\ref{appa},
as we are not aware of suitable constructions in the existing literature.
As in the case of straight polygons we then estimate the portion of the operator
in each piece of the domain, which gives the sought upper bound and some lower bound.
We prove Theorem~\ref{thm2} in Subsection~\ref{ssec-thm2}
by showing that the lower and upper bounds are close enough (when compared with
the order of the eigenvalue) under the geometric condition imposed.
In Subsection~\ref{ssec-thm3} we then prove Theorem~\ref{thm3}. With preparation in the preceding subsection,
the proof scheme is almost identical to the one of Theorem~\ref{thm-poly}, and differences are mostly of a technical nature.

Finally, in Section~\ref{ssec55} we discuss possible extensions of the results, in particular,
we show that some angles do not satisfy the non-resonance condition and
give a different eigenvalue asymptotics, and we explain some links between our study
and the spectral analysis of waveguides. As already mentioned, Appendix~\ref{appa} contains
some geometric constructions in curvilinear sectors, and we believe that they can be of use
for other problems involving differential operators in domains with corners.

\medskip

\noindent {\bf Acknowledgments.}
A large part of this paper was written while Thomas Ourmi\`eres-Bonafos was supported by a public grant as part of the ``Investissement d'avenir'' project, reference ANR-11-LABX-0056-LMH, LabEx LMH. Now, Thomas Ourmi\`eres-Bonafos is supported by the ANR "D\'efi des savoirs (DS10) 2017" programm, reference ANR-17-CE29-0004, project molQED.
The initial version of the paper was significantly reworked following referee's suggestions. We are very grateful to her/him for very constructive criticisms
which resulted in a significant improvement of the text and in the inclusion of several additional results.

\section{Preliminaries}\label{sec-prel}

\subsection{Notation}

For $x=(x_1,x_2)\in\RR^2$ and $y=(y_1,y_2)\in\RR^2$ we will use the length $|x|=\sqrt{x_1^2+x_2^2}$, the scalar product
$x\cdot y=x_1 y_1+x_2y_2$ and the wedge product $x\wedge y=x_1 y_2-x_2 y_1$. 
In this paper we only deal with real-valued operators, so we prefer to work with real Hilbert spaces in order to have a simpler writing.
Let $\cH$  be a Hilbert space and $u,v\in\cH$, then we denote by $\langle u,v\rangle_\cH$ the scalar product of $u$ and $v$.
It will be sometimes shortened to $\langle u,v\rangle$ if there is no ambiguity in the choice of the Hilbert space, and the same applies to the associated norm $\|\cdot\|_\cH$.
For a self-adjoint operator $(A,\dom(A))$ in $\cH$, with $\dom(A)$ being the operator domain,
we denote by $\spec(A)$, $\specdisc(A)$ and $\specess(A)$ the spectrum of $A$, its discrete spectrum and its essential spectrum, respectively. 
For $n\in\NN:=\{1,2,3,\dots\}$, by $E_n(A)$ we denote the $n$th discrete eigenvalue of $A$ (if it exists) when enumerated in the non-decreasing order counting the multiplicities.
If the operator $A$ is semibounded from below, then $\qdom(A)$ denotes the domain of its sesquilinear form, and the value of the sesquilinear form
on two vectors $u,v\in \qdom(A)$ will be denoted by $A[u,v]$.

\subsection{Min-max principle and its consequences}

Let  $\mathcal H$ be an infinite-dimensional Hilbert space and $A$ be a lower semibounded self-adjoint operator in $\cH$, with $A\ge -c$ for some $c\in \mathbb R$.
Recall that $\qdom(A)$ equipped with the scalar product 
\[
\cQ(A)\times\cQ(A)\ni(u,v)\mapsto A[u,v]+(c+1)\langle u,v\rangle_\cH
\]
is a Hilbert space. The following result, giving a variational characterization of eigenvalues (usually referred to as the \emph{min-max} principle), 
is a standard tool in the spectral theory of self-adjoint operators, see e.g. \cite[Section XIII.1]{RS} or \cite[Section 10.2]{bs}:

\begin{prop}\label{minmax}
Let $\Sigma:=\inf \specess (A)$ if $\specess (A)\ne \emptyset$, otherwise set $\Sigma:=+\infty$.
Let $n \in \mathbb N$ and $D$ be a dense subspace of $\qdom(A)$.
Define the $n$th \emph{Rayleigh quotient}  $\Lambda_n(A)$ of $A$ by
\[
\Lambda_n(A):= \inf_{G\subset D:\, \dim G=n} \sup_{u \in G\setminus\{0\}} \frac{A[u,u] }{\|u\|^2_\cH},
\]
then one and only one of the following two assertions is true:
\begin{itemize}
\item $\Lambda_n(A)<\Sigma$ and $E_n(A)=\Lambda_n(A)$.
\item $\Lambda_n(A)=\Sigma$ and $\Lambda_m(A)=\Lambda_n(A)$ for all $m\ge n$.
\end{itemize}
\end{prop}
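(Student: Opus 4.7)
The plan is to split the argument into two reductions followed by a spectral-theoretic core. First I would show that the Rayleigh quotient $\Lambda_n$ is unchanged if $D$ is replaced by the full form domain $\qdom(A)$. Denote the form norm by $\|u\|_A^2 := A[u,u] + (c+1)\|u\|_\cH^2$; since $D$ is dense in $\qdom(A)$ with respect to $\|\cdot\|_A$, any $n$-dimensional subspace $G\subset \qdom(A)$ can be approximated by an $n$-dimensional subspace $G'\subset D$ by picking a basis $(u_1,\dots,u_n)$ of $G$ and replacing each $u_i$ by a nearby $v_i\in D$. A Gram-matrix argument shows that for small enough error the $v_i$ remain linearly independent, and the continuity of $A[\cdot,\cdot]$ in $\|\cdot\|_A$ together with the equivalence of all norms on the finite-dimensional $G$ ensures that the supremum of $A[u,u]/\|u\|_\cH^2$ over $G'$ is arbitrarily close to that over $G$. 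Hence the infimum over $n$-dimensional subspaces of $D$ equals the one over $\qdom(A)$.

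Next I would establish the classical min-max identity via the spectral theorem. Let $E_A$ denote the projection-valued measure of $A$. Assume first $\Lambda_n(A)<\Sigma$ and pick $\mu\in\bigl(\Lambda_n(A),\Sigma\bigr)$. The spectral projection $P:=E_A\bigl((-\infty,\mu]\bigr)$ has finite rank since $\mu<\Sigma$ and $\spec(A)\cap(-\infty,\mu]$ consists only of finitely many eigenvalues of finite multiplicity. Choosing an $n$-dimensional subspace $G\subset \qdom(A)$ with $\sup_{u\in G\setminus\{0\}} A[u,u]/\|u\|_\cH^2<\mu$, one sees that $G$ must inject into $\mathrm{Ran}(P)$: otherwise $G$ would contain a nonzero $u\in\ker P$ satisfying $A[u,u]\geq\mu\|u\|_\cH^2$, a contradiction. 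This yields $\dim\mathrm{Ran}(P)\geq n$, so $E_n(A)$ exists and satisfies $E_n(A)\leq \Lambda_n(A)$. For the reverse inequality, plug in $G_0:=\vspan\{\phi_1,\dots,\phi_n\}$ where $\phi_j$ are the first $n$ eigenfunctions; orthonormalising and diagonalising shows $\sup_{u\in G_0\setminus\{0\}} A[u,u]/\|u\|_\cH^2 = E_n(A)$, giving $\Lambda_n(A)\leq E_n(A)$.

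It remains to handle the dichotomy. If $\Lambda_n(A)\geq\Sigma$, I would show $\Lambda_n(A)=\Sigma$ by producing, for each $\varepsilon>0$, an $n$-dimensional test subspace with Rayleigh quotient at most $\Sigma+\varepsilon$: combine the finitely many eigenfunctions lying strictly below $\Sigma$ (say $N<n$ of them) with $n-N$ nearly orthogonal Weyl-type vectors associated to points of $\specess(A)$ in $[\Sigma,\Sigma+\varepsilon]$, truncated so as to lie in $\qdom(A)$. Since this subspace can be made linearly independent and orthogonal to the eigenfunctions, a direct computation bounds the Rayleigh quotient by $\Sigma+\varepsilon$. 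Combining with the trivial monotonicity $\Lambda_m(A)\geq\Lambda_n(A)$ for $m\geq n$ and the same construction applied to any $m\geq n$ yields $\Lambda_m(A)=\Sigma$, closing the second alternative. The main obstacle I expect is the density reduction together with the Weyl-sequence construction in $\qdom(A)$: one must ensure that the truncated approximations to Weyl sequences live in the form domain and that their form values converge correctly, which boils down to applying the functional calculus to localise in spectral intervals and then approximating in form norm.
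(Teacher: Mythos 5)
Your sketch is the standard spectral-theorem proof of the min-max principle and it is correct: the density reduction from $D$ to $\qdom(A)$ in the form norm, the finite-rank spectral projection argument when $\Lambda_n(A)<\Sigma$, and the test spaces built from the finitely many eigenfunctions below $\Sigma$ together with vectors in the range of the spectral projection onto $[\Sigma,\Sigma+\varepsilon]$ (which automatically lie in $\qdom(A)$, so no separate truncation argument is needed) are exactly the ingredients of the classical argument. The paper does not prove Proposition~\ref{minmax} at all but quotes it as standard from \cite[Section XIII.1]{RS} and \cite[Section 10.2]{bs}, and your route coincides with the proof given there, so there is nothing substantive to compare.
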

The following corollary is also well known, see e.g. \cite[Sec.~10.2., Thm.~5]{bs}:
\begin{coro}\label{corol-codim}
Let $A$ and $B$ be lower semibounded self-adjoint operators in an infinite-dimensional Hilbert space $\cH$.
Assume that there exists $d\in\NN$ and a $d$-dimensional subspace $D$ such that $\qdom(A)=\qdom(B)\oplus D$
and that $A[u,u]=B[u,u]$ for all $u\in\qdom(B)$, then
 $\Lambda_n(B)\le \Lambda_{n+d}(A)$ for all $n\in\NN$.
\end{coro}

Furthermore, the following min-max-based eigenvalue estimate will be of use to compare the eigenvalues
of operators acting in different spaces. It was introduced and used by Exner and Post in~\cite[Lemma~2.1]{ep} as well as by Post in~\cite[Lemma~2.2]{post}:

\begin{prop}\label{prop6}
Let $\cH$ and $\cH'$ be infinite-dimensional Hilbert spaces,
$B$ be a non-negative self-adjoint operator with a compact resolvent in $\cH$ and
$B'$ be a lower semibounded self-adjoint operator in $\cH'$.
Pick $n\in\NN$ and assume that there exists a linear map  $J:\qdom(B)\to\qdom (B')$
and constants $\varepsilon_1 \ge 0$ and $\varepsilon_2 \ge 0$ such that
$\varepsilon_1 < 1/\big(1+E_n(B)\big)$ and that for any
$u\in\qdom (B)$ one has
\begin{align*}
\|u\|^2_{\cH}-\|Ju\|^2_{\cH'}&\le \varepsilon_1 \big(  B[u,u]+\|u\|^2_\cH \big),\\ 
B'[Ju,Ju]-B[u,u]&\le \varepsilon_2 \big(  B[u,u]+\|u\|^2_\cH \big), 
\end{align*}
then $\Lambda_n(B')\le E_n(B) + \dfrac{\big(E_n(B)\varepsilon_1+\varepsilon_2\big)\big(1+E_n(B)\big)}{1-\big(1+E_n(B)\big)\varepsilon_1}$.
\end{prop}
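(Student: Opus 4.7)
The plan is to apply the variational characterisation of $\Lambda_n(B')$ provided by Proposition~\ref{minmax}, using as trial $n$-dimensional subspace the image under $J$ of the spectral subspace of $B$ associated with its first $n$ eigenvalues. The only subtlety is that $J$ is not assumed injective a priori, so the hypothesis $\varepsilon_1<1/(1+E_n(B))$ will have to play the role of guaranteeing that the dimension is preserved through $J$.

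Concretely, I would pick orthonormal eigenfunctions $u_1,\dots,u_n$ of $B$ associated with $E_1(B)\le\dots\le E_n(B)$ (which exist because $B$ is non-negative with compact resolvent) and set $G:=\vspan\{u_1,\dots,u_n\}\subset\qdom(B)$. For every $u\in G$ the spectral theorem gives the pointwise bound $B[u,u]\le E_n(B)\|u\|_\cH^2$. Feeding it into the first hypothesis yields
\[
\|Ju\|_{\cH'}^2\ge \bigl(1-\varepsilon_1(1+E_n(B))\bigr)\|u\|_\cH^2,
\]
whose right-hand side is strictly positive by the assumption on $\varepsilon_1$; this simultaneously shows that $J|_G$ is injective, so $J(G)\subset\qdom(B')$ is genuinely $n$-dimensional, and produces a uniform lower bound on $\|Ju\|_{\cH'}^2$. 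Plugging the same eigenvalue inequality into the second hypothesis gives in parallel
\[
B'[Ju,Ju]\le \bigl(E_n(B)+\varepsilon_2(1+E_n(B))\bigr)\|u\|_\cH^2.
\]

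Combining these two estimates, the Rayleigh quotient satisfies
\[
\frac{B'[Ju,Ju]}{\|Ju\|_{\cH'}^2}\le \frac{E_n(B)+\varepsilon_2(1+E_n(B))}{1-\varepsilon_1(1+E_n(B))}
\]
uniformly for $u\in G\setminus\{0\}$, and a one-line algebraic rearrangement rewrites the right-hand side precisely as $E_n(B)+\frac{(E_n(B)\varepsilon_1+\varepsilon_2)(1+E_n(B))}{1-(1+E_n(B))\varepsilon_1}$. Invoking Proposition~\ref{minmax} with the dense subspace $D:=\qdom(B')$ and the specific trial subspace $J(G)$ then completes the proof. I do not anticipate any serious obstacle: the compactness of the resolvent delivers the trial subspace, and the two quantitative hypotheses are tailored exactly so that $B[u,u]\le E_n(B)\|u\|_\cH^2$ controls the numerator and the denominator of the Rayleigh quotient separately. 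The genuine content of the statement is really only the final algebraic rewriting that turns the ratio into an additive correction to $E_n(B)$.
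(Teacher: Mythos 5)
Your proof is correct and follows exactly the standard argument: the paper does not prove Proposition~\ref{prop6} itself but imports it from Exner--Post \cite{ep} and Post \cite{post}, whose proof is the one you give — push the span of the first $n$ eigenfunctions of $B$ through $J$, use the first hypothesis together with $B[u,u]\le E_n(B)\|u\|_\cH^2$ to keep the image $n$-dimensional and bound $\|Ju\|_{\cH'}^2$ below, use the second hypothesis to bound $B'[Ju,Ju]$ above, and conclude via Proposition~\ref{minmax}. The only point left implicit is that the numerator bound $E_n(B)+\varepsilon_2(1+E_n(B))$ is nonnegative (needed when passing from the two separate estimates to the Rayleigh-quotient bound), which is automatic from $B\ge 0$ and $\varepsilon_2\ge 0$, so there is no gap.
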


\subsection{Distance between closed subspaces}\label{sub-dist}

We will use the well-known notion of a distance between two closed subspaces:
\begin{defi}\label{defin4}
Let $E$ and $F$ be closed subspaces of a Hilbert space $\mathcal H$ and denote by $P_E$ and $P_F$ the orthogonal projectors in $\cH$ on $E$ and $F$ respectively.
The \emph{distance $d(E,F)$ between $E$ and $F$} is defined by
\[
d(E,F):=\sup_{x\in E,\, x\ne 0} \frac{\|x-P_F x\|} {\|x\|}\equiv \lVert P_E-P_F P_E\rVert\equiv \lVert P_E -P_E P_F \rVert.
\]
\end{defi}
One easily sees that the distance is not symmetric, i.e.  $d(E,F)\ne d(F,E)$ in general,
but the triangular inequality is satisfied, i.e. $d(E,G) \leq d(E,F) + d(F,G)$ for any closed subspaces $E,F,G$.
Furthermore, we will need the following result due to Helffer and Sj\"ostrand~\cite[Proposition~2.5]{hs}
allowing to estimate the distance between two subspaces in a special case.
\begin{prop}\label{propdist2}
Let $A$ be a self-adjoint operator in a Hilbert space $\cH$ and $I\subset \mathbb R$ be a compact interval.
For some $n\in\NN$ let $\mu_1,...,\mu_n \in I$ and $\psi_1,...,\psi_n \in \dom(A)$ be linearly independent vectors,
then we denote
\begin{align*}
\varepsilon&:= \max_{j\in\{1,...,n\}} \big\| (A-\mu_j) \psi_j\big\|, 
\quad \eta:= \tfrac{1}{2} \dist\big(I, (\spec A)\backslash I \big), \\
\lambda&:=\text{ the smallest eigenvalue of the Gram matrix }\big(\langle \psi_j,\psi_k\rangle\big)_{j,k\in\{1,\dots,n\}}.
\end{align*}
If $\eta>0$, then the distance $d(E,F)$ between the subspaces
\begin{align*}
E&:=\vspan\{\psi_1,...,\psi_n\},\\
F&:=\text{the spectral subspace associated with $A$ and~$I$}
\end{align*}
satisfies $d(E,F) \leq \dfrac{\varepsilon}{\eta} \sqrt{\dfrac{n}{\lambda}}$.
\end{prop}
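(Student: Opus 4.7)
The plan is to control $\|x - P_F x\|$ for an arbitrary $x \in E$ by using the basis $\psi_1,\dots,\psi_n$. Any such $x$ can be written as $x = \sum_{j=1}^n c_j \psi_j$, and if we set $\phi_j := \psi_j - P_F \psi_j \in F^\perp$ (which belongs to $\dom(A)$, since $P_F$ preserves $\dom(A)$), then $x - P_F x = \sum_{j=1}^n c_j \phi_j$. The problem thus splits naturally into two independent estimates: first bounding each $\|\phi_j\|$ in terms of the quasi-mode residual $\varepsilon$ and the spectral gap $\eta$, and second bounding the coefficient vector $c = (c_1,\dots,c_n)$ in terms of $\|x\|$ via the Gram matrix.

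For the first estimate I would use that $P_F = \chi_I(A)$ commutes with $A$ by the functional calculus, so
\[
(I - P_F)(A - \mu_j)\psi_j = (A - \mu_j)\phi_j,
\]
and the left-hand side has norm at most $\|(A - \mu_j)\psi_j\| \le \varepsilon$. Now $\phi_j$ lies in the spectral subspace $F^\perp$ on which $A$ has spectrum contained in $\spec(A)\setminus I$; since $\mu_j \in I$, the restriction $(A-\mu_j)|_{F^\perp}$ is invertible with inverse of norm at most $1/\dist(\mu_j,\spec(A)\setminus I) \le 1/(2\eta)$. Applying this inverse to the displayed identity yields $\|\phi_j\| \le \varepsilon/(2\eta)$.

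For the second estimate, the Gram matrix assumption gives $\|x\|^2 = \langle Gc, c\rangle_{\RR^n} \ge \lambda |c|^2$, and Cauchy--Schwarz provides $\sum_j |c_j| \le \sqrt{n}\,|c|$. Combining everything,
\[
\|x - P_F x\| \;\le\; \sum_{j=1}^n |c_j|\, \|\phi_j\| \;\le\; \frac{\varepsilon}{2\eta}\sqrt{n}\,|c| \;\le\; \frac{\varepsilon}{2\eta}\sqrt{\frac{n}{\lambda}}\,\|x\|,
\]
which (even with an extra factor of $2$ to spare) gives the stated bound upon taking the supremum in $x \in E\setminus\{0\}$ and recalling the definition of $d(E,F)$.

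There is no genuine obstacle: the argument is essentially bookkeeping around the spectral theorem. The only point that deserves care is verifying that $(A-\mu_j)^{-1}$ is a bounded operator on $F^\perp$ with the claimed norm bound, which requires that $I$ be compact so that the infimum $\dist(I,\spec(A)\setminus I)$ is attained and positive, together with the observation that $F^\perp$ reduces $A$ because $F$ is a spectral subspace. Once these are in place, no further structure of $A$ is needed.
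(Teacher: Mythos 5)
Your proof is correct, and it is essentially the classical Helffer--Sj\"ostrand argument: the paper does not prove this proposition but cites it from \cite[Proposition~2.5]{hs}, whose proof is exactly your combination of the spectral-gap bound $\|(1-P_F)\psi_j\|\le \varepsilon/(2\eta)$ with the Gram-matrix lower bound $\|x\|^2\ge\lambda|c|^2$. Your version even yields the slightly sharper constant $\varepsilon/(2\eta)$ in place of $\varepsilon/\eta$; the only inessential remark is that compactness of $I$ is not needed for the resolvent estimate, since $\eta>0$ is assumed and attainment of the distance plays no role.
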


\subsection{Laplacians with mixed boundary conditions and a trace estimate}

In what follows we will deal with numerous Laplacians with various combinations of boundary conditions.
 In order to simplify the writing, we introduce the following definition:

\begin{defi}[Laplacians with mixed boundary conditions]
Let $U\subset \RR^d$ be an open set and $\Gamma_D$, $\Gamma_N$, $\Gamma_R$ be disjoint
subsets of $\partial U$ such that $\overline{\Gamma_D\cup \Gamma_N\cup \Gamma_R}=\partial U$.
In addition, let $\alpha\in\RR$, then by the Laplacian in $\Omega$ with Dirichlet  condition
at $\Gamma_D$, Neumann condition at $\Gamma_N$ and $\alpha$-Robin condition
at $\Gamma_R$ we mean the self-adjoint operator $A$ in $L^2(U)$ with
\begin{align*}
A[u,u]&=\int_U |\nabla u|^2\dd x -\alpha \int_{\Gamma_R} |u|^2\, \dd s, \\
\qdom (A)&=\big\{u\in H^1(U): \, u=0 \text{ at } \Gamma_D\big\},
\end{align*}
where $\dd s$ is the $(d-1)$-dimensional Hausdorff measure on $\partial U$, provided that
the above expression defines a closed semibounded from below sesquilinear form (which is the case
for  bounded Lipschitz domains $U$).
Informally, the operator $A$ acts then as $u\mapsto -\Delta u$
on suitably regular functions $u$ in $U$ satisfying
$u=0$ at $\Gamma_D$, $\partial_\nu u =0$ at $\Gamma_N$, $\partial_\nu u =\alpha u$ at $\Gamma_R$,
where $\partial_\nu$ stands for the outer normal derivative.
\end{defi}

We will need a variant of the Sobolev trace inequality on scaled domains.

\begin{lemm}\label{sob-scale}\label{rob-scale}
Let $U\subset\RR^d$ be a bounded Lipschitz domain, then there exists $c>0$ such that
\[
\int_{\partial(t U)} f^2\dd s\le c\Big(t\varepsilon\int_{tU} |\nabla f|^2\dd x+ \dfrac{1}{t\varepsilon} \int_{t U} f^2\dd x\Big)
\]
for all $t>0$, $f\in H^1(tU)$, $\varepsilon\in(0,1]$.
In particular, if for $\alpha>0$ one denotes by $R^{tU}_\alpha$ the Laplacian in $tU$ with $\alpha$-Robin condition at the whole boundary,
then there exists $C>0$ such that $R^{tU}_\alpha\ge -C\alpha^2$ for $\alpha t$ sufficiently large.
\end{lemm}

\begin{proof}
The standard trace inequality, see e.g. Grisvard~\cite[Theorem 1.5.1.10]{gris}, implies that there exists $c>0$
such that
\begin{equation}
   \label{grsv1}
\int_{\partial U} u^2\dd s\le c\Big(\varepsilon \int_U |\nabla u|^2\dd x + \dfrac{1}{\varepsilon}\int_U u^2\dd x\Big)
\text{ for all } u\in H^1(U) \text{ and } \varepsilon\in(0,1].
\end{equation}
For $f\in L^2(tU)$ denote by $f_t\in L^2(U)$ the function given by $f_t(x)=f(tx)$, then $f\in H^1(t U)$ if and only if $f_t\in H^1(U)$.
Using \eqref{grsv1} we see that
\begin{equation}
    \label{temp0}
\int_{\partial U} f_t^2\dd s\le c\Big(\varepsilon \int_{U} |\nabla f_t|^2\dd x+ \dfrac{1}{\varepsilon}\int_{U} f_t^2\dd x\Big)
\text{ for all } f\in H^1(tU),\ \varepsilon\in(0,1].
\end{equation}
and using the change of variables $x=y/t$ one easily obtains
\begin{align*}
\int_{\partial U} f_t^2\dd s&=\int_{\partial U} f(tx)^2\dd s=t^{1-d}\int_{\partial(t U)} f(y)^2\dd s,\\
\int_{U} |\nabla f_t|^2\dd x&=\int_{ U} t^2 \big|(\nabla f)(tx)\big|^2\dd x=t^{2-d}\int_{tU} |\nabla f(y)|^2\dd y,\\
\int_{U} f_t^2\dd x&=\int_U f(tx)^2\dd x=t^{-d} \int_{tU} f(y)^2\dd y.
\end{align*}
The substitution of these three equalities into \eqref{temp0} gives the desired trace inequality.
Furthermore, for all $f\in H^1(tU)$ and $\varepsilon\in (0,1]$ one has
\begin{multline*}
R^{tU}_\alpha[f,f]=\int_{tU}|\nabla f|^2\dd x -\alpha\int_{\partial(t U)} f^2\dd s\\
\ge (1-c\alpha t\varepsilon) \int_{tU}|\nabla f|^2\dd x-\dfrac{c\alpha}{t\varepsilon} \int_{tU} f^2\dd x.
\end{multline*}
Hence, taking $\varepsilon:=1/(c\alpha t)$ we arrive at $R^{tU}_\alpha\ge -c^2 \alpha^2$.
\end{proof}

%
%
%
%
\subsection{One-dimensional model operators}

Let us recall some eigenvalue estimates for Laplacians on finite intervals with a combination of boundary conditions.

\begin{prop}\label{prop21}
For $\delta>0$ and $\alpha>0$, let $L_D$ be the Laplacian on $(0,\delta)$
with $\alpha$-Robin condition at $0$ and the Dirichlet boundary condition at $\delta$, then
for $\alpha\delta\to +\infty$ there holds
$E_1(L_D)=-\alpha^2\big(1+ \cO(e^{-\delta\alpha})\big)$ and $E_2(L_D)\ge 0$.
\end{prop}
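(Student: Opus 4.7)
The plan is to diagonalise $L_{D}$ explicitly. Since $(0,\delta)$ is bounded, $L_{D}$ has compact resolvent and purely discrete spectrum; each eigenvalue $\lambda$ corresponds to a nontrivial solution of $-u''=\lambda u$ on $(0,\delta)$ satisfying $u'(0)=-\alpha u(0)$ (the outer normal at the left endpoint is $-1$) and $u(\delta)=0$. First I would treat the negative eigenvalues by writing $\lambda=-k^{2}$ with $k>0$; the solutions of the ODE that vanish at $\delta$ are scalar multiples of $u(x)=\sinh\bigl(k(\delta-x)\bigr)$, and the Robin condition at $0$ reduces to the single transcendental equation
\[
k=\alpha\tanh(k\delta).
\]

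Next I would show there is at most one negative eigenvalue. Setting $f(k):=\alpha\tanh(k\delta)-k$, one has $f(0)=0$, $f'(k)=\alpha\delta\operatorname{sech}^{2}(k\delta)-1$, so $f'$ is strictly decreasing from $\alpha\delta-1$ at $k=0$ to $-1$ at $+\infty$. Under the hypothesis $\alpha\delta\to+\infty$ (certainly $\alpha\delta>1$) the function $f$ is strictly concave with a unique interior maximum and $f(k)\to-\infty$, hence it admits exactly one positive root. Consequently $L_{D}$ has exactly one negative eigenvalue, which gives $E_{2}(L_{D})\ge 0$.

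To control $E_{1}(L_{D})$, I would locate the unique root by a one-step bootstrap. Writing the solution as $k=\alpha-\varepsilon$ and using $\tanh(y)=1-\frac{2e^{-2y}}{1+e^{-2y}}$, the equation becomes
\[
\frac{\varepsilon}{\alpha}=\frac{2\,e^{-2(\alpha-\varepsilon)\delta}}{1+e^{-2(\alpha-\varepsilon)\delta}}.
\]
A crude estimate shows that any positive root satisfies $0<\varepsilon<\alpha$, and then the right-hand side is $O(e^{-2\alpha\delta+2\varepsilon\delta})$; inserting this back yields $\varepsilon=2\alpha e^{-2\alpha\delta}\bigl(1+o(1)\bigr)$ as $\alpha\delta\to+\infty$. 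Squaring,
\[
E_{1}(L_{D})=-k^{2}=-(\alpha-\varepsilon)^{2}=-\alpha^{2}\bigl(1+O(e^{-2\alpha\delta})\bigr),
\]
which is in fact slightly stronger than the stated $O(e^{-\alpha\delta})$ remainder.

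I do not anticipate any serious obstacle: everything reduces to elementary calculus applied to the transcendental equation $k=\alpha\tanh(k\delta)$. The only step that deserves attention is the uniqueness of the positive root, on which the estimate $E_{2}(L_{D})\ge 0$ rests; this is handled by the monotonicity argument for $f'$ described above.
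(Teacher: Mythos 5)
Your approach --- reducing $E_1(L_D)$ to the unique positive root of the transcendental equation $k=\alpha\tanh(k\delta)$ and counting negative eigenvalues via the concavity of $f(k)=\alpha\tanh(k\delta)-k$ --- is precisely the ``direct computation'' that the paper invokes without writing it out (it cites \cite[Lemma 4]{nano13}), and your argument for $E_2(L_D)\ge 0$ is sound, since for $\alpha\delta>1$ the equation has exactly one positive root and $\lambda=0$ is not an eigenvalue. The one step you should tighten is the bootstrap for $E_1$: the ``crude estimate'' $0<\varepsilon<\alpha$ alone is not enough, because then $e^{-2(\alpha-\varepsilon)\delta}$ could be of order $1$ and inserting it back only returns $\varepsilon=\cO(\alpha)$; you first need an a priori bound such as $k\delta\ge 1$ (if $k\delta\le 1$, concavity of $\tanh$ gives $k=\alpha\tanh(k\delta)\ge \alpha\delta\,k\tanh(1)$, impossible for $\alpha\delta$ large), which yields $k\ge\alpha\tanh(1)$, hence $\varepsilon\le 2\alpha e^{-2\alpha\delta\tanh 1}$ and $\varepsilon\delta\to 0$, after which your self-improvement does give $\varepsilon=2\alpha e^{-2\alpha\delta}\bigl(1+o(1)\bigr)$ and the stated (indeed slightly stronger) remainder.
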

The result is obtained by direct computations, details can be found e.g. in \cite[Lemma 4]{nano13}.

\begin{prop}\label{prop22}
For $\delta>0$, $\alpha>0$ and $\beta\ge 0$, let $L_N$ denote the Laplacian on $(0,\delta)$
with $\alpha$-Robin condition at $0$ and $\beta$-Robin condition
at $\delta$, then for $\alpha\delta\to +\infty$ and $\beta\delta\to 0^+$ one has
$E_1(L_N)=-\alpha^2\big(1+ \cO(e^{-\alpha \delta})\big)$ and $E_2(L_N)\ge 1/\delta^2$.
\end{prop}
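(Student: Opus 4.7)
My plan is to treat the two assertions separately: the ground-state asymptotics is obtained by an explicit separation of variables, and the bound on $E_2$ follows from the min-max principle combined with a Friedrichs-type inequality applied to a cleverly chosen function in a $2$-dimensional subspace.

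For the first eigenvalue, I would look for negative eigenvalues $E=-k^2$ with $k>0$ in the form $u(x)=A\cosh(kx)+B\sinh(kx)$. The Robin condition at $x=0$ forces $B=-\alpha A/k$, and after substitution the Robin condition at $x=\delta$ reduces to the transcendental equation
\[
(k^2+\alpha\beta)\tanh(k\delta)=(\alpha+\beta)k,
\quad\text{equivalently}\quad
(k-\alpha)(k-\beta)=\frac{2(k^2+\alpha\beta)\,e^{-2k\delta}}{1+e^{-2k\delta}}.
\]
Setting $k=\alpha+\epsilon$ in the second form, the right-hand side is $\cO(\alpha^2 e^{-2\alpha\delta})$ while the left-hand side equals $\epsilon(\alpha-\beta+\epsilon)\approx\alpha\epsilon$, so the equation has a root $k_1=\alpha+\cO(\alpha e^{-2\alpha\delta})$; a monotonicity analysis of $k\mapsto(k^2+\alpha\beta)\tanh(k\delta)-(\alpha+\beta)k$ on $(0,+\infty)$ rules out other positive roots for $\alpha\delta$ large. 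Squaring gives $E_1=-k_1^{\,2}=-\alpha^2\big(1+\cO(e^{-2\alpha\delta})\big)$, which is even stronger than the claimed asymptotics.

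For the bound $E_2(L_N)\ge 1/\delta^2$, I would use the min-max principle of Proposition~\ref{minmax}: it suffices to exhibit, in every $2$-dimensional subspace $V\subset H^1(0,\delta)$, some $u\neq 0$ with $L_N[u,u]\ge \|u\|^2/\delta^2$. The linear functional $V\ni u\mapsto u(0)\in\RR$ has range of dimension at most one, hence one can pick $u\in V\setminus\{0\}$ with $u(0)=0$. For such a $u$, Cauchy--Schwarz yields
\[
u(\delta)^2=\Big(\int_0^\delta u'(s)\dd s\Big)^2\le \delta\int_0^\delta (u')^2\dd x,
\]
and the Poincar\'e--Friedrichs inequality on $(0,\delta)$ for functions vanishing at $0$ (with sharp constant $\pi^2/(4\delta^2)$, being the ground state of $-d^2/dx^2$ with Dirichlet at $0$ and Neumann at $\delta$) gives $\int_0^\delta (u')^2\ge \frac{\pi^2}{4\delta^2}\,\|u\|^2$. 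Combining the two,
\[
L_N[u,u]=\int_0^\delta (u')^2\dd x-\beta u(\delta)^2\ge (1-\beta\delta)\,\frac{\pi^2}{4\delta^2}\,\|u\|^2,
\]
and this exceeds $\|u\|^2/\delta^2$ as soon as $\beta\delta<1-4/\pi^2$, which is guaranteed since $\beta\delta\to 0^+$.

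The only slightly delicate step is the remainder estimate for $E_1$ in the transcendental equation, which is nonetheless an elementary perturbative calculation. The $E_2$ bound is quite robust; it additionally confirms that $k_1$ is the unique negative eigenvalue (any other would force $E_2<0<1/\delta^2$), so the ground state identified above is unambiguously $E_1$.
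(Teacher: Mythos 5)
Your proof is correct and follows essentially the paper's route: the $E_1$ asymptotics is exactly the direct transcendental-equation computation (your equation $(k^2+\alpha\beta)\tanh(k\delta)=(\alpha+\beta)k$ is the right one) that the paper outsources to a citation of \cite{nano13}, and your $E_2$ bound — choosing in each two-dimensional trial space the element with $u(0)=0$, absorbing $\beta u(\delta)^2$ via $u(\delta)^2\le\delta\int_0^\delta(u')^2\dd x$ and comparing with the Dirichlet--Neumann ground state $\pi^2/(4\delta^2)$ — is the same reduction the paper carries out via Corollary~\ref{corol-codim} and the trace inequality of Lemma~\ref{rob-scale}, just with explicit constants. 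Your closing observation is also the right way to tie it up: since $E_2\ge 1/\delta^2$ is established independently of the $E_1$ analysis, the somewhat delicate task of ruling out further positive roots of the transcendental equation (your ``monotonicity analysis'', which is not entirely obvious near $k=0$) is indeed unnecessary, so there is no gap.
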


\begin{proof}
The estimate for the first eigenvalue was already obtained by direct computations e.g. in \cite[Lemma 3]{nano13}.
To study the second eigenvalue, let $B_\beta$ be the Laplacian on $(0,\delta)$ with the Dirichlet boundary condition
at $0$ and $\beta$-Robin condition at $\delta$, then the sesquilinear form of $B_\beta$ is a restriction of the sesquilinear form of $L_N$, and
$\qdom(L_N)=H^1(0,\delta)$ only differs from $\qdom(B_\beta)=\big\{f\in H^1(0,\delta):\, f(0)=0\big\}$
by a one-dimensional subspace. It follows by Corollary~\ref{corol-codim} that
$E_2(L_N)\ge E_1(B_\beta)$. Now, let us obtain a lower bound for $B_\beta$. For $\beta=0$ one obtains simply the
Laplacian on $(0,\delta)$ with the Dirichlet boundary condition at $0$ and the Neumann boundary condition at $\delta$, and $E_1(B_0)=\pi^2/(4\delta^2)$.
By Lemma~\ref{sob-scale} there is $c>0$ such that
\[
f(\delta)^2\le c\Big(\delta\int_0^\delta (f')^2\dd t +\dfrac{1}{\delta}\int_0^\delta f^2\dd t\Big) \text{ for all } f\in H^1(0,\delta).
\]
It follows that for $f\in \qdom(B_\beta)\equiv\qdom(B_0)$ one has
\[
B_\beta[f,f]=\int_0^\delta (f')^2\dd t -\beta f(\delta)^2\ge (1-c\beta\delta)\int_0^\delta (f')^2\dd t-\dfrac{c\beta}{\delta} \int_0^\delta f^2\dd t,
\]
and the min-max principle implies that for $\beta\delta\to 0^+$ one has
\[
E_1(B_\delta)\ge (1-c\beta\delta)E_1(B_0)-\dfrac{c\beta\delta}{\delta^2}=\dfrac{(1-c\beta\delta)\pi^2-4c\beta\delta}{4\delta^2}\ge \dfrac{1}{\delta^2}. \qedhere
\]
\end{proof}

\subsection{Robin Laplacians in infinite sectors}\label{sec-sectors}

Now, let us recall some basic facts on Robin laplacians in infinite sectors.

\begin{defi}\label{stheta}
For $\theta\in(0,\pi)$ denote by $\cS_\theta$ the following infinite sector of opening angle~$2\theta$:
\[
\cS_\theta=\big\{ (x_1,x_2)\in\mathbb R^2: -\theta<\arg(x_1+\rmi x_2)<\theta\big\}, \quad 0<\theta<\pi,
\]
see Figure~\ref{fig-sector} in the introduction. For $\theta=\pi/2$ one obtains simply the half-plane $\RR_+\times\RR$.
\end{defi}

%
%
%

The following proposition summarizes the basic properties of the associated Robin Laplacians proved in the paper \cite{KP} by Khalile and Pankrashkin:

\begin{prop}\label{prop-sector}
For $\theta\in(0,\pi)$ and $\alpha>0$, let $T_{\theta,\alpha}$ be the Laplacian  on $\cS_\theta$ with $\alpha$-Robin condition
at the whole boundary, then:
\begin{itemize}
\item the operator $T_{\theta,\alpha}$ is well-defined, lower semibounded and is unitarily equivalent to $\alpha^2 T_{\theta,1}$ for all $\theta\in (0,\pi)$ and $\alpha>0$,
\item $\specess(T_{\theta,\alpha})=[-\alpha^2, +\infty)$  for all $\theta\in (0,\pi)$ and $\alpha>0$,
\item the discrete spectrum of $T_{\theta,\alpha}$ is non-empty if and only if $\theta <\frac{\pi}{2}$, in particular,
\[
E_1(T_{\theta,\alpha})=-\alpha^2/\sin^2\theta \text{ for } \theta\in \big(0, \tfrac{\pi}{2}\big).
\]
\item if one denotes
\[
\text{$\kappa(\theta):=\text{the number of discrete eigenvalues of }T_{\theta,\alpha}$,}
\] 
which is independent of $\alpha$, then
\begin{itemize}
\item[$\circ$] $\kappa(\theta) <+\infty$ and $\theta\mapsto \kappa(\theta)$ is non-increasing with $\kappa(0^+)=+\infty$,
\item[$\circ$] for all $\frac{\pi}{6}\le\theta<\frac{\pi}{2}$ one has $\kappa(\theta)=1$,
\end{itemize}
\item there exist $b>0$ and $B>0$
such that if $n\in\big\{1,\dots,\kappa(\theta)\big\}$
and $\psi_{n,\alpha}$ is an eigenfunction of $T_{\theta,\alpha}$
for the $n$th eigenvalue, then for any $\alpha>0$ one has the Agmon-type decay estimate
\begin{equation}
    \label{agmon1}
\int_{\cS_\theta}  e^{b\alpha|x|}\Big( \,\dfrac{1}{\alpha^2}\,\big| \nabla\psi_{n,\alpha}(x)\big|^2+\psi^2_{n,\alpha}(x)\Big)\dd x\le B\|\psi_{n,\alpha}\|^2_{L^2(\cS_\theta)}.
\end{equation}
\end{itemize}
\end{prop}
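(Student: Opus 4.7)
The plan is to reduce every $\alpha$-dependent claim to the case $\alpha=1$ by dilation and then to analyze the fixed object $T_{\theta,1}$ by a mix of classical tools (Persson's formula, Birman--Schwinger, Agmon weights) and explicit computations exploiting the symmetry of $\cS_\theta$ about its bisector. Well-definedness and lower semiboundedness of the quadratic form on $H^1(\cS_\theta)$ follow from a trace inequality in the unbounded Lipschitz sector, which absorbs the boundary term into a small multiple of the gradient norm plus a large multiple of the $L^2$ norm, in the spirit of Lemma~\ref{sob-scale}. The unitary equivalence with $\alpha^2 T_{\theta,1}$ uses the dilation $(U_\alpha f)(x):=\alpha f(\alpha x)$, which is unitary on $L^2(\cS_\theta)$ because the sector is dilation-invariant; a direct form computation yields $T_{\theta,\alpha}[U_\alpha f, U_\alpha f]=\alpha^2\, T_{\theta,1}[f,f]$, which in particular makes $\kappa(\theta)$ independent of $\alpha$.

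For the essential spectrum I would localize away from the apex: outside a large ball the sector looks essentially like a half-plane, and a partition of unity combined with Persson's characterization yields $\inf\specess\ge -\alpha^2$. The opposite inclusion $[-\alpha^2, +\infty)\subset\specess$ is obtained by translating generalized eigenfunctions of the Robin half-plane, namely $e^{-\alpha y}e^{\rmi k t}$ of energy $k^2-\alpha^2$, along one of the rays and cutting them off at infinity to produce singular Weyl sequences. For the existence of a discrete eigenvalue in the convex case $\theta<\pi/2$, I would use the trial function $\psi(x):=e^{-\alpha x_1/\sin\theta}$. It lies in $L^2(\cS_\theta)$ (the integrand decays exponentially and the sector cross-section at height $x_1$ has width $2x_1\tan\theta$) and satisfies $-\Delta\psi=-\alpha^2\sin^{-2}\theta\,\psi$; the outer normal to the upper ray is $(-\sin\theta,\cos\theta)$, so $\partial_\nu\psi=\alpha\psi$ (and similarly on the lower ray by symmetry). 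This delivers the upper bound $E_1(T_{\theta,\alpha})\le -\alpha^2\sin^{-2}\theta$ together with the non-emptiness of $\specdisc(T_{\theta,\alpha})$, and the matching lower bound is obtained by separating variables along the bisector, reducing the problem to a one-dimensional Robin operator whose ground state is readily identified with $\psi$. For $\theta\ge\pi/2$ the inequality $T_{\theta,\alpha}[u,u]\ge -\alpha^2\|u\|^2$ is obtained by a Fourier computation in the half-plane case and, for $\theta>\pi/2$, by decomposing the concave sector into two half-planes meeting along the bisector and summing the resulting inequalities.

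The properties of $\kappa(\theta)$ need more care. Finiteness results from $\inf\specess=-\alpha^2$ together with a Birman--Schwinger-type bound, or alternatively a Dirichlet--Neumann bracketing on a truncation at large radius. The divergence $\kappa(0^+)=+\infty$ follows from the explicit formula $E_1=-\alpha^2\sin^{-2}\theta$ together with reflection arguments producing arbitrarily many linearly independent trial functions with Rayleigh quotient below $-\alpha^2$ as $\theta\to 0$. Monotonicity $\theta\mapsto\kappa(\theta)$ non-increasing is a min-max argument: rotating eigenfunctions of the smaller-angle sector inside the larger one and extending them by zero produces enough trial functions whose Rayleigh quotients stay strictly below $-\alpha^2$. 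The Agmon decay \eqref{agmon1} is a standard weighted estimate: multiply the eigenvalue equation by $e^{2b\alpha|x|}\psi_{n,\alpha}$, integrate by parts, and use the spectral gap $E_n<-\alpha^2$ to absorb the contribution of the squared gradient of the weight for $b$ sufficiently small.

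The most delicate piece is the sharp statement $\kappa(\theta)=1$ on $[\pi/6,\pi/2)$: asserting the absence of a second eigenvalue below $-\alpha^2$ throughout an entire range of angles requires a quantitative second-Rayleigh-quotient estimate that cannot be derived from purely soft arguments. The strategy I would try is to combine the exponential confinement of any candidate second eigenfunction near the bisector (via Agmon) with a one-dimensional reduction in the bisector coordinate, so as to compare the second Rayleigh quotient with that of an effective one-dimensional Robin operator whose first excited state is known to sit above $-\alpha^2$. All other items in the proposition fit into the standard framework for semibounded self-adjoint operators on unbounded Lipschitz domains with well-understood geometry at infinity.
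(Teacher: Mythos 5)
First, a point of reference: the paper does not prove Proposition~\ref{prop-sector} at all; it is imported wholesale from Khalile--Pankrashkin \cite{KP}. So a complete blind proof amounts to reproving that paper, and your sketch should be judged against that. The soft items are handled plausibly: the trace inequality for semiboundedness, the dilation $U_\alpha$ giving $T_{\theta,\alpha}\simeq\alpha^2T_{\theta,1}$ (hence $\alpha$-independence of $\kappa$), Persson plus Weyl sequences for $\specess=[-\alpha^2,+\infty)$, the exact eigenfunction $e^{-\alpha x_1/\sin\theta}$ for the upper bound, the fibering argument for $\theta\ge\pi/2$, and the standard weighted identity for \eqref{agmon1} are all fine in outline.

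There are, however, three genuine gaps. (i) The sharp lower bound $E_1\ge-\alpha^2/\sin^2\theta$: your ``separation of variables along the bisector'' does not work as described. If you fibre over $x_1$ and drop the longitudinal derivative, the transverse problem at height $x_1$ is a Robin problem on an interval of length $2x_1\tan\theta$ with effective parameter $\alpha/\cos\theta$ at both ends; its ground energy tends to $-\infty$ as $x_1\to0^+$, so this reduction gives no lower bound at all (and away from the apex it would give the wrong constant $-\alpha^2/\cos^2\theta$). The Robin term does not separate in bisector coordinates. The correct elementary route is a ground-state substitution $u=\psi v$ with the explicit positive eigenfunction $\psi=e^{-\alpha x_1/\sin\theta}$, which satisfies the Robin condition exactly on both rays, yielding $T_{\theta,\alpha}[u,u]+\alpha^2\sin^{-2}\theta\|u\|^2=\int_{\cS_\theta}\psi^2|\nabla v|^2\dd x\ge0$; nothing of this kind appears in your sketch. (ii) Monotonicity of $\kappa$: extending (rotated) eigenfunctions of the smaller sector by zero into the larger sector destroys the Robin trace terms on the boundary pieces that become interior, so the resulting Rayleigh quotients are not below $-\alpha^2$ and the min-max count does not transfer; a working argument compares the forms through an explicit linear change of variables (in the spirit of Step~2 of Proposition~\ref{prop-good}), not through zero-extension. (iii) The statement $\kappa(\theta)=1$ for $\frac{\pi}{6}\le\theta<\frac{\pi}{2}$ is, as you yourself concede, only a strategy, and it is precisely the hardest quantitative result of \cite{KP}; admitting you cannot derive it means the proposition is not proved. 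The claim $\kappa(0^+)=+\infty$ is likewise only gestured at. So the proposal is a reasonable road map for the routine parts, but the sharp spectral statements that the rest of the paper actually relies on are either argued incorrectly or left open.
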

Remark that the above properties of $T_{\theta,\alpha}$ are also of relevance for Steklov-type eigenvalue problems in domains with corners, see e.g. the papers
by Ivrii \cite{ivrii} and Levitin, Parnovski, Polterovich, Sher \cite{LPPS}. For a subsequent use we give a special name to the eigenvalues of the above operator with $\alpha=1$:
\[
\cE_n(\theta):=E_n(T_{\theta,1})  \text{ for $\theta\in\big(0,\tfrac{\pi}{2}\big)$ and $n\in\{1,\dots,\kappa(\theta)\big\}$}.
\]
Remark that due to Proposition~\ref{prop-sector} one has $\cE_1(\theta)=-1/\sin^2 \theta$ and
\begin{gather*}
\cE_n(\theta)<-1,
\quad
E_n(T_{\theta,\alpha})=\cE_n(\theta)\,\alpha^2<-\alpha^2,\\
\text{for }
\theta\in\big(0,\tfrac{\pi}{2}\big), \  n\in\{1,\dots,\kappa(\theta)\big\},
\ \alpha>0.
\end{gather*}

\section{Analysis in truncated convex sectors}\label{sec-trunc}

\subsection{Robin Laplacians in truncated convex sectors}

Recall that the infinite sectors $\cS_\theta$ are defined above in Definition~\ref{stheta}. Let us introduce their truncated versions.

\begin{defi}[Truncated convex sector $\cS_\theta^r$]\label{def25}
Let $\theta\in\big(0,\frac{\pi}{2}\big)$ and $r>0$. Consider the points
\[
A^\pm_r= r (\cos \theta, \pm \sin\theta)\in \partial \cS_\theta, \quad
B_r=r \big(1/\cos \theta,0\big)\in \cS_\theta,
\]
and denote by $\cS_\theta^r$ the interior of the quadrangle $O A^+_r B_r A^-_r$ 
(remark that the sides $B_r A^\pm_r$ are orthogonal to $\partial \cS_\theta$ at $A^\pm_r$, see Fig.~\ref{figure1}).
We will distinguish between two parts of the boundary of $\cS_\theta^r$, namely, we set
\begin{gather*}
\partial_* \cS_\theta^r:=\partial \cS_\theta^r\mathop{\cap} \partial \cS_{\theta}:= \text{polygonal chain } A^+_rOA^-_r,\\
\partial_\ext \cS_\theta^r:=\partial \cS_\theta^r\setminus \partial_* \cS_\theta^r:=\text{polygonal chain } A^+_rB_rA^-_r.
\end{gather*}
\end{defi}

\begin{figure}[b]

\centering

\includegraphics[width=50mm]{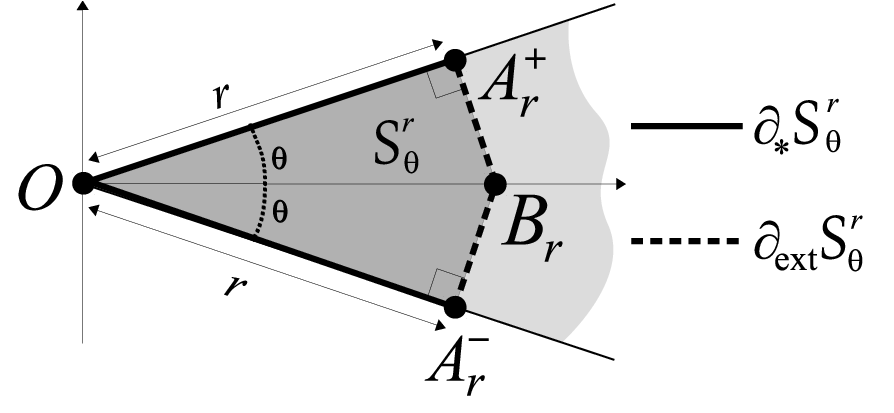}

\caption{The truncated sector $\cS_\theta^r$ is shaded (see Definition~\ref{def25}). The part of the boundary $\partial_*\cS_\theta^r$ is indicated by the thick solid line,
and the part of the boundary $\partial_\ext\cS_\theta^r$ is shown as the thick dashed line.\label{figure1}}

\end{figure}

%
%
%
%

In what follows we will need some properties of three operators associated with $\cS_\theta^r$, namely, for $\theta\in\big(0,\frac{\pi}{2}\big)$, $\alpha>0$ and $r>0$
we introduce:
\begin{equation}
    \label{eq-tqr}
\begin{aligned}
D_{\theta,\alpha}^r:=\,&  \begin{minipage}[t]{90mm}\raggedright the Laplacian in $\cS_\theta^r$ with $\alpha$-Robin condition
at $\partial_* \cS_\theta^r$ and \emph{Dirichlet}  condition at $\partial_\ext \cS_\theta^r$,\end{minipage}\\
N_{\theta,\alpha}^r:=\,&  \begin{minipage}[t]{90mm}\raggedright the Laplacian in $\cS_\theta^r$ with $\alpha$-Robin  condition
at $\partial_* \cS_\theta^r$ and \emph{Neumann}  condition at $\partial_\ext \cS_\theta^r$,\end{minipage}\\
R_{\theta,\alpha}^r:=\,&  \begin{minipage}[t]{90mm}\raggedright the Laplacian in $\cS_\theta^r$ with $\alpha$-Robin condition
\emph{at the whole boundary}.\end{minipage}
\end{aligned}
\end{equation}
We remark that $N_{\theta,\alpha}^r$ will play a key role in the subsequent considerations (in particular, see Subsection~\ref{ssec-nonres}),
while the other two operators will be used mostly for auxiliary constructions. The following properties of the three operators are easily established by a standard
routine computation:
\begin{lemma}\label{lem-scale}
For $t,r>0$ denote by $\Xi_t$ the unitary operators (dilations)
$\Xi_t:L^2(\cS_{\theta}^{tr})\to L^2(\cS_{\theta}^r)$, $(\Xi_t u)(x)=t \,u(tx)$.
Let $X_{\theta,\alpha}^r$ be any of the three operators $D_{\theta,\alpha}^r$, $N_{\theta,\alpha}^r$, $R_{\theta,\alpha}^r$, then
$X_{\theta,t\alpha}^{r} \Xi_t=t^2 \Xi_t X_{\theta,\alpha}^{tr}$, which then gives the eigenvalue identities
\begin{equation}
     \label{eq-scale1}
E_n(X_{\theta,\alpha}^{r})=\alpha^2 E_n(X_{\theta,1}^{\alpha r}) \text{ for all } n\in\NN.
\end{equation}
\end{lemma}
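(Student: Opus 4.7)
The statement is a routine dilation argument and my plan is to establish it at the level of quadratic forms and then transfer to the operators and their eigenvalues. First, I would check that $\Xi_t$ is indeed unitary: the change of variables $y=tx$ gives $\|\Xi_t u\|_{L^2(\cS_\theta^r)}^2=\int_{\cS_\theta^r} t^2 u(tx)^2\dd x=\int_{\cS_\theta^{tr}} u(y)^2\dd y$. The same dilation sends the vertices $A^\pm_{tr},B_{tr}$ of $\cS_\theta^{tr}$ to $A^\pm_r,B_r$, so $\cS_\theta^r=(1/t)\cS_\theta^{tr}$ and the dilation $\Xi_t$ maps $\partial_*\cS_\theta^{tr}$ bijectively to $\partial_*\cS_\theta^r$ (and likewise for $\partial_\ext$), which means the decomposition of the boundary relevant for each of the three operators is preserved.

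Second, I would transport the sesquilinear forms. For $v=\Xi_t u$ a direct differentiation gives $\nabla v(x)=t^2(\nabla u)(tx)$, and using $\dd s(y)=t\,\dd s(x)$ for the one-dimensional Hausdorff measures under $y=tx$, I obtain
\[
\int_{\cS_\theta^r}|\nabla v|^2\dd x=t^2\int_{\cS_\theta^{tr}}|\nabla u|^2\dd y,\qquad \int_{\partial_*\cS_\theta^r} v^2\dd s=t\int_{\partial_*\cS_\theta^{tr}} u^2\dd s,
\]
and the analogous identity on $\partial_\ext$. The Dirichlet condition (vanishing trace on the corresponding boundary piece) and the $H^1$-regularity transfer pointwise through $\Xi_t$, so $\Xi_t$ is a bijection between $\qdom(X_{\theta,\alpha}^{tr})$ and $\qdom(X_{\theta,t\alpha}^{r})$ for each of the three choices of $X$. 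Combining the gradient scaling with the coefficient $t\alpha$ in front of the boundary term for the target operator gives $(t\alpha)\int v^2\dd s=t^2\alpha\int u^2\dd s$, so the two contributions carry the same overall factor $t^2$, and I obtain
\[
X_{\theta,t\alpha}^{r}[\Xi_t u,\Xi_t u]=t^2\, X_{\theta,\alpha}^{tr}[u,u]\qquad\text{for every }u\in\qdom(X_{\theta,\alpha}^{tr}).
\]
Since $\Xi_t$ is unitary, the representation theorem for closed semibounded forms yields the operator identity $X_{\theta,t\alpha}^{r}\Xi_t=t^2\Xi_t X_{\theta,\alpha}^{tr}$ claimed in the lemma.

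Finally, specializing to $t=\alpha$ (and base Robin parameter $1$) shows that $X_{\theta,\alpha}^{r}$ and $\alpha^2 X_{\theta,1}^{\alpha r}$ are unitarily equivalent, so the eigenvalue identity \eqref{eq-scale1} is immediate from the min-max principle. No step is substantively hard here; the only point that needs careful bookkeeping is the consistent tracking of the scaling powers of $\dd x$, $\dd s$ and $\nabla$ so that the gradient term (which picks up a factor $t^2$) and the rescaled Robin term (which picks up $t^2\alpha$ from $t\alpha\cdot t$) agree on the same overall prefactor.
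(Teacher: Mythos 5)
Your proof is correct: the scaling of the gradient, volume and boundary terms is tracked accurately, the boundary pieces and form domains transfer under $\Xi_t$ for all three boundary-condition choices, and the specialization $t=\alpha$ with base parameter $1$ gives \eqref{eq-scale1}. This is exactly the ``standard routine computation'' the paper invokes without writing out, so your argument coincides with the intended proof.
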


Let us show that in a suitable asymptotic regime the lowest eigenvalues of the Robin-Dirichlet Laplacians
$D^{r}_{\theta,\alpha}$
are close to the Robin eigenvalues of the associated infinite sectors:

\begin{lemma}\label{dalph}
For some $c>0$ one has
\[
E_n(D_{\theta,\alpha}^r)=E_n(T_{\theta,\alpha})+\cO(\alpha^2 e^{-c\alpha r}) \equiv \alpha^2\big(\cE_n(\theta)+
\cO(e^{-c\alpha r})\big)
\]
for $n\in\big\{1,\dots,\kappa(\theta)\big\}$,
and $E_{\kappa(\theta)+1}(D_{\theta,\alpha}^r)\ge-\alpha^2$ as  $\alpha r\to+\infty$.
\end{lemma}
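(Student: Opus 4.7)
By the scaling relation \eqref{eq-scale1} in Lemma~\ref{lem-scale}, it suffices to take $\alpha=1$ and to show, as $r\to+\infty$, that $E_n(D_{\theta,1}^r)=\cE_n(\theta)+\cO(e^{-cr})$ for $1\le n\le \kappa(\theta)$ together with $E_{\kappa(\theta)+1}(D_{\theta,1}^r)\ge -1$. The lower bound comes for free from a zero-extension trick: any $u\in \qdom(D_{\theta,1}^r)$ vanishes on $\partial_\ext \cS_\theta^r$, so its extension by zero $\tilde u$ to $\cS_\theta$ lies in $H^1(\cS_\theta)=\qdom(T_{\theta,1})$ with $\|\tilde u\|=\|u\|$ and $T_{\theta,1}[\tilde u,\tilde u]=D_{\theta,1}^r[u,u]$ (the only surviving Robin contribution comes from $\partial \cS_\theta\cap \overline{\cS_\theta^r}=\partial_* \cS_\theta^r$, and $\tilde u$ vanishes on the rest of $\partial \cS_\theta$). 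Hence $\Lambda_n(D_{\theta,1}^r)\ge \Lambda_n(T_{\theta,1})$ for every $n$, and Proposition~\ref{minmax} applied to $T_{\theta,1}$---which by Proposition~\ref{prop-sector} has exactly $\kappa(\theta)$ eigenvalues strictly below $\Sigma=\inf\specess(T_{\theta,1})=-1$---forces $\Lambda_n(T_{\theta,1})=\cE_n(\theta)$ for $n\le \kappa(\theta)$ and $\Lambda_{\kappa(\theta)+1}(T_{\theta,1})=-1$, settling the lower bounds.

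For the matching upper bound when $n\le\kappa(\theta)$, fix orthonormal eigenfunctions $\psi_1,\dots,\psi_n$ of $T_{\theta,1}$ associated with $\cE_1(\theta)\le\cdots\le\cE_n(\theta)$, and a radial cutoff $\chi_r(x)=\chi(|x|/r)$ with $\chi\in C^\infty(\RR)$ equal to $1$ on $(-\infty,1/2]$ and to $0$ on $[1,+\infty)$. Since $\partial_\ext \cS_\theta^r$ lies at distance at least $r$ from the origin, each $\chi_r\psi_i$ belongs to $\qdom(D_{\theta,1}^r)$. For $v=\sum_{i=1}^n a_i\psi_i$, Green's formula combined with $-\Delta\psi_i=\cE_i(\theta)\psi_i$ on $\cS_\theta$, $\partial_\nu \psi_i=\psi_i$ on $\partial_* \cS_\theta^r$, and $\chi_r=0$ on $\partial_\ext\cS_\theta^r$ yields the key identity
\[
D_{\theta,1}^r[\chi_r v,\chi_r v]=\sum_{i,j=1}^n a_i a_j\,\cE_i(\theta)\,\langle \chi_r\psi_i,\chi_r\psi_j\rangle+\int_{\cS_\theta^r} v^2|\nabla\chi_r|^2\dd x.
\]
The Agmon decay estimate \eqref{agmon1} applied to each $\psi_i$ then gives $\langle \chi_r\psi_i,\chi_r\psi_j\rangle=\delta_{ij}+\cO(e^{-br/2})$ and $\int v^2|\nabla\chi_r|^2\dd x=\cO(e^{-br/2})\|v\|^2$, since $|\nabla\chi_r|$ is bounded and supported in $\{|x|\ge r/2\}$. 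Treating the Gram matrix perturbatively around the identity and using $\cE_i(\theta)\le \cE_n(\theta)$, one concludes $D_{\theta,1}^r[\chi_r v,\chi_r v]\le \bigl(\cE_n(\theta)+\cO(e^{-cr})\bigr)\|\chi_r v\|^2$ uniformly in $v\ne 0$, for some $c>0$. As $v\mapsto \chi_r v$ is injective for large $r$, its image is an $n$-dimensional test subspace and Proposition~\ref{minmax} delivers $E_n(D_{\theta,1}^r)\le \cE_n(\theta)+\cO(e^{-cr})$, matching the lower bound.

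The only delicate step is handling the cross-terms in the displayed identity when $n>1$: one must control $\sum_{i,j} a_i a_j\,\cE_i(\theta)\,M_{ij}$ against $\cE_n(\theta)\sum_{i,j} a_i a_j\,M_{ij}$, where $M_{ij}:=\langle \chi_r\psi_i,\chi_r\psi_j\rangle$. This succeeds because $M=I+\cO(e^{-br/2})$ is exponentially close to the identity while the diagonal defect $\cE_n(\theta)-\cE_i(\theta)\ge 0$ dominates the off-diagonal perturbation; no deeper ingredient is needed, and the exponential decay rate $c$ in the final estimate is inherited from the Agmon constant $b$ of Proposition~\ref{prop-sector}.
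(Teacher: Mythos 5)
Your proof is correct and follows essentially the same route as the paper: reduction to $\alpha=1$ by the scaling of Lemma~\ref{lem-scale}, the zero-extension/min-max argument for the lower bounds (including $E_{\kappa(\theta)+1}\ge-1$), and cut-off eigenfunctions of $T_{\theta,1}$ controlled by the Agmon estimate \eqref{agmon1} for the matching upper bound. The only (harmless) difference is bookkeeping: you evaluate the form of the truncated functions via Green's identity so the Robin boundary traces cancel exactly, whereas the paper estimates the gradient and boundary-trace contributions separately, using the operator inequality $T_{\theta,1}\ge -1/\sin^2\theta$ to bound the boundary term.
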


\begin{proof}
The result is quite standard and is based on the fact that the Robin eigenfunctions of the infinite sectors
satisfy an Agmon-type estimate at infinity, but we provide a proof for the sake of completeness.
In view of the above scaling \eqref{eq-scale1} it is sufficient to study the case $\alpha=1$ and $r\to+\infty$. 
Recall that
\begin{gather*}
D^r_{\theta,1}[u,u]=\int_{\cS^r_\theta} |\nabla u|^2\dd x - \int_{\partial_* \cS^r_\theta} u^2\dd s,\\
\qdom(D^r_{\theta,1})=\big\{H^1(\cS^r_\theta):\, u=0 \text{ on } \partial_\ext \cS^r_\theta\big\},\\
T_{\theta,1}[u,u]=\int_{\cS_\theta} |\nabla u|^2\, dx - \int_{\partial \cS_\theta} u^2\dd s, \quad\qdom(T_{\theta,1})=H^1(\cS_\theta).
\end{gather*}
The min-max principle gives $E_n(D^r_{\theta,1})\ge \Lambda_n(T_{\theta,1})$  for any $r>0$ and $n\in\NN$.
For $n\in \big\{1,\dots, \kappa(\theta)\big\}$ one has $\Lambda_n(T_{\theta,1})=E_n(T_{\theta,1})\equiv \cE_n(\theta)$,
while $\Lambda_{\kappa(\theta)+1}(T_{\theta,1})=\inf\specess T_{\theta,1}=-1$. This proves the required lower bounds.

To prove the upper bound, let us pick $n\in \big\{1,\dots, \kappa(\theta)\big\}$
and let $\psi_j$, $j=1,\dots, n$, be eigenfunctions of the operator $T_{\theta,1}$ in the infinite sector corresponding to the $n$ first eigenvalues
and chosen to form an orthonormal family, i.e.
\[
\langle \psi_j,\psi_k\rangle_{L^2(\cS_\theta)}=\delta_{j,k}, \quad T_{\theta,1}[\psi_j,\psi_k]=\cE_j(\theta)\, \delta_{j,k},
\quad
j,k=1,\dots,n.
\]
Let $\chi_0,\chi_1:\RR\to [0,1]$ be smooth functions such that $\chi_0=1$ in $\big(-\infty, \frac12]$, $\chi_0=0$ in $[1,\infty)$
and $\chi_0^2+\chi_1^2=1$. We define $\chi_j^r:\RR^2\to \RR$ by $\chi^r_j(x)=\chi_j\big(|x|/r\big)$, $j=0,1$,
and $\psi_j^r:\cS_\theta\to \RR$ by $\psi_j^r:=\chi^r_0\psi_j$, $j=1,\dots,n$,
and keep the same symbols for the restrictions of these functions to $\cS^r_\theta$.
Remark that the functions $\psi_j^r$ belong to $H^1(\cS_\theta^r)$ and vanish at $\partial_\ext \cS_\theta^r$,
i.e. they belong to $\qdom(D^r_{\theta,1})$ and can be used to estimate the Rayleigh quotients.
Let us now use the Agmon-type estimate \eqref{agmon1} with suitable $b>0$ and $B>0$ for the eigenfunctions $\psi_j$.
Denote
\[
C^r_{j,k}:=\int_{\cS_\theta} (\chi_1^r)^2 \psi_j \psi_k \dd x,
\]
then $|C^r_{j,k}|\le \frac{1}{2}\,(C^r_{j,j}+C^r_{k,k})$
and
\begin{align*}
C_{j,j}^r=\int_{\cS_\theta}(\chi_1^r)^2 \psi_j^2 \dd x&\le
\int_{\cS_\theta:\, |x|>r/2} \psi_j^2 \dd x\\
&\le
e^{-\frac{br}{2}}\int_{\cS_\theta:\, |x|>r/2} e^{b|x|}\psi_j^2 \dd x\le Be^{-\frac{br}{2}}.
\end{align*}
Therefore, for large $r$ one has $C^r_{j,k}=\cO(e^{-cr})$ with  $c:=\frac{1}{2}\, b$
and
\[
\langle \psi_j^r,\psi_k^r\rangle_{L^2(\cS_\theta^r)}=\langle \psi_j,\psi_k\rangle_{L^2(\cS_\theta)}-C^r_{j,k}=\delta_{j,k}+\cO(e^{-cr}).
\]
In particular, for large $r$ the functions $\psi_j^r$ are linearly independent. Using similar estimates we obtain
\begin{gather*}
\int_{\cS_\theta} \nabla(\chi_1^r \psi_j)^2 \dd x=\cO(e^{-cr}),\\
\int_{\cS_\theta^r} \nabla \psi^r_j \cdot \nabla \psi^r_k \dd x=
\int_{\cS_\theta} \nabla \psi_j \cdot \nabla \psi_k  \dd x+\cO(e^{-cr}).
\end{gather*}
To estimate the quantities
\[
G^r_{j,k}:=\int_{\partial \cS_\theta} (\chi_1^r)^2 \psi_j\, \psi_k\dd s
\]
we remark again that $|G^r_{j,k}|\le \frac{1}{2}\,(G^r_{j,j}+G^r_{k,k})$, and using $\chi_1^r \psi_j$ as a test function in the inequality $T_{\theta,1}\ge -(\sin \theta)^{-2}$
for the Robin Laplacian in the sector we obtain
\[
G^r_{j,j}= \int_{\partial \cS_\theta} (\chi_1^r)^2 \psi_j^2\dd s\le 
\int_{\cS_\theta} \nabla(\chi_1^r \psi_j)^2\dd x
+\dfrac{1}{\sin ^2\theta}\int_{\cS_\theta}\chi_1^r \psi_j^2 \dd x=\cO(e^{-cr}),
\]
which implies $G^r_{j,k}=\cO(e^{-cr})$ and
\[
\int_{\partial_* S_\theta^r} \psi_j^r\psi_k^r\dd s=\int_{\partial \cS_\theta} \psi_j\, \psi_k\dd s
-G^r_{j,k}=\int_{\partial \cS_\theta} \psi_j\, \psi_k\dd s+\cO(e^{-cr}).
\]
Denote $L_r:=\vspan(\psi_1^r,\dots,\psi_n^r)$, which is an $n$-dimensional subspace of $\qdom(D^r_{\theta,1})$ for large~$r$.
For any function $\psi$ of the form
\[
\psi=\xi_1\psi_1^r+\dots+\xi_n\psi^r_n\in L_r, \quad \xi=(\xi_1,\dots,\xi_n)\in\RR^n
\]
one has, 
due to the preceding estimates,  $\|\psi\|^2_{L^2(\cS_\theta^r)}=|\xi|^2 \big(1+\cO(e^{-cr})\big)$
and
\begin{align*}
D^r_{\theta,1}[\psi,\psi]&=\sum_{j,k=1}^n \Big(T_{\theta,1}[\psi_j,\psi_k]\,  + \cO(e^{-cr}) \Big)\xi_j \xi_k\\
&=\sum_{j,k=1}^n \Big( \cE_j(\theta)\, \delta_{j,k}+\cO(e^{-cr})\Big) \xi_j \xi_k\le
\big( \cE_n(\theta) +\cO(e^{-cr}) \big) |\xi|^2,
\end{align*}
and an application of the min-max principle gives
\[
E_n(D^r_{\theta,1})\le \sup_{\psi\in L_r,\, \psi\ne 0} \dfrac{D^r_{\theta,1}[\psi,\psi]}{\|\psi\|^2_{L^2(\cS_\theta^r)}}\le \cE_n(\theta) +\cO(e^{-cr}). \qedhere
\]
\end{proof}

In order to obtain an analogous result on the behavior of the first $\kappa(\theta)$ eigenvalues of $N^r_{\theta,\alpha}$ we need a preliminary estimate.
\begin{defi}\label{pth1}
For $\theta\in\big(0,\frac{\pi}{2}\big)$ and $0<\rho<r$ denote
\[
\cP^{r,\rho}_\theta:=\cS_\theta^r\setminus\overline{\cS_\theta^\rho}\equiv\text{the hexagon $B_\rho A^+_\rho A^+_r B_ r A^-_r A^-_\rho$,}
\]
where one uses the same notation as in the definition of $\cS_\theta^r$, see Figures~\ref{figure1} and~\ref{figp}(a).
We again split the boundary of $\cP^{r,\rho}_\theta$ into two parts by setting
\begin{gather*}
\partial_* \cP^{r,\rho}_\theta:=\partial \cP^{r,\rho}_\theta\mathop{\cap} \partial \cS_{\theta}:= \text{the union of the segments $[A^\pm_\rho,A^\pm_r]$},\\
\partial_\ext \cP^{r,\rho}_\theta:=\partial \cP^{r,\rho}_\theta\setminus \partial_* \cP_\theta^{r,\rho}.
\end{gather*}
\end{defi}

\begin{figure}[b]
\centering
\begin{tabular}{cc}
\begin{minipage}[c]{67mm}
\begin{center}
\includegraphics[width=60mm]{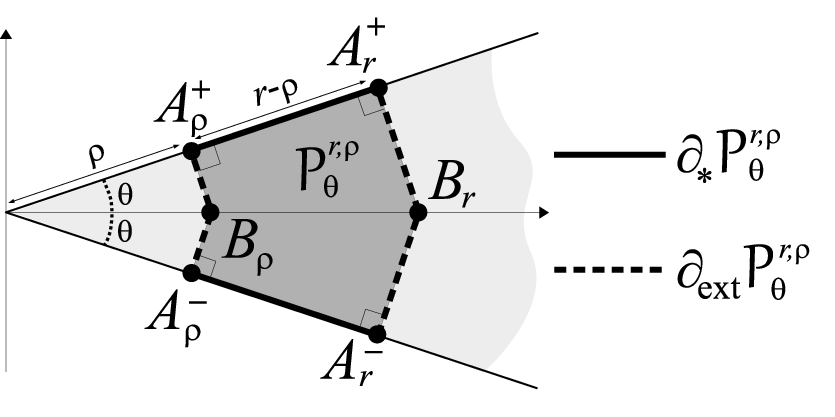}
\end{center}
\end{minipage}
&
\begin{minipage}[c]{37mm}
\begin{center}
\includegraphics[width=30mm]{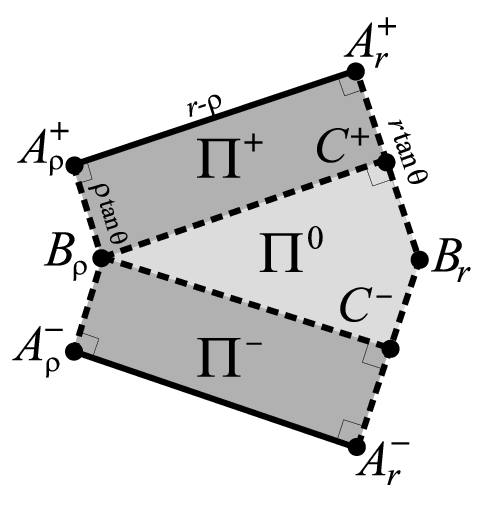}
\end{center}
\end{minipage}\\
(a) & (b) 
\end{tabular}
\caption{(a) Polygon $\cP^{r,\rho}_\theta$, see Definition~\ref{pth1}. 
(b)  Decomposition of $\cP^{r,\rho}_\theta$ for the proof of  Lemma~\ref{lem29}. The solid/dashed lines correspond to Robin/Neumann boundary conditions.\label{figp}}
\end{figure}

\begin{lemma}\label{lem29}
Let $P^{r,\rho}_{\theta,\alpha}$ denote the Laplacian in $\cP^{r,\rho}_\theta$ with $\alpha$-Robin condition
at $\partial_*\cP^{r,\rho}_\theta$ and the Neumann boundary condition at $\partial_\ext\cP^{r,\rho}_\theta$, then
 $E_1(P^{r,\rho}_{\theta,\alpha})\ge -\alpha^2\big( 1+\cO(e^{-\alpha \rho\tan\theta})\big)$ as $\alpha\rho\to+\infty$.
\end{lemma}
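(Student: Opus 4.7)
The plan is to combine the reflection symmetry of $\cP^{r,\rho}_\theta$ with a Neumann-bracketing decomposition, reducing the lower bound to a one-dimensional Robin--Neumann eigenvalue computation on an interval of length $\rho\tan\theta$.

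First I would exploit that $\cP^{r,\rho}_\theta$ and its boundary conditions are symmetric under $(x_1,x_2)\mapsto(x_1,-x_2)$: adding a Neumann cut along the axis of symmetry, i.e.\ along $[B_\rho,B_r]\subset\{x_2=0\}$, gives by standard bracketing a lower bound on $E_1(P^{r,\rho}_{\theta,\alpha})$ by the first eigenvalue of the problem on the upper trapezoid $T$ with vertices $B_\rho,A^+_\rho,A^+_r,B_r$, bearing Robin on $[A^+_\rho,A^+_r]$ and Neumann on the three remaining sides. In $T$ I would then introduce the orthonormal coordinates $(s,t)$ where $s$ is arclength along $[A^+_\rho,A^+_r]$ starting from $A^+_\rho$ and $t$ is the perpendicular distance into $T$. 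Since by construction $[A^+_r,B_r]$ is perpendicular to the Robin side and $[B_\rho,B_r]$ lies on the $x_1$-axis, a short computation identifies $T$ with
\[
\tilde T=\big\{(s,t):\,0\le s\le r-\rho,\ 0\le t\le(\rho+s)\tan\theta\big\},
\]
and the Laplacian remains $-\partial_s^2-\partial_t^2$.

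Next I would impose an additional Neumann cut along $\{t=\rho\tan\theta\}$; because the definition of $\cS^\rho_\theta$ is built from perpendiculars, this line passes exactly through the corner $B_\rho$, so it splits $\tilde T$ into the rectangle $R=[0,r-\rho]\times[0,\rho\tan\theta]$ and the right triangle $T'$ with vertices $(0,\rho\tan\theta)$, $(r-\rho,\rho\tan\theta)$, $(r-\rho,r\tan\theta)$. The triangle $T'$ carries Neumann conditions on each of its three sides (its hypotenuse corresponds to the segment $[B_\rho,B_r]$, which was Neumann after the symmetry step), so its first eigenvalue is $0$. On $R$, only the side $\{t=0\}$ carries Robin and the three others are Neumann, so variables separate and $E_1(R)=-k^2$, where $k>0$ is the unique positive solution of $k\tanh(k\rho\tan\theta)=\alpha$. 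Inverting this transcendental equation for $\alpha\rho\to+\infty$ yields $k=\alpha\big(1+2e^{-2\alpha\rho\tan\theta}+\cO(e^{-4\alpha\rho\tan\theta})\big)$, hence
\[
E_1(R)\ge -\alpha^2\big(1+\cO(e^{-\alpha\rho\tan\theta})\big).
\]

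Finally, Neumann bracketing gives $E_1(P^{r,\rho}_{\theta,\alpha})\ge\min\{E_1(R),E_1(T')\}=E_1(R)$, which is the announced bound. There is essentially no conceptual obstacle: the whole argument rests on the fact that the definition of $\cS^\rho_\theta$ through perpendiculars makes the cut $\{t=\rho\tan\theta\}$ pass exactly through the corner $B_\rho$ and cleanly separate a pure-Neumann triangle from a rectangle in which the variables separate. The only points requiring care are bookkeeping the boundary conditions after each decomposition and the routine asymptotic inversion of $k\tanh(k\rho\tan\theta)=\alpha$.
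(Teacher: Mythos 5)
Your argument is correct and is essentially the paper's own proof: your second cut $\{t=\rho\tan\theta\}$ is exactly the paper's segment $[B_\rho,C^{+}]$ (the perpendicular from $B_\rho$ to $[A^{+}_r,B_r]$), so your rectangle $R$ coincides with the paper's rectangle $\Pi^{+}$ of size $(r-\rho)\times(\rho\tan\theta)$ with Robin on one long side and Neumann elsewhere, while your preliminary Neumann cut along the symmetry axis merely bisects the paper's all-Neumann quadrangle $\Pi^{0}$ into two all-Neumann triangles, which changes nothing in the bound. The only other difference is cosmetic: you invert $k\tanh(k\rho\tan\theta)=\alpha$ explicitly, whereas the paper cites Proposition~\ref{prop22} (with $\beta=0$), which encapsulates the same one-dimensional computation.
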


\begin{proof}
Denote for shortness $P:=P^{r,\rho}_{\theta,\alpha}$. Let us decompose the polygon $\cP^{r,\rho}_\theta$ as shown in Figure~\ref{figp}(b).
Namely, let $C^\pm$ be orthogonal projection of $B_\rho$
on the segment $[A^\pm_r,B_r]$ and let $U$ be the domain obtained from $\cP^{r,\rho}_\theta$ by taking out the segments $[B_\rho,C^\pm]$, then 
$U$ is the disjoint union of two rectangles $\Pi^\pm:=B_\rho A^\pm_\rho A^\pm_r C^\pm$
and the quadrangle $\Pi^0:=B_\rho C^+B_r C^-$. Let $\Lambda$ be the Laplacian in $U$ with $\alpha$-Robin condition on $\partial_*\cP^{r,\rho}_\theta\subset \partial U$
and Neumann condition at the remaining boundary, then the min-max principle
implies then $E_1(P)\ge E_1(\Lambda)$. Hence, it is sufficient to show the sought lower bound for $E_1(\Lambda)$.

The operator $\Lambda$ is the direct sum $\Lambda_0\boplus\Lambda_+\boplus\Lambda_-$ with
$\Lambda_j$ acting in $L^2(\Pi^j)$. Namely,
$\Lambda_0$ is just the Neumann Laplacian in $\Pi^0$, and, therefore, $E_1(\Lambda_0)=0$.
Furthermore, $\Lambda^\pm$ are Laplacians in the rectangles $\Pi^\pm$ with $\alpha$-Robin condition on the sides $A^\pm_\rho A^\pm_r$ 
and Neumann condition at the remaining boundary. Therefore, they admit a separation of variables
and are both unitary equivalent to $L_N\otimes 1+ 1\otimes T$, where
the operator $L_N$ is the Laplacian on $(0,\rho\tan\theta)$ with $\alpha$-Robin condition at $0$ and Neumann condition
at $\rho\tan\theta$ and $T$ is the Neumann Laplacian on $(0,r-\rho)$. Therefore, $E_1(\Lambda^\pm)=E_1(L_N)+E_1(T)
=E_1(L_N)$.
The operator $L_N$ is covered by Proposition~\ref{prop22} (with $\beta=0$),
and $E_1(L_N)=-\alpha^2\big( 1+\cO(e^{-\alpha \rho\tan\theta})\big)<0$ for $\alpha \rho\to+\infty$.
Therefore, for $\alpha\rho\to+\infty$ one has
\begin{align*}
E_1(P)\ge E_1(\Lambda)=E_1(\Lambda_0\boplus\Lambda_+\boplus\Lambda_-)
&=E_1(\Lambda^+)\\
=E_1(L_N)&=-\alpha^2\big( 1+\cO(e^{-\alpha \rho\tan\theta})\big). \qedhere
\end{align*}
\end{proof}

\begin{lemma}\label{nalph}
For $\alpha r\to+\infty$ there holds
\begin{align*}
E_n(N_{\theta,\alpha}^r)&=\Big[\cE_n(\theta)\,+\cO\Big(\dfrac{1}{(\alpha r)^2}\Big) \Big]\,\alpha^2, \quad n\in\big\{1,\dots,\kappa(\theta)\big\},\\
E_{\kappa(\theta)+1}(N_{\theta,\alpha}^r)&\ge -\alpha^2 +o(\alpha^2).
\end{align*}
\end{lemma}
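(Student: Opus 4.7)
The plan proceeds in three steps after a scaling reduction. First, Lemma~\ref{lem-scale} reduces the problem to $\alpha=1$ with the large parameter $R:=\alpha r\to+\infty$. The upper bound $E_n(N^R_{\theta,1})\le E_n(D^R_{\theta,1})=\cE_n(\theta)+\cO(e^{-cR})$ for $n\le\kappa(\theta)$ follows immediately from $\qdom(D^R_{\theta,1})\subset\qdom(N^R_{\theta,1})$ with the two quadratic forms agreeing on the smaller space, combined with the min-max principle and Lemma~\ref{dalph}. All the work is in the lower bounds.

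The common tool is an IMS-type localization adapted to the geometry. I would introduce the Lipschitz function $\phi(x):=\cos\theta\,x_1+\sin\theta\,|x_2|$, which satisfies $|\nabla\phi|=1$ a.e.\ and, by inspection of the two half-planes defining the exterior edges of $\cS^\rho_\theta$, $\cS^\rho_\theta=\{\phi\le\rho\}\cap\cS_\theta$ for every $\rho>0$. Pick a smooth $\chi:[0,+\infty)\to[0,1]$ with $\chi=1$ on $[0,1/2]$ and $\chi=0$ on $[3/4,+\infty)$ and set $\chi_0:=\chi(\phi/R)$, $\chi_1:=\sqrt{1-\chi_0^2}$, so that $|\nabla\chi_0|^2+|\nabla\chi_1|^2\le C/R^2$ a.e. Since $\supp\chi_0\subset\cS^{3R/4}_\theta$ is disjoint from $\partial_\ext\cS^R_\theta=\{\phi=R\}$, every $\chi_0 u$ extended by zero belongs to $\qdom(T_{\theta,1})$ with $N^R_{\theta,1}[\chi_0 u,\chi_0 u]=T_{\theta,1}[\chi_0 u,\chi_0 u]$. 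Since $\chi_1 u$ is supported in $\cP^{R,R/2}_\theta$ (and moreover $\chi_1$ vanishes on $\partial_\ext\cS^{R/2}_\theta$), it belongs to $\qdom(P^{R,R/2}_{\theta,1})$ and Lemma~\ref{lem29} gives $P^{R,R/2}_{\theta,1}[\chi_1 u,\chi_1 u]\ge(-1+o(1))\|\chi_1 u\|^2$. The standard IMS identity therefore yields the working inequality
\[
N^R_{\theta,1}[u,u]\ge T_{\theta,1}[\chi_0 u,\chi_0 u]+\bigl(-1+o(1)\bigr)\|\chi_1 u\|^2-\tfrac{C}{R^2}\|u\|^2, \qquad u\in H^1(\cS^R_\theta).
\]

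For the bound on $E_{\kappa(\theta)+1}$ I would argue by a dimension count. Fix $\delta>0$, let $V$ be the spectral subspace of $N^R_{\theta,1}$ in $(-\infty,-1-\delta]$, and substitute $N^R_{\theta,1}[u,u]\le-(1+\delta)\|u\|^2$ together with $\|\chi_1 u\|^2=\|u\|^2-\|\chi_0 u\|^2$ into the IMS inequality. A short rearrangement shows, for $R$ large enough depending on $\delta$, that $T_{\theta,1}[\chi_0 u,\chi_0 u]\le-(1+\delta/4)\|\chi_0 u\|^2$ and $\chi_0 u\ne 0$ whenever $u\ne 0$. Hence $u\mapsto\chi_0 u$ is an injection of $V$ into the subspace of $\qdom(T_{\theta,1})$ where the quadratic form of $T_{\theta,1}$ is strictly below $-\|\cdot\|^2$; since $\specess(T_{\theta,1})=[-1,+\infty)$ and $T_{\theta,1}$ has exactly $\kappa(\theta)$ discrete eigenvalues below $-1$, the min-max principle (Proposition~\ref{minmax}) forces the dimension of any such subspace to be at most $\kappa(\theta)$. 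Consequently $\dim V\le\kappa(\theta)$, i.e.\ $E_{\kappa(\theta)+1}(N^R_{\theta,1})>-1-\delta$, and letting $\delta\to 0^+$ yields the second estimate of the lemma.

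For the sharp $\cO(R^{-2})$ lower bound at $n\le\kappa(\theta)$, the term $\|\chi_1 u\|^2$ must be negligible when $u$ lies in $V_n:=\vspan(\psi_1,\ldots,\psi_n)$, with $\{\psi_j\}$ an orthonormal $N^R_{\theta,1}$-eigenbasis for the first $n$ eigenvalues. The main---and only non-formal---step is a uniform Agmon estimate: since the already-established upper bound gives $E_j(N^R_{\theta,1})\le\cE_j(\theta)+\cO(e^{-cR})\le -1-\delta_0$ with $\delta_0>0$ independent of $R$ once $R$ is large, a standard weighted energy identity (with test function $e^{2c_0\phi}\psi_j$, integrating by parts and using the Neumann condition on $\partial_\ext\cS^R_\theta$ to kill the associated boundary term while the Robin condition on $\partial_*\cS^R_\theta$ produces one with the right sign) yields $\int_{\cS^R_\theta}e^{2c_0\phi(x)}\psi_j(x)^2\dd x\le C\|\psi_j\|^2$ with $c_0,C>0$ independent of $R$. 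This implies $\|\chi_1 u\|^2\le C e^{-c_0 R}\|u\|^2$ for every $u\in V_n$, so the IMS inequality refines to $T_{\theta,1}[\chi_0 u,\chi_0 u]\le\bigl(E_n(N^R_{\theta,1})+\cO(R^{-2})\bigr)\|\chi_0 u\|^2$. The map $\chi_0\cdot$ is then bounded below on $V_n$, its image is an $n$-dimensional subspace of $\qdom(T_{\theta,1})$, and Proposition~\ref{minmax} applied to $T_{\theta,1}$ gives $\cE_n(\theta)=\Lambda_n(T_{\theta,1})\le E_n(N^R_{\theta,1})+\cO(R^{-2})$, closing the asymptotics. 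Rescaling via Lemma~\ref{lem-scale} returns the statement.
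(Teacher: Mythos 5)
Your scaling reduction, the upper bound via $E_n(N^R_{\theta,1})\le E_n(D^R_{\theta,1})$ and Lemma~\ref{dalph}, the IMS set-up with the weight $\phi=\cos\theta\,x_1+\sin\theta\,|x_2|$, and the dimension-counting proof of $E_{\kappa(\theta)+1}(N^R_{\theta,1})\ge-1+o(1)$ are all sound; the last item is a legitimate variant of the paper's bookkeeping, which instead reads both estimates off the bracketing $E_n(N^r_{\theta,\alpha})\ge E_n\big(D^r_{\theta,\alpha}\oplus P^{r,\rho}_{\theta,\alpha}\big)-a/r^2$ combined with Lemmas~\ref{dalph} and~\ref{lem29}.

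The gap is in your sharp lower bound for $n\le\kappa(\theta)$, which you hang on a uniform Agmon estimate for the eigenfunctions of $N^R_{\theta,1}$, justified by saying that the Robin condition on $\partial_*\cS^R_\theta$ ``produces a boundary term with the right sign''. That is not so: the Robin contribution enters the form as $-\int_{\partial_*\cS^R_\theta}(\cdot)^2\dd s$, i.e.\ with the unfavourable sign, and it is exactly this term that makes Agmon estimates nontrivial in the Robin setting. The weighted identity $N[e^{c_0\phi}\psi,e^{c_0\phi}\psi]=\int(E+c_0^2)e^{2c_0\phi}\psi^2\dd x$ alone gives no decay; one must additionally know that the energy per unit $L^2$-mass away from the corner is at least $\approx-1$, which forces a further splitting of the form into an inner truncated sector and the outer polygon, with Lemma~\ref{lem29} for the outer part and a rough inner bound (e.g.\ $N^\rho_{\theta,\beta}\ge R^\rho_{\theta,\beta}\ge-c\beta^2$ from Lemma~\ref{rob-trunc}). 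This is precisely what the paper carries out separately, and only \emph{after} the present lemma, in Lemma~\ref{agmon22a}, whose proof even quotes Lemma~\ref{nalph}; your version can be made non-circular (your already-proved upper bound $E_n(N^R_{\theta,1})\le\cE_n(\theta)+o(1)<-1$ suffices as input), but none of that argument is supplied, and as stated the step would fail. Moreover the detour is unnecessary: your own IMS inequality, fed into the min-max principle for the direct sum $T_{\theta,1}\oplus P^{R,R/2}_{\theta,1}$ (or, as in the paper, $D^{R}_{\theta,1}\oplus P^{R,R/2}_{\theta,1}$), already yields $E_n(N^R_{\theta,1})\ge\Lambda_n\big(T_{\theta,1}\oplus P^{R,R/2}_{\theta,1}\big)-C/R^2=\cE_n(\theta)-C/R^2$ for $n\le\kappa(\theta)$, since $E_1(P^{R,R/2}_{\theta,1})\ge-1+o(1)>\cE_{\kappa(\theta)}(\theta)$; no information on the eigenfunctions of $N^R_{\theta,1}$ is needed for the stated remainder.
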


\begin{proof}
The min-max principle  shows that the eigenvalues of $N_{\theta,\alpha}^r$ are bounded from above by the respective eigenvalues of $D_{\theta,\alpha}^r$.
Hence, the upper bound for $E_n(N_{\theta,\alpha}^r)$ follows from the upper bound for the eigenvalues of $D_{\theta,\alpha}^r$ obtained in Lemma~\ref{dalph} above.

Let us pass to the proof of the lower bound. Let $\chi_0,\chi_1:\RR\to [0,1]$ be smooth functions such that $\chi_0=1$ in $\big(-\infty, \frac12]$, $\chi_0=0$ in $[1,\infty)$
and $\chi_0^2+\chi_1^2=1$. We define $\chi_j^r:\RR^2\to \RR$ by $\chi^r_j(x)=\chi_j\big(|x|/r\big)$, $j=0,1$.
Recall that
\[
N_{\theta,\alpha}^r[u,u]=\int_{\cS^r_\theta} |\nabla u|^2\dd x - \alpha\int_{\partial_*\cS^r_\theta}u^2\dd s,
\quad
\qdom(N_{\theta,\alpha}^r)=H^1(\cS^r_\theta),
\]
and by direct computation  for any $u\in \qdom(N_{\theta,\alpha}^r)$ one has
\begin{multline}
   \label{nn00}
N_{\theta,\alpha}^r[u,u]=N_{\theta,\alpha}^r[\chi^r_0 u,\chi^r_0 u]+N_{\theta,\alpha}^r[\chi^r_1 u,\chi^r_1 u]
-\int_{\cS_\theta^r} \big(|\nabla \chi^r_0|^2+|\nabla \chi^r_1|^2\big)u^2 \dd x\\
\ge N_{\theta,\alpha}^r[\chi^r_0 u,\chi^r_0 u]+N_{\theta,\alpha}^r[\chi^r_1 u,\chi^r_1 u] - \dfrac{a}{r^2}\, \|u\|^2_{L^2(\cS_\theta^r)},
\quad a:= \|\chi_0'\|^2_\infty+\|\chi_1'\|^2_\infty.
\end{multline}
One has $\chi_0^r u\in H^1(\cS_\theta^r)$ and $\chi_0^r u=0$ at $\partial_\ext \cS_\theta^r$.
At the same time, the function $\chi_1^r u$ vanishes inside the disk $|x|\leq\frac{1}{2}\, r$
and, hence, is supported in the quadrangle $\cP_\theta^{r,\rho}$ with $\rho:=\frac{1}{2}\, r \cos\theta$
and belongs to $H^1(\cP_\theta^{r,\rho})$. 
Therefore,   one has $\chi^r_0 u\in \qdom (D_{\theta,\alpha}^r)$ and 
$\chi^r_1 u\in \qdom (P_{\theta,\alpha}^{r,\rho})$, and the inequality \eqref{nn00} rewrites as
$N_{\theta,\alpha}^r[u,u]\ge D_{\theta,\alpha}^r[\chi^r_0 u,\chi^r_0 u]+P_{\theta,\alpha}^{r,\rho}[\chi^r_1 u,\chi^r_1 u]- (a/ r^2) \|u\|^2_{L^2(\cS_\theta^r)}$,
and we recall that $\|u\|^2_{L^2(\cS_\theta^r)}=\|\chi^r_0 u\|^2_{L^2(\cS_\theta^r)}+\|\chi^r_0 u\|^2_{L^2(\cP_\theta^{r,\rho})}$.
By the min-max principle,
\begin{equation}
   \label{enn}
E_n(N_{\theta,\alpha}^r)\ge E_n(D_{\theta,\alpha}^r\boplus P_{\theta,\alpha}^{r,\rho})-a/r^2, \quad
r>0,\ n\in\NN.
\end{equation}
Now let us pick $n\in\big\{1,\dots,\kappa(\theta)\big\}$ and consider the regime $\alpha r\to+\infty$. Then one also has $\alpha \rho\to+\infty$,
and the estimate of Lemma~\ref{lem29} for the first eigenvalue of $P_{\theta,\alpha}^{r,\rho}$ gives $E_1(P_{\theta,\alpha}^{r,\rho})\ge -\alpha^2+o(\alpha^2)$.
On the other hand, the estimate of Lemma~\ref{dalph} for the first eigenvalues of $D_{\theta,\alpha}^r$
shows that $E_n(D_{\theta,\alpha}^R)=\alpha^2 \big(\cE_n(\theta) +\cO(e^{-c\alpha r})\big)$ with some $c>0$, which is below 
$-\alpha^2+o(\alpha^2)$ due to the inequality $\cE_n(\theta)<-1$. Hence,
$E_n(D_{\theta,\alpha}^r\boplus P_{\theta,\alpha}^{r,\rho})=E_n(D_{\theta,\alpha}^r)=\alpha^2 \big(\cE_n(\theta) +\cO(e^{-c\alpha r})\big)$.
Substituting this last estimate into the inequality \eqref{enn} one arrives to
\[
E_n(N_{\theta,\alpha}^r)\ge  \Big[\cE_n(\theta) +\cO(e^{-c\alpha r}) -\dfrac{a}{(\alpha r)^2}\Big]\,\alpha^2
=
\Big[\cE_n(\theta)\,+\cO\Big(\dfrac{1}{(\alpha r)^2}\Big) \Big]\,\alpha^2.
\]
Using \eqref{enn} for $n=\kappa(\theta)+1$ we have
\[
E_{\kappa(\theta)+1}(N_{\theta,\alpha}^r)\ge \min\Big\{E_{\kappa(\theta)+1}(D_{\theta,\alpha}^r),E_1(P_{\theta,\alpha}^{r,\rho})\Big\}-a/r^2.
\]
By assumption one has $1/r=o(\alpha)$. In addition, $E_1(P_{\theta,\alpha}^{r,\rho})\ge -\alpha^2+o(\alpha^2)$, while
$E_{\kappa(\theta)+1} (D_{\theta,\alpha}^r)\ge -\alpha^2$ by Lemma~\ref{dalph}, which concludes the proof.
\end{proof}

Let us give a rough estimate for the first eigenvalue of $R^r_{\theta,\alpha}$ (it will be improved later).

\begin{lemma}\label{rob-trunc}
For some $c>0$ there holds $R^r_{\theta,\alpha}\ge -c\alpha^2$ as $\alpha r\to +\infty$.
\end{lemma}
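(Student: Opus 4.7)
The plan is to deduce this statement as an essentially immediate consequence of the general trace-inequality/scaling argument already packaged in Lemma~\ref{rob-scale}. Observe that the truncated sector satisfies $\cS_\theta^r = r\,\cS_\theta^1$, where $\cS_\theta^1$ is a fixed bounded Lipschitz (in fact convex polygonal) domain depending only on the half-angle $\theta$. In other words, the family $(\cS_\theta^r)_{r>0}$ is exactly a rescaling of a single Lipschitz domain, which is precisely the situation covered by Lemma~\ref{rob-scale}.

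First, I would apply Lemma~\ref{rob-scale} with $U=\cS_\theta^1$ and $t=r$. This yields the scale-invariant trace inequality
\[
\int_{\partial \cS_\theta^r} f^2\dd s \le c\Big( r\varepsilon \int_{\cS_\theta^r}|\nabla f|^2\dd x + \tfrac{1}{r\varepsilon}\int_{\cS_\theta^r} f^2\dd x\Big),
\qquad f\in H^1(\cS_\theta^r),\ \varepsilon\in(0,1],
\]
with some constant $c=c(\theta)>0$. Inserting this into the quadratic form of $R^r_{\theta,\alpha}$, which reads
\[
R^r_{\theta,\alpha}[f,f]=\int_{\cS_\theta^r}|\nabla f|^2\dd x - \alpha\int_{\partial \cS_\theta^r} f^2\dd s,
\]
one gets the lower bound
\[
R^r_{\theta,\alpha}[f,f] \ge (1 - c\alpha r \varepsilon)\int_{\cS_\theta^r} |\nabla f|^2\dd x - \tfrac{c\alpha}{r\varepsilon}\int_{\cS_\theta^r} f^2\dd x.
\]

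Next, I would choose $\varepsilon := 1/(c\alpha r)$, which is admissible (i.e.\ in $(0,1]$) precisely when $\alpha r$ is sufficiently large, to annihilate the gradient term and obtain $R^r_{\theta,\alpha}[f,f] \ge -c^2\alpha^2 \|f\|^2_{L^2(\cS_\theta^r)}$, which is the claim with $c_\theta := c(\theta)^2$. There is no serious obstacle here: everything is a direct rescaling of the fixed trace inequality on $\cS_\theta^1$, and the statement is really just a reminder that the operator is lower semibounded on the scale $\alpha^2$ uniformly in $r$ as long as $\alpha r$ is large (a sharper asymptotic value of $c$ is not needed, and is anyway improved upon in the subsequent lemmas via Dirichlet--Neumann bracketing against $D^r_{\theta,\alpha}$ and $P^{r,\rho}_{\theta,\alpha}$).
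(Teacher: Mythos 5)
Your proposal is correct and coincides with the paper's own argument: the paper proves Lemma~\ref{rob-trunc} in one line by noting $\cS^r_\theta = r\,\cS^1_\theta$ and invoking Lemma~\ref{rob-scale} (whose second assertion already contains the bound $R^{tU}_\alpha\ge -C\alpha^2$ for $\alpha t$ large). Your spelled-out choice of $\varepsilon=1/(c\alpha r)$ is exactly the computation inside the proof of Lemma~\ref{rob-scale}, so there is no difference in substance.
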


\begin{proof}
Due to the scaling $\cS^r_\theta=r \cS^1_\theta$ the estimate follows from
Lemma~\ref{rob-scale}.
\end{proof}

\subsection{Eigenfunctions of the Robin-Neumann Laplacians}

We will need an Agmon-type decay estimate for the first $\kappa(\theta)$ eigenfunctions of $N^r_{\theta,\alpha}$, which is established in the following lemma:
\begin{lemma}\label{agmon22a}
There exist  $c>0$ and $C>0$ such that if $n\in\big\{1,\dots,\kappa(\theta)\big\}$ and $\psi^{r,n}_{\theta,\alpha}$
is an eigenfunction of $N^r_{\theta,\alpha}$ for the $n$th eigenvalue, then for $\alpha r\to +\infty$
there holds
\[
\int_{\cS^r_\theta} e^{c\alpha|x|}\Big ( \dfrac{1}{\alpha^2} |\nabla \psi^{r,n}_{\theta,\alpha}|^2+ |\psi^{r,n}_{\theta,\alpha}|^2\Big)\, dx
\le C \,\big\|\psi^{r,n}_{\theta,\alpha}\big\|^2_{L^2(\cS^r_\theta)}.
\]
\end{lemma}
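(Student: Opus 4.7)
The plan is to adapt the standard Agmon strategy to the Robin--Neumann setting. The crucial input is a uniform spectral gap: by Lemma~\ref{nalph}, for $n\le\kappa(\theta)$ and $\alpha r\to+\infty$ we have $E:=E_n(N^r_{\theta,\alpha})=\alpha^2\cE_n(\theta)+o(\alpha^2)\le -(1+\delta)\alpha^2$ for some $\delta>0$, since $\cE_n(\theta)<-1$; and by Lemma~\ref{lem29}, the quadratic form of $N^r_{\theta,\alpha}$ evaluated on functions supported in the hexagon $\cP^{r,\rho}_\theta$ is bounded from below by $-\alpha^2\bigl(1+\cO(e^{-\alpha\rho\tan\theta})\bigr)\|\cdot\|^2$ as $\alpha\rho\to+\infty$. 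These two facts create a positive gap between the eigenvalue and the ``far-from-corner'' bottom of the form, which is what will drive the exponential decay.

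A direct computation using $\nabla(e^\Phi\psi)=e^\Phi(\psi\nabla\Phi+\nabla\psi)$, integration by parts, and the eigenvalue equation in the sense of quadratic forms gives the standard Agmon identity
\[
N^r_{\theta,\alpha}[e^\Phi\psi,e^\Phi\psi]=E\,\|e^\Phi\psi\|^2+\int_{\cS^r_\theta}|\nabla\Phi|^2 e^{2\Phi}\psi^2\dd x
\]
for any Lipschitz weight $\Phi$. I would take $\Phi(x):=c\alpha|x|$ with a small $c>0$ to be chosen, so that $|\nabla\Phi|^2=c^2\alpha^2$ a.e., and set $\phi:=e^\Phi\psi$. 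Next, one performs an IMS-type localization with cutoffs adapted to the scaled truncated sectors: choose smooth functions $\chi_0,\chi_1$ with $\chi_0^2+\chi_1^2=1$, $\chi_0=1$ on $\cS^{R/\alpha}_\theta$, $\supp\chi_0\subset\cS^{2R/\alpha}_\theta$, and $|\nabla\chi_0|^2+|\nabla\chi_1|^2\le K\alpha^2/R^2$, for a large parameter $R>0$. Then $\supp(\chi_1\phi)\subset\cP^{r,R/\alpha}_\theta$, so Lemma~\ref{lem29} yields $N^r_{\theta,\alpha}[\chi_1\phi,\chi_1\phi]\ge -\alpha^2\bigl(1+\cO(e^{-R\tan\theta})\bigr)\|\chi_1\phi\|^2$, while Lemma~\ref{rob-trunc} gives $N^r_{\theta,\alpha}[\chi_0\phi,\chi_0\phi]\ge -C_0\alpha^2\|\chi_0\phi\|^2$. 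Plugging the IMS identity into the Agmon identity and grouping terms, the contributions to $\|\chi_1\phi\|^2$ combine to a strictly positive factor $\bigl(\delta-\cO(e^{-R\tan\theta})-K/R^2-c^2\bigr)\alpha^2$, which for $R$ large and $c$ small is $\ge(\delta/2)\alpha^2$; the remaining $\|\chi_0\phi\|^2$ is bounded by $e^{4cR}\|\psi\|^2$, yielding
\[
\int_{\cS^r_\theta}e^{2c\alpha|x|}\psi^2\dd x\le C\|\psi\|^2_{L^2(\cS^r_\theta)}.
\]

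For the gradient part, I would start from the identity $\|\nabla\phi\|^2=N^r_{\theta,\alpha}[\phi,\phi]+\alpha\int_{\partial_*\cS^r_\theta}\phi^2\dd s$, control the Robin boundary term via the trace inequality of Lemma~\ref{sob-scale} with $\varepsilon\sim 1/\alpha$ to absorb half of $\|\nabla\phi\|^2$, and combine with $N^r_{\theta,\alpha}[\phi,\phi]=(E+c^2\alpha^2)\|\phi\|^2=\cO(\alpha^2\|\psi\|^2)$ to conclude that $\|\nabla\phi\|^2\le C\alpha^2\|\psi\|^2$. The pointwise bound $|\nabla\psi|^2\le 2e^{-2\Phi}|\nabla\phi|^2+2|\nabla\Phi|^2\psi^2$ then delivers the weighted gradient estimate. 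The main technical obstacle is to make the IMS cutoffs compatible with Lemma~\ref{lem29}: an ordinary radial cutoff would force $\supp(\chi_1\phi)$ into a region of the form $\cS^r_\theta\setminus\overline{B(0,R/\alpha)}$, which does not match any of the hexagons $\cP^{r,\rho}_\theta$; building $\chi_0$ from the nested scaled sectors $\cS^{R/\alpha}_\theta\subset\cS^{2R/\alpha}_\theta$ circumvents this difficulty at the price of a mild anisotropy of the cutoff which is harmless for the argument.
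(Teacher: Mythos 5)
Your proof is correct and rests on the same core mechanism as the paper's: the Agmon identity for the weight $e^{c\alpha|x|}$, the gap coming from $\cE_n(\theta)<-1$ via Lemma~\ref{nalph}, and the bound $E_1(P^{r,\rho}_{\theta,\alpha})\ge-\alpha^2\big(1+o(1)\big)$ of Lemma~\ref{lem29} on a hexagon with $\rho\sim R/\alpha$. The implementation differs in two places. First, you localize by an IMS partition of unity and pay the commutator term $K\alpha^2/R^2$, whereas the paper avoids cutoffs: it splits the form of $e^{\alpha\phi}\psi$ directly over $\cS^\rho_\theta$ and $\cP^{r,\rho}_\theta$, retaining an $\eta$-fraction of the gradient globally and boosting the Robin parameter to $\alpha/(1-\eta)$ on the two pieces; this has the advantage that the retained term $\eta\|\nabla(e^{\alpha\phi}\psi)\|^2$ delivers the weighted gradient bound at once, while you need a separate step via the trace inequality of Lemma~\ref{sob-scale} with $\varepsilon\sim 1/(\alpha r)$ (which does work). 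Second, for the inner region you use the crude bound $N^r_{\theta,\alpha}\ge-C_0\alpha^2$, which suffices because the weight is bounded there by $e^{4cR/\cos\theta}$; the paper invokes Lemma~\ref{nalph} instead, with no essential difference. One side remark of yours is inaccurate though harmless: an ordinary radial cutoff vanishing on $\{|x|\le R/\alpha\}$ \emph{is} compatible with Lemma~\ref{lem29}, since $\cS^\rho_\theta\subset B(0,\rho/\cos\theta)$ gives $\cS^r_\theta\setminus \overline{B(0,R/\alpha)}\subset\cP^{r,\rho}_\theta$ with $\rho=(R/\alpha)\cos\theta$ (this is exactly how the cutoff is treated in the proof of Lemma~\ref{nalph}), so your anisotropic cutoff is an unnecessary but valid precaution. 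Finally, as the paper itself does with $\rho=L/\alpha$, be explicit that Lemma~\ref{lem29} is used in its uniform form (for every $\varepsilon>0$ there is $R_0$ such that the bound holds whenever $\alpha\rho\ge R_0$), since in your setting $\alpha\rho=R$ is a large constant rather than a quantity tending to infinity; the proof of that lemma indeed provides this.
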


\begin{proof}
Denote for shortness
\[
\text{$N:=N^r_{\theta,\alpha}$, $\psi:=\psi^{r,n}_{\theta,\alpha}$ and $\cE:=\cE_{\kappa(\theta)}(\theta)<-1$.}
\]
For $b>0$ to be chosen later let us consider the function $\phi:\cS^r_\theta\ni x\mapsto b|x|\in \RR$, then $|\nabla\phi|=b$,
and a standard computation gives
\[
N\big[e^{\alpha\phi}\psi,e^{\alpha\phi}\psi]=\int_{\cS^r_\theta} e^{2\alpha\phi} \big(E_n(N) + b^2\alpha^2\big)\psi^2 \dd x.
\]
For $\alpha r\to +\infty$ one has $E_n(N)= \big(\cE_n(\theta)+o(1)\big)\alpha^2$
by Lemma~\ref{nalph}. Therefore, for an arbitrary $\varepsilon>0$ there holds
$E_n(N)\le (\cE+\varepsilon)\alpha^2$, and 
\begin{equation}
  \label{nnn1}
N\big[e^{\alpha\phi}\psi,e^{\alpha\phi}\psi]\le (\cE+b^2+\varepsilon)\alpha^2 \int_{\cS^r_\theta} e^{2\alpha\phi} \psi^2 \dd x.
\end{equation}
On the other hand, let us pick $\eta\in(0,1)$ whose exact value will be chosen later, and set $\rho:=L/\alpha$ with a value $L>0$
to be chosen later as well, then
\begin{align*}
N\big[e^{\alpha\phi}\psi,e^{\alpha\phi}\psi]&\equiv
\int_{\cS^r_\theta} \big|\nabla(e^{\alpha\phi}\psi)\big|^2\dd x-\alpha \int_{\partial_*\cS^r_\theta} e^{2\alpha\phi}\psi^2\dd s\\
&=\eta \int_{\cS^r_\theta} \big|\nabla(e^{\alpha\phi}\psi)\big|^2\dd x\\
&\qquad + (1-\eta)\bigg[ \int_{S^\rho_\theta} \big|\nabla(e^{\alpha\phi}\psi)\big|^2\dd x-\dfrac{\alpha}{1-\eta} \int_{\partial_* \cS^\rho_\theta} e^{2\alpha\phi}\psi^2\dd s\\
&\qquad +\int_{\cS^r_\theta \setminus \overline{\cS^\rho_\theta}} \big|\nabla(e^{\alpha\phi}\psi)\big|^2\dd x-\dfrac{\alpha}{1-\eta} \int_{\partial_* \cS^r_\theta\setminus \partial_* \cS^\rho_\theta} e^{2\alpha\phi}\psi^2\dd s \bigg]\\
&=\eta \int_{\cS^r_\theta} \big|\nabla(e^{\alpha\phi}\psi)\big|^2\dd x\\
&\qquad+ (1-\eta) \Big(N^\rho_{\theta,\frac{\alpha}{1-\eta}}\big[e^{\alpha\phi}\psi,e^{\alpha\phi}\psi]
+ P^{r,\rho}_{\theta,\frac{\alpha}{1-\eta}}\big[e^{\alpha\phi}\psi,e^{\alpha\phi}\psi]\Big)\\
&\ge \eta \int_{\cS^r_\theta} \big|\nabla(e^{\alpha\phi}\psi)\big|^2\dd x + (1-\eta) E_1\big(N^\rho_{\theta,\frac{\alpha}{1-\eta}}\big) \int_{\cS^\rho_\theta} e^{2\alpha\phi}\psi^2\dd x\\
&\qquad+(1-\eta)E_1\big(P^{r,\rho}_{\theta,\frac{\alpha}{1-\eta}}\big) \int_{\cP_{\theta}^{r,\rho}} e^{2\alpha\phi}\psi^2\dd x.
\end{align*}
By applying Lemma~\ref{nalph} for $N^\rho_{\theta,\frac{\alpha}{1-\eta}}$ and Lemma~\ref{lem29} to $P^{r,\rho}_{\theta,\frac{\alpha}{1-\eta}}$
we see that the constant $L$ in the definition of $\rho$  can be chosen sufficiently large to have, for large $\alpha$,
\[
E_1\big(N^{\rho}_{\theta,\frac{\alpha}{1-\eta}}\big)\ge \big(\cE_1(\theta)-\varepsilon\big)\dfrac{\alpha^2}{(1-\eta)^2},
\quad
E_1\big(P^{r, \rho}_{\theta,\frac{\alpha}{1-\eta}}\big)\ge -\dfrac{(1+\varepsilon)\alpha^2}{(1-\eta)^2},
\]
and the substitution into the preceding inequality gives
\begin{align*}
N\big[e^{\alpha\phi}\psi,e^{\alpha\phi}\psi]
&\ge \eta \int_{\cS^r_\theta} \big|\nabla(e^{\alpha\phi}\psi)\big|^2\, \dd x\\
&\quad + \dfrac{\cE_1(\theta)-\varepsilon}{1-\eta}\, \alpha^2 \int_{\cS^\rho_\theta} e^{2\alpha\phi}\psi^2\dd x
-\dfrac{1+\varepsilon}{1-\eta}\, \alpha^2 \int_{\cP_{\theta}^{r,\rho}} e^{2\alpha\phi}\psi^2\dd x.
\end{align*}
Recall that $\cP_{\theta}^{r,\rho}=\cS^r_\theta\setminus \overline{\cS^\rho_\theta}$
and substitute the last inequality into \eqref{nnn1}, then
\begin{multline*}
\eta \int_{\cS^r_\theta} \big|\nabla(e^{\alpha\phi}\psi)\big|^2\dd x + \dfrac{\cE_1(\theta)-\varepsilon}{1-\eta}\, \alpha^2 \int_{\cS^\rho_\theta} e^{2\alpha\phi}\psi^2\dd x\\
{}-\dfrac{1+\varepsilon}{1-\eta}\, \alpha^2 \int_{\cP_{\theta}^{r,\rho}} e^{2\alpha\phi}\psi^2\dd x\\
\le 
(\cE+b^2+\varepsilon)\alpha^2 \int_{\cP_{\theta}^{r,\rho}} e^{2\alpha\phi} \psi^2 \dd x
+
(\cE+b^2+\varepsilon) \alpha^2 \int_{\cS^\rho_\theta} e^{2\alpha\phi} \psi^2 \dd x,
\end{multline*}
which we rewrite as
\begin{gather}
\eta \int_{\cS^r_\theta} \big|\nabla(e^{\alpha\phi}\psi)\big|^2\dd x
+ a_0\alpha^2 \int_{\cS^r_\theta\setminus \overline{\cS^\rho_\theta}} e^{2\alpha\phi} \psi^2 \dd x
\le b_0\alpha^2 \int_{\cS^\rho_\theta} e^{2\alpha\phi} \psi^2 \dd x, \label{preceq00}\\
\begin{aligned}
a_0&:=-\cE-b^2-\varepsilon-\dfrac{1+\varepsilon}{1-\eta}=\dfrac{-\cE - 1 +\big(\eta b^2-b^2 +\eta \cE -2\varepsilon+\varepsilon\eta\big)}{1-\eta},\\
b_0&:=\cE+b^2+\varepsilon -\dfrac{\cE_1(\theta)-\varepsilon}{1-\eta}
=\dfrac{\cE-\cE_1(\theta) -\eta \cE+2\varepsilon -\varepsilon\eta}{1-\eta}+b^2.
\end{aligned} \nonumber
\end{gather}
Due to $\cE_1(\theta)\le\cE<-1$, for any $b>0$ one can choose $\varepsilon>0$ and $\eta>0$ sufficiently small to have $a_0>0$ and $b_0>0$.
For $x\in \cS^\rho_\theta$ one has $|x|< \rho/\cos\theta$, hence,
$\alpha\phi(x)\le \alpha b L/ (\alpha\cos\theta) = bL/\cos\theta$,
and \eqref{preceq00} takes the form
\begin{gather*}
\eta \int_{\cS^r_\theta} \big|\nabla(e^{\alpha\phi}\psi)\big|^2\dd x
+a_0 \alpha^2 \int_{\cS^r_\theta\setminus \overline{\cS^\rho_\theta}} e^{2\alpha\phi} \psi^2 \dd x
\le A\alpha^2 \int_{\cS^\rho_\theta} \psi^2 \dd x, \\
A:=b_0 e^{2bL/\cos\theta},
\end{gather*}
and then
\begin{multline}
   \label{eq-temp4}
\int_{\cS^r_\theta} \big|\nabla(e^{\alpha\phi}\psi)\big|^2\dd x
+2b^2\alpha^2 \int_{\cS^r_\theta} \big|e^{2\alpha\phi}\psi|^2\dd x\\
=\dfrac{1}{\eta}\,\eta \int_{\cS^r_\theta} \big|\nabla(e^{\alpha\phi}\psi)\big|^2\dd x
+ \dfrac{2b^2}{a_0}\, a_0\alpha^2  
\int_{ \cS^r_\theta\setminus \overline{\cS^\rho_\theta} } e^{2\alpha\phi} \psi^2 \dd x
+2b^2\alpha^2\int_{\cS^\rho_\theta} \psi^2 \dd x\\
\le \bigg(\dfrac{1}{\eta}\, A+\dfrac{2b^2}{a_0}\, A+2b^2\bigg)\alpha^2 \int_{\cS^\rho_\theta} \psi^2 dx=:A_0\alpha^2\int_{\cS^\rho_\theta} \psi^2 \dd x\le
A_0\alpha^2\|\psi\|^2_{L^2(\cS^r_\theta)}.
\end{multline}
Using $2xy\le x^2+y^2$ and $ xy\le \frac{1}{4}\, x^2+y^2$ for $x,y\in\RR$, we estimate
\begin{align*}
\big|\nabla(e^{\alpha\phi}\psi)\big|^2&\ge |e^{\alpha\phi}\nabla \psi|^2+b^2\alpha^2|e^{\alpha\phi} \psi|^2 -  2\big|e^{\alpha\phi}\nabla \psi\big| \ \big|b\alpha e^{\alpha\phi}\psi\big|\\
&\ge \dfrac{1}{2}|e^{\alpha\phi}\nabla \psi|^2 - b^2\alpha^2|e^{\alpha\phi} \psi|^2.
\end{align*}
The substitution into \eqref{eq-temp4} gives
\[
\int_{\cS^r_\theta} e^{2b\alpha|x|} \Big( \dfrac{1}{2}\big|\nabla \psi|^2
+b^2\alpha^2 \psi^2\Big)\, dx\le A_0\alpha^2\|\psi\|^2_{L^2(\S^r_\theta)},
\]
and one arrives to the claim by taking $c:=2b$ and $C:=A_0(2+1/b^2)$.
\end{proof}

\subsection{Non-resonant sectors}\label{ssec-nonres}

Recall that the Robin-Neumann Laplacians $N^r_{\theta,\alpha}$ in the truncated convex sectors $\cS^r_\theta$
are defined in \eqref{eq-tqr}, and that due to the asymptotics of Lemma~\ref{nalph}
their first $\kappa(\theta)$ eigenvalues are, in a sense, close to the first $\kappa(\theta)$ eigenvalues
of the Robin Laplacian $T_{\theta,\alpha}$ in the associated infinite sectors $\cS_\theta$
in the regime $\alpha r\to +\infty$. For the subsequent study we will use the notion of a non-resonant angle,
which involves a hypothesis on the behavior of the next eigenvalue of $N^r_{\theta,\alpha}$
in the same asymptotic regime. Namely, we will use the following definition:

\begin{defi}\label{defnonres}
A half-angle $\theta\in\big(0,\frac{\pi}{2}\big)$ is called \emph{non-resonant} if there is $C>0$ such that
\[
		E_{\kappa(\theta)+1}(N^r_{\theta,\alpha})\ge -\alpha^2+ C/r^2 \text{ as $\alpha>0$ is fixed and $r$ is large.}
\]
By the scaling~\eqref{eq-scale1}, the property only depends on $\theta$ and can be equivalently reformulated
as
\[
  	E_{\kappa(\theta)+1}(N^r_{\theta,\alpha})\ge -\alpha^2+  C/r^2 \text{ as $\alpha r$ is large.}
\]
\end{defi}
We show in Proposition~\ref{prop-good} that the non-resonance property is satisfied by an explicit wide range of half-angles, which is a key point for the whole analysis  (we remark that
there exist half-angles which \emph{do not} satisfy the non-resonance property as it will be seen 
 in Subsection~\ref{rem-triangle}). 

\begin{prop}\label{prop-good}
All half-angles $\theta$ with $\frac{\pi}{4}\le\theta<\frac{\pi}{2}$ are non-resonant.
\end{prop}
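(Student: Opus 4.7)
The approach exploits the reflection symmetry of $\cS^r_\theta$ across the $x$-axis, which decomposes $N^r_{\theta,\alpha}=N^{r,+}_{\theta,\alpha}\oplus N^{r,-}_{\theta,\alpha}$ into its symmetric and antisymmetric parts. Each factor is unitarily equivalent to a mixed boundary value problem on the upper half-triangle $T^r_\theta:=\cS^r_\theta\cap\{y>0\}$, with $\alpha$-Robin condition on $[O,A^+_r]$, Neumann on $[A^+_r,B_r]$, and Neumann (for $+$) or Dirichlet (for $-$) on the hypotenuse $[O,B_r]$. The ground state of $N^r_{\theta,\alpha}$ is positive (by a Perron-Frobenius type argument for the semigroup) hence symmetric, and since $\kappa(\theta)=1$ on the range under study one has
\[
E_2(N^r_{\theta,\alpha})=\min\big\{E_2(N^{r,+}_{\theta,\alpha}),\,E_1(N^{r,-}_{\theta,\alpha})\big\},
\]
so it suffices to bound each of these below by $-\alpha^2+C/r^2$.

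For $\theta=\pi/4$ a genuine separation of variables is available. A rotation by $\pi/4$ identifies $\cS^r_{\pi/4}$ with the square $[0,r]^2$ carrying the Robin condition on two adjacent sides (the images of $OA^\pm_r$) and Neumann on the opposite two sides; the reflection across the original axis of symmetry becomes the coordinate swap. The Laplacian factors as $L_\alpha\otimes I+I\otimes L_\alpha$, where $L_\alpha$ is the one-dimensional operator of Proposition~\ref{prop22} (Robin at $0$, Neumann at $r$) satisfying $E_1(L_\alpha)=-\alpha^2\big(1+\cO(e^{-\alpha r})\big)$ and $E_2(L_\alpha)\geq 1/r^2$. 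Forming the symmetric and antisymmetric tensor combinations under the swap shows that both $E_2(N^{r,+}_{\pi/4,\alpha})$ and $E_1(N^{r,-}_{\pi/4,\alpha})$ equal $E_1(L_\alpha)+E_2(L_\alpha)$, which is at least $-\alpha^2+c/r^2$ once $\alpha r$ is large enough to absorb the exponentially small error.

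The monotonicity step treats general $\theta\in(\pi/4,\pi/2)$ via the affine bijection $\Phi:T^r_\theta\to T^{r\tan\theta}_{\pi/4}$, $(x,y)\mapsto(x\tan\theta,y)$, which preserves each of the three boundary pieces by type. Writing $u(x,y)=\tilde u(\Phi(x,y))$, a direct change-of-variables computation shows that the Rayleigh quotient of $u$ for $N^{r,\pm}_{\theta,\alpha}$ equals the Rayleigh quotient of $\tilde u$ for the anisotropic form
\[
\tilde q(\tilde u)=\int_{T^{r\tan\theta}_{\pi/4}}\big(\tan^2\theta\,(\partial_{\tilde x}\tilde u)^2+(\partial_{\tilde y}\tilde u)^2\big)\dd\tilde x\dd\tilde y-\alpha\int_0^{r\tan\theta}\tilde u(\tilde x,0)^2\dd\tilde x
\]
on $T^{r\tan\theta}_{\pi/4}$ with the corresponding boundary conditions. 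Since $\tan^2\theta\geq 1$, the form $\tilde q$ dominates the standard Robin-Neumann quadratic form, and the min-max principle yields $E_n(N^{r,\pm}_{\theta,\alpha})\geq E_n(N^{r\tan\theta,\pm}_{\pi/4,\alpha})$ for every $n\in\NN$. Combined with the square case this gives $E_2(N^r_{\theta,\alpha})\geq -\alpha^2+c/(r\tan\theta)^2=-\alpha^2+C/r^2$ with $C:=c\cot^2\theta>0$.

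The main subtleties are the symmetry argument identifying the ground state as symmetric (so that $E_2$ is the minimum of the lowest excited symmetric eigenvalue and the lowest antisymmetric eigenvalue) and the verification that the exponentially small corrections coming from Proposition~\ref{prop22} are beaten by the geometric term $c/r^2$ once $\alpha r$ is large; the anisotropic comparison itself is a short algebraic calculation once the right affine change of variables has been guessed.
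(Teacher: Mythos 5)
Your proposal is correct in substance and follows essentially the same route as the paper: decompose $N^r_{\theta,\alpha}$ by the reflection across the bisector, settle $\theta=\pi/4$ by separation of variables on the square via Proposition~\ref{prop22}, and reduce $\theta\in(\pi/4,\pi/2)$ to the $\pi/4$ case by an anisotropic affine comparison using $\tan^2\theta\ge 1$ — this is exactly the paper's Step 1/Step 2, where your comparison appears (in rotated coordinates) as the form $Q_{r,\theta}$ on $Z_{r,\theta}$ and the unitary $V$; the only stylistic differences are that the paper handles the Dirichlet-on-bisector half by a simpler rectangle bracketing and needs no Perron--Frobenius step, since $E_2$ of a direct sum is always at least $\min\{E_2(\text{symmetric part}),E_1(\text{antisymmetric part})\}$. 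One slip to fix in your write-up: with your stated coordinates $T^r_\theta=\cS^r_\theta\cap\{y>0\}$ the Robin side is the slanted segment $[O,A^+_r]$, not $\{y=0\}$, and the map $(x,y)\mapsto(x\tan\theta,y)$ does not send $T^r_\theta$ onto $T^{r\tan\theta}_{\pi/4}$ (it destroys the right angle at $A^+_r$) and distorts the Robin boundary measure, so the stated Rayleigh-quotient identity fails as written. The cure is to first rotate so that the Robin side lies along a coordinate axis and then stretch by $\tan\theta$ along that axis; in those coordinates your displayed anisotropic form $\tilde q$, the monotonicity in $\tan^2\theta$, and the conclusion $E_n(N^{r,\pm}_{\theta,\alpha})\ge E_n(N^{r\tan\theta,\pm}_{\pi/4,\alpha})$ are exactly right.
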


\begin{proof}
We prove the result first for $\theta=\pi/4$ (Step 1) by rather direct computations, and then use a kind of monotonicity to extend it to other half-angles in the range indicated (Step 2).
Without loss of generality we set $\alpha=1$ and remove the dependence on $\alpha$ from the notation and write $N^r_{\theta}$ instead of $N^r_{\theta,1}$.

\emph{Step 1: $\theta=\pi/4$.} By Proposition~\ref{prop-sector} we have $\kappa(\pi/4)=1$, so
we need to prove that there exists a constant $C>0$ satisfying
\begin{equation}
\label{nonr1}
E_2(N^r_{\frac{\pi}{4}}) \geq -1 +C/r^2 \text{ as } r \text{ is large}.
\end{equation}
Remark that $\cS^r_{\frac{\pi}{4}}$ is simply a square of side length $r$, and $N^r_{\frac{\pi}{4}}$ is the Laplacian
with $1$-Robin boundary condition on two neighboring sides and Neumann condition on the other two sides.
Hence, on can separate the variables: using the one-dimensional Laplacians $L_N$ on $(0,r)$ with $1$-Robin
condition at $0$ and Neumann  condition at $r$ one has $N^r_{\frac{\pi}{4}}=L_N\otimes 1 + 1\otimes L_N$, and 
then $E_2(N^r_{\frac{\pi}{4}}) = E_1(L_N)+E_2(L_N)$. Using Proposition~\ref{prop22} one has
$E_2(N^r_{\frac{\pi}{4}})\ge -1 + 1/r^2+\cO(e^{-r})$ as $r\to +\infty$,
which gives the sought inequality \eqref{nonr1}. Hence, the claim is proved for $\theta=\pi/4$.

\begin{figure}
   \begin{minipage}[c]{.46\linewidth}
   \centering
      \includegraphics[height=40mm]{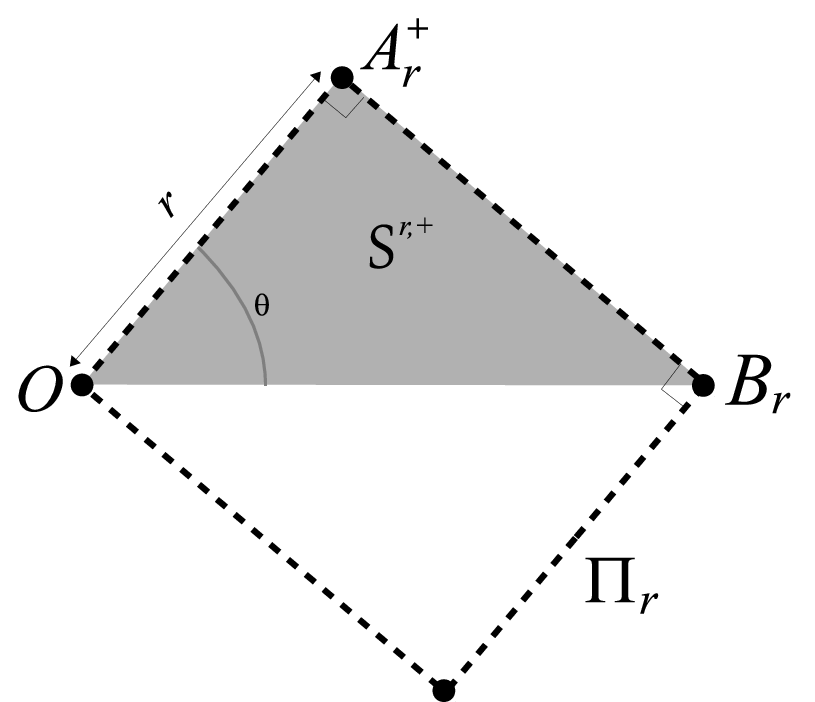}\\(a)
   \end{minipage} \hfill
   \begin{minipage}[c]{.46\linewidth}
   \centering
      \includegraphics[height=40mm]{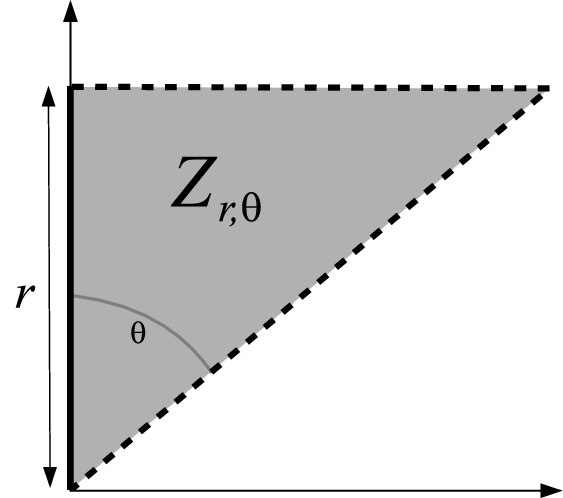}\\(b)
   \end{minipage}

\caption{Constructions for the proof of Proposition~\ref{prop-good}. 
(a) The completion of the triangle $\cS_\theta^{r,+}$ (shaded) to a rectangle $\Pi_r$ (surrounded by the dashed line).
(b) The triangle $Z_{r,\theta}$ is a rotated copy of $\cS_\theta^{r,+}$.
The solid/dashed lines correspond to Robin/Neumann boundary conditions.
\label{fig-zrt}}
\end{figure}

\emph{Step 2: extension to $\theta \in[\frac{\pi}{4},\frac{\pi}{2})$.}
We still have $\kappa(\theta)=1$ by Proposition~\ref{prop-sector}, hence,
we need to show that there exists $C>0$ such that 
\begin{equation}
\label{nonr2}
E_2(N^r_{\theta}) \geq -1 +C/r^2 \text{ as } r \text{ is large}.
\end{equation}
Using the symmetry with respect to the axis $Ox_1$ one easily sees that $N^r_{\theta}$ is unitarily equivalent
to $T^{r,D}_{\theta}\oplus T^{r,N}_{\theta}$, where $T^{r,D/N}_{\theta}$ stand for the Laplacians
in
\[
\mathcal S^{r,+}_{\theta}:=\mathcal S^r_\theta \cap \big\{ (x_1,x_2): \, x_2>0\big\}=\text{triangle } OA^+_rB_r
\]
with $1$-Robin condition at $OA^+_r$, Neumann condition at $A^+_rB_r$
and the Dirichlet/Neumann boundary condition at $OB_r$ (we refer to Figures~\ref{figure1} and~\ref{fig-zrt}(a) for an illustration).
Let us study first the Dirichlet part $T^{r,D}_{\theta}$. Let $\Pi_r$ be the rectangle constructed
on the vectors $OA^+_r$ and $A^+_rB_r$, see Figure~\ref{fig-zrt}(a), then $S^{r,+}_{\theta}\subset \Pi_r$.
Using the standard Dirichlet bracketing we obtain $E_n(T^{r,D}_{\theta})\ge E_n(Q_r)$ for any $n\in\NN$,
where $Q_r$ is the Laplacian in $\Pi_r$ with $1$-Robin condition  at $OA^+_r$, Neumann condition at $A^+_rB_r$
and the Dirichlet boundary condition at the remaining part of the boundary. Remark that $|A^+_r B_r|=r\tan\theta$,
and the operator $Q_r$ admits then a separation of variables and is unitarily equivalent to $L_D\otimes 1 + 1 \otimes D_r$, where $D_r$ is the Laplacian on $(0,r)$
with the Dirichlet boundary condition at $0$ and the Neumann boundary condition at $r$, and $L_D$ is the one-dimensional
Laplacian
on the interval $(0,r\tan\theta)$ with $1$-Robin condition at $0$ and Dirichlet condition on the other end.
Therefore, $E_1(T^{r,D}_{\theta})=E_1(L_D)+E_1(D_r)=E_1(L_D)+\pi^2/(4r^2)$.
Due to Proposition~\ref{prop21} we have $E_1(L_D)=-1+\cO(e^{-r\tan\theta})$, therefore,
$E_1(T^{r,D}_{\theta})\ge -1 + C_D/r^2$ for large $r$ with any fixed $C_D\in(0,\pi^2/4)$.
Therefore, the sought estimate \eqref{nonr2} becomes equivalent to the existence of $C_N>0$ for which there holds
\begin{equation}
   \label{nonr3}
E_2(T^{r,N}_{\theta})\ge -1 + C_N/r^2 \text{ as } r\to+\infty,
\end{equation}
which we already know to hold for $\theta=\frac{\pi}{4}$.
In order to study $T^{r,N}_{\theta}$ we apply a rotation bringing the triangle $\cS^{r,+}_\theta$ onto the triangle
$Z_{r,\theta}:=\big\{ (x_1,x_2): \  0<x_1\cotan\theta<x_2<r \big\}$,
so that $T^{r,N}_{\theta}$ becomes unitary equivalent to the Laplacian $Q_{r,\theta}$ in $L^2(Z_{r,\theta})$ with $1$-Robin condition
along the axis $Ox_2$ and Neumann condition at the remaining boundary, and $E_n(T^{r,N}_{\theta})=E_n(Q_{r,\theta})$ for any $n\in\NN$,
and one easily sees that 
\[
Q_{r,\theta}[u,u]=\int_{Z_{r,\theta}} |\nabla u|^2  \dd x - \int_0^r u(0,x_2)^2 \dd x_2, \quad \qdom(Q_{r,\theta})=H^1(Z_{r,\theta}),
\]
see Figure~\ref{fig-zrt}(b) for an illustration.
Using the unitary transform
\[
V: L^2(Z_{r\tan\theta,\frac{\pi}{4}})\to L^2(Z_{r,\theta}),
\quad
(V u)(x_1,x_2)=\sqrt{\tan\theta}\,u(x_1,x_2\tan\theta),
\]
which satisfies $V \big(H^1(Z_{r\tan\theta,\frac{\pi}{4}})\big)=H^1(Z_{r,\theta})$,
we obtain, with $u_j:=\partial u/\partial x_j$,
\begin{align*}
Q_{R,\theta}[Vu,Vu] &=\tan \theta \int_{Z_{R,\theta}} \Big( u_1 (x_1,x_2\tan\theta)^2 + \tan^2\theta \,u_2 (x_1,x_2\tan\theta)^2 \Big)\, \dd x\\
&\qquad -\tan\theta \int_0^r u(0,x_2\tan\theta)^2 \dd x_2\\
&=\int_{Z_{r\tan\theta,\frac{\pi}{4}}} \Big( u_1 (x_1,x_2)^2 + \tan^2\theta \,u_2 (x_1,x_2)^2 \Big)\dd x\\
&\qquad - \alpha \int_0^{r\tan\theta} u(0,x_2)^2 \dd x_2\\
&=Q_{r\tan\theta,\frac{\pi}{4}}[u,u]+ (\tan^2\theta -1) \int_{Z_{r\tan\theta,\frac{\pi}{4}}} u_2^2 \dd x.
\end{align*}
For $\theta\in \big[\frac{\pi}{4},\frac{\pi}{2}\big)$ we have $\tan\theta\ge 1$, hence, $Q_{r,\theta}[Vu,Vu]\ge Q_{r\tan\theta,\frac{\pi}{4}}[u,u]$ for all  $u \in H^1(Z_{r,\theta})$,
and by the min-max principle we have
\[
E_n(T^{r,N}_\theta)=E_n(Q_{r,\theta})\ge E_n(Q_{r\tan\theta,\frac{\pi}{4}})=E_n(T^{r\tan\theta,N}_\frac{\pi}{4}).
\]
It was already shown in Step~1 that for some $C>0$ we have $E_2(T^{r\tan\theta,N}_\frac{\pi}{4})\ge -1 + C/(r\tan \theta)^2$
for large $r$, so the substitution into the preceding inequality gives the sought estimate \eqref{nonr3} with $C_N=C \cotan^2\theta$.
\end{proof}

Now we state some consequences of the non-resonance condition, which will provide important components for the subsequent
asymptotic analysis:

\begin{lemma}\label{lem-trace0}
Assume that $\theta$ is non-resonant and denote by $\cL$
the subspace spanned by the eigenfunctions corresponding to the first $\kappa(\theta)$ eigenvalues of $N^r_{\theta,\alpha}$.
Then there exists $b>0$ such that for $\alpha r\to +\infty$ there holds
\begin{align}
	\label{trr1a}
\|v\|^2_{L^2(\cS^r_\theta)}&\le b\,r^2\big( N^r_{\theta,\alpha}[v,v] + \alpha^2\|v\|^2_{L^2(\cS^r_\theta)} \big), &
 v\in H^1(\cS^r_\theta)\mathop{\cap} \cL^\perp,\\
	\label{trr2a}
\int_{\partial_\ext \cS^r_\theta} v^2\dd s
&\le b\alpha r^2\big( N^r_{\theta,\alpha}[v,v] + \alpha^2\|v\|^2_{L^2(\cS^r_\theta)}\big),
&  v\in H^1(\cS^r_\theta)\mathop{\cap} \cL^\perp.
\end{align}
\end{lemma}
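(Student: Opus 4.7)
The proof splits naturally according to which of the two inequalities is at stake, and both reduce quickly to the non-resonance condition plus the scaled trace inequality of Lemma~\ref{sob-scale}.

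\emph{Proof of \eqref{trr1a}.} This is essentially a direct consequence of the min-max principle. Since $\cL$ is spanned by the eigenfunctions associated with the first $\kappa(\theta)$ eigenvalues of the self-adjoint operator $N^r_{\theta,\alpha}$ and $\qdom(N^r_{\theta,\alpha})=H^1(\cS^r_\theta)$, the spectral theorem yields, for every $v\in H^1(\cS^r_\theta)\cap \cL^\perp$,
\[
N^r_{\theta,\alpha}[v,v]\ge E_{\kappa(\theta)+1}(N^r_{\theta,\alpha})\,\|v\|^2_{L^2(\cS^r_\theta)}.
\]
By the non-resonance assumption there exists $C>0$ with $E_{\kappa(\theta)+1}(N^r_{\theta,\alpha})\ge -\alpha^2+C/r^2$ for $\alpha r$ large, and substituting this in the preceding line gives $(C/r^2)\|v\|^2\le N^r_{\theta,\alpha}[v,v]+\alpha^2\|v\|^2$, which is \eqref{trr1a} with $b=1/C$.

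\emph{Intermediate gradient bound.} Applying Lemma~\ref{sob-scale} to $U=\cS^1_\theta$ with $t=r$ (so $\cS^r_\theta=r\cS^1_\theta$) produces a constant $c>0$ such that, for every $\varepsilon\in(0,1]$ and every $v\in H^1(\cS^r_\theta)$,
\[
\int_{\partial \cS^r_\theta} v^2\dd s\le c\Big( r\varepsilon\,\|\nabla v\|^2_{L^2(\cS^r_\theta)}+\dfrac{1}{r\varepsilon}\,\|v\|^2_{L^2(\cS^r_\theta)}\Big).
\]
Since $\|\nabla v\|^2=N^r_{\theta,\alpha}[v,v]+\alpha\int_{\partial_*\cS^r_\theta}v^2\dd s$ and $\partial_*\cS^r_\theta\subset\partial\cS^r_\theta$, plugging the trace bound in and choosing $\varepsilon=1/(2c\alpha r)$ (which lies in $(0,1]$ for $\alpha r$ large) absorbs the gradient term on the left, yielding a constant $C''>0$ such that, for every $v\in H^1(\cS^r_\theta)$,
\[
\|\nabla v\|^2_{L^2(\cS^r_\theta)}\le C''\big(N^r_{\theta,\alpha}[v,v]+\alpha^2\|v\|^2_{L^2(\cS^r_\theta)}\big).
\]
Note that this gradient bound does not require $v\in\cL^\perp$.

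\emph{Proof of \eqref{trr2a}.} Apply the scaled trace inequality once more, but now to $\partial_\ext \cS^r_\theta\subset\partial\cS^r_\theta$, with $\varepsilon=1$:
\[
\int_{\partial_\ext\cS^r_\theta}v^2\dd s\le c\Big(r\,\|\nabla v\|^2_{L^2(\cS^r_\theta)}+\dfrac{1}{r}\,\|v\|^2_{L^2(\cS^r_\theta)}\Big).
\]
For $v\in H^1(\cS^r_\theta)\cap\cL^\perp$, substitute the gradient bound from the previous step together with \eqref{trr1a}; this produces a constant $C'''>0$ such that
\[
\int_{\partial_\ext\cS^r_\theta}v^2\dd s\le C'''\,r\,\big(N^r_{\theta,\alpha}[v,v]+\alpha^2\|v\|^2_{L^2(\cS^r_\theta)}\big).
\]
Since $r\le \alpha r^2$ whenever $\alpha r\ge 1$, this in particular implies \eqref{trr2a} for a suitable $b>0$, completing the proof.

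The only genuinely delicate point is the gradient bound in the intermediate step: one has to insert the trace inequality into the quadratic form $N^r_{\theta,\alpha}$ and then choose the free parameter $\varepsilon$ of size $1/(\alpha r)$ so as to both remain in the admissible range $(0,1]$ and produce a strictly positive coefficient on $\|\nabla v\|^2$. Everything else is a routine application of min-max in \eqref{trr1a} and of Lemma~\ref{sob-scale} in \eqref{trr2a}; observe in particular that \eqref{trr2a} as stated is not sharp (the argument in fact gives the stronger factor $r$ in place of $\alpha r^2$), but the weaker form is what will be convenient later.
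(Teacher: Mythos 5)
Your proof of \eqref{trr1a} is fine and coincides with the paper's. The problem is the ``intermediate gradient bound''. What the absorption argument actually yields, after inserting the trace inequality with $\varepsilon=1/(2c\alpha r)$, is
$\|\nabla v\|^2_{L^2(\cS^r_\theta)}\le 2N^r_{\theta,\alpha}[v,v]+4c^2\alpha^2\|v\|^2_{L^2(\cS^r_\theta)}$,
and this does \emph{not} imply $\|\nabla v\|^2\le C''\big(N^r_{\theta,\alpha}[v,v]+\alpha^2\|v\|^2\big)$, because $N^r_{\theta,\alpha}[v,v]$ can be negative of size comparable to $-\alpha^2\|v\|^2$. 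As stated (``for every $v\in H^1(\cS^r_\theta)$'') the bound is plainly false: for the ground state of $N^r_{\theta,\alpha}$ the right-hand side is negative since $E_1(N^r_{\theta,\alpha})<-\alpha^2$. It is also false on $\cL^\perp$: take $\theta=\pi/4$, so $\cS^r_\theta$ is a square and $N^r_{\theta,\alpha}\simeq L_N\otimes 1+1\otimes L_N$; the second eigenfunction $v=\psi_1\otimes\psi_2$ lies in $\cL^\perp$ and satisfies $\|\nabla v\|^2\approx\alpha^2\|v\|^2$, while $N^r_{\theta,\alpha}[v,v]+\alpha^2\|v\|^2\approx Cr^{-2}\|v\|^2$, so the ratio blows up like $(\alpha r)^2$. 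Consequently the last step is unjustified, and so is your closing claim that one actually gets the sharper factor $r$ in place of $\alpha r^2$: with the correct gradient bound and $\varepsilon=1$ in the final trace inequality you would only obtain a factor of order $\alpha^2 r^3$, which is \emph{weaker} than \eqref{trr2a}.

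The argument can be repaired, but not as written: keep the true inequality $\|\nabla v\|^2\le 2N^r_{\theta,\alpha}[v,v]+4c^2\alpha^2\|v\|^2$, restrict to $v\in\cL^\perp$, and use \eqref{trr1a} to get $\|\nabla v\|^2\le C\big(1+(\alpha r)^2\big)\big(N^r_{\theta,\alpha}[v,v]+\alpha^2\|v\|^2\big)$; then apply the scaled trace inequality to $\partial_\ext\cS^r_\theta$ with $\varepsilon\sim 1/(\alpha r)$ (not $\varepsilon=1$), which balances the two terms and gives precisely the factor $\alpha r^2$ of \eqref{trr2a} — no better. For comparison, the paper reaches \eqref{trr2a} more directly: by Lemma~\ref{rob-trunc} the fully Robin operator satisfies $R^r_{\theta,\alpha}\ge -c_0\alpha^2$, and since $R^r_{\theta,\alpha}[v,v]=N^r_{\theta,\alpha}[v,v]-\alpha\int_{\partial_\ext\cS^r_\theta}v^2\dd s$ this gives $\int_{\partial_\ext\cS^r_\theta}v^2\dd s\le\alpha^{-1}N^r_{\theta,\alpha}[v,v]+c_0\alpha\|v\|^2$ for all $v\in H^1(\cS^r_\theta)$, after which one only has to bound $\|v\|^2$ via \eqref{trr1a}; this avoids estimating $\|\nabla v\|^2$ altogether.
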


\begin{proof}
The norm estimate \eqref{trr1a} directly follows from the definition of a non-resonant half-angle (Definition~\ref{defnonres})
with the help of the spectral theorem. For \eqref{trr2a}, recall that by Lemma~\ref{rob-scale}
one can find $c_0>0$ such that $E_1(R^r_{\theta,\alpha})\ge-c_0\alpha^2$ for large $\alpha r$.
Due to
\[
\cQ(N^r_{\theta,\alpha})=\cQ(R^r_{\theta,\alpha})=H^1(\cS^r_\theta),
\quad
R^r_{\theta,\alpha}[v,v]=N^r_{\theta,\alpha}[v,v]-\alpha\int_{\partial_\ext \cS^r_\theta}v^2\dd s,
\]
the preceding inequality for $E_1(R^r_{\theta,\alpha})$ takes the form
\[
 \int_{\partial_\ext \cS^r_\theta}v^2\dd s\le \dfrac{1}{\alpha} N^r_{\theta,\alpha}[v,v]+ c_0 \alpha\|v\|^2_{L^2(\cS^r_\theta)} \text{ for all } v\in H^1(\cS^r_\theta).
\]
Assume in addition that $v\perp \cL$, then one bounds from above the second term on the right-hand side using~\eqref{trr1a}, which gives
\begin{align*}
 \int_{\partial_\ext \cS^r_\theta}v^2\dd s&\le \dfrac{1}{\alpha} N^r_{\theta,\alpha}[v,v]+ c_0 b\,\alpha r^2\big( N^r_{\theta,\alpha}[v,v] + \alpha^2\|v\|^2_{L^2(\cS^r_\theta)} \big)\\
&=\Big(\dfrac{1}{\alpha}+c_0 b\,\alpha r^2\Big)N^r_{\theta,\alpha}[v,v] + c_0 b\,\alpha r^2 \alpha^2\|v\|^2_{L^2(\cS^r_\theta)}\\
&\le\Big(\dfrac{1}{\alpha}+c_0 b\,\alpha r^2\Big)\Big(N^r_{\theta,\alpha}[v,v]+\alpha^2\|v\|^2_{L^2(\cS^r_\theta)}\Big).
\end{align*}
It remains to estimate, for $\alpha r\to +\infty$,
\[
\dfrac{1}{\alpha}+c_0 b\,\alpha r^2=\dfrac{1+c_0 b\,(\alpha r)^2}{\alpha}\le \dfrac{2c_0 b\,(\alpha r)^2}{\alpha}=2c_0 b \alpha r^2.\qedhere
\]
\end{proof}

\section{Robin eigenvalues in polygons: Proof of Theorem~\ref{thm-poly}}\label{sec-thm-poly}

\subsection{Decomposition of a polygon}

In this section we assume that $\Omega$ is a polygon with straight edges. We assume that $\Omega$ has $M$ vertices
$A_1,\dots, A_M$, and for the notation convenience we identify  $A_0\equiv A_M$ and $A_{M+1}\equiv A_1$,
and the same cyclic numbering convention will be applied to other related objects.
We denote by $\ell_j$ the length of the side $\Gamma_j:=[A_j,A_{j+1}]$, $j=1,\dots, M$, and
introduce the maps 
\[
\gamma_j:[0,\ell_j]\ni t \mapsto A_j + \dfrac{A_{j+1}-A_j}{\ell_j}\,t \in \RR^2
\]
providing an arc-length parametrization of $\Gamma_j$ with $\gamma_j(0)=A_j$ and $\gamma_j(\ell_j)=A_{j+1}$.
In addition, for $t\in(0,\ell_j)$ by $\nu_j(t)$ we denote the \emph{outer} unit normal to $\partial\Omega$
at the point $\gamma_j(t)$ of $\Gamma_j$.

\begin{figure}

\centering

\includegraphics[width=50mm]{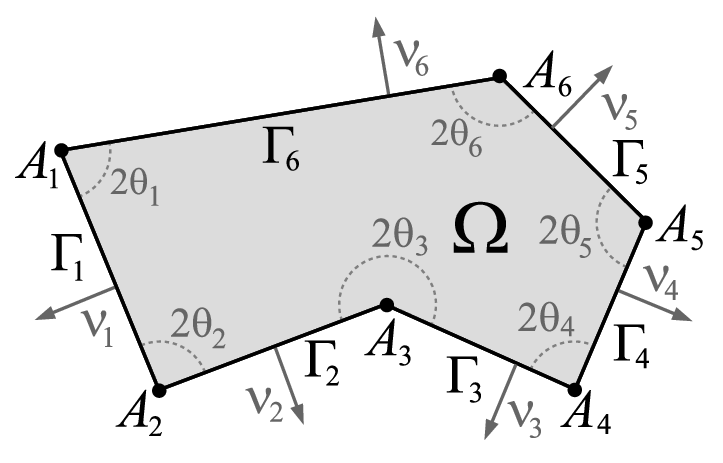}

\caption{An example of a polygon $\Omega$ with six vertices. The vertex $A_3$ is concave, the other vertices are convex, and
$\cJ_\cvx=\{1,2,4,5,6\}$.\label{fig-polygon}}

\end{figure}

By $\theta_j\in[0,\pi]$ we denote the half-angle of $\Omega$ at the vertex $A_j$, i.e. $\theta_j$
is the half of the angle between $\Gamma_{j-1}$ and $\Gamma_j$ when measured inside $\Omega$.
Our assumption is that there are neither zero angles nor artificial vertices, i.e. that
$\theta_j\notin \big\{0, \tfrac{\pi}{2},\pi\big\}$ for all $j=1,\dots,M$.
One says that a vertex $A_j$ is \emph{convex} if $\theta_j<\frac{\pi}{2}$,
otherwise it will be called \emph{concave}. We denote
\[
\cJ_\cvx:=\{j:\, A_j\text{ is convex}\}.
\]
We refer to Figure~\ref{fig-polygon} for an illustration.

%

\begin{figure}[b]
\centering
\begin{tabular}{cc}
\begin{minipage}[c]{52mm}
\begin{center}
\includegraphics[height=30mm]{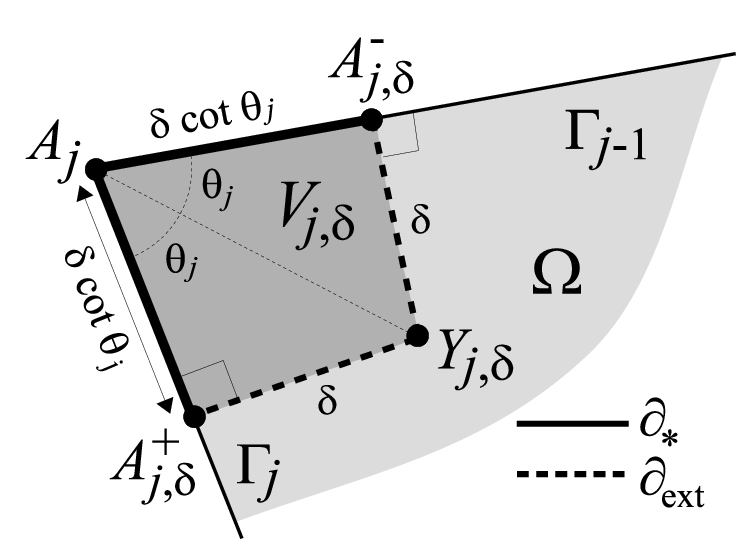}
\end{center}
\end{minipage}
&
\begin{minipage}[c]{52mm}
\begin{center}
\includegraphics[height=30mm]{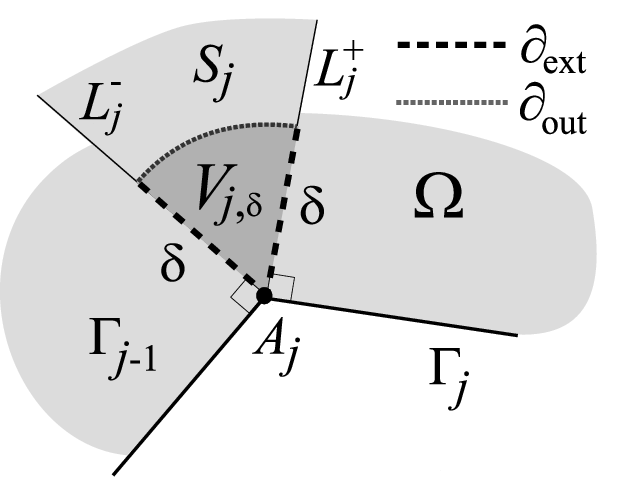}
\end{center}
\end{minipage}\\
(a)  & (b) 
\end{tabular}
\caption{The construction of the neighborhoods $V_{j,\delta}$: (a) convex vertex, (b) concave vertex.
The partial boundary $\partial_* V_{j,\delta}$
is shown with the thick solid line, the part $\partial_\ext V_{j,\delta}$ is indicated with the thick dashed line,
and the part $\partial_\out V_{j,\delta}$ with the gray dotted line.\label{fig8}}
\end{figure}

For small $\delta>0$ denote
\[
\Omega_\delta=\big\{x\in \Omega:\, \dist(x,\partial\Omega)<\delta\big\},
\quad \Omega_\delta^c:=\Omega\setminus \overline{\Omega_\delta},
\]
and $\Omega_\delta$ will be further decomposed near each vertex. The construction is different for convex and concave vertices.
\begin{itemize}
\item Let $A_j$ be a convex vertex, then there exists a unique point $Y_{j,\delta}\in \Omega$
such that $\dist(Y_{j,\delta},\Gamma_{j-1})=\dist(Y_{j,\delta},\Gamma_j)=\delta$.
Denote $\lambda_j:=\cot \theta_j$, then the points
\[
A_{j,\delta}^-:=\gamma_{j-1}(\ell_j-\lambda_j \delta), \quad
A_{j,\delta}^+:=\gamma_j(\lambda_j \delta)
\]
are exactly the orthogonal projections of $Y_{j,\delta}$ on $\Gamma_{j-1}$ and $\Gamma_j$, respectively.
We denote the interior of the quadrangle $A_j A_{j,\delta}^- Y_{j,\delta} A_{j,\delta}^+$ by $V_{j,\delta}$,
and, in turn, we decompose the boundary of $V_{j,\delta}$  into the following
parts:
\[
\partial_* V_{j,\delta}:=\partial V_{j,\delta}\cap\partial\Omega, \quad
\partial_\ext V_{j,\delta}:=\partial V_{j,\delta}\setminus \partial_* V_{j,\delta},
\quad
\partial_\out V_{j,\delta}:=\emptyset.
\]
\item Let $A_j$ be a concave vertex. Let $L^-_j$ be the half-line emanating from $A_j$ which is orthogonal to $\Gamma_{j-1}$ and directed inside $\Omega$.
By $L^+_j$ we denote the half-line emanating from $A_j$, orthogonal to $\Gamma_j$ at $A_j$ and directed inside $\Omega$.
Denote by $S_j$ the infinite sector bounded by $L^-_j$ and $L^+_j$ which lies inside $\Omega$ near $A_j$. Then we set
\[
V_{j,\delta}:=S_j\cap B(A_j,\delta), \quad \lambda_j:=0,
\]
where $B(a,r)$ is the disk of radius $r$ centered at $a$. We decompose the boundary of $V_{j,\delta}$ as follows:
\[
\partial_* V_{j,\delta}:=\emptyset,
\quad
\partial_\out V_{j,\delta}:=\partial V_{j,\delta}\cap \partial \Omega^c_\delta,
\quad
\partial_\ext V_{j,\delta}:=\partial V_{j,\delta}\setminus \partial_\out V_{j,\delta}.
\]
\end{itemize}
Remark that the numbers $\lambda_j$ represent a kind of ``length deficiency'':
the length of $\partial_*V_{j,\delta}$ is equal to $2\lambda_j \delta$ for both convex and concave vertices.

The set $\cW_\delta:=\Omega_\delta\setminus\overline{\bigcup_{j=1}^M V_{j,\delta}}$ 
is then the union of $M$ disjoint thin rectangles. Namely, denote
\begin{equation}
\label{rectangles}
\begin{aligned}
I_{j,\delta}:=(\lambda_j \delta,\ell_j-\lambda_{j+1}\delta), \quad \Pi_{j,\delta}:=I_{j,\delta}\times(0,\delta),\\
W_{j,\delta}:=\Phi_j(\Pi_{j,\delta}), \quad \Phi_j(s,t):=\gamma_j(s)-t\nu_j(s),
\end{aligned}
\end{equation}
then $\cW_\delta=\bigcup_{j=1}^M W_{j,\delta}$, and $W_{j,\delta}\cap W_{k,\delta}=\emptyset$ for $j\ne k$.
We decompose the boundary of each rectangle $W_{j,\delta}$ as follows:
\begin{gather*}
\partial_* W_{j,\delta}:=\partial W_{j,\delta} \cap \partial\Omega,
\quad
\partial_\out W_{j,\delta}:=\partial W_{j,\delta}\cap \partial \Omega^c_\delta,\\
\partial_\ext W_{j,\delta}:=\partial W_{j,\delta}\setminus\Big(
\partial_* W_{j,\delta}\cup \partial_\out W_{j,\delta}
\Big).
\end{gather*}
By construction we have the equality
\begin{equation}
    \label{eq-vjwj}
\bigcup\nolimits_{j=1}^M \partial_\ext V_{j,\delta} = \bigcup\nolimits_{j=1}^M \partial_\ext W_{j,\delta},
\end{equation}
which will be of importance later. We refer to Fig.~\ref{fig9-decomp} for an illustration of the above decomposition.

\begin{figure}

\centering

\includegraphics[width=100mm]{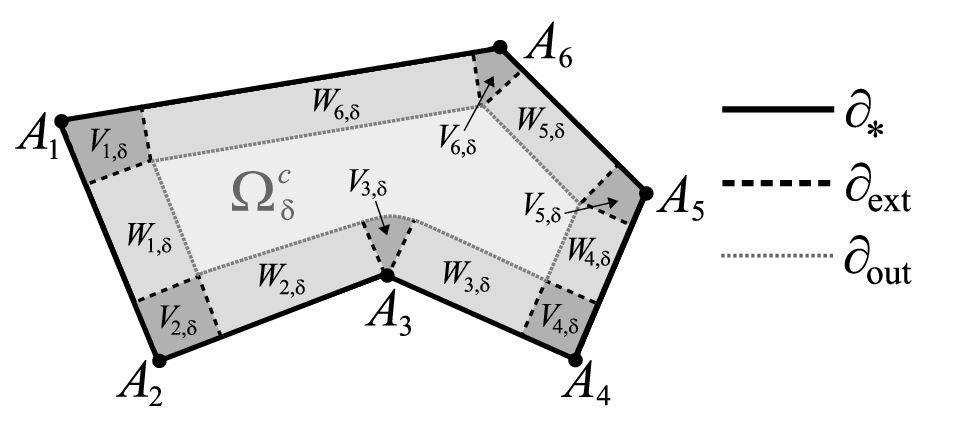}

\caption{Decomposition of a polygon.\label{fig9-decomp}}

\end{figure}

\subsection{First estimates for side-induced eigenvalues}

With each $j\in\{1,\dots,M\}$ we associate the corresponding number $\kappa(\theta_j)$
of discrete eigenvalues of the Robin Laplacians in the infinite sector of aperture $2\theta_j$ (see Section~\ref{sec-sectors})
and set
\begin{align*}
K&:=\kappa(\theta_1)+\dots +\kappa(\theta_M)\equiv \sum\nolimits_{j\in\cJ_\cvx} \kappa(\theta_j),\\
\cE&:=\text{ the disjoint union of } \big\{ \cE_n(\theta_j), \ n=1,\dots,\kappa(\theta_j)\big\} \text{  for } j\in\cJ_\cvx,\\
\cE_n&:=\text{ the $n$th element of $\cE$ when numbered in the non-decreasing order,}
\end{align*}
(see Subsection~\ref{sec-sectors} for a detailed notation).
For what follows we introduce several operators:
\begin{align*}
N^V_j&:=\begin{minipage}[t]{90mm}\raggedright 
the Laplacian in $V_{j,\delta}$ with the $\alpha$-Robin boundary condition at $\partial_* V_{j,\delta}$
and the Neumann boundary condition at the rest of the boundary,\end{minipage}\\
D^V_j&:=\begin{minipage}[t]{90mm}\raggedright 
the Laplacian in $V_{j,\delta}$ with the $\alpha$-Robin boundary condition at $\partial_* V_{j,\delta}$
and the Dirichlet boundary condition at the rest of the boundary.\end{minipage}
\end{align*}
We remark that for concave vertices $A_j$, the respective operators $(N/D)^V_j$
are just the Neumann/Dirichlet Laplacians in $V_{j,\delta}$ due to $\partial_* V_{j,\delta}=\emptyset$.
Furthermore, denote
\begin{align*}
N^W_j&:=\begin{minipage}[t]{90mm}\raggedright 
the Laplacian in $W_{j,\delta}$ with the $\alpha$-Robin boundary condition at $\partial_* W_{j,\delta}$
and the Neumann boundary condition at the rest of the boundary,\end{minipage}\\
D^W_j&:=\begin{minipage}[t]{90mm}\raggedright 
the Laplacian in $W_{j,\delta}$ with the $\alpha$-Robin boundary condition at $\partial_* W_{j,\delta}$
and the Dirichlet boundary condition at the rest of the boundary.\end{minipage}
\end{align*}
Finally, introduce
\[
N^c:=
\begin{minipage}[t]{90mm}\raggedright 
the Neumann Laplacian in $\Omega^c_\delta$.\end{minipage}
\]
One easily sees that  $D^V_j$  and $N^V_j$ with $j\in\cJ_\cvx$ are covered by the analysis of Section~\ref{sec-trunc},
and the behavior of the first $\kappa(\theta_j)$ eigenvalues for $\alpha\delta\to+\infty$ is given in Lemmas~\ref{dalph} and~\ref{nalph}, respectively.
On the other hand, for $j\notin\cJ_\cvx$ one has $D^V_j\ge0$, $N^V_j\ge 0$, and $N^c\ge 0$.
For the rest of the section we assume that 
\begin{equation}
    \label{rescond0}
\text{all convex vertices are non-resonant,}
\end{equation}
then, in addition, we have a lower bound for the $\big(\kappa(\theta_j)+1)$-th eigenvalue
of each $N^V_j$ with $j\in\cJ_\cvx$ due to Definition~\ref{defnonres}.

In the subsequent constructions we choose $\delta>0$ depending on $\alpha$ in such a way that
\begin{equation}
  \label{eq-daa}
\delta\to 0^+, \quad \alpha\delta\to+\infty \text{ for } \alpha\to+\infty,
\end{equation}
and summarize the preceding observations as follows:
\begin{lemma}\label{lem-prec0}
With some some $c>0$ and $c_0>0$ there holds, for $\alpha\to +\infty$,
\begin{align*}
E_n\big(\boplus\nolimits_{j=1}^M N^V_j \big)&=\cE_n\,\alpha^2+\cO(1/\delta^2), & \text{for }n &=1,\dots, K,\\
E_n\big(\boplus\nolimits_{j=1}^M D^V_j \big)&=\cE_n\,\alpha^2+\cO(\alpha^2 e^{-c\alpha\delta}), & \text{for } n &=1,\dots, K,\\
E_{K+1}\big(\boplus\nolimits_{j=1}^M D^V_j \big)&\ge E_{K+1}\big(\boplus\nolimits_{j=1}^M N^V_j \big)\ge-\alpha^2+c_0/\delta^2.
\end{align*}
\end{lemma}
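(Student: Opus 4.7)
The plan is to reduce everything to the truncated-sector analysis of Section~\ref{sec-trunc}. Fix a convex vertex index $j\in\cJ_\cvx$: by construction the quadrangle $V_{j,\delta}=A_jA_{j,\delta}^- Y_{j,\delta}A_{j,\delta}^+$ is congruent to the truncated convex sector $\cS^{r_j}_{\theta_j}$ of Definition~\ref{def25} with $r_j:=\delta\cot\theta_j$, since $|A_jA_{j,\delta}^\pm|=\delta\cot\theta_j=r_j$ and the segments $Y_{j,\delta}A_{j,\delta}^\pm$ are perpendicular to the polygon sides at $A_{j,\delta}^\pm$. Under this congruence $\partial_*V_{j,\delta}$ maps to $\partial_*\cS^{r_j}_{\theta_j}$ and the remaining boundary of $V_{j,\delta}$ to $\partial_\ext\cS^{r_j}_{\theta_j}$, so $N^V_j$ and $D^V_j$ are unitarily equivalent to $N^{r_j}_{\theta_j,\alpha}$ and $D^{r_j}_{\theta_j,\alpha}$ respectively. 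The choice \eqref{eq-daa} ensures $\alpha r_j\to+\infty$.

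I would then apply Lemmas~\ref{dalph} and~\ref{nalph} directly: for each $n\in\{1,\dots,\kappa(\theta_j)\}$,
\[
E_n(D^V_j)=\alpha^2\cE_n(\theta_j)+\cO(\alpha^2e^{-c\alpha\delta}),\qquad
E_n(N^V_j)=\alpha^2\cE_n(\theta_j)+\cO(1/\delta^2),
\]
the constant $c$ being readjusted to absorb the $\cot\theta_j$ in $r_j$. In addition, the non-resonance hypothesis \eqref{rescond0} combined with Definition~\ref{defnonres} yields $E_{\kappa(\theta_j)+1}(N^V_j)\ge-\alpha^2+C_j/\delta^2$ for some $C_j>0$, the factor $\tan^2\theta_j$ from $1/r_j^2$ being absorbed into $C_j$. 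For concave vertices $j\notin\cJ_\cvx$ one has $\partial_*V_{j,\delta}=\emptyset$, so $N^V_j$ and $D^V_j$ are the pure Neumann and Dirichlet Laplacians on $V_{j,\delta}$; both are non-negative, with $E_1(N^V_j)=0$.

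Finally, since the spectrum of a direct sum is the multiset union of the individual spectra, the first $K=\sum_{j\in\cJ_\cvx}\kappa(\theta_j)$ eigenvalues of $\boplus_jN^V_j$ (respectively $\boplus_jD^V_j$) are obtained by sorting the $K$ eigenvalues displayed above in non-decreasing order. For large $\alpha$ all of them lie below $\cE_{\kappa(\theta_j)}(\theta_j)\alpha^2+\cO(1/\delta^2)<-\alpha^2$, hence strictly below every other eigenvalue appearing in the direct sum; their reordering produces exactly the two claimed asymptotics for $\boplus_j N^V_j$ and $\boplus_j D^V_j$. The $(K+1)$-th eigenvalue of $\boplus_jN^V_j$ is then the smallest among the remaining eigenvalues, namely the values $E_{\kappa(\theta_j)+1}(N^V_j)$ for $j\in\cJ_\cvx$ and $E_1(N^V_j)=0$ for $j\notin\cJ_\cvx$, and is thus bounded below by $-\alpha^2+c_0/\delta^2$ with $c_0:=\min_{j\in\cJ_\cvx}C_j>0$ (the bound being trivial if $\cJ_\cvx=\emptyset$). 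The inequality $E_{K+1}(\boplus_jD^V_j)\ge E_{K+1}(\boplus_jN^V_j)$ is the standard Dirichlet--Neumann bracketing, the form inclusion $\qdom(D^V_j)\subset\qdom(N^V_j)$ giving $D^V_j\ge N^V_j$ componentwise.

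No serious obstacle is anticipated: the statement is essentially a bookkeeping consequence of the truncated-sector results once the congruence $V_{j,\delta}\cong\cS^{r_j}_{\theta_j}$ is made explicit. The only point requiring mild care is the correct tracking of the rescaling $r_j=\delta\cot\theta_j$, which is what converts the remainders $\cO(1/(\alpha r)^2)\cdot\alpha^2$ of Lemma~\ref{nalph} and the lower bound $C/r^2$ of Definition~\ref{defnonres} into the respective $\cO(1/\delta^2)$ and $c_0/\delta^2$ appearing in the statement.
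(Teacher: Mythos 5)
Your proposal is correct and follows essentially the same route as the paper, which states this lemma as a summary of the preceding observations (the congruence of each convex vertex neighborhood $V_{j,\delta}$ with the truncated sector $\cS^{r_j}_{\theta_j}$, $r_j=\delta\cot\theta_j$, Lemmas~\ref{dalph} and~\ref{nalph}, the non-resonance hypothesis \eqref{rescond0}, and positivity of the Neumann/Dirichlet Laplacians at concave vertices) without a separate written proof. Your write-up merely makes explicit the rescaling $r_j=\delta\cot\theta_j$ and the sorting of the direct-sum spectrum, which is exactly the intended bookkeeping.
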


In what follows we are going to use several one-dimensional operators. We denote
\begin{align*}
D_j&:=\text{ the Dirichlet Laplacian on $(0,\ell_j)$}, \\
D_{j,\delta}&:=\text{ the Dirichlet Laplacian on $I_{j,\delta}$},\\
N_j&:=\text{ the Neumann Laplacian on $(0,\ell_j)$}, \\
N_{j,\delta}&:=\text{ the Neumann Laplacian on $I_{j,\delta}$}, 
\end{align*}
and remark that for each fixed $n\in\NN$ one has $E_n(D_{j,\delta})=E_n(D_j)+\cO(\delta)$ and $E_n(N_{j,\delta})=E_n(N_j)+\cO(\delta)$.
We start with a simple estimate for the eigenvalues of $R^\Omega_\alpha$:

\begin{prop}\label{DtN}
There holds, with some $c>0$,
\begin{equation}
  \label{eq-ap01}
E_n(R^\Omega_\alpha)=\cE_n\,\alpha^2+\cO(1/\delta^2+\alpha^2e^{-c\alpha\delta}), \quad n\in\{1,\dots,K\}.
\end{equation}
In addition, for any $n\in\NN$ there holds
\begin{multline}
   \label{eq-ap02}
-\alpha^2 +E_n\big(\boplus\nolimits_{j=1}^M N_j\big) + \cO(\delta+\alpha^2 e^{-\alpha\delta}) \le E_{K+n}(R^\Omega_\alpha)\\
\le -\alpha^2+E_n\big(\boplus\nolimits_{j=1}^M D_j\big) + \cO(\delta+\alpha^2e^{-\alpha\delta}).
\end{multline}
\end{prop}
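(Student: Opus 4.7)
The plan is to apply Dirichlet--Neumann bracketing to the decomposition $\Omega = \bigcup_j V_{j,\delta} \cup \bigcup_j W_{j,\delta} \cup \Omega^c_\delta$. Introduce
\begin{align*}
A_N &:= \big(\boplus\nolimits_{j=1}^M N^V_j\big) \oplus \big(\boplus\nolimits_{j=1}^M N^W_j\big) \oplus N^c,\\
A_D &:= \big(\boplus\nolimits_{j=1}^M D^V_j\big) \oplus \big(\boplus\nolimits_{j=1}^M D^W_j\big) \oplus D^c,
\end{align*}
where $D^c$ denotes the Dirichlet Laplacian in $\Omega^c_\delta$. By construction the parts $\partial_* V_{j,\delta}$ and $\partial_* W_{j,\delta}$ together cover $\partial\Omega$, so that the sesquilinear forms satisfy $R^\Omega_\alpha[u,u] = A_N[u,u]$ for every $u \in H^1(\Omega)$, viewed as an element of $\qdom(A_N)$ via restriction to the pieces. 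Since moreover $\qdom(A_D) \subset H^1(\Omega) \subset \qdom(A_N)$, with agreement of the forms on the smaller space, the min-max principle yields
\[
E_n(A_N) \le E_n(R^\Omega_\alpha) \le E_n(A_D), \quad n \in \NN.
\]

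Next I would control the spectrum of each block. The vertex operators are covered by Lemma~\ref{lem-prec0}: the first $K$ eigenvalues of $\boplus\nolimits_{j=1}^M N^V_j$ and $\boplus\nolimits_{j=1}^M D^V_j$ are $\cE_n \alpha^2 + \cO(1/\delta^2)$ and $\cE_n\alpha^2 + \cO(\alpha^2 e^{-c\alpha\delta})$ respectively, all strictly below $-\alpha^2$, and the non-resonance assumption \eqref{rescond0} gives $E_{K+1}(\boplus\nolimits_{j=1}^M N^V_j) \ge -\alpha^2 + c_0/\delta^2$, which also controls the Dirichlet counterpart from below. For each rectangle, the affine parametrization $\Phi_j$ from \eqref{rectangles} identifies $W_{j,\delta}$ isometrically with $I_{j,\delta}\times(0,\delta)$, on which $N^W_j$ and $D^W_j$ separate as
\[
N^W_j \simeq L_N\otimes 1 + 1\otimes N_{j,\delta}, \qquad D^W_j \simeq L_D\otimes 1 + 1\otimes D_{j,\delta},
\]
where $L_N$ and $L_D$ are the one-dimensional Robin--Neumann and Robin--Dirichlet Laplacians on $(0,\delta)$ from Propositions~\ref{prop22} and \ref{prop21}. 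Combining $E_1(L_N), E_1(L_D) = -\alpha^2 + \cO(\alpha^2 e^{-\alpha\delta})$, the lower bounds $E_2(L_N) \ge 1/\delta^2$ and $E_2(L_D) \ge 0$, and $E_n(N_{j,\delta}) = E_n(N_j) + \cO(\delta)$ (resp.\ Dirichlet), one obtains for each fixed $n$
\[
E_n(N^W_j) = -\alpha^2 + E_n(N_j) + \cO(\delta + \alpha^2 e^{-\alpha\delta}),\quad E_n(D^W_j) = -\alpha^2 + E_n(D_j) + \cO(\delta + \alpha^2 e^{-\alpha\delta}),
\]
with no further rectangle eigenvalues below $-\alpha^2 + 1/\delta^2$. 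Finally $N^c, D^c \ge 0$.

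To conclude, one merely reshuffles the eigenvalues of $A_N$ and $A_D$ in nondecreasing order. For $n \le K$, the $K$ lowest eigenvalues come from the vertex blocks, since $\cE_n \alpha^2 < -\alpha^2$ lies strictly below any rectangle or interior eigenvalue; combining the two sides of the bracket yields \eqref{eq-ap01}. For $n \ge 1$, the $(K+n)$-th eigenvalue is the $n$-th among the remaining spectrum, in which the rectangle contributions are of order $-\alpha^2 + \cO(1)$, whereas the surplus vertex contributions are $\ge -\alpha^2 + c_0/\delta^2$ and the interior is $\ge 0$. Since $E_n(\boplus\nolimits_{j=1}^M N_j)$ and $E_n(\boplus\nolimits_{j=1}^M D_j)$ stay bounded while $c_0/\delta^2 \to \infty$ by \eqref{eq-daa}, this minimum is realized in the rectangle block, which gives \eqref{eq-ap02}.

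The main obstacle, as the sketch suggests, is precisely the step where the non-resonance hypothesis is invoked to push the $(K+1)$-th vertex eigenvalue above the rectangle eigenvalues; without it one could not guarantee that the $(K+n)$-th decoupled eigenvalue is a rectangle contribution, and an entirely different mechanism would determine the asymptotics. Everything else reduces to routine bookkeeping with the min-max principle and the one-dimensional estimates of Section~\ref{sec-prel}.
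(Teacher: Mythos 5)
Your proposal is correct and follows essentially the same route as the paper: Dirichlet--Neumann bracketing over the decomposition into $V_{j,\delta}$, $W_{j,\delta}$ and $\Omega^c_\delta$, the vertex estimates of Lemma~\ref{lem-prec0} (where the non-resonance hypothesis enters), separation of variables in the rectangles via Propositions~\ref{prop21} and~\ref{prop22}, and finally interleaving the decoupled eigenvalues. The only cosmetic difference is that you keep a Dirichlet block $D^c$ on $\Omega^c_\delta$ in the upper bound, whereas the paper drops it; since $D^c\ge 0$ this is harmless for the eigenvalues in question.
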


\begin{proof}
Due to the standard Dirichlet-Neumann bracketing, for any $n\in \NN$ one has
\begin{equation}
  \label{braket1a}
\begin{aligned}
E_n \big( N^c \oplus (\boplus\nolimits_{j=1}^M N^V_j ) \oplus (\boplus\nolimits_{j=1}^M N^W_j)\big)
&\le
E_n(R^\Omega_\alpha)\\
&\le E_n\big(\boplus\nolimits_{j=1}^M D^V_j \oplus (\boplus\nolimits_{j=1}^M D^W_j) \big).
\end{aligned}
\end{equation}
The operators $N^W_j$ and $D^W_j$ admits a separation of variables: if one denotes $L_{N/D}$ the Laplacian on $(0,\delta)$
with $\alpha$-Robin condition at $0$ and Neumann/Dirichlet condition at $\delta$, then
one has the unitary equivalences $N^W_j\simeq N_{j,\delta}\otimes 1 + 1\otimes L_N$ and $D^W_j\simeq D_{j,\delta}\otimes 1 + 1\otimes L_D$.
For each fixed $n\in\NN$ one has $E_n(N_{j,\delta})=\cO(1)$ and $E_n(D_{j,\delta})=\cO(1)$. On the other hand, by
Propositions~\ref{prop21} and~\ref{prop22} we have $E_1(L_{N/D})=-\alpha^2+ \cO(\alpha^2e^{-\alpha\delta})$ and $E_2(L_{N/D})\ge 0$.
Therefore, $E_n(N^W_j)=E_1(L_N)+E_n(N_{j,\delta})$ and $E_n(D^W_j)=E_1(L_D)+E_n(D_{j,\delta})$,
and then
\begin{align*}
E_n\big(\boplus\nolimits_{j=1}^M N^W_j \big)&= E_1(L_N) + E_n \big(\boplus\nolimits_{j=1}^M N_{j,\delta} \big)\\
&=-\alpha^2+ E_n \big(\boplus\nolimits_{j=1}^M N_j \big) + \cO(\delta+\alpha^2e^{-\alpha\delta}),\\
E_n\big(\boplus\nolimits_{j=1}^M D^W_j \big)&= E_1(L_D) + E_n \big(\boplus\nolimits_{j=1}^M D_{j,\delta} \big)\\
&=-\alpha^2+ E_n \big(\boplus\nolimits_{j=1}^M D_j \big) + \cO(\delta+\alpha^2e^{-\alpha\delta}).
\end{align*}
In view of the estimates of Lemma~\ref{lem-prec0} one has then
\begin{align*}
E_K\Big(N^c \oplus \big(\boplus\nolimits_{j=1}^M N^V_j \big)\Big)
&\le E_n\big(\boplus\nolimits_{j=1}^M N^W_j \big)
\le 
E_{K+1}\Big(N^c \oplus \big(\boplus\nolimits_{j=1}^M N^V_j \big)\Big), \\
E_K\Big( \big(\boplus\nolimits_{j=1}^M D^V_j \big)\Big)
&\le E_n\big(\boplus\nolimits_{j=1}^M D^W_j \big)
\le 
E_{K+1}\Big(\big(\boplus\nolimits_{j=1}^M D^V_j \big)\Big).
\end{align*}
Therefore, for each $n\in\{1,\dots,K\}$ one has
\begin{align*}
E_n \big( N^c \oplus (\boplus\nolimits_{j=1}^M N^V_j) \oplus (\boplus\nolimits_{j=1}^M N^W_j)\big)&=E_n \big(\boplus\nolimits_{j=1}^M N^V_j\big)\\
&=\cE_n\,\alpha^2+\cO(1/\delta^2),\\
E_n\big((\boplus\nolimits_{j=1}^M D^V_j) \oplus (\boplus\nolimits_{j=1}^M D^W_j) \big) &=E_n\big(\boplus\nolimits_{j=1}^M D^V_j\big)\\
&=\cE_n\,\alpha^2+\cO(\alpha^2e^{-c\alpha\delta}),
\end{align*}
and \eqref{braket1a} reads as $\cE_n\,\alpha^2+\cO(1/\delta^2)\le E_n(R^\Omega_\alpha)\le \cE_n\,\alpha^2+\cO(e^{-c\alpha\delta})$ and gives \eqref{eq-ap01}.
In order to obtain \eqref{eq-ap02} we remark that for each $n\in\NN$ one has
\begin{align*}
E_{K+n} \Big( N^c \oplus \big(\boplus_{j=1}^M N^V_j \big) \oplus &\big(\boplus_{j=1}^M N^W_j \big)\Big)=
E_n\big(\boplus_{j=1}^M N^W_j \big)\\
&=-\alpha^2+ E_n \big(\boplus_{j=1}^M N_j \big) + \cO(\delta+\alpha^2e^{-\alpha\delta}),\\
E_{K+n}\Big(\boplus_{j=1}^M D^V_j \oplus &\big(\boplus_{j=1}^M D^W_j \big) \Big)=E_n\big(\boplus_{j=1}^M D^W_j \big)\\
&=-\alpha^2+ E_n \big(\boplus_{j=1}^M D_j \big) + \cO(\delta+\alpha^2e^{-\alpha\delta}),
\end{align*}
and it remains to use these bounds on the both sides of \eqref{braket1a}.
\end{proof}

By taking $\delta:=b\log\alpha /\alpha$ with a sufficiently large $b$ one then obtains:
\begin{coro}\label{corol-11}
There holds
\begin{equation}
   \label{eq-ap11}
E_n(R^\Omega_\alpha)=\cE_n\,\alpha^2+o(\alpha^2) \text{ for $n\in\{1,\dots,K\}$.}
\end{equation}
In addition, for any $n\in\NN$ there holds
\begin{equation}
   \label{eq-ap12}
\begin{aligned}
E_{K+n}(R^\Omega_\alpha)&= -\alpha^2+\cO(1), \\
E_{K+n}(R^\Omega_\alpha)&\le -\alpha^2+E_n\big(\boplus\nolimits_{j=1}^M D_j\big) + \cO\Big(\tfrac{\log\alpha}{\alpha}\Big).
\end{aligned}
\end{equation}
\end{coro}

Remark that \eqref{eq-ap11} is only given for completeness (and as a preparation for the analysis of the curvilinear case): the remainder is not optimal and can be improved to $\cO(e^{-c\alpha})$ with a suitable $c>0$ by using more advanced methods as shown by Khalile~\cite{khalile2}.

\subsection{Lower bound for side-induced eigenvalues}

It remains to obtain a more precise lower bound for $E_{K+n}(R^\Omega_\alpha)$.
This  is the most involved part of the whole analysis, and it will be done in the present subsection with the help of the Proposition~\ref{prop6}
by constructing a suitable identification map.
All estimates of this subsection are for $\alpha$ and $\delta$ in the asymptotic regime~\eqref{eq-daa}.
Introduce some additional objects:
\begin{align*}
L&:=\begin{minipage}[t]{90mm}
the subspace of $L^2(\Omega)$ spanned by the first $K$ eigenfunctions  of $R^\Omega_\alpha$,
\end{minipage}\\
L_j&:=\begin{minipage}[t]{90mm}
the subspace of $L^2(V_{j,\delta})$ spanned by the first $\kappa(\theta_j)$ eigenfunctions of
$N^V_j$, with $j\in\cJ_\cvx$,
\end{minipage}\\
\sigma_j&:L^2(\Omega)\to L^2(V_{j,\delta}) \text{ the operator of restriction,}\\
&  \quad \text{$(\sigma_j u)(x)=u(x)$ for $x\in V_{j,\delta}$,}
\end{align*}
then the adjoint operators  $\sigma^*_j:L^2(V_{j,\delta})\to L^2(\Omega)$ are the operators of extension by zero.
Recall that the distance  $d(E,F)$ between subspaces $E$ and $F$ was discussed in Subsection~\ref{sub-dist}.
\begin{lemma}\label{dist-ll0}
For  $j\in\cJ_\cvx$ one has  $d(\sigma^*_j L_j, L)=\cO(e^{-c\alpha\delta})$ with some fixed $c>0$.
\end{lemma}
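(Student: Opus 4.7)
Fix $j\in\cJ_\cvx$. The strategy is to produce a family of genuine test functions in $\dom(R^\Omega_\alpha)$ that nearly coincide with the first $\kappa(\theta_j)$ eigenfunctions of $N^V_j$, and then to invoke Proposition~\ref{propdist2} after expressing $L$ as the spectral subspace of $R^\Omega_\alpha$ for a suitable compact interval.

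I would let $\psi_1,\ldots,\psi_{\kappa(\theta_j)}$ be an $L^2(V_{j,\delta})$-orthonormal family of eigenfunctions of $N^V_j$ with eigenvalues $\mu_1,\ldots,\mu_{\kappa(\theta_j)}$, and introduce a radial cut-off $\tilde\chi(x):=\chi\bigl(|x-A_j|/(c_j\delta)\bigr)$, where $\chi\colon[0,\infty)\to[0,1]$ equals $1$ on $[0,1/2]$ and vanishes on $[3/4,\infty)$, and $c_j>0$ is small enough that $\supp\tilde\chi$ stays at positive distance from $\partial_\ext V_{j,\delta}$. Since $\tilde\chi$ is radial around $A_j$ and the two segments making up $\partial_*V_{j,\delta}$ emanate from $A_j$, the normal derivative $\partial_\nu\tilde\chi$ vanishes on $\partial_*V_{j,\delta}$. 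Set $\tilde\psi_i:=\sigma_j^*(\tilde\chi\psi_i)$. Then $\tilde\psi_i\in H^1(\Omega)$, and a direct weak-form calculation testing against arbitrary $v\in H^1(\Omega)$ and using the Robin condition $\partial_\nu\psi_i=\alpha\psi_i$ on $\partial_*V_{j,\delta}$ together with $\partial_\nu\tilde\chi=0$ there gives $\tilde\psi_i\in\dom(R^\Omega_\alpha)$ and
\[
(R^\Omega_\alpha-\mu_i)\tilde\psi_i = -2\nabla\tilde\chi\cdot\nabla\psi_i-\psi_i\Delta\tilde\chi\quad\text{in }L^2(\Omega).
\]

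Next I would quantify the errors using the Agmon-type estimate of Lemma~\ref{agmon22a}, applicable to $N^V_j$ after a rigid motion identifying it with $N^{\lambda_j\delta}_{\theta_j,\alpha}$. Since $\nabla\tilde\chi$ and $\Delta\tilde\chi$ are supported in the annulus $\{|x-A_j|\asymp\delta\}$ and carry pointwise bounds $\cO(\delta^{-1})$ and $\cO(\delta^{-2})$, the exponential weight $e^{c\alpha|x-A_j|}$ furnished by Lemma~\ref{agmon22a} yields, for some $c'>0$,
\[
\|\sigma_j^*\psi_i-\tilde\psi_i\|_{L^2(\Omega)}=\cO(e^{-c'\alpha\delta}),
\qquad
\varepsilon:=\max_i\|(R^\Omega_\alpha-\mu_i)\tilde\psi_i\|_{L^2(\Omega)}=\cO(\alpha e^{-c'\alpha\delta}).
\]
The Gram matrix of $(\tilde\psi_i)$ is then $I+\cO(e^{-c'\alpha\delta})$, whose smallest eigenvalue $\lambda$ is bounded below by $1/2$ for large $\alpha$.

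Finally I would apply Proposition~\ref{propdist2} with $A:=R^\Omega_\alpha$ and $I:=[-C_0\alpha^2,-(1+\eta_0)\alpha^2]$, choosing $\eta_0\in(0,|\cE_K|-1)$ and $C_0>0$ large enough. Lemma~\ref{lem-prec0} and Corollary~\ref{corol-11} ensure that all $\mu_i$ and all of $E_1(R^\Omega_\alpha),\dots,E_K(R^\Omega_\alpha)$ lie in $I$ for large $\alpha$, while, crucially invoking the non-resonance assumption~\eqref{rescond0} through Lemma~\ref{lem-prec0}, one has $E_{K+1}(R^\Omega_\alpha)\ge-\alpha^2+\cO(1)>-(1+\eta_0)\alpha^2$ for large $\alpha$. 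Hence the spectral subspace of $R^\Omega_\alpha$ associated with $I$ is exactly $L$ and the gap satisfies $\eta=\tfrac12\dist\bigl(I,\spec(R^\Omega_\alpha)\setminus I\bigr)\gtrsim\alpha^2$. Proposition~\ref{propdist2} then yields
\[
d\bigl(\vspan\{\tilde\psi_i\},L\bigr)\le \dfrac{\varepsilon}{\eta}\sqrt{\dfrac{\kappa(\theta_j)}{\lambda}}=\cO(e^{-c'\alpha\delta}),
\]
and combining this with $d(\sigma_j^*L_j,\vspan\{\tilde\psi_i\})=\cO(e^{-c'\alpha\delta})$, which follows from the first bound above, through the triangle inequality for the subspace distance yields the claim. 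The main obstacle is to guarantee that $\tilde\psi_i\in\dom(R^\Omega_\alpha)$ even though $\psi_i$ may carry a corner singularity at $A_j$ preventing $H^2$ regularity; this is exactly why the cut-off must be chosen \emph{radial}, so that the Robin boundary condition survives multiplication by $\tilde\chi$.
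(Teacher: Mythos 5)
Your proposal is correct and follows essentially the same route as the paper's own proof: a radial cut-off around $A_j$ whose normal derivative vanishes on the straight boundary pieces (so the truncated eigenfunctions of $N^V_j$ lie in $\dom(R^\Omega_\alpha)$), the Agmon estimate of Lemma~\ref{agmon22a} to control the truncation errors and the Gram matrix, Proposition~\ref{propdist2} applied to $R^\Omega_\alpha$ with an interval containing the $K$ corner eigenvalues and a spectral gap of order $\alpha^2$ coming from \eqref{eq-ap11}--\eqref{eq-ap12}, and a final triangle inequality for the subspace distance. The only (harmless) deviations are the slightly different choice of interval $I$ and your residual bound $\varepsilon=\cO(\alpha e^{-c'\alpha\delta})$, which in general carries an extra factor $1/\delta$ as in the paper, i.e. $\varepsilon=\cO\big((\alpha/\delta)e^{-c\alpha\delta}\big)$; since $\eta\gtrsim\alpha^2$ and $1/\delta=o(\alpha)$, this does not affect the conclusion $d(\sigma_j^*L_j,L)=\cO(e^{-c\alpha\delta})$.
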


\begin{proof}
During the proof we denote $\Lambda_j:=\sigma^*_j L_j\subset L^2(\Omega)$, and for $v\in L^2(V_{j,\delta})$
we denote $v_*:=\sigma_j^*v\in L^2(\Omega)$.

Let $0<a<b<1$. Consider a $C^\infty$ function $\varphi:\RR\to [0,1]$ with $\varphi(t)=1$ for $t\le a$ and $\varphi(t)=0$ for $t\ge b$.
Introduce $\varphi_\delta:\Omega\to \RR$ by $\varphi_\delta(x)= \varphi\big( |x-A_j|/(\delta\cot \theta_j)\big)$,
which clearly satisfies (as $\alpha$ is large, hence, $\delta$ is small):
\begin{itemize}
\item $0\le \varphi_\delta\le 1$, and for all $\beta\in\NN^2$ with $1\le |\beta|\le 2$
there holds $\|\partial^\beta \varphi_\delta\|_\infty\le C \delta^{-|\beta|}$,
\item $\varphi_\delta=1$ in $V_{j,a\delta}$, and $\varphi_\delta=0$ in $\Omega\setminus \overline{V_{j,b\delta}}$,
\item the normal derivative of $\varphi_\delta$ at $\partial\Omega$ is zero,
\end{itemize}
where $C>0$ is some fixed constant.
Denote
\[
\varphi_\delta \Lambda_j:=\big\{ \varphi_\delta v_*:\, v_*\in \Lambda_j\big\}\subset L^2(\Omega),
\]
then 
\begin{equation}
  \label{dist01}
d(\Lambda_j,L)\le d(\Lambda_j,\varphi_\delta \Lambda_j)+d(\varphi_\delta \Lambda_j,L).
\end{equation}
The first term on the right-hand side can be easily estimated by applying directly the definition of
the distance. Namely, due to the Agmon-type estimate for the first $\kappa(\theta_j)$
eigenfunctions of $N^V_j$ (Lemma~\ref{agmon22a}), with some $b_0>0$ and $B>0$ there holds
\[
\int_{V_{j,\delta}} e^{b_0\alpha|x-A_j|}
\Big(\dfrac{1}{\alpha^2} |\nabla v|^2 + v^2\Big) dx\le B\|v\|^2_{L^2(V_{j,\delta})},
\quad
v\in L_j.
\]
Writing
\begin{multline*}
\int_{V_{j,\delta}\setminus \overline{V_{j,a\delta}}} \Big( \dfrac{1}{\alpha^2} |\nabla v|^2 + v^2\Big)\dd x\\
=\int_{V_{j,\delta}\setminus \overline{V_{j,a\delta}}}e^{b_0\alpha|x-A_j|}\cdot e^{ b_0\alpha|x-A_j|}\Big( \dfrac{1}{\alpha^2} |\nabla v|^2 + v^2_*\Big)\dd x
\end{multline*}
we obtain the following upper bound
\begin{multline*}
 \int_{V_{j,\delta}\setminus \overline{V_{j,a\delta}}} \Big( \dfrac{1}{\alpha^2} |\nabla v|^2 + v^2\Big)\dd x\\
\le e^{-b_0\alpha a  \delta} \int_{V_{j,\delta}\setminus \overline{V_{j,a\delta}}} e^{ b_0\alpha|x-A_j|}\Big( \dfrac{1}{\alpha^2} |\nabla v|^2 + v^2_*\Big)\dd x\\
\le e^{-b_0 \alpha a \delta} \int_{V_{j,\delta}} e^{ {b_0}\alpha|x-A_j|}\Big( \dfrac{1}{\alpha^2} |\nabla v|^2 + v^2_*\Big)\dd x.
\end{multline*}
This finally gives
\begin{align}
 \label{eq-est123}
  \int_{V_{j,\delta}\setminus \overline{V_{j,a\delta}}} \Big( \dfrac{1}{\alpha^2} |\nabla v|^2 + v^2\Big)\dd x
  \leq
  Be^{-2c\alpha\delta} \|v\|^2_{L^2(V_{j,\delta})}, \quad c:={b_0}a/2.
 \end{align}
Therefore, for any $v_*\in \Lambda_j$ we have
\begin{multline*}
\big\| v_*-\varphi_\delta v_*\big\|^2_{L^2(\Omega)}= \int_{\Omega} (1-\varphi_\delta)^2 v_*^2\, dx
\le \int_{\Omega\setminus \overline{V_{j,a\delta}}} v_*^2\, dx\\
\equiv \int_{V_{j,\delta}\setminus \overline{V_{j,a\delta}}} v^2\, dx\le 
Be^{-2c\alpha\delta} \|v\|^2_{L^2(V_{j,\delta})}\equiv Be^{-2c\alpha\delta} \|v_*\|^2_{L^2(\Omega)}.
\end{multline*}
Denote by $P_j$ the orthogonal projector on  $\varphi_\delta \Lambda_j$ in $L^2(\Omega)$,
then for any $u\in L^2(\Omega)$ we have by definition $\|u-P_j u\|=\inf_{\phi\in \varphi_\delta \Lambda_j} \|u -\phi\|$.
Therefore, for any non-zero $v_*\in\Lambda_j$ we have
\begin{gather}
\dfrac{\|v_*-P_jv_*\|}{\|v_*\|}\le \dfrac{\|v_*-\varphi_\delta v_*\|}{\|v_*\|}
\equiv \dfrac{\big\|(1-\varphi_\delta)v_*\big\|}{\|v_*\|}\le \sqrt{B}\, e^{-c\alpha \delta}, \nonumber\\
d(\Lambda_j,\varphi_\delta \Lambda_j)=\sup_{v_*\in \Lambda_j,\, v_*\ne 0} \dfrac{\|v_*-P_j v_*\|}{\|v_*\|}
\le \sqrt{B}\, e^{-c\alpha \delta}. \label{dist02}
\end{gather}

Now we need an estimate for the second term on the right-hand side of \eqref{dist01}, which will be obtained
with the help of Proposition~\ref{propdist2}. Namely, let $v^n$ with $n\in\big\{1,\dots,\kappa(\theta_j)\big\}$
be the eigenfunctions of $N^V_j$ for the eigenvalues $E_n:=E_n(N^V_j)$ forming an orthonormal basis of $L_j$,
then, in particular,
\[
-\Delta v^n=E_n v^n \text{ in } V_{j,\delta}, \quad \frac{\partial v^n}{\partial\nu}=\alpha v^n \text{ at } \partial_*V_{j,\delta}\subset \partial\Omega,
\]
where $\nu$ the outer unit normal.
Consider the functions $\psi_n:=\varphi_\delta v^n_*$, then using the above properties
of $\varphi_\delta$ we have
\begin{gather*}
\Delta \psi_n
=\big((\Delta \varphi_\delta) v^n +2\nabla \varphi_\delta\cdot \nabla v^n +\varphi_\delta \Delta v^n\big)_* \in L^2(\Omega),\\
\dfrac{\partial\psi_n}{\partial \nu}= \dfrac{\partial \varphi_\delta}{\partial \nu}\, v^n
+ \varphi_\delta\dfrac{\partial v^n}{\partial \nu}
=\varphi_\delta\dfrac{\partial v^n}{\partial \nu}=\alpha \varphi_\delta v^n=\alpha \psi_n \text{ on } \partial\Omega,
 \end{gather*}
which shows that $\psi_n$ belong to the domain of $R^\Omega_\alpha$.
We represent now
\[
(R^\Omega_\alpha-E_n)\psi_n =(-\Delta -E_n)\psi_n= \big(-(\Delta \varphi_\delta) v^n -2\nabla \varphi_\delta\cdot \nabla v^n\big)_*
\]
and note that the supports of $\nabla\varphi_\delta$ and $\Delta\varphi_\delta$ are contained
in $V_{j,b\delta}\setminus \overline{V_{j,a\delta}}$. Therefore, with the help of \eqref{eq-est123}
we can estimate
\begin{align*}
\int_\Omega \big| (\Delta \varphi_\delta) v_*^n\big|^2\dd x
& \le \dfrac{C^2}{\delta^4}
\int_{V_{j,b\delta}\setminus \overline{V_{j,a\delta}}} (v^n)^2\dd x\\
&\le \dfrac{BC^2}{\delta^4}e^{-2c\alpha\delta} \|v^n\|^2_{L^2(V_{j,\delta})}\equiv \dfrac{BC^2}{\delta^4}e^{-2c\alpha\delta},\\
\int_\Omega \big| \nabla \varphi_\delta\cdot \nabla v_*^n\big|^2\dd x
& \le \int_\Omega |\nabla \varphi_\delta|^2 |\nabla v_*^n|^2\dd x
\le \dfrac{C^2}{\delta^2}
\int_{V_{j,b\delta}\setminus \overline{V_{j,a\delta}}} (\nabla v^n)^2\dd x\\
& \le\dfrac{BC^2\alpha^2}{\delta^2}e^{-2c\alpha\delta} \|v^n\|^2_{L^2(V_{j,\delta})}\equiv \dfrac{BC^2\alpha^2}{\delta^2}e^{-2c\alpha\delta},
\end{align*} 
and by noting that $1/\delta^2=o(\alpha/\delta)$ we have
\[
\big\| (R^\Omega_\alpha-E_n)\psi_n\big\|_{L^2(\Omega)}
=\cO \big( (\alpha/\delta)\, e^{-c\alpha\delta}\big).
\]

Let us estimate the Gram matrix $G$ of $(\psi_n)$.
We have, using Cauchy-Schwarz and \eqref{eq-est123},
\begin{multline*}
\Big| \langle \psi_k,\psi_n\rangle_{L^2(\Omega)}-\langle v^k_*,v^n_*\rangle_{L^2(\Omega)}\Big|\\
=
\Big|\int_\Omega (\varphi_\delta^2-1) v^k_* v^n_*\dd x\Big|
\le \int_{V_{j,\delta}\setminus \overline{V_{j,a\delta}}} |v^k\, v^n|\,\dd x\\
\le \dfrac{1}{2}\Big(
\int_{V_{j,\delta}\setminus \overline{V_{j,a\delta}}} (v^k)^2\,\dd x
+
\int_{V_{j,\delta}\setminus \overline{V_{j,a\delta}}} (v^n)^2\,\dd x
\Big)\le Be^{-2c\alpha\delta}.
\end{multline*}
Therefore, we have $\langle \psi_k,\psi_n\rangle_{L^2(\Omega)}=\delta_{k,n}+\cO(e^{-2c\alpha\delta})$,
and the lowest eigenvalue $\lambda$ of $G$ is estimated as $\lambda=1+\cO(e^{-2c\alpha\delta})$.

Finally let $h:=(-\cE_K -1)/2$, then the interval $I:=\big((\cE_1-h)\alpha^2, (\cE_K+h)\alpha^2\big)$
contains all the above  eigenvalues $E_n$ due to Lemma~\ref{lem-prec0},
and it also contains the first $K$ eigenvalues of $R^\Omega_\alpha$
and satisfies $\dist \big( I, \spec (R^\Omega_\alpha)\setminus I\big)\ge \frac{1}{4}\,h \alpha^2$ by \eqref{eq-ap11}.
Therefore, we are exactly in the situation of Proposition~\ref{propdist2} with the parameters
\[
E=\varphi_\delta \Lambda_j, \quad
F=L, \quad
\varepsilon=\cO \big( \tfrac{\alpha}{\delta}\, e^{-c\alpha\delta}\big),
\quad \eta\ge \tfrac{1}{8}h \alpha^2, \quad
\lambda=1+\cO(e^{-2c\alpha\delta}),
\]
which gives $d(\varphi_\delta \Lambda_j,L)= \cO \big(e^{-c\alpha\delta}/(\alpha\delta)\big)$.
By combining this last inequality with \eqref{dist02} in the initial triangular inequality \eqref{dist01}
one arrives at the conclusion.
\end{proof}

\begin{lemma}\label{vertices1}
There exist $b>0$ and $c>0$ such that for each $j=1,\dots, M$ there holds
\begin{align}
\|\sigma_j u\|^2_{L^2(V_{j,\delta})}
&\le b\delta^2 \Big(N^V_j[\sigma_j u,\sigma_j u]+ \alpha^2 \|\sigma_j u\|^2_{L^2(V_{j,\delta})}\Big)\nonumber\\
&\qquad+ b\alpha^2\delta^2e^{-c\alpha\delta} \|u\|^2_{L^2(\Omega)},\label{ineq2}\\
\int_{\partial_\ext V_{j,\delta}} (\sigma_j u)^2\dd s
&\le b \alpha\delta^2 \Big(N^V_j[\sigma_j u,\sigma_j u]+ \alpha^2 \|\sigma_j u\|^2_{L^2(V_{j,\delta})}\Big)\nonumber\\
&\qquad + b\alpha^3\delta^2 e^{-c\alpha\delta} \|u\|^2_{L^2(\Omega)} \label{ineq1}
\end{align}
as $u\in H^1(\Omega)$ with $u\perp L$.
\end{lemma}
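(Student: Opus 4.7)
The strategy is to treat the convex and concave cases separately, and in the convex case to decompose $\sigma_j u$ with respect to the low-eigenvalue subspace $L_j$ of $N^V_j$ so that Lemma~\ref{lem-trace0} can be applied to the orthogonal part, while the component in $L_j$ is handled via Lemma~\ref{dist-ll0}.

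For a concave vertex $A_j$ one has $\kappa(\theta_j)=0$, so $N^V_j$ is the Neumann Laplacian on the ball-sector $V_{j,\delta}$. Both \eqref{ineq2} and \eqref{ineq1} reduce to Lemma~\ref{sob-scale} applied with $\varepsilon = 1/(\alpha\delta)$, combined with the trivial bound $\|\sigma_j u\|^2 \le b\alpha^2\delta^2\|\sigma_j u\|^2$ valid as soon as $b\alpha^2\delta^2 \ge 1$, which holds in the regime~\eqref{eq-daa}; here the condition $u\perp L$ and the exponential remainder are both superfluous.

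For a convex vertex, $V_{j,\delta}$ is isometric to $\cS^{r}_{\theta_j}$ with $r=\delta\cot\theta_j$, and $N^V_j$ corresponds to $N^{r}_{\theta_j,\alpha}$. Denoting by $P_{L_j}$ the $L^2(V_{j,\delta})$-orthogonal projector onto $L_j$, I would write $\sigma_j u = v + w$ with $v:=P_{L_j}\sigma_j u$ and $w:=(1-P_{L_j})\sigma_j u$. Because $v$ and $w$ lie in complementary spectral subspaces of $N^V_j$, one has both $\|\sigma_j u\|^2=\|v\|^2+\|w\|^2$ and $N^V_j[\sigma_j u,\sigma_j u]=N^V_j[v,v]+N^V_j[w,w]$; moreover, Lemma~\ref{lem-prec0} implies $|N^V_j[v,v]|\le C\alpha^2\|v\|^2$. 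The key estimate for $v$ comes from the identity
\[
\|v\|^2 = \langle \sigma_j u,v\rangle_{L^2(V_{j,\delta})} = \langle u,\sigma_j^* v\rangle_{L^2(\Omega)} = \langle u,(1-P_L)\sigma_j^* v\rangle_{L^2(\Omega)},
\]
where the last equality uses $u\perp L$; Cauchy-Schwarz together with $\|\sigma_j^* v\|_{L^2(\Omega)}=\|v\|_{L^2(V_{j,\delta})}$ and Lemma~\ref{dist-ll0} then give $\|v\|_{L^2(V_{j,\delta})}\le Ce^{-c_0\alpha\delta}\|u\|_{L^2(\Omega)}$.

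Since $w\in H^1(V_{j,\delta})\mathop{\cap} L_j^\perp$, Lemma~\ref{lem-trace0} applies to $w$ (this is exactly where the non-resonance assumption~\eqref{rescond0} enters), yielding $\|w\|^2\le b r^2(N^V_j[w,w]+\alpha^2\|w\|^2)$. Combining this with $N^V_j[w,w]\le N^V_j[\sigma_j u,\sigma_j u]+C\alpha^2\|v\|^2$, $r^2\le \delta^2\cot^2\theta_j$, and the bound on $\|v\|^2$ proves~\eqref{ineq2} after absorbing the prefactor $\alpha^2\delta^2$ into the exponential by shrinking $c_0$ to $c<c_0$. The trace bound~\eqref{ineq1} follows from the splitting
\[
\int_{\partial_\ext V_{j,\delta}}(\sigma_j u)^2\dd s \le 2\int_{\partial_\ext V_{j,\delta}} w^2\dd s + 2\int_{\partial_\ext V_{j,\delta}} v^2\dd s,
\]
where the $w$-term is controlled directly by the trace estimate in Lemma~\ref{lem-trace0}, and the $v$-term is handled by combining the Agmon-type decay of Lemma~\ref{agmon22a} for the eigenfunctions spanning $L_j$ with the scaled trace inequality of Lemma~\ref{sob-scale} applied to $V_{j,\delta}\setminus\overline{V_{j,a\delta}}$ for some $a\in(0,1)$; this is legitimate because $\partial_\ext V_{j,\delta}$ lies at distance $\ge \delta\cot\theta_j$ from $A_j$. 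The resulting exponentially small term is then absorbed into the stated error.

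The principal obstacle is bookkeeping: the various polynomial prefactors $\alpha^p\delta^q$ arising when passing from $w$ to $\sigma_j u$ and from the Agmon decay to a trace estimate must all be swallowed by the exponential $e^{-c\alpha\delta}$, which is possible but requires one to carefully diminish the Agmon exponent at each step. Conceptually, however, the whole argument rests on two facts only: that $L_j$ (after extension by zero) is exponentially close to $L$, and that on $L_j^\perp$ the operator $N^V_j$ is bounded below by $-\alpha^2+C/\delta^2$.
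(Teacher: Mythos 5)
Your proof is correct and follows essentially the same route as the paper: the same splitting of $\sigma_j u$ into its $L_j$-component, which is made exponentially small via $u\perp L$ and the distance estimate of Lemma~\ref{dist-ll0}, and its orthogonal complement, which is handled by the non-resonance norm and trace bounds of Lemma~\ref{lem-trace0}, with the concave vertices treated by the elementary scaled trace inequality. The only divergence is your treatment of the trace of the $L_j$-component on $\partial_\ext V_{j,\delta}$: the paper obtains it in one line from the rough bound $E_1(R^V_j)\ge -c_0\alpha^2$ (Lemma~\ref{rob-scale}) applied to that component, whereas your Agmon-decay-plus-annulus-trace argument also works but is heavier than needed.
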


\begin{proof}
Remark that the sought inequalities  look quite similar to those in Lemma~\ref{lem-trace0}. The novelty
is that we do not assume $\sigma_j u\perp L_j$ (in this case the result would follow directly)
but just $u\perp L$. The main technical ingredient of the proof below is to show that the orthogonal
projection of $\sigma_j u$ onto $L_j$ is sufficiently small and absorbed by the last summands in the above inequalities~\eqref{ineq2} and~\eqref{ineq1}.
This will be achieved using the distance estimate of Lemma~\ref{dist-ll0}.

Assume first that $j\in\cJ_\cvx$. Let $P$ be the orthogonal projector on $L$ in $L^2(\Omega)$
and $P_j$ be the orthogonal projector on $L_j$ in $L^2(V_{j,\delta})$. Consider the following functions of $L^2(V_{j,\delta})$:
\[
u^V:=\sigma_j u,
\quad
v_0:=P_j u^V, \quad v:=(1-P_j) u^V.
\]
Due to $u\perp L$ we have $u=(1-P)u$, hence,
\begin{align*}
\|v_0\|_{L^2(V_{j,\delta})}=\|\sigma_j^*v_0\|_{L^2(\Omega)}&=\big\| \sigma_j^*P_j\sigma_j(1-P) u\big\|_{L^2(\Omega)}\\
&\le \big\| \sigma_j^*P_j\sigma_j(1-P)\big\| \, \|u\|_{L^2(\Omega)}.
\end{align*}
The operator $\Pi_j:=\sigma_j^*P_j\sigma_j$ is exactly the orthogonal projector
on $\sigma_j^*L_j$ in $L^2(\Omega)$, and by Lemma~\ref{dist-ll0} one 
has $\big\| \sigma_j^*P_j\sigma_j(1-P)\big\|=\|\Pi_j- \Pi_j P\|=d(\sigma_j^*L_j,L)=\cO(e^{-c\alpha\delta})$
with some $c>0$. Then for some $b>0$ one has 
\begin{equation}
    \label{vnorm}
\|v_0\|_{L^2(V_{j,\delta})}\le be^{-c\alpha\delta}\|u\|_{L^2(\Omega)}.
\end{equation}
As $P_j$ is a spectral projector for  $N^V_j$, one has
$N^V_j[u^V,u^V]=N^V_j[v_0,v_0]+N^V_j[v,v]$, and due to the spectral theorem we have the inequalities
\[
E_{1}( N^V_j)\|v_0\|^2_{L^2(V_{j,\delta})}\le N^V_j[v_0,v_0]
\le
 E_{\kappa(\theta_j)}( N^V_j)\|v_0\|^2_{L^2(V_{j,\delta})}.
\]
By Lemma~\ref{nalph} we have $E_n( N^V_j)=\cO(\alpha^2)$ for $n=1,\dots,\kappa(\theta_j)$ and using~\eqref{vnorm} one arrives at
\begin{equation}
 \label{nseq2}
\begin{aligned}
\Big| N^V_j[v_0,v_0] \Big| &\le a_0 \alpha^2 e^{-2c\alpha\delta}\|u\|^2_{L^2(\Omega)},\\
N^V_j[v,v] &\le N^V_j[u^V,u^V] + a_0 \alpha^2 e^{-2c\alpha\delta}\|u\|^2_{L^2(\Omega)}.
\end{aligned}
\end{equation}
As $v\perp L_j$, one can apply the trace and norm estimate for non-resonant truncated sectors (Lemma~\ref{lem-trace0}).
Using first the norm estimate one has, with some $c_1>0$,
\[
\|v\|^2_{L^2(V_{j,\delta})}\le c_1\delta^2\Big(
N^V_j[v,v]+\alpha^2\|v\|^2_{L^2(V_{j,\delta})}
\Big),
\]
and by using \eqref{vnorm}, \eqref{nseq2} and the inequality $\lVert v\rVert^2_{L^2(V_{j,\delta})} \leq \lVert u^V\rVert^2_{L^2(V_{j,\delta})}$ we have
\begin{align*}
\| u^V\|^2_{L^2(V_{j,\delta})}&=\|v\|^2_{L^2(V_{j,\delta})}+\|v_0\|^2_{L^2(V_{j,\delta})}\\
&\le  c_1\delta^2\Big(
N^V_j[v,v]+\alpha^2\|v\|^2_{L^2(V_{j,\delta})}\Big) + b^2e^{-2c\alpha\delta}\|u\|^2_{L^2(\Omega)}\\
&\le c_1\delta^2\Big(
N^V_j[u^V,u^V]+\alpha^2\|u^V\|^2_{L^2(V_{j,\delta})}\Big)\\
&\qquad + (a_0c_1\alpha^2\delta^2e^{-2c\alpha\delta}+b^2e^{-2c\alpha\delta})\|u\|^2_{L^2(\Omega)}\\
&\le c_1\delta^2\Big(
N^V_j[u^V,u^V]+\alpha^2\|u^V\|^2_{L^2(V_{j,\delta})}\Big)\\
&\qquad + b_0\alpha^2\delta^2e^{-2c\alpha\delta}\|u\|^2_{L^2(\Omega)}
\end{align*}
with a sufficiently large $b_0>0$, which proves \eqref{ineq2}.
Furthermore, using first the trace estimate of Lemma~\ref{lem-trace0}
and then \eqref{nseq2} we have, with some $c_2>0$,
\begin{equation}
    \label{eq-trace2-1}
\begin{aligned}
\int_{\partial_{\ext} V_{j,\delta}} v^2\dd s&\le c_1 \alpha\delta^2 \Big(
N^V_j[v,v]+\alpha^2\|v\|^2_{L^2(V_{j,\delta})}
\Big)\\
&\le 
c_1 \alpha\delta^2 \Big(
N^V_j[u^V,u^V]+\alpha^2\|u^V\|^2_{L^2(V_{j,\delta})}
\Big)\\
&\quad + c_2 \alpha^3\delta^2 e^{-2c\alpha\delta}\|u\|^2_{L^2(\Omega)},
\end{aligned}
\end{equation}
with $c_2 := c_1 a_0$. Let $R^V_j$ be the Laplacian in $V_{j,\delta}$
with $\alpha$-Robin condition at the whole boundary, then $E_1(R^V_j)\ge-c_0\alpha^2$ with some $c_0>0$ (see Lemma~\ref{rob-scale}),
i.e.
\begin{gather*}
R^V_j[f,f]\equiv N^V_j[f,f]-\alpha\int_{\partial_\ext V_{j,\delta}} f^2\dd s
\ge -c_0\alpha^2 \|f\|^2_{L^2(V_{j,\delta})},\\
\int_{\partial_\ext V_{j,\delta}}f^2 \dd s\le \dfrac{1}{\alpha}\Big(N^V_j[f,f]+ c_0 \alpha^2\|f\|^2_{L^2(V_{j,\delta})}\Big)
 \text{ for all } f\in H^1(V_{j,\delta}).
\end{gather*}
Using this inequality for $f:=v_0$ and then applying~\eqref{vnorm} and \eqref{nseq2} on both terms on the right-hand side
we arrive at
\begin{equation}
  \label{eq-trace2-2}
\int_{\partial_\ext V_{j,\delta}}v_0^2 \dd s\le c_3\alpha e^{-2c\alpha\delta}\|u\|^2_{L^2(\Omega)}
\end{equation}
with some $c_3>0$. Finally,
\[
\int_{\partial_\ext V_{j,\delta}} (u^V)^2\dd s\equiv 
\int_{\partial_\ext V_{j,\delta}} (v+v_0)^2\dd s\le 2\int_{\partial_\ext V_{j,\delta}} v^2\dd s
+2\int_{\partial_\ext V_{j,\delta}} v^2_0\dd s,
\]
and by estimating the two terms on the right-hand side by \eqref{eq-trace2-1} and \eqref{eq-trace2-2}
one arrives at \eqref{ineq1}.

Now assume that $j\notin\cJ_\cvx$, then $N^V_j\ge 0$ is just the Neumann Laplacian in $V_{j,\delta}$. In particular, for large $\alpha$ one has the obvious estimate
\[
\|\sigma_j u \|^2_{L^2(V_{j,\delta})}\le \dfrac{1}{\alpha^2} \Big( N^V_j[\sigma_j u ,\sigma_j u] + \alpha^2 \|\sigma_j u\|^2_{L^2(V_{j,\delta})}\Big),
\]
implying \eqref{ineq2} due to $1/\alpha^2=o(\delta^2)$. To obtain \eqref{ineq1} consider the Laplacian
$R^V_j$ in $V_{j,\delta}$ with the $\alpha$-Robin boundary condition at the whole boundary, then
$R^V_j\ge -c_4\alpha^2$ with some $c_4>0$ by Lemma~\ref{rob-scale}, and
for all $f \in H^1(V_{j,\delta})$ we have
\begin{align*}
\int_{\partial V_{j,\delta}} f^2\dd s&\le \dfrac{1}{\alpha}\Big( \int_{V_{j,\delta}} |\nabla f|^2\dd x
+c_4\alpha^2  \int_{V_{j,\delta}} f^2\dd x\Big )\\
&\equiv
\dfrac{1}{\alpha} \Big(N^V_j[f,f]+c_4\alpha^2\|f\|^2_{L^2(V_{j,\delta})}\Big).
\end{align*}
The terms on the right-hand side are non-negative, so for $c_5:=\max\{1,c_4\}$ one has
\[
\int_{\partial V_{j,\delta}} f^2\dd s\le
\dfrac{c_5}{\alpha} \Big(N^V_j[f,f]+\alpha^2\|f\|^2_{L^2(V_{j,\delta})}\Big) \text{ for all } f \in H^1(V_{j,\delta}).
\]
Using this estimate for $f:=\sigma_j u$ and noting that $1/\alpha=o(\alpha\delta^2)$ we arrive at \eqref{ineq1}.
Remark that the last summands in \eqref{ineq2} and \eqref{ineq1} appear for convex vertices only.
\end{proof}

Now we have collected all technical ingredients for the proof of the main estimate:
\begin{prop}\label{prop-low-rob1}
For any fixed $n\in\NN$ one has $E_{K+n}(R^\Omega_\alpha)\ge -\alpha^2 + E_n\big(\boplus\nolimits_{j=1}^M D_j\big) + \cO \big( \tfrac{\log\alpha}{\sqrt{\alpha}}\big)$
as $\alpha\to+\infty$.
\end{prop}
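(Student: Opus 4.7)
My plan is to apply Proposition~\ref{prop6} with the direct sum $\boplus_{j=1}^M D_j$ in the role of the ``skeleton'' operator. Pick $C>0$ large enough that both
\[
B := (R^\Omega_\alpha + \alpha^2 + C)|_{L^\perp}\quad\text{and}\quad B' := \boplus\nolimits_{j=1}^M D_j + C
\]
are non-negative; both have compact resolvent, and by~\eqref{eq-ap12} one has $E_n(B) = E_{K+n}(R^\Omega_\alpha) + \alpha^2 + C = \cO(1)$. The conclusion $\Lambda_n(B') \le E_n(B) + \varepsilon$ of Proposition~\ref{prop6} translates directly into the sought lower bound $E_{K+n}(R^\Omega_\alpha) \ge -\alpha^2 + E_n(\boplus_j D_j) - \varepsilon$, provided the final error $\varepsilon$ is $\cO(\log\alpha/\sqrt{\alpha})$. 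The identification map $J : \qdom(B) \to \qdom(B')$ will be the boundary trace with a vertex cut-off: for each side $\Gamma_j$, let $\chi_j \in C^\infty([0, \ell_j])$ equal $1$ on $I_{j,\delta}$, vanish on $[0, \lambda_j \delta]\cup[\ell_j - \lambda_{j+1}\delta, \ell_j]$ and satisfy $|\chi_j'| \le c/\delta$, and set
\[
(Ju)_j(s) := \tfrac{1}{\sqrt{2\alpha}}\,\chi_j(s)\,u\bigl(\gamma_j(s)\bigr),
\]
which lies in $H^1_0(0, \ell_j) = \qdom(D_j)$. The normalization $(2\alpha)^{-1/2}$ is dictated by the exponential boundary-layer profile $u \sim f(s)e^{-\alpha t}$, which at the leading order makes $\|u\|^2_{L^2(\Omega)}$ match $\|Ju\|^2$ and the Dirichlet form $D_j[Ju,Ju]$ match $R^\Omega_\alpha[u,u] + \alpha^2\|u\|^2$.

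The hypotheses of Proposition~\ref{prop6} will be verified by splitting $\Omega = \Omega^c_\delta \cup \bigcup_j W_{j,\delta}\cup\bigcup_j V_{j,\delta}$ and bounding the contributions of the three regions to $\bigl|\|u\|^2 - \|Ju\|^2\bigr|$ and $\bigl|B'[Ju,Ju] - B[u,u]\bigr|$ separately. On each strip $W_{j,\delta}$ the metric is exactly Euclidean under $\Phi_j$ (the sides are straight), so an integration by parts in the normal variable, combined with Cauchy--Schwarz and the trace inequality of Lemma~\ref{sob-scale}, produces a quantitative version of the boundary-layer identity whose error terms are proportional to $(R^\Omega_\alpha + (\alpha^2+C))[u,u]$ restricted to $W_{j,\delta}$, plus contributions of order $\alpha\delta^2$ from the transition regions of $\chi_j$. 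On $\Omega^c_\delta$ the $L^2$-mass is controlled by the Robin inequality of Lemma~\ref{rob-scale} combined with a standard Agmon-type bound, producing an $\cO(\alpha e^{-c\alpha\delta})$ contribution. On each vertex piece $V_{j,\delta}$ the function $Ju$ vanishes on $\partial_*V_{j,\delta}$, so the contributions to the two differences reduce to $\|\sigma_j u\|^2_{L^2(V_{j,\delta})}$, to $N^V_j[\sigma_j u, \sigma_j u]$ and to the boundary term $\alpha\int_{\partial_\ext V_{j,\delta}}(\sigma_j u)^2\,\dd s$ produced by the localization identity $R^\Omega_\alpha[u,u] = \sum_j N^V_j[\sigma_j u, \sigma_j u] - \alpha\sum_j\int_{\partial_\ext V_{j,\delta}}(\sigma_j u)^2\,\dd s + (\text{strip and bulk pieces})$; Lemma~\ref{vertices1} bounds each of these three objects by $\cO(\delta^2)$ or $\cO(\alpha\delta^2)$ times $B[u,u] + \|u\|^2$, plus an exponentially small remainder --- and this is precisely where the orthogonality $u \perp L$ (and therefore the non-resonance hypothesis, via Lemma~\ref{lem-trace0}) enters the argument.

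Assembling all contributions yields $\varepsilon_1 + \varepsilon_2 = \cO(\alpha\delta^2 + e^{-c\alpha\delta})$. Choosing $\delta = b\log\alpha/\alpha$ with $b$ large enough to push the exponential term below any polynomial leads to $\varepsilon_1 + \varepsilon_2 = \cO((\log\alpha)^2/\alpha)$, which is itself $\cO(\log\alpha/\sqrt{\alpha})$; since $E_n(B) = \cO(1)$, Proposition~\ref{prop6} then delivers the announced remainder. The main technical obstacle will be the quantitative boundary-layer comparison on $W_{j,\delta}$: the heuristic $u(\Phi_j(s,t)) \approx u(\gamma_j(s))\,e^{-\alpha t}$ must be made rigorous in a form where the deviation is absorbed into the combination $R^\Omega_\alpha[u,u] + (\alpha^2+C)\|u\|^2$, since this is the only quantity that is controlled by $\cO(\|u\|^2)$ for $u \in L^\perp$; a secondary delicate point is the absorption of the $\partial_\ext V_{j,\delta}$ trace term produced by the localization, and it is the joint use of \emph{both} estimates of Lemma~\ref{vertices1} (the $L^2$ one and the trace one) that makes the scheme close.
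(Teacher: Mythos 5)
Your overall architecture coincides with the paper's (apply Proposition~\ref{prop6} to $B\sim R^\Omega_\alpha+\alpha^2+\mathrm{const}$ restricted to $L^\perp$ and to a direct sum of Dirichlet operators, use~\eqref{eq-ap12} for $E_n(B)=\cO(1)$, use Lemma~\ref{vertices1} at the vertices, take $\delta\sim\log\alpha/\alpha$), but the core construction is flawed. Your identification map $(Ju)_j=\tfrac{1}{\sqrt{2\alpha}}\chi_j\,u\circ\gamma_j$ is the boundary trace of $u$, and for $u\in\qdom(B)=H^1(\Omega)\cap L^\perp$ the trace only lies in $H^{1/2}(\partial\Omega)$; it is not in $H^1_0(0,\ell_j)=\qdom(D_j)$, so $J$ does not map $\qdom(B)$ into $\qdom(B')$ as Proposition~\ref{prop6} requires. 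Worse, even on smooth functions there is no way to bound the tangential derivative $\|\partial_s(u\circ\gamma_j)\|_{L^2}$ (which is exactly what $B'[Ju,Ju]$ demands) by the only quantity you control, namely $R^\Omega_\alpha[u,u]+(\alpha^2+C)\|u\|^2$; the heuristic boundary layer $u\approx f(s)e^{-\alpha t}$ gives no such estimate, and no trace inequality does. The paper circumvents precisely this obstacle by replacing the trace with the transverse average $(P_ju)(s)=\int_0^\delta\psi(t)\,w_j(\Phi_j(s,t))\,\dd t$ against the first eigenfunction $\psi$ of the one-dimensional Robin--Neumann operator on $(0,\delta)$: then $\|(P_ju)'\|^2\le\int|\partial_s \Tilde w_j|^2$ is dominated by the strip contribution to the form, and membership in $H^1_0(I_{j,\delta})$ is arranged by subtracting the endpoint values $(P_ju)(\iota_j),(P_ju)(\tau_j)$ multiplied by fixed cutoffs $\rho_j^\pm$.

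This substitution is also where your error count goes wrong. The endpoint values are bounded (via Cauchy--Schwarz) by the trace of $u$ on $\partial_\ext W_{j,\delta}=\partial_\ext V_{j,\delta}$, which Lemma~\ref{vertices1} controls only by $\cO(\alpha\delta^2)\bigl(B[u,u]+\|u\|^2\bigr)$, i.e.\ $\cO(\log^2\alpha/\alpha)$ relative to the form; and this correction enters $\|Ju\|^2$ and $B'[Ju,Ju]$ through a \emph{cross term} with the main part $P_ju$, which is itself only $\cO(1)$ relative to the form. One is therefore forced into a Young inequality with a free parameter $\varepsilon$, paying $\varepsilon\,B[u,u]+\varepsilon^{-1}\cO(\log^2\alpha/\alpha)B[u,u]$, and the optimal balance $\varepsilon=\log\alpha/\sqrt{\alpha}$ is exactly what produces the remainder $\cO(\log\alpha/\sqrt{\alpha})$ in the statement. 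Your tally $\varepsilon_1+\varepsilon_2=\cO(\alpha\delta^2+e^{-c\alpha\delta})=\cO(\log^2\alpha/\alpha)$ would prove a remainder strictly better than the proposition itself; it is not attainable by this scheme because it ignores the geometric-mean loss coming from the vertex trace, which is the decisive quantitative step of the proof.
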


\begin{proof}
Consider the Hilbert spaces
\[
\cH:=\text{ the orthogonal complement of $L$ in $L^2(\Omega)$},
\quad
\cH':=\boplus_{j=1}^M L^2(I_{j,\delta}).
\]
During the proof for $u\in \cH$ we denote $\lVert u \rVert\coloneqq \lVert u \rVert_{L^2(\Omega)}$ and 
\begin{align*}
v_j&:=\text{ the restriction of $u$ to $V_{j,\delta}$},& \|v_j\|&:=\|v_j\|_{L^2(V_{j,\delta})},\\
w_j&:=\text{ the restriction of $u$ to $W_{j,\delta}$},& \|w_j\|&:=\|w_j\|_{L^2(W_{j,\delta})},\\
u_c&:=\text{ the restriction of $u$ to $\Omega^c_\delta$},& \|u_c\|&:=\|u_c\|_{L^2(\Omega^c_\delta)},
\end{align*}
and remark that due to the preceding constructions and the equality~\eqref{eq-vjwj} we have
\begin{equation}
  \label{eq-uvw}
\sum_{j=1}^M \int_{\partial_\ext V_{j,\delta}} v_j^2\dd s=\sum_{j=1}^M \int_{\partial_\ext W_{j,\delta}} w_j^2\dd s.
\end{equation}
Applying Lemma~\ref{vertices1} we obtain, with some $b>0$ and $c>0$,
the inequalities
\begin{align*}
\|v_j\|^2
&\le b\delta^2 \Big(N^V_j[v_j,v_j]+ \alpha^2 \|v_j\|^2\Big) + b\alpha^2\delta^2e^{-c\alpha\delta} \|u\|^2,\\
\int_{\partial_\ext V_{j,\delta}} v_j^2 \dd s
&\le b \alpha\delta^2 \Big(N^V_j[v_j,v_j]+ \alpha^2 \|v_j\|^2\Big)
+ b\alpha^3\delta^2 e^{-c\alpha\delta} \|u\|^2.
\end{align*}

Now recall that each $N^W_j$ admit a separation of variables, $N^W_j\simeq L_N\otimes 1 + 1 \otimes N_{j,\delta}$, where
$L_N$ is the Laplacian on $(0,\delta)$ with $\alpha$-Robin condition at $0$ and Neumann condition at $\delta$.
Denote by $\psi$ a normalized eigenfunction for the first eigenvalue of $L_N$, consider the maps
\begin{equation}
   \label{eq-pj}
P_j:\cH\to L^2(I_{j,\delta}), \quad
(P_j u)(s):=\int_0^\delta \psi(t) \,w_j\big(\Phi_j(s,t)\big)\dd t,
\end{equation}
with $\Phi_j$ defined in \eqref{rectangles}, and denote $\Tilde w_j:=w_j\circ \Phi_j$ and $z_j:= \Tilde w_j - (P_j u) \otimes \psi$, then one has the pointwise orthogonality
\[
\int_0^\delta \psi (t) z_j(\cdot, t)\dd t=0.
\]
Using the standard change of variables and then the spectral theorem for $L_N$ one obtains
\begin{multline*}
N^W[w_j,w_j]= \int_{W_{j,\delta}} |\nabla w_j|^2\dd x -\alpha \int_{\partial_* W_{j,\delta}} w^2\dd s\\
\begin{aligned}
&=\int_{I_{j,\delta}} \int_0^\delta \Big( \big(\partial_s \Tilde w_j \big)^2+\big(\partial_t \Tilde w_j \big)^2\Big)\dd t \dd s -\alpha\int_{I_{j,\delta}} {\Tilde w_j(s,0)^2}\dd s\\
&=\int_{I_{j,\delta}} \big((P_j u)'(s)\big)^2\dd s + \int_{I_{j,\delta}} \int_0^\delta \big(\partial_s z_j \big)^2\dd t \dd s\\
&\qquad+ \int_{I_{j,\delta}} \bigg( \int_0^\delta \big(\partial_t \Tilde w_j \big)^2\dd t  -\alpha {\Tilde w_j(s,0)^2}\bigg)\dd s\\
&\ge\int_{I_{j,\delta}} \big((P_j u)'(s)\big)^2\dd s + \int_{I_{j,\delta}} \int_0^\delta \big(\partial_s z_j \big)^2\dd t \dd s\\
&\qquad+E_1(L_N) \|P_j u\|^2 +E_2(L_N) \|z_j\|^2.
\end{aligned}
\end{multline*}
Using Proposition~\ref{prop22} we estimate $E_1(L)\ge -\alpha^2-b_1\alpha^2e^{-c\alpha\delta}$ and $E_2(L_N)\ge 0$, which leads to
\[
N^W_j[w_j,w_j]\ge 
-\alpha^2\|P_j u\|^2+\big\|(P_j u)'\big\|^2-b_1 \alpha^2e^{-c\alpha\delta}\|P_j u\|^2.
\]
Now let us set $\delta:=(c'\log\alpha)/\alpha$ with $c'\ge 3/c$, then the conditions \eqref{eq-daa} for the choice
of $\delta$ are satisfied, and $\alpha^2e^{-c\alpha\delta}=o(\delta)$, which implies  $\alpha^2\delta^2e^{-c\alpha\delta}=o(\delta^3)$ and  $\alpha^3\delta^2 e^{-c\alpha\delta}=o(\alpha\delta^3)$.
This simplifies the remainders in the above inequalities, and
one can pick a sufficiently large $a>0$ ,
\begin{align}
   \label{nv1}
\|v_j\|^2
&\le \dfrac{a\log^2\alpha}{\alpha^2} \Big(N^V_j[v_j,v_j]+ \alpha^2 \|v_j\|^2\Big) + \dfrac{a\log^3\alpha}{\alpha^3} \|u\|^2,\\
   \label{nv2}
\int_{\partial_\ext V_{j,\delta}} v_j^2 \dd s
&\le \dfrac{a\log^2\alpha}{\alpha} \Big(N^V_j[v_j,v_j]+ \alpha^2 \|v_j\|^2\Big)
+ \dfrac{a\log^3\alpha}{\alpha^2} \|u\|^2,\\
N^W_j[w_j,w_j]&\ge -\alpha^2\|P_j u\|^2+ \big\|(P_j u)'\big\|^2-\dfrac{a\log\alpha}{\alpha}\,\|P_j u\|^2.
   \label{nw0}
	\end{align}
Consider the self-adjoint operators
\begin{align*}
B&:= R^\Omega_\alpha +\alpha^2 +\dfrac{(M+a)\log\alpha}{\alpha}
\text{ viewed as an operator in $\cH$,}\\
B'&:=\boplus_{j=1}^M D_{j,\delta} \text{ in } \cH',
\end{align*}
with
$\qdom(B)=H^1(\Omega)\mathop{\cap} \cH$ and $\qdom(B')=\bigoplus\nolimits_{j=1}^M H^1_0(I_{j,\delta})$.
Recall that 
\begin{equation}
   \label{eq-buu1}
\begin{aligned}
B[u,u]&=\int_{\Omega} |\nabla u|^2\dd x-\alpha\int_{\partial\Omega} u^2\dd s+ \alpha^2\int_\Omega u^2\dd x\\
&\quad+\dfrac{(M+a)\log\alpha}{\alpha}\int_\Omega u^2\dd x\\
&=\sum\nolimits_{j=1}^M \bigg(
\int_{V_{j,\delta}} |\nabla u|^2\dd x-\alpha\int_{\partial_* V_{j,\delta}} u^2\dd s +
\alpha^2\int_{V_{j,\delta}} u^2\dd x
\bigg)\\
&\quad +\sum\nolimits_{j=1}^M \bigg(
\int_{W_{j,\delta}} |\nabla u|^2\dd x-\alpha\int_{\partial_* W_{j,\delta}} u^2\dd s +
\alpha^2\int_{W_{j,\delta}} u^2\dd x
\bigg)\\
&\quad +\int_{\Omega^c_\delta} |\nabla u|^2\dd x + \alpha^2\int_{\Omega^c_\delta} u^2\dd x
+\dfrac{(M+a)\log\alpha}{\alpha}\int_\Omega u^2\dd x\\
&\ge\sum\nolimits_{j=1}^M \Big(
N^V_j[v_j,v_j] + \alpha^2\|v_j\|^2\Big)\\
&\quad+\sum\nolimits_{j=1}^M \Big(N^W_j[w_j,w_j] + \alpha^2\|w_j\|^2\Big)\\
&\quad + \alpha^2 \|u_c\|^2
+\dfrac{(M+a)\log\alpha}{\alpha}\|u\|^2.
\end{aligned}
\end{equation}
Using \eqref{nv1} and \eqref{nv2} we obtain
\begin{multline}
 \label{bound1}
    \sum_{j=1}^M \Big(
N^V_j[v_j,v_j] + \alpha^2\|v_j\|^2\Big)\\
\ge \dfrac{\alpha^2}{a\log^2\alpha}\,\dfrac{1}{2}\Big(\sum_{j=1}^M \|v_j\|^2 + {\frac1\alpha}\sum_{j=1}^M \int_{\partial_\ext V_{j,\delta}} v_j^2\dd s\Big)
-\dfrac{M\log\alpha}{\alpha}\, \|u\|^2.
\end{multline}
On the other hand, with the help of \eqref{nw0} we estimate
\begin{multline}
    \label{bound2}
\sum_{j=1}^M \Big(N^W_j[w_j,w_j] + \alpha^2\|w_j\|^2\Big)\\
\ge
\sum_{j=1}^M \big\|(P_j u)'\big\|^2-\dfrac{a\log\alpha}{\alpha} \|u\|^2 +\alpha^2\sum_{j=1}^M\Big( \|w_j\|^2-\|P_j u\|^2\Big),
\end{multline}
where we use $\sum_j \|P_j u\|^2\le \sum_j \lVert w_j\rVert^2\leq \lVert u \rVert^2$. 
Using \eqref{bound1} and \eqref{bound2} in \eqref{eq-buu1} we arrive at
\begin{equation}
    \label{eq-buu2}
\begin{aligned}
B[u,u]&\ge \dfrac{\alpha^2}{2a\log^2\alpha}\sum\nolimits_{j=1}^M \|v_j\|^2\\
&\quad + \dfrac{\alpha}{2a\log^2\alpha} \sum\nolimits_{j=1}^M \int_{\partial_\ext V_{j,\delta}} v_j^2\dd s + \sum_{j=1}^M\big\|(P_j u)'\big\|^2\\
&\quad + \alpha^2 \sum\nolimits_{j=1}^M\Big( \|w_j\|^2-\|P_j u\|^2\Big) + \alpha^2 \|u_c\|^2.
\end{aligned}
\end{equation}
Each term on the right-hand side is non-negative and, hence, the left-hand side is an upper bound for each term on the right-hand side.
It also implies that $B$ is positive and then
\begin{equation*}
E_1(B)\equiv E_{K+1}(R^\Omega_\alpha)+ \alpha^2 +\dfrac{(M+a)\log\alpha}{\alpha}\ge 0.
\end{equation*}
By \eqref{eq-ap12}, for any fixed $n\in\mathbb N$ there is $\mu_n>0$ independent of $\alpha$ such that
\begin{equation}
\label{hyp1}
0\le E_n(B)\le \mu_n, \quad \big( 1+ E_n(B)\big)^{-1} \ge (1+\mu_n)^{-1}.
\end{equation}

In order to construct a suitable identification map $J:\qdom(B)\to \qdom(B')$ we pick functions
$\rho^\pm_j\in C^1\big([0,\ell_j]\big)$ such that
\begin{align*}
\rho^+_j&=\begin{cases}
1 & \text{ in a neighborhood of $0$},\\
0 & \text{ in a neighborhood of $\ell_j$},
\end{cases}
\quad j\in \cJ_*,\\
\rho^-_j&=\begin{cases}
0 & \text{ in a neighborhood of $0$},\\
1 & \text{ in a neighborhood of $\ell_j$},
\end{cases}
\quad j\in \cJ_*,
\end{align*}
and then choose a constant $\rho_0>0$ such that
\begin{equation}
   \label{eq-rho0}
\|\rho^\pm_j\|_{L^\infty(0,\ell_j)}+\|(\rho^\pm_j)'\|_{L^\infty(0,\ell_j)}\le \rho_0 \text{ for all }j\in \cJ_*.
\end{equation}
Recall that 
$I_{j,\delta}:=(\lambda_j\delta,\ell_j-\lambda_{j+1}\delta)=:(\iota_j,\tau_j)$, hence,
\[
{\rho_j^+(\iota_j)=1, \quad \rho_j^+(\tau_j)=0, \quad
\rho_j^-(\iota_j)=0, \quad \rho_j^-(\tau_j)=1},
\]
as $\alpha$ is sufficiently large. Therefore, the map
\begin{gather*}
J:\qdom(B)\to \qdom(B')\equiv \boplus\nolimits_{j=1}^M H^1_0(I_{j,\delta}),
\quad J u= (J_j u),\\
(J_j u)(s):=(P_j u)(s)-(P_j u)(\iota_j)\rho^+_j(s) - (P_j u)(\tau_j)\rho^-_j(s)
\end{gather*}
is well-defined. We estimate, using the Cauchy-Schwarz inequality,
\begin{multline}
   \label{pj002}
\big|P_j u(\iota_j)\big|^2+\big|P_j u(\tau_j)\big|^2\\
=\Big( \int_0^\delta \psi(t)\,w_j\big(\Phi_j(\iota_j,t)\big)\dd t\Big)^2
+\Big( \int_0^\delta \psi(t) \,w_j\big(\Phi_j(\tau_j,t)\big)\dd t\Big)^2\\
\le \int_0^\delta w_j\big(\Phi_j(\iota_j,t)\big)^2\, dt + \int_0^\delta w_j\big(\Phi_j(\tau_j,t)\big)^2\dd t
\equiv \int_{\partial_\ext W_{j,\delta}} w_j^2\dd s.
\end{multline}
Recall the inequality $(x+y)^2\ge(1-\varepsilon)x^2-y^2/\varepsilon$ valid for any $x,y\in\RR$ and $\varepsilon>0$. Then
\begin{align*}
\|J_j u\|^2&=\int_{I_{j,\delta}} \Big|(P_j u)(s)-(P_j u)(\iota_j)\rho^+_j(s) - (P_j u)(\tau_j)\rho^-_j(s)\Big|^2\dd s\\
&\ge (1-\varepsilon)\int_{I_{j,\delta}} \Big|(P_j u)(s)\Big|^2\dd s\\
&\quad-\dfrac{1}{\varepsilon}\int_{I_{j,\delta}}\Big|(P_j u)(\iota_j)\rho^+_j(s) + (P_j u)(\tau_j)\rho^-_j(s)\Big|^2\dd s,
\end{align*}
and, using \eqref{pj002} and the constant $\rho_0$ from \eqref{eq-rho0} we have
\begin{align*}
&\begin{aligned}
\int_{I_{j,\delta}}&\Big|(P_j u)(\iota_j)\rho^+_j(s)  + (P_j u)(\tau_j)\rho^-_j(s)\Big|^2\dd s\\
&\le 2\ell_j\rho_0^2 \Big( \big|P_j u(\iota_j)\big|^2+\big|P_j u(\tau_j)\big|^2 \Big)\le 2 \ell_j \rho_0^2 \int_{\partial_\ext W_{j,\delta}} w_j^2\dd s,
\end{aligned}\\
&\|J_j u\|^2\ge (1-\varepsilon)\|P_j u\|^2-\dfrac{2 \ell \rho_0^2}{\varepsilon}\int_{\partial_\ext W_{j,\delta}} w_j^2\dd s,
\quad \ell:=\max \ell_j.
\end{align*}
Therefore, using \eqref{eq-uvw}  we arrive at
\begin{align*}
\|u\|^2-\|Ju\|^2&=\sum_{j=1}^M \|v_j\|^2+\sum_{j=1}^M \|w_j\|^2+\|u_c\|^2 -\sum_{j=1}^M \|J_j u\|^2\\
&\le \sum_{j=1}^M \|v_j\|^2+\sum_{j=1}^M \|w_j\|^2+\|u_c\|^2\\
&\qquad-(1-\varepsilon)\sum_{j=1}^M\|P_j u\|^2 
+\dfrac{2 \ell \rho_0^2}{\varepsilon}\sum_{j=1}^M\int_{\partial_\ext W_{j,\delta}} w_j^2\dd s\\
&=\sum_{j=1}^M \|v_j\|^2+\sum_{j=1}^M\big( \|w_j\|^2- \|P_j u\|^2 \big)\\
&\qquad+\varepsilon\sum_{j=1}^M\|P_j u\|^2
+ \dfrac{2 \ell \rho_0^2}{\varepsilon}\sum_{j=1}^M\int_{\partial_\ext V_{j,\delta}} v_j^2\dd s+\|u_c\|^2\\
&\le \sum_{j=1}^M \|v_j\|^2+\sum_{j=1}^M\big( \|w_j\|^2- \|P_j u\|^2 \big)\\
&\qquad+ \varepsilon \|u\|^2 + \dfrac{2 \ell \rho_0^2}{\varepsilon}\sum_{j=1}^M\int_{\partial_\ext V_{j,\delta}} v_j^2\dd s+\|u_c\|^2.
\end{align*}
Using \eqref{eq-buu2} we obtain an upper bound for all terms on the right-hand side except $\varepsilon \|u\|^2$:
\[
\|u\|^2-\|Ju\|^2\le \Big(\dfrac{2a\log^2\alpha}{\alpha^2}+{\dfrac{2}{\alpha^2}}+\dfrac{4 \ell \rho_0^2 a\log^2\alpha}{\varepsilon \alpha}\Big) B[u,u]+\varepsilon \|u\|^2.
\]
Taking $\varepsilon:=\log\alpha/\sqrt{\alpha}$ and choosing $c_1>0$ sufficiently large we obtain
\begin{equation}
    \label{eps1}
\|u\|^2-\|Ju\|^2\le \dfrac{c_1 \log\alpha}{\sqrt{\alpha}}\Big( B[u,u]+\|u\|^2\Big).
\end{equation}

To study the difference $B'[Ju,Ju]-B[u,u]$ recall that $B'[Ju,Ju]=\sum_{j=1}^M \|(J_ju)'\|^2$.
Using the inequality $(x+y)^2\le(1+\varepsilon)x^2+2y^2/\varepsilon$ valid for all $x,y\in\RR$ and $\varepsilon\in(0,1)$
we estimate
\begin{align*}
\big\|(J_j u)'\big\|^2&=\int_{I_{j,\delta}} \Big|(P_j u)'(s)-(P_j u)(\iota_j)(\rho^+_j)'(s) - (P_j u)(\tau_j)(\rho^-_j)'(s)\Big|^2\dd s\\
&\le (1+\varepsilon)\int_{I_{j,\delta}} \Big|(P_j u)'(s)\Big|^2\dd s\\
&\quad + \dfrac{2}{\varepsilon}\int_{I_{j,\delta}}\Big|(P_j u)(\iota_j)(\rho^+_j)'(s) + (P_j u)(\tau_j)(\rho^-_j)'(s)\Big|^2\dd s.
\end{align*}
Using \eqref{pj002} for the last term and the constant $\rho_0$ from \eqref{eq-rho0} we have
\begin{align*}
&\int_{I_{j,\delta}}\Big|(P_j u)(\iota_j)(\rho^+_j)'(s) + (P_j u)(\tau_j)(\rho^-_j)'(s)\Big|^2\dd s\le 2 \ell \rho_0^2 \int_{\partial_\ext W_{j,\delta}} w_j^2\dd s,\\
&\begin{aligned}
B'[Ju,Ju]&\le (1+\varepsilon)\sum\nolimits_{j=1}^M \big\|(P_ju)'\big\|^2 + \dfrac{4\ell\rho_0^2}{\varepsilon} \sum\nolimits_{j=1}^M \int_{\partial_\ext W_{j,\delta}} w_j^2\dd s
\end{aligned}
\end{align*}
Recall that due to \eqref{eq-buu2} we have $B[u,u]\ge \sum_{j=1}^M \big\|(P_ju)'\big\|^2$ and
\[
B[u,u]\ge \dfrac{\alpha}{2a\log^2\alpha} \sum_{j=1}^M \int_{\partial_\ext V_{j,\delta}} v_j^2\dd s
\equiv \dfrac{\alpha}{2a\log^2\alpha} \sum_{j=1}^M \int_{\partial_\ext W_{j,\delta}} w_j^2\dd s,
\]
where we have used \eqref{eq-uvw}.
Therefore,
\begin{align*}
B'[Ju,Ju]-B[u,u]&\le \varepsilon \sum\nolimits_{j=1}^M \big\|(P_ju)'\big\|^2+\dfrac{4\ell\rho_0^2}{\varepsilon} \sum\nolimits_{j=1}^M \int_{\partial_\ext W_{j,\delta}} w_j^2\dd s\\
&\le \left(\varepsilon+\dfrac{4\ell\rho_0^2}{\varepsilon}
\cdot \dfrac{2a\log^2\alpha}{\alpha}\right)\, B[u,u].
\end{align*}
Therefore, by setting $\varepsilon=\log\alpha/\sqrt\alpha$ and by choosing $c_2>0$ sufficiently large
we arrive at
\begin{equation}
    \label{eps2}
B'[Ju,Ju]-B[u,u]\le \dfrac{c_2 \log\alpha}{\sqrt{\alpha}}\, B[u,u]\le \dfrac{c_2 \log\alpha}{\sqrt{\alpha}}\,\Big(B[u,u]+\|u\|^2\Big).
\end{equation}
With the inequalities \eqref{eps1} and \eqref{eps2} at hand, we are in the situation of Proposition~\ref{prop6} with
 $\varepsilon_j:=c_j\log\alpha/\sqrt{\alpha}$, $j\in\{1,2\}$. Remark that for each fixed $n$ the assumption $\varepsilon_1<1/\big(1+E_n(B)\big)$ is satisfied due to~\eqref{hyp1}. Hence, for each fixed
$n$ we have
\begin{equation}
    \label{eq-bbb}
E_n\Big(\boplus_{j=1}^M D_{j,\delta}\Big)\equiv E_n(B')\le E_n(B) + \dfrac{\log\alpha}{\sqrt{\alpha}}\cdot\dfrac{\big(c_1 E_n(B)+c_2\big)\big(1+E_n(B)\big)}{1-c_1\big(1+E_n(B)\big)\log\alpha/\sqrt{\alpha}}.
\end{equation}
By \eqref{hyp1} we have $E_n(B)=\cO(1)$ for each fixed $n$,
and the substitution into \eqref{eq-bbb} gives
\[
E_{K+n}(R^\Omega_\alpha)\ge -\alpha^2+E_n\Big(\boplus_{j=1}^M D_{j,\delta}\Big)+\cO\Big( \dfrac{\log\alpha}{\sqrt{\alpha}}\Big),
\]
and it remains to note that for fixed $n$ and $j$ one has
\[
E_n(D_{j,\delta})=E_n(D_j)+\cO(\delta)=E_n(D_j)+\cO\Big( \dfrac{\log\alpha}{\alpha}\Big)=E_n(D_j)+o\Big( \dfrac{\log\alpha}{\sqrt{\alpha}}\Big),
\]
which implies $E_n(\boplus\nolimits_{j=1}^M D_{j,\delta})=E_n(\boplus\nolimits_{j=1}^M D_j)
+\cO(\frac{\log \alpha }{\sqrt{\alpha}})$.  This concludes the proof of Proposition~\ref{prop-low-rob1}.
\end{proof}

\section{Robin eigenvalues in curvilinear polygons}\label{sec-thm23}

If one tries to adapt the preceding proof scheme to curvilinear polygons, a number of points require more attention:

\begin{itemize}

\item[(a)] The initial construction of the vertex neighborhood $V_{j,\delta}$ become more technical: the shape
of these neighborhoods cannot be chosen at random, as the subsequent analysis need the presence of two straight sides to which the side neighborhoods $W_{j,\delta}$ are glued.

\item[(b)] A suitable control of eigenvalues of Robin-Dirichlet/Neumann Laplacians in $V_{j,\delta}$ is needed. Remark that
these neighborhoods are not truncated sectors anymore, but curvilinear polygons. Hence, a suitable analog of the non-resonance condition
is needed. This can be achieved using a suitable diffeomorphism between $V_{j,\delta}$ and truncated convex sectors.

\item[(c)] We need an analog of the radial cut-off functions $\varphi_\delta$ to prove an analog of Lemma~\ref{dist-ll0} for the curvilinear case.
The cut-off functions are needed to satisfy the Neumann boundary condition at $\partial\Omega$, in order to ensure
that the truncated eigenfunction are still in the domain of the Robin laplacian. This is important for the constructions,
as it allows one to apply Proposition~\ref{propdist2} to estimate the distance between two subspaces.

\item[(d)] The analysis of the Robin laplacians in side neighborhoods $W_{j,\delta}$ become more involved, as a non-trivial curvature contribution appears.

\item[(e)] The lower bound for the eigenvalues in Proposition~\ref{prop-low-rob1} is in part based on the fact  that the individual eigenvalues of the operators $B$ and $B'$
are bounded for large $\alpha$. This is an important point when using Proposition~\ref{prop6}: if the eigenvalues of $B$
become large, it becomes difficult to satisfy the initial assumption on $\varepsilon_1 \big(1+E_1(B)\big)<1$, see the discussion in subsection~\ref{ssec-var}.
\end{itemize}

We remark that the points (a)--(c) are purely geometric, and can be of importance for the analysis of other problems in curvilinear polygons.
We are not aware of any suitable construction in the literature (due to the very specific shape of the vertex neighborhoods), and we have decided to
give a self-contained discussion in Appendix~\ref{appa}, to which we refer in the main text. In the present text we were not able
to overcome completely the difficulties mentioned under (d) and (e), and we concentrate ourselves on two special but important cases.

\subsection{Decomposition of curvilinear polygons}\label{ssec-poly}

\begin{figure}

\centering

\begin{tabular}{cc}
\begin{minipage}[c]{52mm}
\begin{center}
\includegraphics[height=26mm]{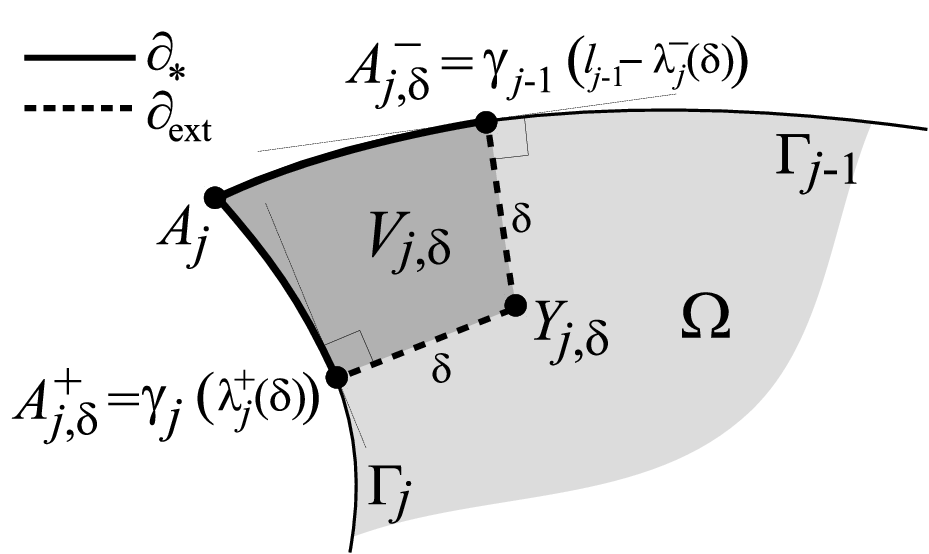}
\end{center}
\end{minipage}
&
\begin{minipage}[c]{52mm}
\begin{center}
\includegraphics[height=26mm]{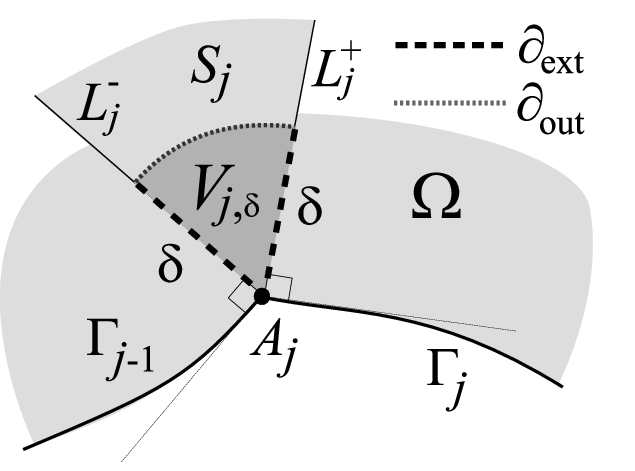}
\end{center}
\end{minipage}\\
(a)  & (b) 
\end{tabular}
\caption{The construction of the neighborhoods $V_{j,\delta}$: (a) convex vertex, (b) concave vertex.
The partial boundary $\partial_* V_{j,\delta}$
is shown with the thick solid line, the part $\partial_\ext V_{j,\delta}$ is indicated with the thick dashed line,
and the part $\partial_\out V_{j,\delta}$ with the gray dotted line.\label{conv-vois}}
\end{figure}

Let us describe more precisely the class of domains $\Omega$ we are going to deal with
as well as its decomposition into pieces of special shape. Once the geometric justifications has been made, see Appendix~\ref{appa}, the latter differs only in minor details from the case of straight polygons. 
A bounded domain $\Omega\subset \mathbb{R}^2$ will be called a \emph{curvilinear polygon}
with $M\ge 1$ vertices if there exist $A_1,\dots,A_M\in\mathbb{R}^2$ and $\ell_1,\dots,\ell_M >0$ such that:
\begin{itemize}
\item there are injective $C^3$ maps $\gamma_j:\RR\to \RR^2$ with $|\gamma'_j|=1$ such that
\[
	\gamma_j(0) = A_j,\quad \gamma_j(\ell_j) = A_{j+1}, \quad j\in \{1,\dots,M\},
\]
where we identify $A_0\equiv A_M$ and $A_{M+1}\equiv A_1$, and the same numbering convention applies to the finite arcs
$\Gamma_j:=\gamma_j\big((0,\ell_j)\big)$
which we assume mutually disjoint and such that
$\Gamma:=\partial\Omega = \bigcup_{j=1}^M \overline{\Gamma_j}$.
\item 
The orientation of each $\gamma_j$ is assumed to be chosen in such a way that if $\nu_j(t)$ is the \emph{outer} unit normal to $\partial\Omega$
at a point $\gamma_j(t)$, then $\nu_j(s)\wedge \gamma'_j(s)=1$, i.e. $\nu_j(s)$ is obtained by rotating the tangent vector $\gamma'_j(t)$ by $\frac{\pi}{2}$ in the clockwise direction,
and the  curvature $H_j(t)$ of $\Gamma_j$ at the point $\gamma_j(s)$ is defined by
$\nu'_j(s)=H_j(t) \,\gamma'_j(s)$.
\item  By $\theta_j\in[0,\pi]$ we denote the half-angle of the boundary at a vertex $A_j$, i.e. the number $\theta_j\in[0,\pi]$ is 
characterized by the conditions
\[
	\cos(2 \theta_j) = \gamma_{j-1}'(\ell_{j-1})\cdot \big(-\gamma'_j(0)\big),
	\quad \sin(2\theta_j) = \gamma_j'(0) \wedge \big(-\gamma_{j-1}'(\ell_{j-1})\big).
\]
Our assumption is that there are neither zero angles nor artificial vertices, i.e. $\theta_j\notin \big\{0, \frac{\pi}{2},\pi\big\}$ for $j=1,\dots,M$.
\end{itemize}
The above points $A_j\in\partial\Omega$ will be called the \emph{vertices} of $\Omega$. Furthermore, one says that $A_j$ is a \emph{convex} vertex if $\theta_j<\frac{\pi}{2}$
and is a \emph{concave} one otherwise, and we denote
\[
\cJ_\cvx:=\{j:\, A_j\text{ is convex}\}.
\]
We refer to Figure~\ref{fig-domain} in the introduction for an illustration, and in that case one has $\cJ_\cvx=\{1,2\}$.
Let us now proceed with a special decomposition of $\Omega$. For small $\delta>0$, denote
\[
\Omega_\delta=\big\{x\in \Omega:\, \dist(x,\partial\Omega)<\delta\big\},
\quad \Omega_\delta^c:=\Omega\setminus \overline{\Omega_\delta}.
\]
We further decompose $\Omega_\delta$ near each vertex as follows:
\begin{itemize}
\item Let $A_j$ be a convex vertex. The following constructions are consequences of Lemma~\ref{prox} and are  illustrated in Figure~\ref{conv-vois}(a).
For sufficiently small $\delta$ there exists a unique point $Y_{j,\delta}\in \Omega$
such that $\dist(Y_{j,\delta},\Gamma_{j-1})=\dist(Y_{j,\delta},\Gamma_j)=\delta$, and there are uniquely defined numbers $\lambda_j^\pm(\delta)>0$ such that
the points
\[
A_{j,\delta}^-:=\gamma_{j-1}\big(\ell_j-\lambda_j^-(\delta)\big), \quad
A_{j,\delta}^+:=\gamma_j\big(\lambda_j^+(\delta) \big)
\]
satisfy $|Y_{j,\delta}-A_{j,\delta}^-|=|Y_{j,\delta}-A_{j,\delta}^+|=\delta$. The quantities $\lambda_j^\pm(\delta)>0$ satisfy
\begin{equation}
   \label{ljt0}
\lambda_j^\pm(\delta) =  \delta \cotan \theta_j + \cO(\delta^2) \quad \text{ for } \delta\to 0^+.
\end{equation}
We denote by $V_{j,\delta}$ the curvilinear quadrangle whose boundary
consists of the arcs $\gamma_{j-1}\big(\big[\ell_j-\lambda_j^-(\delta),\ell_j\big]\big)$, $\gamma_{j}\big(\big[0,\lambda_j^+(\delta)\big]\big)$
and the segments $A^\pm_{j,\delta}Y_{j,\delta}$, and we decompose its boundary into the following
parts:
\[
\partial_* V_{j,\delta}:=\partial V_{j,\delta}\cap\partial\Omega, \quad
\partial_\ext V_{j,\delta}:=\partial V_{j,\delta}\setminus \partial_* V_{j,\delta},
\quad
\partial_\out V_{j,\delta}:=\emptyset.
\]

\item Let $A_j$ be a concave vertex.  The constructions are illustrated in Figure~\ref{conv-vois}(b).
Let $L^-_j$ be the half-line emanating from $A_j$, orthogonal to $\Gamma_{j-1}$ at $A_j$ and directed inside $\Omega$.
By $L^+_j$ we denote the half-line emanating from $A_j$, orthogonal to $\Gamma_j$ at $A_j$ and directed inside $\Omega$.
Denote by $S_j$ the infinite sector bounded by $L^-_j$ and $L^+_j$ which lies inside $\Omega$ near $A_j$. Then we set
\[
V_{j,\delta}:=S_j\cap B(A_j,\delta), \quad \lambda^\pm_j(\delta):=0,
\]
and decompose its boundary as follows:
\[
\partial_* V_{j,\delta}:=\emptyset,
\quad
\partial_\out V_{j,\delta}:=\partial V_{j,\delta}\cap \partial \Omega^c_\delta,
\quad
\partial_\ext V_{j,\delta}:=\partial V_{j,\delta}\setminus \partial_\out V_{j,\delta}.
\]
\end{itemize}
The ``length deficiency'' $\lambda^+_j(\delta)+\lambda_j^-(\delta)$
is exactly the length of $\partial_*V_{j,\delta}$ for both convex and concave vertices.

The set
${W_\delta}:=\Omega_\delta\setminus\overline{\bigcup_{j=1}^M V_{j,\delta}}$ 
is the union of $M$ disjoint curvilinear rectangles: if one denotes
\begin{gather*}
I_{j,\delta}:=\big(\lambda^+_j(\delta),\ell_j-\lambda^-_{j+1}(\delta)\big), \quad \Pi_{j,\delta}:=I_{j,\delta}\times(0,\delta),\\
W_{j,\delta}:=\Phi_j(\Pi_{j,\delta}), \quad \Phi_j(s,t):=\gamma_j(s)-t\nu_j(s),
\end{gather*}
then ${W_\delta}=\bigcup_{j=1}^M W_{j,\delta}$. We decompose the boundary of each $W_{j,\delta}$ as follows:
\begin{gather*}
\partial_* W_{j,\delta}:=\partial W_{j,\delta} \cap \partial\Omega,
\quad
\partial_\out W_{j,\delta}:=\partial W_{j,\delta}\cap \partial \Omega^c_\delta,\\
\partial_\ext W_{j,\delta}:=\partial W_{j,\delta}\setminus\Big(
\partial_* W_{j,\delta}\cup \partial_\out W_{j,\delta}
\Big).
\end{gather*}
The resulting decomposition of $\Omega$ is illustrated in Figure~\ref{fig-domain2}, and we always have
\begin{equation}
    \label{eq-vjwj2}
\bigcup\nolimits_{j=1}^M \partial_\ext V_{j,\delta} = \bigcup\nolimits_{j=1}^M \partial_\ext W_{j,\delta}.
\end{equation}

\begin{figure}

\centering

\includegraphics[width=60mm]{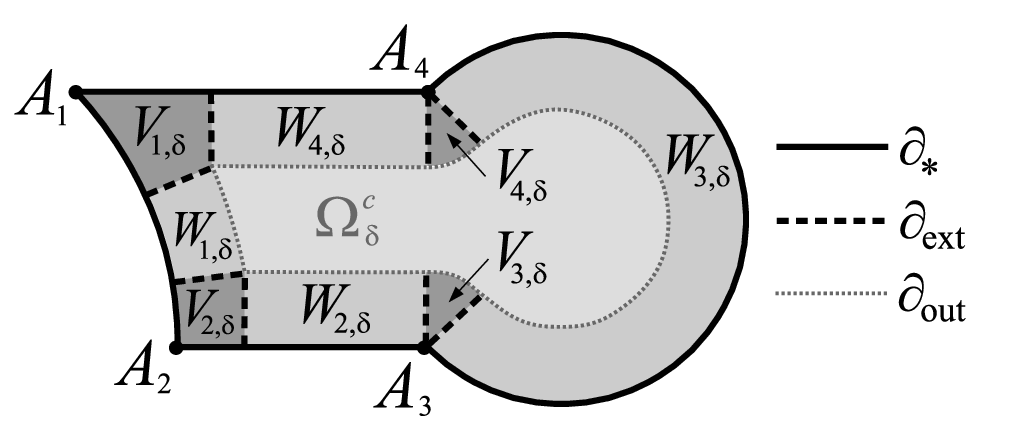}

\caption{Decomposition of a curvilinear polygon.\label{fig-domain2}}

\end{figure}

\subsection{Some estimates for curvilinear neighborhoods}

With each $j\in\{1,\dots,M\}$ we associate the corresponding number $\kappa(\theta_j)$
of discrete eigenvalues of the Robin Laplacians in the infinite sector of  aperture $2\theta_j$ (see Section~\ref{sec-sectors})
and set
\begin{align*}
K&:=\kappa(\theta_1)+\dots +\kappa(\theta_M)\equiv \sum\nolimits_{j\in\cJ_\cvx} \kappa(\theta_j),\\
\cE&:=\text{ the disjoint union of } \big\{ \cE_n(\theta_j), \ n=1,\dots,\kappa(\theta_j)\big\} \text{  for } j\in\cJ_\cvx,\\
\cE_n&:=\text{ the $n$th element of $\cE$ when numbered in the non-decreasing order,}
\end{align*}
(see Subsection~\ref{sec-sectors} for a detailed notation).
For what follows we introduce several operators:
\begin{align*}
N^V_j&:=\begin{minipage}[t]{90mm}\raggedright 
the Laplacian in $V_{j,\delta}$ with $\alpha$-Robin condition at $\partial_* V_{j,\delta}$
and Neumann condition at the rest of the boundary,\end{minipage}\\
D^V_j&:=\begin{minipage}[t]{90mm}\raggedright 
the Laplacian in $V_{j,\delta}$ with $\alpha$-Robin condition at $\partial_* V_{j,\delta}$
and Dirichlet condition at the rest of the boundary.\end{minipage}\\
R^V_j&:=\begin{minipage}[t]{90mm}\raggedright 
the Laplacian in $V_{j,\delta}$ with $\alpha$-Robin condition at the whole boundary.\end{minipage}
\end{align*}
We remark that for concave vertices $A_j$, the respective operators $(N/D)^V_j$
are just the Neumann/Dirichlet Laplacians in $V_{j,\delta}$ due to $\partial_* V_{j,\delta}=\emptyset$.
Furthermore, denote
\begin{align*}
N^W_j&:=\begin{minipage}[t]{90mm}\raggedright 
the Laplacian in $W_{j,\delta}$ with $\alpha$-Robin condition at $\partial_* W_{j,\delta}$
and Neumann condition at the rest of the boundary,\end{minipage}\\
D^W_j&:=\begin{minipage}[t]{90mm}\raggedright 
the Laplacian in $W_{j,\delta}$ with $\alpha$-Robin condition at $\partial_* W_{j,\delta}$
and Dirichlet condition at the rest of the boundary,\end{minipage}
\end{align*}
Finally, introduce
\[
N_c:=
\begin{minipage}[t]{90mm}\raggedright 
the Neumann Laplacian in $\Omega^c_\delta$.\end{minipage}
\]
Due to Lemma~\ref{mapfi} (Appendix~\ref{appa}), for each $j\in\cJ_\cvx$ one can find a bi-Lipschitz map $\Psi_j$
between a neighborhood of the origin and a neighborhood of $A_j$,
a rotation $Z_j$ with $\Psi'_j(x)=Z_j+\cO\big(|x|\big)$ for $x\to 0$
and a $C^2$ smooth function $r_j$ defined near $0$ with $r(0)=0$ and $r'(0)=\cotan\theta_j$
such that for all sufficiently small $\delta>0$ one has
\[
\Psi_j(\cS_{\theta_j}^{r(\delta)})=V_{j,t},
\quad
\Psi_j(\partial_*\cS_{\theta_j}^{r(\delta)})=\partial_* V_{j,\delta},
\quad
\Phi_j(\partial_\ext\cS_{\theta_j}^{r(\delta)})=\partial_\ext V_{j,\delta}.
\]
Hence, for $u\in H^1(V_{j,\delta})$ one can use $u\circ \Psi_j\in H^1(\cS_{\theta_j}^{r(\delta)})$ as test functions in truncated sectors,
which implies in the standard way, see e.g. \cite[Lemma~3.3]{bp}, the following estimates for the eigenvalues of $N^V_j$, $D^V_j$ and $R^V_j$:

\begin{lemma}\label{lem-curv}
There exist $a>0$, $a_0>0$, $\delta_0>0$ such that for all $\delta\in(0,\delta_0)$, $\alpha>0$, $n\in\NN$, $j\in\cJ_\cvx$ there holds
\begin{gather*}
(1-a_0\delta) E_n(N_{\theta_j,\alpha(1+a\delta)}^{r(\delta)})\le E_n(N^V_j)\le (1+a_0\delta) E_n(N_{\theta,\alpha(1-a\delta)}^{r(\delta)}),\\
(1-a_0\delta) E_n(D_{\theta_j,\alpha(1+a\delta)}^{r(\delta)})\le E_n(D^V_j)\le (1+a_0\delta) E_n(D_{\theta,\alpha(1-a\delta)}^{r(\delta)}),\\
(1-a_0\delta) E_n(R_{\theta_j,\alpha(1+a\delta)}^{r(\delta)})\le E_n(R^V_j)\le (1+a_0\delta) E_n(R_{\theta,\alpha(1-a\delta)}^{r(\delta)}).
\end{gather*}
\end{lemma}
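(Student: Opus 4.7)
The plan is to prove this via the bi-Lipschitz diffeomorphism $\Psi_j$ provided by Lemma~\ref{mapfi} (Appendix~\ref{appa}), reducing the spectral problem on the curvilinear vertex neighborhood $V_{j,\delta}$ to one on the straight truncated sector $\cS_{\theta_j}^{r(\delta)}$. For each $j\in\cJ_\cvx$ fix the notation $T:=\cS_{\theta_j}^{r(\delta)}$, $V:=V_{j,\delta}$, and associate with every $u\in H^1(V)$ its pull-back $v:=u\circ \Psi_j\in H^1(T)$. The key geometric input is that $\Psi_j'(x)=Z_j+\cO(|x|)$ with $Z_j$ orthogonal, together with the bound $|x|\le c\, r(\delta)=\cO(\delta)$ valid on $T$ (from $r(\delta)=\delta\cotan\theta_j+\cO(\delta^2)$). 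Consequently
\[
|\det \Psi_j'(x)|=1+\cO(\delta), \qquad \big(\Psi_j'(x)\big)^{-1}\big(\Psi_j'(x)\big)^{-\top}=I+\cO(\delta),
\]
uniformly on $T$, and an analogous estimate holds for the Jacobian of the restriction of $\Psi_j$ to the boundary pieces $\partial_*T$, $\partial_\ext T$.

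The next step is to turn these pointwise estimates into a comparison of quadratic forms. A standard change of variables yields constants $c>0$, independent of $\delta$ and $\alpha$, such that for every $u\in H^1(V)$ with $v=u\circ\Psi_j$,
\begin{align*}
(1-c\delta)\|v\|^2_{L^2(T)}&\le \|u\|^2_{L^2(V)}\le (1+c\delta)\|v\|^2_{L^2(T)},\\
(1-c\delta)\|\nabla v\|^2_{L^2(T)}&\le \|\nabla u\|^2_{L^2(V)}\le (1+c\delta)\|\nabla v\|^2_{L^2(T)},\\
(1-c\delta)\|v\|^2_{L^2(\partial_\bullet T)}&\le \|u\|^2_{L^2(\partial_\bullet V)}\le (1+c\delta)\|v\|^2_{L^2(\partial_\bullet T)},
\end{align*}
for $\bullet\in\{*,\ext\}$. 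Combining the gradient and boundary estimates one obtains, for instance,
\[
N^V_j[u,u]\le (1+c\delta)\int_T |\nabla v|^2\dd x-\alpha(1-c\delta)\int_{\partial_*T} v^2\dd s
=(1+c\delta)\,N^{r(\delta)}_{\theta_j,\tilde\alpha}[v,v],
\]
with $\tilde\alpha:=\alpha(1-c\delta)/(1+c\delta)$. Elementary algebra gives $\tilde\alpha \ge \alpha(1-a\delta)$ for a suitable $a>0$, and the reversed inequality yields an analogous lower bound $N^V_j[u,u]\ge (1-c\delta)\,N^{r(\delta)}_{\theta_j,\alpha(1+a\delta)}[v,v]$. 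The same manipulation with $\partial_\ext$ replacing $\partial_*$ handles the Dirichlet boundary term (which simply disappears for $D^V_j$ since $v$ vanishes on $\partial_\ext T$), and combining both boundary conditions handles $R^V_j$.

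Finally, one applies the min-max principle. For the upper bound on $E_n(N^V_j)$, given an $n$-dimensional test space $G_T\subset \qdom(N^{r(\delta)}_{\theta_j,\alpha(1-a\delta)})$ realising its $n$-th Rayleigh quotient, push $G_T$ forward to $G_V:=\{v\circ\Psi_j^{-1}:v\in G_T\}\subset\qdom(N^V_j)$; the quadratic-form comparison together with the $L^2$-norm comparison give
\[
\frac{N^V_j[u,u]}{\|u\|^2_{L^2(V)}}\le \frac{1+c\delta}{1-c\delta}\cdot\frac{N^{r(\delta)}_{\theta_j,\alpha(1-a\delta)}[v,v]}{\|v\|^2_{L^2(T)}}\le (1+a_0\delta)\,E_n\big(N^{r(\delta)}_{\theta_j,\alpha(1-a\delta)}\big),
\]
after enlarging $a_0$; the direction of the last inequality is preserved because the relevant eigenvalues are negative and the multiplicative factor exceeds $1$. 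The reverse argument (pulling the spectral subspace of $N^V_j$ back to $T$) yields the claimed lower bound. The same scheme, reading off the correct boundary pieces, proves the statements for $D^V_j$ and $R^V_j$. The main obstacle is really just bookkeeping of the signs and constants — everything else is routine once the geometric facts of Lemma~\ref{mapfi} are in hand.
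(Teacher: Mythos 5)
Your overall route is the one the paper intends: pull back to the truncated sector via $\Psi_j$, compare the quadratic forms and norms up to factors $1\pm c\delta$, and conclude by the min--max principle (the paper gives no further detail and refers to the standard change-of-variables scheme). The genuine problem lies in the final Rayleigh-quotient step, which is not mere bookkeeping. Your displayed inequality
\[
\frac{N^V_j[u,u]}{\|u\|^2_{L^2(V)}}\le \frac{1+c\delta}{1-c\delta}\cdot\frac{N^{r(\delta)}_{\theta_j,\alpha(1-a\delta)}[v,v]}{\|v\|^2_{L^2(T)}}
\]
is obtained by dividing the form estimate $N^V_j[u,u]\le(1+c\delta)\,N^{r(\delta)}_{\theta_j,\alpha(1-a\delta)}[v,v]$ by the lower bound $\|u\|^2\ge(1-c\delta)\|v\|^2$. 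That division is only legitimate when the right-hand side of the form estimate is nonnegative; in the relevant regime it is negative (the low eigenvalues are $\approx\cE_n(\theta_j)\alpha^2<0$), a negative upper bound must be divided by the \emph{upper} bound $(1+c\delta)\|v\|^2$ of the denominator, and what actually follows is only $\frac{N^V_j[u,u]}{\|u\|^2}\le\frac{N^{r(\delta)}[v,v]}{\|v\|^2}$; the inequality you wrote, which asks the $u$-quotient to be \emph{more} negative than the $v$-quotient by the factor $\frac{1+c\delta}{1-c\delta}$, is in general false. The closing remark that the last inequality is preserved ``after enlarging $a_0$'' because the eigenvalues are negative is also backwards: for $E_n<0$ one has $(1+a_0\delta)E_n<E_n$, so $\frac{1+c\delta}{1-c\delta}E_n\le(1+a_0\delta)E_n$ forces $a_0\lesssim 2c$, while for the nonnegative eigenvalues (the lemma is stated for all $n\in\NN$) the same step forces $a_0\gtrsim 2c$; no single constant makes your chain work.

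The repair is to estimate the two constituents of the Rayleigh quotient separately, each of which has a definite sign: from $\|\nabla u\|^2_{L^2(V)}\le(1+c\delta)\|\nabla v\|^2_{L^2(T)}$, $\int_{\partial_* V}u^2\dd s\ge(1-c\delta)\int_{\partial_* T}v^2\dd s$ and $(1-c\delta)\|v\|^2\le\|u\|^2\le(1+c\delta)\|v\|^2$ one gets, for every $u$,
\[
\frac{N^V_j[u,u]}{\|u\|^2_{L^2(V)}}\le\frac{1+c\delta}{1-c\delta}\,\frac{\|\nabla v\|^2_{L^2(T)}}{\|v\|^2_{L^2(T)}}-\alpha\,\frac{1-c\delta}{1+c\delta}\,\frac{\int_{\partial_* T}v^2\dd s}{\|v\|^2_{L^2(T)}},
\]
and then, choosing $a_0$ with $1+a_0\delta\ge\frac{1+c\delta}{1-c\delta}$ and $a$ so large that $(1+a_0\delta)(1-a\delta)\le\frac{1-c\delta}{1+c\delta}$ (e.g.\ $a_0=4c$, $a=6c$ for small $\delta_0$), the right-hand side is bounded by $(1+a_0\delta)\,N^{r(\delta)}_{\theta_j,\alpha(1-a\delta)}[v,v]\big/\|v\|^2$ irrespective of signs, because the gradient term is only increased and the boundary term only made more negative. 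Min--max over the pushforward of the spectral subspace then gives the upper bound for every $n$; the mirrored inequalities (equivalently, the same argument run through $\Psi_j^{-1}$, with $1-a_0\delta$ and $\alpha(1+a\delta)$) give the lower bound, and the identical splitting handles $D^V_j$ and $R^V_j$. So the approach is correct, but the sign-sensitive distribution of the $(1\pm c\delta)$ factors --- in particular that the Robin-parameter shift $a$ must be taken large relative to the prefactor constant $a_0$ --- is the actual content of the proof, not a routine afterthought.
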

Here we recall that the operators $(D/N/R)_{\theta,\alpha}^r$ in truncated sectors $\cS_\theta^r$
were defined in~\eqref{eq-tqr}. Using the estimates of Lemmas~\ref{dalph} and~\ref{nalph}
for the eigenvalues in truncated sectors and by literally repeating the proof to obtain an Agmon estimate
in~Lemma~\ref{agmon22a}, we arrive then at first estimates for $(D/N/R)^V_j$:

\begin{coro}\label{lem-trunc5}
There is $b>0$ such that for $\delta\to 0^+$ and $\alpha\delta\to+\infty$ one has
\begin{align*}
E_n(D^V_j)&= \cE_n(\theta_j)\,\alpha^2+ \cO(\alpha^2\delta+\alpha^2 e^{-b\alpha\delta}), & n\in\big\{1,\dots,\kappa(\theta)\big\},\\
E_n(N^V_j)&= \cE_n(\theta_j)\,\alpha^2+ \cO(\alpha^2\delta+ 1/\delta^2), & n\in\big\{1,\dots,\kappa(\theta)\big\},
\end{align*}
and $E_{\kappa(\theta_j)+1}(D^V_j)\ge E_{\kappa(\theta_j)+1}(N^V_j)\ge -\alpha^2+o(\alpha^2)$.
In addition, there exist  $c>0$ and $C>0$ such that if $n\in\big\{1,\dots,\kappa(\theta_j)\big\}$ and $\psi_n$
is an eigenfunction of $N^V_j$ for the $n$th eigenvalue, then for $\delta\to 0^+$ and $\alpha\delta\to +\infty$
there holds
\[
\int_{V_{j,\delta}} e^{c\alpha|x|}\Big ( \dfrac{1}{\alpha^2} \big|\nabla \psi_n(x)\big|^2+ \big|\psi_n(x)\big|^2\Big)\, dx
\le C \,\|\psi_n\|^2_{L^2(V_{j,\delta})}.
\]
There exists $a>0$ such that $E_1(R^V_j)\ge -a \alpha^2$ for  $\delta\to 0^+$ and $\alpha\delta\to+\infty$.
\end{coro}

Like in the case of straight polygons, we introduce the following subspaces:
\begin{align*}
L&:=\begin{minipage}[t]{90mm}\raggedright
the subspace of $L^2(\Omega)$ spanned by the first $K$ eigenfunctions  of $R^\Omega_\alpha$,
\end{minipage}\\
L_j&:=\begin{minipage}[t]{90mm}\raggedright
the subspace of $L^2(V_{j,\delta})$ spanned by the first $\kappa(\theta_j)$ eigenfunctions of
$N^V_j$, with $j\in\cJ_\cvx$,
\end{minipage}\\
\sigma_j&:L^2(\Omega)\to L^2(V_{j,\delta}) \text{ the operator of restriction,}\\
&\quad\text{$(\sigma_j u)(x)=u(x)$
for $x\in V_{j,\delta}$,}
\end{align*}
then the adjoint operators  $\sigma^*_j:L^2(V_{j,\delta})\to L^2(\Omega)$ are the operators of extension by zero.
The following distance estimate will again be of importance:

\begin{lemma}\label{dist-ll}
Let $j\in\cJ_\cvx$, then in the limit $\delta\to 0^+$
and $\alpha\delta\to +\infty$ there holds $d(\sigma^*_j L_j, L)=\cO(e^{-c\alpha\delta})$ with some fixed $c>0$.
\end{lemma}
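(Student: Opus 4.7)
The proof plan is to replicate the scheme of Lemma~\ref{dist-ll0} for the curvilinear setting, with the new geometric ingredients and spectral estimates provided by Corollary~\ref{lem-trunc5} and by the appendix on curvilinear sectors. I first pick $0<a<b<1$ and, following Appendix~\ref{appa}, construct a cut-off function $\varphi_\delta\in C^\infty(\overline\Omega)$ satisfying
\[
0\le\varphi_\delta\le 1,\qquad \varphi_\delta=1\ \text{in $V_{j,a\delta}$},\qquad \varphi_\delta=0\ \text{in $\Omega\setminus\overline{V_{j,b\delta}}$},
\]
with $\|\partial^\beta\varphi_\delta\|_\infty\le C\delta^{-|\beta|}$ for $|\beta|\le 2$ and, crucially, $\partial_\nu\varphi_\delta=0$ on $\partial\Omega$. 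This last point is what makes $\varphi_\delta v$ belong to $\dom(R^\Omega_\alpha)$ whenever $v$ belongs to $\dom(N^V_j)$, and it is precisely the reason one cannot use a naive radial cut-off in the curvilinear case.

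With this cut-off in place, I would split via the triangular inequality
\[
d(\sigma_j^*L_j,L)\le d(\sigma_j^*L_j,\varphi_\delta\sigma_j^*L_j)+d(\varphi_\delta\sigma_j^*L_j,L).
\]
For the first term I invoke the Agmon-type estimate of Corollary~\ref{lem-trunc5}: choosing $c:=b_0 a/2$ where $b_0$ is the decay rate, the eigenfunctions of $N^V_j$ in $L_j$ satisfy $\int_{V_{j,\delta}\setminus\overline{V_{j,a\delta}}}(\alpha^{-2}|\nabla v|^2+v^2)\,dx\le Be^{-2c\alpha\delta}\|v\|^2$, and hence $\|v_*-\varphi_\delta v_*\|_{L^2(\Omega)}\le \sqrt{B}\,e^{-c\alpha\delta}\|v_*\|$, giving $d(\sigma_j^*L_j,\varphi_\delta\sigma_j^*L_j)=\cO(e^{-c\alpha\delta})$ exactly as in the straight case.

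For the second term I would apply Proposition~\ref{propdist2} to the quasimodes $\psi_n:=\varphi_\delta v_*^n$, where $v^n$ ($n=1,\dots,\kappa(\theta_j)$) is an orthonormal basis of $L_j$ associated with the eigenvalues $E_n:=E_n(N^V_j)$. The Neumann-compatibility of $\varphi_\delta$ ensures $\psi_n\in\dom(R^\Omega_\alpha)$ and gives
\[
(R^\Omega_\alpha-E_n)\psi_n=\bigl(-(\Delta\varphi_\delta)v^n-2\nabla\varphi_\delta\cdot\nabla v^n\bigr)_*,
\]
which is supported in $V_{j,b\delta}\setminus\overline{V_{j,a\delta}}$. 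Plugging the gradient bounds $|\nabla\varphi_\delta|\le C/\delta$, $|\Delta\varphi_\delta|\le C/\delta^2$ into the Agmon estimate and noting $1/\delta^2=o(\alpha/\delta)$, I obtain $\|(R^\Omega_\alpha-E_n)\psi_n\|=\cO\bigl((\alpha/\delta)e^{-c\alpha\delta}\bigr)$. The Gram matrix of $(\psi_n)$ is $I+\cO(e^{-2c\alpha\delta})$, again by Agmon decay.

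The remaining ingredient is the spectral gap. From Corollary~\ref{lem-trunc5} the eigenvalues $E_n$ satisfy $E_n=\cE_n(\theta_j)\alpha^2+o(\alpha^2)$, and the first $K$ eigenvalues of $R^\Omega_\alpha$ satisfy $E_n(R^\Omega_\alpha)=\cE_n\alpha^2+o(\alpha^2)$ while $E_{K+1}(R^\Omega_\alpha)\ge-\alpha^2+o(\alpha^2)$ (by a Dirichlet--Neumann bracketing argument analogous to Proposition~\ref{DtN}, using Corollary~\ref{lem-trunc5} in place of Lemmas~\ref{dalph}--\ref{nalph}). Setting $h:=(-\cE_K-1)/2>0$, the interval $I:=((\cE_1-h)\alpha^2,(\cE_K+h)\alpha^2)$ contains all the $E_n$ and the first $K$ eigenvalues of $R^\Omega_\alpha$, and $\dist(I,\spec(R^\Omega_\alpha)\setminus I)\ge \frac14 h\alpha^2$ for large $\alpha$. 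Proposition~\ref{propdist2} then yields $d(\varphi_\delta\sigma_j^*L_j,L)=\cO\bigl(e^{-c\alpha\delta}/(\alpha\delta)\bigr)$, and combined with the first estimate this gives the claim. The main obstacle, already resolved in Appendix~\ref{appa}, is the construction of $\varphi_\delta$: without Neumann-compatibility at $\partial\Omega$, the cut-off eigenfunctions leave the operator domain and Proposition~\ref{propdist2} is inapplicable.
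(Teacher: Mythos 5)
Your proposal is correct and follows essentially the same route as the paper, which simply constructs the Neumann-compatible cut-off $\varphi_\delta$ via Lemma~\ref{cutoff} and then repeats the argument of Lemma~\ref{dist-ll0} verbatim, all spectral ingredients being supplied by Corollary~\ref{lem-trunc5} (plus the rough bracketing location of the first $K$ eigenvalues of $R^\Omega_\alpha$, as you indicate). The only ingredient you use implicitly but do not name is the geometric fact (Corollary~\ref{lem-length}) that $|x-A_j|\ge a_0\delta$ on $V_{j,\delta}\setminus\overline{V_{j,a\delta}}$, which in the curvilinear setting is what turns the Agmon weight into the $\cO(e^{-c\alpha\delta})$ smallness; with that reference added, your proof coincides with the paper's.
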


\begin{proof}
Pick $0<a<b<1$, then due to Lemma~\ref{cutoff} one can find smooth cut-off functions $\varphi_\delta\in C^2(\Bar\Omega)$
with the following properties:
\begin{itemize}
\item $0\le \varphi_\delta\le 1$, and for all $\beta\in\NN^2$ with $1\le |\beta|\le 2$
there holds $\|\partial^\beta \varphi_\delta\|_\infty\le C \delta^{-|\beta|}$,
\item $\varphi_\delta=1$ in $V_{j,a\delta}$, and $\varphi_\delta=0$ in $\Omega\setminus \overline{V_{j,b\delta}}$,
\item the normal derivative of $\varphi_\delta$ at $\partial\Omega$ is zero,
\end{itemize}
where $C>0$ is some fixed constant. By Corollary~\ref{lem-length}
one can find $a_0>0$ such that $|x-A_j|> a_0 \delta $ for $x\in V_{j,\delta}\setminus \overline{V_{j,a\delta}}$.
As the normal derivative of $\varphi_\delta$ at $\partial\Omega$ is zero, it follows that
for any $v\in \dom(N^V_j)$ we have then $\varphi_\delta v\in\dom(R^\Omega_\alpha)$,
and the proof works literally as for the straight case (Lemma~\ref{dist-ll0}),
as all other necessary components are contained in Corollary~\ref{lem-trunc5}.
\end{proof}

Let us now give some first estimates for the eigenvalues of $N^W_j$. Recall that  $H_j$ stands for the curvature of the $j$th side of $\Omega$. We denote
\begin{equation}
   \label{hhj0}
H_{j,*}:=\max_{s\in[0,\ell_j]} H_j(s), \qquad H_*:=\max_{j\in\{1,\dots,M\}} H_{j,*},
\qquad
\cJ_*:=\{j: H_{j,*}=H_*\}.
\end{equation}
As in the case of straight polygons, let us consider the one-dimensional operators
\begin{align*}
D_j&:=\text{ the Dirichlet Laplacian on $(0,\ell_j)$}, \\
D_{j,\delta}&:=\text{ the Dirichlet Laplacian on $I_{j,\delta}$},\\
N_j&:=\text{ the Neumann Laplacian on $(0,\ell_j)$}, \\
N_{j,\delta}&:=\text{ the Neumann Laplacian on $I_{j,\delta}$}.
\end{align*}
A literal repetition of the constructions of~\cite[Sec.~6]{pp15b} gives the following result:
\begin{prop}\label{prop-pp}
For any $j\in\{1,\dots,M\}$, $n\in\NN$ and $\delta:=\alpha^{-\kappa}$ with $\kappa\in[\frac{2}{3},1]$, for $\alpha\to+\infty$ there holds
\begin{equation}
  \label{eq-eend1}
\begin{aligned}
E_n(N^W_j)&=-\alpha^2+E_n(N_{j,\delta}-\alpha H_j)+\cO(1), \\
E_n(D^W_j)&=-\alpha^2+E_n(D_{j,\delta}-\alpha H_j)+\cO(1).
\end{aligned}
\end{equation}
In particular,
\begin{gather}
  \label{eq-eend2}
\begin{aligned}
E_n(N_{j,\delta}-\alpha H_j)&=-H_{j,*}\alpha+ o(\alpha),\\
E_n(D_{j,\delta}-\alpha H_j)&=-H_{j,*}\alpha+ o(\alpha),
\end{aligned}\\
   \label{eq-eend3}
\begin{aligned}
E_n(N^W_j)&=-\alpha^2-H_{j,*} \alpha +o(\alpha),\\
E_n(D^W_j)&=-\alpha^2-H_{j,*}\alpha+ o(\alpha).
\end{aligned}
\end{gather}
\end{prop}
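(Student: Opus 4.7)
The plan is to reduce the two-dimensional Robin Laplacians in the curvilinear strip $W_{j,\delta}$ to a one-dimensional effective Schr\"odinger operator on $I_{j,\delta}$, following the tubular-coordinate procedure of~\cite{pp15b} adapted to the truncated situation with either a Neumann or a Dirichlet cap at $t=\delta$. I expect the argument to split into three steps.

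First I would pull back the form of $N^W_j$ (resp.\ $D^W_j$) along $\Phi_j(s,t)=\gamma_j(s)-t\nu_j(s)$ from $W_{j,\delta}$ to the straight rectangle $\Pi_{j,\delta}=I_{j,\delta}\times(0,\delta)$; the measure becomes $(1-tH_j(s))\,\dd s\,\dd t$ with $1-tH_j(s)=1+\cO(\delta)$ uniformly. After the standard change of unknown $w(s,t)=(1-tH_j(s))^{1/2}v(s,t)$, which trivialises the weight in $L^2$, and one integration by parts in $t$, the form rewrites as
\[
\int_{\Pi_{j,\delta}}\!\Big((1+\cO(\delta))|\partial_s w|^2+|\partial_t w|^2\Big)\dd s\,\dd t-\alpha\!\int_{I_{j,\delta}}\!w(s,0)^2\dd s-\tfrac12\!\int_{I_{j,\delta}}\!H_j(s)w(s,0)^2\dd s+\cO(1)\|w\|^2,
\]
supplemented with the constraint $w(\cdot,\delta)=0$ in the Dirichlet case (and a boundary term at $t=\delta$ produced by the integration by parts that is harmlessly absorbed in the $\cO(1)$ remainder).

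Next I would separate variables transversally. Let $\chi_\alpha$ denote the normalised $L^2(0,\delta)$ ground state of the 1D Laplacian with Robin condition $\chi'(0)=-\alpha\chi(0)$ and the appropriate Neumann or Dirichlet condition at $t=\delta$. By Propositions~\ref{prop21}--\ref{prop22} its first eigenvalue is $-\alpha^2+\cO(\alpha^2 e^{-\alpha\delta})$, its second eigenvalue is bounded from below by $c\delta^{-2}\gg\alpha$ (since $\delta^{-2}=\alpha^{2\kappa}\ge\alpha^{4/3}$), and $\chi_\alpha$ is exponentially localised near $t=0$ with $\chi_\alpha(0)^2=2\alpha+\cO(1)$. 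Decomposing $w(s,t)=\phi(s)\chi_\alpha(t)+w_\perp(s,t)$ with $\int_0^\delta\chi_\alpha(t)w_\perp(s,t)\dd t=0$ for every~$s$, the transverse spectral gap makes the contribution of $w_\perp$ negligible when compared to $-\alpha^2\|w_\perp\|^2$, so the min-max on any fixed eigenvalue may be taken over pure tensors $\phi\otimes\chi_\alpha$.

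Finally, evaluating the form on $w=\phi\otimes\chi_\alpha$ and integrating in $t$ yields, up to an $\cO(1)$ error on $\|\phi\|^2$,
\[
(1+\cO(\delta))\!\int_{I_{j,\delta}}\!|\phi'|^2\dd s-\alpha^2\!\int_{I_{j,\delta}}\!\phi^2\dd s-\alpha\!\int_{I_{j,\delta}}\!H_j(s)\phi(s)^2\dd s,
\]
which is exactly the form of $N_{j,\delta}-\alpha H_j-\alpha^2$ (or its Dirichlet counterpart); a two-sided min-max comparison then produces~\eqref{eq-eend1}. The main delicate point is matching the $(1+\cO(\delta))$ tangential distortion with the size of individual eigenvalues: eigenfunctions of $N_{j,\delta}-\alpha H_j$ concentrate at scale $\sim\alpha^{-1/4}$ near a maximiser of $H_j$, carry kinetic energy $\cO(\alpha^{1/2})$, and the distortion therefore contributes only $\cO(\alpha^{1/2-\kappa})=\cO(1)$ precisely under the standing hypothesis $\kappa\in[\tfrac23,1]$. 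For~\eqref{eq-eend2}, the lower bound $E_n(N_{j,\delta}-\alpha H_j)\ge-\alpha H_{j,*}$ follows at once from $-\alpha H_j(s)\ge-\alpha H_{j,*}$ (and the same for $D_{j,\delta}$), while the matching upper bound comes from $n$ disjointly supported smooth bumps of width $\alpha^{-1/3}$ around a maximiser $s_\ast$ of $H_j$: each bump has kinetic energy $\cO(\alpha^{2/3})=o(\alpha)$ and, by continuity of $H_j$, sees the potential $-\alpha H_{j,*}+o(\alpha)$; combined with~\eqref{eq-eend1} this yields~\eqref{eq-eend3}.
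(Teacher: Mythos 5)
Your plan is essentially the proof the paper has in mind: the statement is proved there by a one-line reference to the tubular-coordinate construction of \cite[Sec.~6]{pp15b}, and the same scheme is implemented explicitly in the paper for constant curvature (Lemma~\ref{change3} for the change of variables with the weight $(1-tH_j)^{1/2}$, Lemma~\ref{wneum} for the transverse projection onto the Robin ground state), so your reduction to $N_{j,\delta}-\alpha H_j$, resp.\ $D_{j,\delta}-\alpha H_j$, with an $\cO(1)$ error, and the elementary derivation of \eqref{eq-eend2}--\eqref{eq-eend3} from it, follow the intended route. Two points in your sketch need repair, though neither is fatal. First, the boundary term at $t=\delta$ produced by the integration by parts is a \emph{trace} term of size $\cO(1)\int_{I_{j,\delta}}w(s,\delta)^2\,\dd s$ (it only vanishes in the Dirichlet case); it is \emph{not} bounded by $\cO(1)\|w\|^2$, and absorbing it via the scaled trace inequality of Lemma~\ref{sob-scale} costs at least $\cO(\delta^{-1})\|w\|^2$, which destroys the $\cO(1)$ precision of \eqref{eq-eend1}. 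The correct treatment is to feed it into the transverse one-dimensional operator as a $\beta$-Robin condition at $t=\delta$ with $\beta=\cO(1)$, which is exactly why Proposition~\ref{prop22} is stated with a $\beta$-Robin endpoint and how Lemma~\ref{wneum} proceeds; with that modification your "$\chi_\alpha$" still has first eigenvalue $-\alpha^2+\cO(\alpha^2e^{-\alpha\delta})$ and second eigenvalue $\gtrsim\delta^{-2}$, and the rest of your argument is unchanged. Second, your calibration of the tangential distortion relies on the $\alpha^{-1/4}$ localization and $\cO(\alpha^{1/2})$ kinetic energy, which presupposes a nondegenerate curvature maximum and would already allow $\kappa\ge\tfrac12$; the statement makes no such assumption, and the correct general bound (the actual source of the hypothesis $\kappa\ge\tfrac23$) is that suitable test functions, and the eigenfunctions of the comparison operators, have tangential kinetic energy $\cO(\alpha^{2/3})$, so that the $\cO(\delta)$ metric distortion contributes $\cO(\alpha^{2/3-\kappa})=\cO(1)$.
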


The preceding estimates work for all half-angles angle $\theta_j$. For the rest of the section we assume that
\begin{equation}
   \label{cond-nonres}
\theta_j \text{ is non-resonant for all $j\in\cJ_\cvx$.}
\end{equation}

\subsection{Estimates for non-resonant convex sectors}

Let us pick any $j\in\cJ_\cvx$, which is then non-resonant by assumption.
The estimates for $N^V_j$ given in this subsection will be of crucial importance for the subsequent analysis.
They slightly differ from the respective estimates for the straight case (Subsection~\ref{ssec-nonres}),
as some more parameters will be needed later.
For the rest of the section we assume that $\delta$ is chosen depending
on $\alpha$ such that
\begin{equation}
   \label{eq-daa2}
\alpha\delta\to +\infty, \quad \delta\to 0^+, \quad \alpha^2\delta^3\to0^+
\qquad \text{ as } \alpha\to+\infty.
\end{equation}
An exact choice of $\delta$ will be made at a later stage.

\begin{coro}\label{lem-nonres2}
For any $A\in\RR$ there exists $c>0$ such that under the assumption~\eqref{eq-daa2}
there holds $E_{\kappa(\theta_j)+1}(D^V_j)\ge E_{\kappa(\theta_j)+1}(N^V_j)\ge -\alpha^2 +A\alpha+ c/\delta^2$.
\end{coro}

\begin{proof}
By Lemma~\ref{lem-curv} one has $E_n(N^V_j)\ge (1-a_0\delta) E_n(N_{\theta_j,\alpha(1+a\delta)}^{r(\delta)})$.
As $\theta_j$ is non-resonant,  with some $C>0$ we have
$E_{\kappa(\theta_j)+1}(N_{\theta_j,\alpha}^{r})\ge -\alpha^2+C/r^2$ as $\alpha r$ is large.
In the asymptotic regime under consideration we have  $\alpha(1+a\delta)r_j(\delta)\sim \alpha \delta\cotan\theta_j\to+\infty$, hence,
\begin{align*}
E_n(N_{\theta_j,\alpha(1+a\delta)}^{r(\delta)})&\ge -\alpha^2(1+a\delta)^2+\dfrac{C}{r(\delta)^2}\\
&\ge -\alpha^2-3a\alpha^2\delta + \dfrac{C_0}{\delta^2}, \quad C_0:=\dfrac{C \tan^2\theta_j}{2},\\
(1-a_0\delta) E_n(N_{\theta_j,\alpha(1+a\delta)}^{r(\delta)})&\ge (1-a_0 \delta)\Big(
{}-\alpha^2-3a\alpha^2\delta + \dfrac{C_0}{\delta^2}\Big)\\
&\ge-\alpha^2 -3a\alpha^2\delta + \dfrac{C_0}{\delta^2}-\dfrac{a_0C_0}{\delta}\\
&\ge -\alpha^2 -3a\alpha^2\delta + \dfrac{C_0}{2\delta^2}\\
&=-\alpha^2+A\alpha +\dfrac{1}{\delta^2}\Big( \dfrac{1}{2}\, C_0 - 3a\alpha^2\delta^3-A\alpha\delta^2\Big).
\end{align*}
For $\alpha^2\delta^3\to 0^+$ one has $\alpha \delta^2=\alpha^2\delta^3/(\alpha\delta)\to 0^+$,
and for any fixed $c\in(0,C_0/2)$ there holds
\[
E_{\kappa(\theta_j)+1}(N^V_j)\ge-\alpha^2+A\alpha+ c/\delta^2.
\]
The inequality $E_{\kappa(\theta_j)+1}(D^V_j)\ge E_{\kappa(\theta_j)+1}(N^V_j)$ follows from the min-max principle.
\end{proof}

Proceeding almost literally as in the straight case (Lemma~\ref{lem-trace0})
one puts the preceding assertion into the following special form:

\begin{coro}\label{lem-trace}
For any $A\in\RR$ there exists $b>0$ such that under the assumption~\eqref{eq-daa2} 
there following inequalities holds for any $u\in H^1(V_{j,\delta})\mathop{\cap} L_j^\perp$:
\begin{align}
	\label{trr1}
\|v\|^2_{L^2(V_{j,\delta})}&\le b\,\delta^2\Big( N^V_j [v,v] + (\alpha^2-A\alpha)\|v\|^2_{L^2(V_{j,\delta})} \Big),\\
	\label{trr2}
\int_{\partial_\ext V_\delta} v^2\dd s
&\le b\alpha\delta^2\Big( N^V_j [v,v] + (\alpha^2-A\alpha)\|v\|^2_{L^2(V_{j,\delta})}\Big).
\end{align}
\end{coro}

By combining the eigenvalues and eigenfunction estimates of Corollary~\ref{lem-trace} with the distance estimate of Lemma~\ref{dist-ll} like in the proof of Lemma~\ref{vertices1}
we arrive then to the following estimate:
\begin{lemma}\label{vertices2}
For any $A\in\RR$ one can find $b>0$ and $c>0$ such that under the assumption~\eqref{eq-daa2}
there holds, for any $j=1,\dots, M$, and any $u\in H^1(\Omega)$ with $u\perp L$,
\begin{align*}
\|\sigma_j u\|^2_{L^2(V_{j,\delta})}
&\le b\delta^2 \Big(N^V_j[\sigma_j u,\sigma_j u]+ (\alpha^2-A\alpha)\|\sigma_j u\|^2_{L^2(V_{j,\delta})}\Big)\\
&\quad
+ b\alpha^2\delta^2e^{-c\alpha\delta} \|u\|^2_{L^2(\Omega)},\\
\int_{\partial_\ext V_{j,\delta}} (\sigma_j u)^2\dd s
&\le b \alpha\delta^2 \Big(N^V_j[\sigma_j u,\sigma_j u]+ (\alpha^2-A\alpha) \|\sigma_j u\|^2_{L^2(V_{j,\delta})}
\Big)\\
&\quad + b\alpha^3\delta^2 e^{-c\alpha\delta} \|u\|^2_{L^2(\Omega)}.
\end{align*}
\end{lemma}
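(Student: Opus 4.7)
The plan is to adapt the proof of Lemma~\ref{vertices1} line by line to the curvilinear setting, replacing each ingredient used there by its curvilinear analogue established earlier in Section~\ref{sec-thm23}. There are two cases to handle separately, according to whether $A_j$ is convex or concave.

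For $j\in\cJ_\cvx$ the strategy is as follows. Set $u^V:=\sigma_j u$, denote by $P_j$ the orthogonal projector in $L^2(V_{j,\delta})$ onto $L_j$, and decompose $u^V=v_0+v$ with $v_0:=P_ju^V$ and $v:=(1-P_j)u^V$. Since $u\perp L$, the same computation as in the proof of Lemma~\ref{vertices1} gives
\[
\|v_0\|_{L^2(V_{j,\delta})}=\big\|\sigma_j^*P_j\sigma_j(1-P)u\big\|_{L^2(\Omega)}\le d(\sigma_j^*L_j,L)\,\|u\|_{L^2(\Omega)},
\]
where $P$ is the orthogonal projector on $L$ in $L^2(\Omega)$; invoking Lemma~\ref{dist-ll} yields $\|v_0\|_{L^2(V_{j,\delta})}=\cO(e^{-c\alpha\delta})\|u\|_{L^2(\Omega)}$ for some $c>0$. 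Because $P_j$ is a spectral projector for $N^V_j$, one has $N^V_j[u^V,u^V]=N^V_j[v_0,v_0]+N^V_j[v,v]$, and the bound $E_n(N^V_j)=\cO(\alpha^2)$ for $n\le\kappa(\theta_j)$ from Corollary~\ref{lem-trunc5} combined with the spectral theorem gives $|N^V_j[v_0,v_0]|\le C\alpha^2e^{-2c\alpha\delta}\|u\|^2_{L^2(\Omega)}$, so that $N^V_j[v,v]\le N^V_j[u^V,u^V]+C\alpha^2e^{-2c\alpha\delta}\|u\|^2_{L^2(\Omega)}$.

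Now the key point: $v\perp L_j$ by construction, so Corollary~\ref{lem-trace}, applied with the prescribed $A\in\RR$, produces a constant $b>0$ (depending on $A$) such that both
\[
\|v\|^2_{L^2(V_{j,\delta})}\le b\delta^2\Big(N^V_j[v,v]+(\alpha^2-A\alpha)\|v\|^2_{L^2(V_{j,\delta})}\Big)
\]
and the analogous trace bound on $\int_{\partial_\ext V_{j,\delta}}v^2\dd s$ hold. Substituting the estimates on $\|v_0\|$ and $N^V_j[v_0,v_0]$, together with $\|v\|\le\|u^V\|$ and $\|u^V\|^2=\|v\|^2+\|v_0\|^2$, yields the sought norm inequality after enlarging $b$. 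For the trace inequality one additionally controls $\int_{\partial_\ext V_{j,\delta}}v_0^2\dd s$ via the rough lower bound $R^V_j\ge-c_0\alpha^2$ from Corollary~\ref{lem-trunc5} applied to $f:=v_0$, which yields $\int_{\partial_\ext V_{j,\delta}}v_0^2\dd s=\cO(\alpha e^{-2c\alpha\delta})\|u\|^2_{L^2(\Omega)}$; this is absorbed in the $b\alpha^3\delta^2e^{-c\alpha\delta}\|u\|^2_{L^2(\Omega)}$ remainder.

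For $j\notin\cJ_\cvx$ no projection is needed: one has $\partial_*V_{j,\delta}=\emptyset$ and $N^V_j\ge0$ is the pure Neumann Laplacian. For $\alpha$ large enough $\alpha^2-A\alpha\ge\alpha^2/2$, which gives the norm estimate trivially from $\|\sigma_j u\|^2\le(2/\alpha^2)(N^V_j[\sigma_j u,\sigma_j u]+(\alpha^2-A\alpha)\|\sigma_j u\|^2)$ and the observation $1/\alpha^2=o(\delta^2)$ from~\eqref{eq-daa2}. The trace estimate uses once more $R^V_j\ge-c_0\alpha^2$ from Corollary~\ref{lem-trunc5}, which implies $\int_{\partial V_{j,\delta}}f^2\dd s\le(C/\alpha)(N^V_j[f,f]+\alpha^2\|f\|^2_{L^2(V_{j,\delta})})$ for all $f\in H^1(V_{j,\delta})$; applying it to $f:=\sigma_j u$ and using $1/\alpha=o(\alpha\delta^2)$ completes the proof. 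The main (and only substantial) obstacle is the convex case, where the exponential distance bound of Lemma~\ref{dist-ll}, itself a consequence of the Agmon decay of Corollary~\ref{lem-trunc5} and Proposition~\ref{propdist2}, is precisely what is needed to absorb the $v_0$ contribution into the harmless $\alpha^2\delta^2e^{-c\alpha\delta}\|u\|^2_{L^2(\Omega)}$ remainder; the presence of the free parameter $A\in\RR$ in the statement is harmless because it is already built into Corollary~\ref{lem-trace}.
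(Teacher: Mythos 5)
Your proposal is correct and follows essentially the same route as the paper, which proves Lemma~\ref{vertices2} precisely by repeating the argument of Lemma~\ref{vertices1} with Corollary~\ref{lem-trace} replacing Lemma~\ref{lem-trace0} and Lemma~\ref{dist-ll} replacing Lemma~\ref{dist-ll0}, the parameter $A$ entering only through Corollary~\ref{lem-trace}. The only slight imprecision is attributing the bound $R^V_j\ge -c_0\alpha^2$ for concave vertices to Corollary~\ref{lem-trunc5} (which concerns $j\in\cJ_\cvx$); for concave vertices $V_{j,\delta}$ is an exactly scaled circular sector, so the bound follows directly from Lemma~\ref{rob-scale}, as in the straight case.
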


We are now able to obtain an analog of Proposition~\ref{DtN} for curvilinear polygons. 
\begin{coro}\label{corol1}
For any fixed $n\in\NN$ and $\alpha\to+\infty$ there holds
\begin{equation}
   \label{bra002}
E_n(\boplus\nolimits_{j=1}^M N^W_j)	\le E_{K+n}(R^\Omega_\alpha)\le E_n(\boplus\nolimits_{j=1}^M D^W_j),
\end{equation}
in particular,
\begin{multline*}
-\alpha^2+E_n\Big( \boplus_{j\in\cJ_*}(N_{j,\delta}-\alpha H_j)\Big)+\cO(1)\le E_{K+n}(R^\Omega_\alpha)\\
\le-\alpha^2+E_n\Big( \boplus_{j\in\cJ_*}(D_{j,\delta}-\alpha H_j)\Big)+\cO(1).
\end{multline*}
\end{coro}

\begin{proof}
The standard Dirichlet-Neumannn bracketing gives
\begin{multline}
   \label{bra001}
E_n\big( N^c \oplus (\boplus\nolimits_{j=1}^M N^V_j)\oplus (\boplus\nolimits_{j=1}^M N^W_j) \big)\le
E_n(R^\Omega_\alpha)\\
\le E_n\big( (\boplus\nolimits_{j=1}^M D^V_j)\oplus (\boplus\nolimits_{j=1}^M D^W_j) \big).
\end{multline}
In the asymptotic regime \eqref{eq-daa2}, thanks to Corollary \ref{lem-trunc5}, we have
\begin{align*}
E_{n}(\boplus\nolimits_{j=1}^M D^V_j)&=\cE_n\alpha^2+o(\alpha^2),\\
 E_{n}(\boplus\nolimits_{j=1}^M N^V_j)&=\cE_n\alpha^2+o(\alpha^2) \text { for $n\in\{1,\dots,K\}$},
\end{align*}
while for any $A>0$ and some $c>0$ one has, by Corollary~\ref{lem-nonres2},
\[
E_{K+1}(\boplus\nolimits_{j=1}^M D^V_j)\ge E_{K+1}(\boplus\nolimits_{j=1}^M N^V_j)\ge -\alpha^2+A\alpha + c/\delta^2.
\]
By Proposition~\ref{prop-pp}, for each $n\in \NN$ we have $E_n(D^W_j)=-\alpha^2+\cO(\alpha)$ and $E_n(N^W_j)=-\alpha^2+\cO(\alpha)$, 
hence 
\begin{align*}
E_K(\boplus\nolimits_{j=1}^M N^V_j)&\le E_n(N^W_j)=-\alpha^2+\cO(\alpha)\le E_{K+1}(\boplus\nolimits_{j=1}^M N^V_j), \\
E_K(\boplus\nolimits_{j=1}^M D^V_j)&\le E_n(D^W_j)=-\alpha^2+\cO(\alpha)\le E_{K+1}(\boplus\nolimits_{j=1}^M D^V_j).
\end{align*}
It follows that for any $n\in\NN$ one has
\begin{align*}
E_{K+n}\big( (\boplus\nolimits_{j=1}^M D^V_j)\oplus (\boplus\nolimits_{j=1}^M D^W_j) \big)&=E_{n}(\boplus\nolimits_{j=1}^M D^W_j),\\
E_{K+n}\big(  N^c\oplus (\boplus\nolimits_{j=1}^M N^V_j)\oplus (\boplus\nolimits_{j=1}^M N^W_j) \big)&={E_{n}(\boplus\nolimits_{j=1}^M N^W_j)},
\end{align*}
and \eqref{bra001} implies \eqref{bra002}. Now it is sufficient to apply Proposition~\ref{prop-pp} to each of the operators in the direct sums. In particular, due to \eqref{eq-eend2}
only $j\in\cJ_*$ contribute to the asymptotics of the individual eigenvalues.
\end{proof}

\subsection{Curvatures taking their maxima away from corners: Proof of Theorem~\ref{thm2}}\label{ssec-thm2}

Remark that the estimate of Corollary~\ref{corol1} only gives a rough asymptotics in general, as there is a discrepancy between the lower and upper bounds due to the  different boundary conditions (Neumann/Dirichlet). In the particular case of
constant curvatures one obtains the same asymptotics for each individual eigenvalue.
We remark nevertheless that the discrepancy can be very small under suitable geometric assumptions. The considerations of the present subsection
rely on some general and well-known ideas of the semiclassical analysis (see e.g. Helffer's monograph \cite{bhbook}), so we only give a sketch of the proofs.

Namely,
in the present subsection we consider the case when the maximum curvature $H_*$ is not attained at any corner:
\begin{equation}
   \label{eq-hhh0}
\text{for all $j\in\{1,\dots,M\}$ there holds $H_j(0)\ne H_*$ and $H_j(\ell_j)\ne H_j$.}
\end{equation}
The analysis of both $E_n(N_{j,\delta}-\alpha H_j)$ and $E_n(D_{j,\delta}-\alpha H_j)$ is covered by the standard framework of the semiclassical analysis:
the eigenfunctions for the lowest eigenvalues are exponentially localized near the set at which
$H_j$ takes its maximum value, and the boundary conditions only influence the eigenvalue asymptotics in exponentially small terms,
see \cite[\S 3]{bhbook}. One obtains then the following assertion:
\begin{prop}\label{prop-agmon}
Under the assumption \eqref{eq-hhh0}, there exists $c>0$ such that for each $j\in\cJ_*$ and each $n\in\NN$ the following estimates hold in the asymptotic regime \eqref{eq-daa2}:
\begin{align*}
E_n(D_{j,\delta}-\alpha H_j)&=E_n(D_j-\alpha H_j)+\cO(e^{-c\sqrt{\alpha}}),\\
E_n(N_{j,\delta}-\alpha H_j)&=E_n(N_j-\alpha H_j)+\cO(e^{-c\sqrt{\alpha}}),\\
E_n(D_j-\alpha H_j)-E_n(N_j-\alpha H_j)&=\cO(e^{-c\sqrt{\alpha}}).
\end{align*}
\end{prop}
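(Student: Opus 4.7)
The plan is to use standard Agmon-type localization estimates for one-dimensional Schr\"odinger operators with a large coupling in front of $-H_j$. Under \eqref{eq-hhh0}, the set $\Sigma_j := \{s\in[0,\ell_j] : H_j(s) = H_*\}$ is a compact subset of the open interval $(0,\ell_j)$, so there exist an open set $\mathcal{U}$ with $\overline{\mathcal{U}}$ compactly contained in $(0,\ell_j)$ and $\Sigma_j\subset \mathcal{U}$, together with a constant $\eta_0>0$ such that $H_*-H_j\ge \eta_0$ on $[0,\ell_j]\setminus \mathcal{U}$. Since $\lambda_j^\pm(\delta) = \cO(\delta)\to 0^+$ by~\eqref{ljt0}, one also has $\overline{\mathcal{U}}\subset I_{j,\delta}$ for $\alpha$ sufficiently large. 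The aim is to show that eigenfunctions of any of the four operators $A\in\{N_j, D_j, N_{j,\delta}, D_{j,\delta}\}$ associated with the first $n$ eigenvalues of $A-\alpha H_j$ are Agmon-localized in $\mathcal{U}$ with rate $\sqrt{\alpha}$, and then to derive the three claimed equalities by cut-off arguments.

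First I would derive the Agmon estimate in a form uniform over the four operators. Proposition~\ref{prop-pp} for the $\delta$-truncated intervals, together with a simple trial function adapted to a maximum of $H_j$ and the trivial lower bound $A-\alpha H_j\ge -H_*\alpha$, gives $E_n(A-\alpha H_j)=-H_*\alpha+o(\alpha)$ uniformly in the four choices of $A$. Setting $\phi(s):=c\sqrt{\alpha}\,\dist(s,\mathcal{U})$ with $c^2<\eta_0/2$, multiplying the eigenvalue equation by $e^{2\phi}u$ and integrating by parts yields, after a standard Cauchy--Schwarz step to absorb the cross terms,
\[
\int \bigl(\alpha(H_*-H_j) - c^2\alpha - o(\alpha)\bigr) u^2 e^{2\phi}\,\dd s \le \alpha\|H_*-H_j\|_{\infty}\int_{\mathcal{U}} u^2\,\dd s.
\]
Splitting the left-hand integral at $\partial \mathcal{U}$ and using $\alpha(H_*-H_j)\ge \eta_0\alpha$ on the complement of $\mathcal{U}$, the choice of $c$ produces, for large $\alpha$,
\[
\int u^2 e^{2c\sqrt{\alpha}\,\dist(s,\mathcal{U})}\,\dd s \le C\|u\|^2,
\]
together with the corresponding bound on $|u'|^2$, with $C$ independent of $\alpha$ and of the choice of $A$.

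Second, I would conclude the three equalities by combining the Agmon estimate with cut-off and extension arguments. By form-domain inclusion via extension by zero, min-max immediately yields the ``free'' inequalities $E_n(D_{j,\delta}-\alpha H_j)\ge E_n(D_j-\alpha H_j)$ and $E_n(D_j-\alpha H_j)\ge E_n(N_j-\alpha H_j)$. For the reverse bounds, as well as for both directions of the $N_j$ versus $N_{j,\delta}$ comparison, fix a cut-off $\chi\in C_c^\infty((0,\ell_j))$ with $\chi\equiv 1$ on a neighborhood of $\overline{\mathcal{U}}$ and $\mathrm{supp}\,\chi\subset I_{j,\delta}$ for $\alpha$ large. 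Given an orthonormal family $u_1,\dots,u_n$ of eigenfunctions of the ``larger'' operator, the functions $\chi u_k$ belong to the form domain of the ``smaller'' operator after extension by zero, and by the Agmon estimate
\[
\|(1-\chi)u_k\|^2=\cO(e^{-c\sqrt{\alpha}}), \qquad \|\chi' u_k\|^2=\cO(e^{-c\sqrt{\alpha}}),
\]
since $\mathrm{supp}(1-\chi)$ and $\mathrm{supp}\,\chi'$ lie at positive distance from $\mathcal{U}$. An IMS-type Rayleigh-quotient computation then shows that the image subspace is $n$-dimensional and satisfies
\[
A_{\mathrm{smaller}}[\chi u_k,\chi u_k]\le E_k(A_{\mathrm{larger}})\|\chi u_k\|^2+\cO(e^{-c\sqrt{\alpha}}),
\]
and the min-max principle yields the three claimed upper bounds.

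The main obstacle is securing the Agmon estimate uniformly for the four operators in the asymptotic regime~\eqref{eq-daa2}, as the intervals $I_{j,\delta}$ themselves depend on $\alpha$. This is where \eqref{eq-hhh0} plays its decisive role: it provides a single open neighborhood $\mathcal{U}$ of $\Sigma_j$ separated from $\{0,\ell_j\}$ and hence, for $\alpha$ large, from $\partial I_{j,\delta}$ as well, so the same Agmon barrier works for all four operators simultaneously and the Dirichlet versus Neumann boundary conditions together with the precise endpoints $\lambda_j^\pm(\delta)$ contribute only at the level $\cO(e^{-c\sqrt{\alpha}})$.
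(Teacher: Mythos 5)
Your proposal is correct and follows essentially the same route the paper takes: the paper disposes of this proposition by invoking the standard semiclassical (Agmon) localization of the low-lying eigenfunctions near the interior maximum set of $H_j$ (with semiclassical rate $\sqrt{\alpha}$), so that the Dirichlet/Neumann conditions and the $\cO(\delta)$ truncation of the interval enter only through exponentially small terms, exactly as you spell out with the weight $c\sqrt{\alpha}\,\dist(\cdot,\mathcal{U})$ and the cut-off/min-max comparison among the four operators. Your write-up simply makes explicit the details the paper delegates to the cited semiclassical literature.
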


By combining Corollary~\ref{corol1} with Proposition~\ref{prop-agmon}  one obtains
then the following main result:

\begin{prop}\label{maxcurv}
Let $\Omega$ be a curvilinear polygon whose half-angles satisfy \eqref{cond-nonres}
and the side curvatures satisfy \eqref{eq-hhh0}. Then for any fixed $n\in\NN$ one has
\begin{align*}
E_{K+n}(R^\Omega_\alpha)&=-\alpha^2+E_n\big( \boplus\nolimits_{j\in\cJ_*}(D_j-\alpha H_j)\big)+\cO(1)\\
& = -\alpha^2+E_n\big( \boplus\nolimits_{j\in\cJ_*}(N_j-\alpha H_j)\big)+\cO(1).
\end{align*}
\end{prop}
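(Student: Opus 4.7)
The plan is to combine directly the bracketing from Corollary~\ref{corol1} with the exponential closeness estimates of Proposition~\ref{prop-agmon}. All of the hard work has already been done: the geometric decomposition, the non-resonance analysis in vertex neighborhoods, and the passage from the Laplacian in $W_{j,\delta}$ to the effective operator $(D/N)_{j,\delta}-\alpha H_j$ on the side are encoded in Corollary~\ref{corol1}, while the localization of the low eigenvalues of $(D/N)_j-\alpha H_j$ away from the endpoints under assumption~\eqref{eq-hhh0} is contained in Proposition~\ref{prop-agmon}.

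First I would fix $\delta$ satisfying~\eqref{eq-daa2} (for instance $\delta=\alpha^{-\kappa}$ with $\kappa\in[\tfrac23,1)$) so that Corollary~\ref{corol1} applies: for any fixed $n\in\NN$,
\[
-\alpha^2+E_n\Big( \boplus_{j\in\cJ_*}(N_{j,\delta}-\alpha H_j)\Big)+\cO(1)\le E_{K+n}(R^\Omega_\alpha)\le-\alpha^2+E_n\Big( \boplus_{j\in\cJ_*}(D_{j,\delta}-\alpha H_j)\Big)+\cO(1).
\]
Next I would invoke the first two lines of Proposition~\ref{prop-agmon}: under~\eqref{eq-hhh0} the maximum of each $H_j$ with $j\in\cJ_*$ is attained strictly inside $(0,\ell_j)$, and Agmon-type concentration of the lowest eigenfunctions of $D_j-\alpha H_j$ and $N_j-\alpha H_j$ around this interior maximum makes the boundary conditions at the endpoints (as well as the truncation of the interval from $(0,\ell_j)$ to $I_{j,\delta}$, whose endpoints shift by $\cO(\delta)$) contribute only $\cO(e^{-c\sqrt\alpha})$ to the individual eigenvalues. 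This lets me replace $E_n(D_{j,\delta}-\alpha H_j)$ by $E_n(D_j-\alpha H_j)$ and $E_n(N_{j,\delta}-\alpha H_j)$ by $E_n(N_j-\alpha H_j)$ with errors absorbed in $\cO(1)$, yielding
\[
-\alpha^2+E_n\Big( \boplus_{j\in\cJ_*}(N_j-\alpha H_j)\Big)+\cO(1)\le E_{K+n}(R^\Omega_\alpha)\le-\alpha^2+E_n\Big( \boplus_{j\in\cJ_*}(D_j-\alpha H_j)\Big)+\cO(1).
\]

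Finally I would use the third line of Proposition~\ref{prop-agmon}, $E_n(D_j-\alpha H_j)-E_n(N_j-\alpha H_j)=\cO(e^{-c\sqrt\alpha})$, to conclude that the lower and upper bounds coincide up to $\cO(1)$. This immediately gives both displayed asymptotics of Proposition~\ref{maxcurv}. The only step that requires any real thought is the verification of Proposition~\ref{prop-agmon}, but this is the standard Agmon/harmonic-approximation scheme in the semiclassical regime and is not the target of the statement itself; everything else is bookkeeping on the two-sided estimate of Corollary~\ref{corol1}. The main subtle point to keep in mind is that the error $\cO(1)$ in Corollary~\ref{corol1} is already larger than the exponential error transferring from $I_{j,\delta}$ to $(0,\ell_j)$, so no further optimization of $\delta$ is required beyond the range~\eqref{eq-daa2}.
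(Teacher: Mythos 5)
Your proposal is correct and is essentially identical to the paper's argument, which likewise obtains the result by combining the two-sided bracketing of Corollary~\ref{corol1} with the exponential-closeness estimates of Proposition~\ref{prop-agmon}. The only slip is the parenthetical choice $\kappa\in[\tfrac23,1)$: the endpoint $\kappa=\tfrac23$ violates $\alpha^2\delta^3\to 0$ in~\eqref{eq-daa2}, so one needs $\kappa\in(\tfrac23,1)$, but your actual requirement ``$\delta$ satisfying~\eqref{eq-daa2}'' is the correct one.
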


We finally note that the analysis can be made more precise under additional geometric assumptions.
In particular, the construction of Helffer-Kachmar \cite{HK} can be easily adapted
in order to obtain the following result:

\begin{prop}\label{thaa1}
Let $\Omega$ be a curvilinear polygon whose half-angles satisfy \eqref{cond-nonres}.
Assume that there exists a unique $k\in\{1,\dots,M\}$
and a unique $s_*\in (0,\ell_k)$ such that $H_k(s_*)=H_*$ and $h_*:=-H''_j(s_*)>0$ and that $\gamma_k$ is $C^\infty$ in a neighborhood of $s_*$, then,  for any $n\in\NN$ there exists a sequence $(\beta_{i,n})_{i\ge0}$ such that
for any $m\in\NN$ there holds
\[
E_{K+n}(R^\Omega_\alpha)=-\alpha^2-H_*\alpha + (2n-1)\sqrt{\dfrac{h_*}{2}}\sqrt{\alpha}
+\sum_{i=0}^m \beta_{i,n} \alpha^{-\frac{i}{2}} + o(\alpha^{-\frac{m}{2}}).
\]
\end{prop}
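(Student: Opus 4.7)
The plan is to reduce the problem to a one-dimensional semiclassical Schr\"odinger operator on the single side $\Gamma_k$ that carries the unique non-degenerate curvature maximum, and then apply the Helffer--Kachmar construction \cite{HK} to this effective operator.

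First I would sharpen the two-sided estimate of Corollary~\ref{corol1} so that both sides agree up to a remainder of order $\cO(\alpha^{-N})$ for arbitrarily large $N$. Under the assumption $h_*=-H_k''(s_*)>0$, standard Agmon estimates applied to $-\partial_s^2-\alpha H_k(s)$ show that the lowest eigenfunctions of both $D_k-\alpha H_k$ and $N_k-\alpha H_k$ are concentrated in a window of size $\cO(\alpha^{-\frac{1}{4}+\varepsilon})$ around $s_*$, which, thanks to $H_k(0),H_k(\ell_k)<H_*$, lies strictly inside $(0,\ell_k)$. Consequently
\[
E_n(D_k-\alpha H_k)-E_n(N_k-\alpha H_k)=\cO\big(e^{-c\sqrt{\alpha}}\big),
\]
and the endpoint boundary conditions only contribute at exponentially small order; the effective one-dimensional eigenvalue problem is therefore well-defined up to any polynomial order in $\alpha^{-1}$.

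Second, the reduction of the two-dimensional Robin problem to this one-dimensional effective operator, with precision $\cO(\alpha^{-N})$ for every $N$, is carried out in the spirit of \cite{pp15b, HK}. In tubular coordinates $(s,t)=\Phi_k^{-1}(x)$ near $\Gamma_k$ the Laplacian expands as $-\Delta=-\partial_t^2+\frac{H_k(s)}{1-tH_k(s)}\partial_t-\frac{1}{(1-tH_k(s))^2}\partial_s^2$, and one lifts a one-dimensional eigenfunction $f_{n,\alpha}$, localized around $s_*$ by a cut-off, to a two-dimensional quasimode
\[
u_{n,\alpha}(s,t)=\chi\big(\alpha^{1/4}(s-s_*)\big)\,e^{-\alpha t}\,\Big(f_{n,\alpha}(s)+\sum_{j=1}^{m}\alpha^{-j/2}g_{j,n}(s,t)\Big),
\]
where the correctors $g_{j,n}$ are determined by solving triangular systems of transport-type equations that compensate the non-normal terms generated by the curvature. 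The resulting upper bound via the min-max principle, applied on the orthogonal complement of the first $K$ eigenfunctions as in Proposition~\ref{prop-low-rob1} (using Lemma~\ref{vertices2} and the non-resonance condition to suppress corner contamination), is matched by a lower bound obtained from Proposition~\ref{propdist2}, since the exponential localization near $s_*$ keeps the quasimodes far from every vertex.

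Third, the effective operator $-\partial_s^2-\alpha H_k(s)$ is analyzed by the standard harmonic-approximation procedure. Writing $H_k(s)=H_*-\tfrac{h_*}{2}(s-s_*)^2+\sum_{j\ge 3}a_j(s-s_*)^j$ and rescaling $\sigma=\alpha^{1/4}(s-s_*)$ brings the operator, after subtracting $-\alpha H_*$, into the form $\sqrt{\alpha}\big(-\partial_\sigma^2+\tfrac{h_*}{2}\sigma^2\big)+\sum_{j\ge 3}a_j\alpha^{1-j/4}\sigma^j$, a perturbation of the harmonic oscillator whose $n$-th eigenvalue is $(2n-1)\sqrt{h_*/2}$. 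Perturbation theory, combined with the parity of the Hermite eigenfunctions, annihilates all half-integer-order corrections and produces a full asymptotic series in powers of $\alpha^{-1/2}$, matching the claimed form.

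The hard part will be carrying out the two-dimensional reduction with remainder better than every power of $\alpha^{-1/2}$: the Dirichlet–Neumann bracketing of Corollary~\ref{corol1} only yields $\cO(1)$ precision and must be replaced by a genuine quasimode construction. The $C^\infty$ assumption on $\gamma_k$ near $s_*$ is used exclusively at this stage, to justify building $g_{j,n}$ for arbitrarily large $j$; the rest of the argument needs only the already imposed $C^3$ regularity and the non-resonance of the corner angles.
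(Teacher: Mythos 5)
Your overall mechanism (localization at the non-degenerate curvature maximum, tangential harmonic approximation, expansion in powers of $\alpha^{-1/2}$) is the right one, but the paper's proof is organized differently and is far lighter. It keeps the \emph{exact} bracketing \eqref{bra002}, observes via \eqref{eq-eend3} that only the side $k$ contributes, so that $E_n(N^W_k)\le E_{K+n}(R^\Omega_\alpha)\le E_n(D^W_k)$, and then applies \cite[Thm.~1.1]{HK} literally to the two side-neighborhood operators $N^W_k$ and $D^W_k$: since the Helffer--Kachmar analysis is entirely local near $\gamma_k(s_*)$, the lateral Dirichlet/Neumann conditions and the corners never enter, and both bounds carry the \emph{same} complete expansion, which squeezes $E_{K+n}(R^\Omega_\alpha)$. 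In particular no quasimode construction for $R^\Omega_\alpha$ itself, no orthogonalization against the first $K$ eigenfunctions, and no use of Lemma~\ref{vertices2} or Proposition~\ref{propdist2} is needed; the non-resonance hypothesis enters only through Corollary~\ref{corol1}. A point of fact: the $\cO(1)$ loss you attribute to ``the bracketing of Corollary~\ref{corol1}'' actually sits in the two-dimensional-to-one-dimensional comparison of Proposition~\ref{prop-pp}; the bracketing inequality itself is exact, which is precisely why upgrading the analysis of the two-dimensional side operators suffices and nothing has to be ``replaced''.

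Your route can be made to work, but the lower bound as you state it is a genuine gap. Proposition~\ref{propdist2} controls the distance from the span of your quasimodes to a spectral subspace; it shows that spectrum exists near each quasi-eigenvalue, i.e.\ it yields upper-bound-type information, and by itself it does not exclude further eigenvalues of $R^\Omega_\alpha$ lying below or between your quasi-eigenvalues, so it cannot give $E_{K+n}(R^\Omega_\alpha)\ge$ ($n$-th quasi-eigenvalue) $+\,o(\alpha^{-m/2})$. To close this you need a counting/separation input: the rough asymptotics from Corollary~\ref{corol1} together with Proposition~\ref{prop-pp} and the standard one-dimensional harmonic approximation \cite{DS} pin each $E_{K+n}$ within $o(\sqrt{\alpha})$ of the $n$-th quasi-eigenvalue, while consecutive quasi-eigenvalues are separated by about $2\sqrt{h_*/2}\,\sqrt{\alpha}$, so the eigenvalue detected by the quasimode must be $E_{K+n}$ itself; equivalently, run your construction on $N^W_k$ and use $E_{K+n}(R^\Omega_\alpha)\ge E_n(N^W_k)$, which is exactly what citing \cite{HK} buys. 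Finally, beyond the third term the tangential effective operator is not the bare $-\partial_s^2-\alpha H_k$: for instance the $-H_k^2/2$ correction visible in Theorem~\ref{thm3} enters at order $\alpha^0$, so the coefficients $\beta_{i,n}$ must be generated by your corrector hierarchy feeding back into the quasi-eigenvalue at each order, not by your step 3 applied to the one-dimensional operator alone; your ansatz permits this, but the argument should say so explicitly.
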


For the proof one remarks first that due to \eqref{eq-eend3} one has
\begin{gather*}
E_n(\boplus\nolimits_{j=1}^M N^W_j)=E_n(N^W_k),
\quad
E_n(\boplus\nolimits_{j=1}^M D^W_j)=E_n(D^W_k),
\end{gather*}
and then $E_n(N^W_k)\le E_{K+n}(R^\Omega_\alpha)\le E_n(D^W_k)$ by \eqref{bra002}.
The eigenvalues of $N^W_k$ and $D^W_k$ are then analyzed literally
as in \cite[Thm.~1.1]{HK}, since all the analysis is done
in a small neighborhood of $\gamma_k(s_*)$.
Remark that the first three terms can also be deduced directly from
Proposition~\ref{maxcurv} by applying the standard WKB analysis to the 1D operators
$N_k-\alpha H_k$ and $D_k-\alpha H_k$, \cite[Sec.~3]{DS}.

\subsection{Constant curvatures: Proof of Theorem~\ref{thm3}}\label{ssec-thm3}

The main assumption in the present subsection is as follows:
\begin{equation}
   \label{hconst}
\text{ each function $H_j$ is constant, and } H_*:=\max\nolimits_{j=1}^M H_j,
\end{equation}
and we recall that the we still assume the non-resonance condition~\eqref{cond-nonres},
and that in the beginning of the section we introduced the diffeomorphisms $\Phi_j$
by $\Phi_j(s,t):=\gamma_j(s)-t\nu_j(s)$ and the open sets
\[
W_{j,\delta}:=\Phi_j(\Pi_{j,\delta}),
\quad \Pi_{j,\delta}=I_{j,\delta}\times(0,\delta).
\]

%
%
%

We will need some constructions in curvilinear coordinates in $W_{j,\delta}$. 
In order to avoid the use of special functions we prefer to use tubular coordinates instead of polar coordinates. The following lemma is obtained by direct computations using the standard change of variables.

\begin{lemma}\label{change3}
Consider the unitary transform
\begin{equation}
   \label{eq-ggj}
G_j:L^2(W_{j,\delta})\to L^2(\Pi_{j,\delta}), \quad (G_j u)(s,t)=(1-tH_j)^{\frac{1}{2}}u\big(\Phi_j(s,t)\big),
\end{equation}
then $u\in H^1(W_{j,\delta})$ if and only if $g:=G_j u\in H^1(\Pi_{j,\delta})$,
and there exists $b>0$  such that for sufficiently small $\delta$, all  $u$ and $g$ as above and all $\alpha>0$ one has the two-sided estimate
\begin{align*}
B_-[g,g]&\le\int_{W_{j,\delta}} |\nabla u|^2dx-\alpha\int_{\partial_* W_{j,\delta}} u^2\dd s\le B_+[g,g],\\
B_\pm[g,g]&:=\int_{I_{j,\delta}}\int_0^\delta \Big[ \big(1 \pm b\delta\big) \Big( \dfrac{\partial g}{\partial s}\Big)^2 +\Big( \dfrac{\partial g}{\partial t}\Big)^2-\Big( \dfrac{H^2_j}{4} \mp b\delta\Big) g^2 \Big]\dd t\dd s\\
&\qquad {}-\int_{I_{j,\delta}} \Big(\alpha + \dfrac{H_j}{2}\Big) g(s,0)^2\dd s
\pm b \int_{I_{j,\delta}} g(s,\delta)^2\dd s
\end{align*}
\end{lemma}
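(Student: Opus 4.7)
The plan is to turn the Robin quadratic form on $W_{j,\delta}$ into an integral on the flat rectangle $\Pi_{j,\delta}$ by performing the tubular change of variables $\Phi_j$ and absorbing the Jacobian into the unknown via the weight $(1-tH_j)^{1/2}$. From $\Phi_j(s,t)=\gamma_j(s)-t\nu_j(s)$ together with $|\gamma_j'|=1$, $\gamma_j'\perp\nu_j$ and $\nu_j'=H_j\gamma_j'$ (recall $H_j$ is constant), the induced metric is diagonal with $g_{ss}=(1-tH_j)^2$, $g_{tt}=1$, so $\sqrt{\det g}=1-tH_j>0$ for $\delta$ small. This makes $G_j$ a well-defined unitary operator, the statement $u\in H^1(W_{j,\delta})\Leftrightarrow G_j u\in H^1(\Pi_{j,\delta})$ is immediate from the smoothness of $\Phi_j$, and with $\Tilde u:=u\circ\Phi_j$ one has the identity
\[
\int_{W_{j,\delta}}|\nabla u|^2\dd x=\int_{\Pi_{j,\delta}}\Big[\dfrac{(\partial_s\Tilde u)^2}{1-tH_j}+(1-tH_j)(\partial_t\Tilde u)^2\Big]\dd s\dd t.
\]

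Next I substitute $\Tilde u=(1-tH_j)^{-1/2}g$ and expand. The $\partial_s$ contribution produces $(1-tH_j)^{-2}(\partial_sg)^2$ cleanly. For the $\partial_t$ part, the product rule gives $\partial_t\Tilde u=(1-tH_j)^{-1/2}\partial_tg+\tfrac{H_j}{2}(1-tH_j)^{-3/2}g$; squaring yields three terms, namely $(\partial_tg)^2$, a cross-term $H_j g\,\partial_tg/(1-tH_j)$, and a potential-like term $\tfrac{H_j^2}{4}g^2/(1-tH_j)^2$. The cross-term is rewritten using $\partial_t(g^2)=2g\,\partial_tg$ and integrated by parts in $t$; because $\partial_t\bigl[(1-tH_j)^{-1}\bigr]=H_j/(1-tH_j)^2$, one obtains an interior contribution that exactly cancels \emph{one half} of the $\tfrac{H_j^2}{4}g^2/(1-tH_j)^2$ term, flipping its sign to $-\tfrac{H_j^2}{4}g^2/(1-tH_j)^2$, plus two boundary contributions: a term $-\tfrac{H_j}{2}g(s,0)^2$ at $t=0$ and $\tfrac{H_j}{2(1-\delta H_j)}g(s,\delta)^2$ at $t=\delta$. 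Since $\partial_*W_{j,\delta}=\gamma_j(I_{j,\delta})$ is the arc-length image of $I_{j,\delta}$ and $g(s,0)=\Tilde u(s,0)$, the Robin boundary term transfers verbatim to $-\alpha\int_{I_{j,\delta}}g(s,0)^2\dd s$, which combines with the $t=0$ boundary leftover to give precisely the coefficient $-(\alpha+H_j/2)$ appearing in $B_\pm$.

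Finally, to reach the stated two-sided estimate I use that for $\delta$ small enough one has $1-tH_j\in[1-|H_j|\delta,1+|H_j|\delta]$ uniformly in $t\in[0,\delta]$, hence $(1-tH_j)^{-2}=1+\cO(\delta)$ and $(1-tH_j)^{-2}\cdot\tfrac{H_j^2}{4}=\tfrac{H_j^2}{4}+\cO(\delta)$; the leftover boundary term $\tfrac{H_j}{2(1-\delta H_j)}g(s,\delta)^2$ is simply estimated by $\pm b\,g(s,\delta)^2$. Choosing a single $b>0$ uniform over $j$ then absorbs all $\cO(\delta)$ errors into the $\pm b\delta$ coefficients in front of $(\partial_sg)^2$ and $g^2$, delivering both $B_-[g,g]$ as a lower bound and $B_+[g,g]$ as an upper bound. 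The only delicate step is the integration by parts and the sign tracking that produces the specific coefficient $-H_j^2/4$ in front of $g^2/(1-tH_j)^2$; everything else is bookkeeping about where the $\cO(\delta)$ remainders are dropped.
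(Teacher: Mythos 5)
Your computation is correct and is exactly the ``direct computation using the standard change of variables'' that the paper invokes without detail: the tubular coordinates give the weighted form, the unitary weight $(1-tH_j)^{1/2}$ is expanded, the cross term is integrated by parts in $t$ to produce the $-H_j^2/4$ potential and the boundary contributions $-\tfrac{H_j}{2}g(s,0)^2$ and $\tfrac{H_j}{2(1-\delta H_j)}g(s,\delta)^2$, and the uniform bounds $1-tH_j=1+\cO(\delta)$ absorb the remainders into the $\pm b\delta$ and $\pm b$ terms. In particular you correctly use the constancy of $H_j$ so that the $\partial_s$ term carries no extra derivative of the weight, so no gap remains.
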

The above change of variables will be now used for some constructions involving $(D/N)^W_j$.
We start with an eigenvalue estimate for $D^W_j$:

\begin{lemma}\label{wd-dirdir}
One can find $b>0$ such that for $\delta\to 0^+$ and $\alpha\delta\to +\infty$ there holds
\[
E_n(D^W_j)\le -\alpha^2-\alpha H_j -\dfrac{H^2_j}{2} +(1+b\delta)E_n(D_j)
+ b (\delta+\alpha^2 e^{-\alpha\delta}),\quad n\in\NN.
\]
\end{lemma}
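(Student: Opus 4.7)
The strategy is to transport the variational problem for $D^W_j$ to the flat rectangle $\Pi_{j,\delta}$ via the isometry $G_j$ of Lemma~\ref{change3}, use the upper form $B_+$, and then exploit the fact that the constant curvature $H_j$ makes $B_+$ separable.

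First, since $H_j$ is constant, Lemma~\ref{change3} yields that for every $u$ in the form domain of $D^W_j$ (i.e.\ $u\in H^1(W_{j,\delta})$ vanishing on $\partial_\ext W_{j,\delta}\cup \partial_\out W_{j,\delta}$), the function $g:=G_j u\in H^1(\Pi_{j,\delta})$ satisfies $g(\iota_j,\cdot)=g(\tau_j,\cdot)=0$ and $g(\cdot,\delta)=0$, together with
\[
D^W_j[u,u]\le B_+[g,g]\quad\text{and}\quad \|u\|^2_{L^2(W_{j,\delta})}=\|g\|^2_{L^2(\Pi_{j,\delta})}.
\]
Because $g(\cdot,\delta)=0$, the boundary term $b\int g(\cdot,\delta)^2\,\dd s$ in $B_+$ vanishes. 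Hence the min--max principle gives, denoting by $\mathcal G_n$ the family of $n$-dimensional subspaces of the resulting flat form domain,
\[
E_n(D^W_j)\le \inf_{V\in\mathcal G_n}\sup_{g\in V\setminus\{0\}}\frac{B_+[g,g]}{\|g\|^2_{L^2(\Pi_{j,\delta})}}.
\]

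Next I separate variables. Let $\phi\in H^1(0,\delta)$ be a normalized first eigenfunction of the one-dimensional Laplacian $L_D'$ on $(0,\delta)$ with $(\alpha+H_j/2)$-Robin condition at $0$ and Dirichlet condition at $\delta$; by Proposition~\ref{prop21},
\[
\|\phi'\|^2_{L^2(0,\delta)}-\Big(\alpha+\tfrac{H_j}{2}\Big)\phi(0)^2=E_1(L_D')=-\Big(\alpha+\tfrac{H_j}{2}\Big)^2\bigl(1+\cO(e^{-\delta(\alpha+H_j/2)})\bigr).
\]
For $f\in H^1_0(I_{j,\delta})$ the test function $g:=f\otimes \phi$ lies in the correct flat form domain, and a direct computation in $B_+$ yields
\[
\frac{B_+[f\otimes\phi,f\otimes\phi]}{\|f\otimes\phi\|^2}
=(1+b\delta)\,\frac{\|f'\|^2}{\|f\|^2}+E_1(L_D')-\frac{H_j^2}{4}+b\delta.
\]
Applying min--max over $n$-dimensional subspaces of $H^1_0(I_{j,\delta})$ spanned by Dirichlet eigenfunctions of $D_{j,\delta}$, I obtain
\[
E_n(D^W_j)\le (1+b\delta)\,E_n(D_{j,\delta})+E_1(L_D')-\frac{H_j^2}{4}+b\delta.
\]

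Finally I simplify the numerical terms. Expanding,
\[
E_1(L_D')-\tfrac{H_j^2}{4}=-\Bigl(\alpha+\tfrac{H_j}{2}\Bigr)^{\!2}-\tfrac{H_j^2}{4}+\cO(\alpha^2 e^{-\alpha\delta})
=-\alpha^2-\alpha H_j-\tfrac{H_j^2}{2}+\cO(\alpha^2 e^{-\alpha\delta}).
\]
Since $I_{j,\delta}=(\lambda_j^+(\delta),\ell_j-\lambda_{j+1}^-(\delta))$ and $\lambda_j^\pm(\delta)=\cO(\delta)$ by~\eqref{ljt0}, an elementary comparison of one-dimensional Dirichlet Laplacians gives $E_n(D_{j,\delta})=E_n(D_j)+\cO(\delta)$, so that $(1+b\delta)E_n(D_{j,\delta})=(1+b\delta)E_n(D_j)+\cO(\delta)$. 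Absorbing all $\cO(\delta)$ and $\cO(\alpha^2 e^{-\alpha\delta})$ terms into a single constant $b$ (enlarged if necessary) produces the stated inequality.

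There is no real obstacle beyond bookkeeping: because $H_j$ is constant the variables in $B_+$ separate exactly, and the only ingredients needed are the flat eigenvalue bounds of Propositions~\ref{prop21}--\ref{prop22} together with the geometric perturbation estimate $\lambda_j^\pm(\delta)=\cO(\delta)$. The mild point to watch is the tensor ansatz: one must check that $f\otimes \phi$ lies in the form domain, which is precisely guaranteed by $f\in H^1_0(I_{j,\delta})$ and $\phi(\delta)=0$.
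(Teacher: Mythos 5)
Your proposal is correct and follows essentially the same route as the paper: transport to the flat rectangle via Lemma~\ref{change3}, use the upper form $B_+$, exploit the exact separation of variables afforded by constant $H_j$ together with Proposition~\ref{prop21} for the transverse Robin--Dirichlet operator, and conclude with $E_n(D_{j,\delta})=E_n(D_j)+\cO(\delta)$. The only cosmetic difference is that you feed tensor test functions $f\otimes\phi$ into the min--max, whereas the paper invokes the unitary equivalence of $B_+$ with $(1+b_0\delta)D_{j,\delta}\otimes 1+1\otimes L_D-(H_j^2/4-b_0\delta)$; both yield the same bound.
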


\begin{proof} Due to Lemma~\ref{change3}, for some $b_0>0$ one has $E_n(D^W)\le E_n(B_+)$,
where $B_+$ is the self-adjoint operator in $H^1(\Pi_{j,\delta})$ with
\begin{align*}
\cQ(B_+)&=\big\{ g\in H^1\big(I_{j,\delta}\times(0,\delta)\big):\  g(\cdot,\delta)=0,\\
&\qquad  g(\iota,\cdot)=0 \text{ for each } \iota\in\partial I_{j,\delta}\big\},\\
B_+[g,g]&=\int_{I_{j,\delta}}\int_0^\delta \Big[ \big(1 + b_0\delta\big)
( g_s)^2 +(g_t)^2
-\Big( \dfrac{H_j^2}{4} - b_0\delta\Big) g^2 \Big]\dd t\dd s\\
&\quad -\int_{I_{j,\delta}} \Big(\alpha + \dfrac{H_j}{2}\Big) g(s,0)^2\dd s,
\end{align*}
where we have set $g_s := \partial g/\partial s$ and $g_t := \partial g/\partial t$.
As $H_j$ is constant, the operator $B_+$ admits a separation of variables and is unitarily equivalent to
\[
C_+:=(1+b_0\delta)D_{j,\delta} \otimes 1 + 1\otimes L_D-( H^2_j/4- b_0\delta),
\] where $L_D$ is the Laplacian on $(0,\delta)$ with $(\alpha + H_j/2)$-Robin condition
at $0$ and Dirichlet condition at $\delta$, so using Proposition~\ref{prop21}
with some $b_1>0$ we have
$E_1(L_D)=-(\alpha + H_j/2)^2+ b_1 \alpha^2 e^{-\alpha\delta}$ and $E_2(L_D)\ge 0$.
Therefore, for each fixed $n\in\NN$ due to $E_n(D_{j,\delta})=\cO(1)$ we have
\begin{align*}
E_n(D^W_j)&\le E_n(B_+)=E_n(C_+)\\
&=E_1(L_D)+ (1+b_0\delta)E_n(D_{j,\delta})
-\Big( \dfrac{H^2_j}{4} - b_0\delta\Big),\\
&\le -\alpha^2-\alpha H_j-\dfrac{H^2_j}{2} +(1+b_0\delta)E_n(D_{j,\delta}) + b_0\delta + b_1 \alpha^2 e^{\alpha\delta}.
\end{align*}
One arrives at the sought result by taking $b:=\max\{b_0,b_1\}$ and by noting that
$E_n(D_{j,\delta})=E_n(D_j)+\cO(\delta)$ for any fixed $n\in\NN$.
\end{proof}

By applying the estimate of Lemma~\ref{wd-dirdir} to each operator
in the right-hand side of \eqref{bra002} with $\delta\coloneqq(3\log \alpha)/\alpha$ so that \eqref{eq-daa2} is satisfied, we arrive to an improved upper bound
for $E_{K+n}(R^\Omega_\alpha)$:

\begin{prop}\label{prop-up-rob}
Under the assumptions \eqref{cond-nonres} and \eqref{hconst},
for any fixed $n\in\NN$ there holds
\[
E_{K+n}(R^\Omega_\alpha)\leq-\alpha^2-H_*\alpha-\dfrac{H_*^2}{2}+E_n\Big( \boplus_{j\in \cJ_*} D_j\Big)+\cO\Big(\dfrac{\log \alpha}{\alpha}\Big)
\quad \text{as }\alpha\to+\infty.
\]
\end{prop}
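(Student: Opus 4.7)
The plan is to combine the upper bound from Corollary~\ref{corol1}, namely $E_{K+n}(R^\Omega_\alpha)\le E_n(\boplus_{j=1}^M D^W_j)$, with the pointwise upper bounds for $E_n(D^W_j)$ provided by Lemma~\ref{wd-dirdir}. First I would specialize the asymptotic parameter to $\delta := (3\log\alpha)/\alpha$, which clearly fulfills \eqref{eq-daa2} and makes the exponentially small remainder tractable: $\alpha^2 e^{-\alpha\delta}=\alpha^{-1}$, so the error term in Lemma~\ref{wd-dirdir} becomes $\cO(\log\alpha/\alpha)$. Moreover $(1+b\delta)E_n(D_{j,\delta})=E_n(D_j)+\cO(\log\alpha/\alpha)$ for any fixed $n$, since $E_n(D_{j,\delta})=E_n(D_j)+\cO(\delta)$ and $E_n(D_j)$ is $\alpha$-independent.

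Second, I would extract from the proof of Lemma~\ref{wd-dirdir} not merely a numerical inequality for each $E_n(D^W_j)$ but a \emph{form-level} inequality $D^W_j\le C_+^{(j)}$ (through the unitary $G_j$ of \eqref{eq-ggj}), where $C_+^{(j)}$ separates variables as $(1+b\delta)D_{j,\delta}\otimes 1+1\otimes L_D^{(j)}-(H_j^2/4-b\delta)$ with $L_D^{(j)}$ the one-dimensional model on $(0,\delta)$. Taking direct sums then yields
\[
E_n\big( \boplus\nolimits_{j=1}^M D^W_j\big) \le E_n\big(\boplus\nolimits_{j=1}^M C_+^{(j)}\big),
\]
and the eigenvalues of $\boplus_j C_+^{(j)}$ are simply the union (with multiplicities) of the numbers
\[
E_i\big(L_D^{(j)}\big)+(1+b\delta)E_k(D_{j,\delta})-\big(H_j^2/4-b\delta\big),\quad i,k\in\NN,\ j\in\{1,\dots,M\}.
\]

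Third, I would inspect which contributions can appear among the lowest $n$ eigenvalues in the regime $\alpha\to+\infty$. By Proposition~\ref{prop21}, $E_2(L_D^{(j)})\ge 0$, so any term with $i\ge 2$ is at worst $\cO(1)$; hence such terms cannot contribute to an eigenvalue of order $-\alpha^2$. For $i=1$, Proposition~\ref{prop21} gives $E_1(L_D^{(j)})=-(\alpha+H_j/2)^2+\cO(\alpha^2 e^{-\alpha\delta})$, so the corresponding eigenvalues of $C_+^{(j)}$ have leading behaviour $-\alpha^2-H_j\alpha-H_j^2/2+\cO(1)$. For $j\notin\cJ_*$ this leading part exceeds $-\alpha^2-H_*\alpha$ by a positive amount of order $\alpha$, so these sides do not contribute to $E_n(\boplus_j C_+^{(j)})$ for fixed $n$ once $\alpha$ is large enough. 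Only the sides with $j\in\cJ_*$ remain, and for those the eigenvalues equal $-\alpha^2-H_*\alpha-H_*^2/2+(1+b\delta)E_k(D_{j,\delta})+\cO(\log\alpha/\alpha)$.

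Putting the pieces together I would therefore obtain, for any fixed $n\in\NN$ and $\alpha$ sufficiently large,
\[
E_n\big(\boplus\nolimits_{j=1}^M D^W_j\big)\le -\alpha^2-H_*\alpha-\tfrac{1}{2}H_*^2+E_n\big(\boplus\nolimits_{j\in\cJ_*}(1+b\delta)D_{j,\delta}\big)+\cO\Big(\tfrac{\log\alpha}{\alpha}\Big),
\]
and the last displayed $E_n$ equals $E_n(\boplus_{j\in\cJ_*} D_j)+\cO(\log\alpha/\alpha)$ by the same reasoning as above. Combining with Corollary~\ref{corol1} gives the desired bound. The only mildly delicate step is the third one, namely justifying that the ``non-maximizing'' sides $j\notin\cJ_*$ are filtered out of the first $n$ eigenvalues by an $\alpha$-dependent spectral gap, but this follows directly from the fact that the shift $-H_j\alpha$ differs from $-H_*\alpha$ by a quantity tending to $+\infty$, while all other contributions to the upper bounds stay bounded.
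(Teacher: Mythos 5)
Your proposal is correct and follows essentially the same route as the paper, which simply applies Lemma~\ref{wd-dirdir} to each summand on the right-hand side of the bracketing bound \eqref{bra002} from Corollary~\ref{corol1} with the choice $\delta=(3\log\alpha)/\alpha$. The only remark is that your third step is superfluous for an upper bound: dropping the summands with $j\notin\cJ_*$ can only increase the eigenvalues of the direct sum, so no $\alpha$-dependent spectral-gap argument is needed to discard the non-maximizing sides.
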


Now we will bring the Robin-Neumann Laplacian $N^W_j$ to a special form
to use during subsequent proofs:

\begin{lemma}\label{wneum}
Let $G_j$ be defined by \eqref{eq-ggj}. There are
functions $\psi_j\in L^2(0,\delta)$ with $\lVert\psi \rVert^2_{L^2(0,\delta)}=1$ such that if one defines
the map
\begin{align}
\label{projection}
P_j: L^2(W_{j,\delta})\to L^2(I_{j,\delta}), \quad (P_j u)(s):=\int_0^\delta \psi_j(t)(G_j u)(s,t)\dd t,
\end{align}
then one has for $\delta\to 0^+$ and $\alpha\delta \to +\infty$, with some  $b>0$, 
\begin{equation}
  \label{lala1}
\begin{aligned}
N^W_j[u,u]&\ge 
-\Big(\alpha^2+\alpha H_j+\dfrac{H^2_j}{2}\Big) \|P_ju\|^2_{L^2(I_{j,\delta})}
+
\big(1 - b\delta\big)\big\|(P_ju)'\big\|^2_{L^2(I_{j,\delta})}\\
&\quad -b\big(\delta+\alpha^2e^{-\alpha\delta}\big)\|P_ju\|^2_{L^2(I_{j,\delta})}
\text{ for all } u\in H^1(W_{j,\delta}).
\end{aligned}
\end{equation}
In particular, 
\begin{equation}
   \label{ineq0}
E_1(N^W_j)\ge -\Big(\alpha^2+\alpha H_j+\dfrac{H^2_j}{2}\Big) + \cO(\delta+\alpha^2e^{-c\alpha\delta}).
\end{equation}
\end{lemma}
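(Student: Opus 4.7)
The plan is to construct $\psi_j$ as the ground state of a one-dimensional transverse model read off from the lower bound $B_-$ in Lemma~\ref{change3}, and to split $g:=G_ju$ by projecting onto $\psi_j$ in the transverse variable, so that the spectral gap of the model absorbs the orthogonal part.

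Let $b_0>0$ denote the constant from Lemma~\ref{change3} and introduce the one-dimensional Laplacian $T$ in $L^2(0,\delta)$ via the form
$$T[f,f]=\int_0^\delta(f')^2\dd t-(\alpha+H_j/2)f(0)^2-b_0 f(\delta)^2,\qquad\qdom(T)=H^1(0,\delta).$$
This is a 1D Laplacian with $(\alpha+H_j/2)$-Robin condition at $t=0$ and $b_0$-Robin condition at $t=\delta$. Applying Proposition~\ref{prop22} with $\alpha$ replaced by $\alpha+H_j/2$ and $\beta:=b_0$, in the regime $\alpha\delta\to+\infty$, $\delta\to 0^+$, yields
$$E_1(T)=-(\alpha+H_j/2)^2+\cO(\alpha^2 e^{-\alpha\delta}),\qquad E_2(T)\ge 1/\delta^2.$$
Pick $\psi_j$ to be a normalized first eigenfunction of $T$, so that $\|\psi_j\|_{L^2(0,\delta)}=1$.

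For $u\in H^1(W_{j,\delta})$, set $g:=G_ju\in H^1(\Pi_{j,\delta})$ and decompose, for a.e.\ $s\in I_{j,\delta}$,
$$g(s,t)=(P_ju)(s)\,\psi_j(t)+z(s,t),\qquad\int_0^\delta\psi_j(t)\,z(s,t)\dd t=0,$$
so that $\|g(s,\cdot)\|^2_{L^2(0,\delta)}=(P_ju)(s)^2+\|z(s,\cdot)\|^2_{L^2(0,\delta)}$ and analogously for $\partial_s g$. Starting from $N^W_j[u,u]\ge B_-[g,g]$ and applying the spectral theorem to $T$ fiberwise in $s$ gives
$$\int_0^\delta(\partial_tg)^2\dd t-(\alpha+H_j/2)g(s,0)^2-b_0g(s,\delta)^2\ge E_1(T)(P_ju)(s)^2+E_2(T)\|z(s,\cdot)\|^2.$$
Integrating these pointwise identities in $s$, regrouping, and using $E_2(T)\ge 1/\delta^2\ge H_j^2/4+b_0\delta$ (for $\delta$ small enough) to discard the non-negative remainder in $z$ and $\partial_s z$, one arrives at
$$B_-[g,g]\ge(1-b_0\delta)\|(P_ju)'\|^2+\bigl(E_1(T)-H_j^2/4-b_0\delta\bigr)\|P_ju\|^2.$$
The expansion $E_1(T)-H_j^2/4-b_0\delta=-\alpha^2-\alpha H_j-H_j^2/2+\cO(\delta+\alpha^2 e^{-\alpha\delta})$ then yields \eqref{lala1} after enlarging $b$ to absorb all constants.

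Inequality \eqref{ineq0} follows by applying \eqref{lala1} to a normalized first eigenfunction of $N^W_j$: Cauchy--Schwarz together with the unitarity of $G_j$ gives $\|P_ju\|^2_{L^2(I_{j,\delta})}\le\|u\|^2_{L^2(W_{j,\delta})}=1$, and, since the coefficient $-(\alpha^2+\alpha H_j+H_j^2/2)$ is negative for large $\alpha$, replacing $\|P_ju\|^2$ by $1$ only weakens the bound; dropping the non-negative term $(1-b\delta)\|(P_ju)'\|^2$ then yields the claim. No serious obstacle arises: the constancy of $H_j$ makes the transverse model $s$-independent, and the only delicate point is checking the non-negativity of the $z$-remainder, which is granted by the spectral gap $E_2(T)\ge 1/\delta^2$ from Proposition~\ref{prop22}.
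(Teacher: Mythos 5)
Your proposal is correct and follows essentially the same route as the paper: the paper also takes $\psi_j$ to be the normalized ground state of the one-dimensional Laplacian on $(0,\delta)$ with $(\alpha+H_j/2)$-Robin condition at $0$ and a fixed Robin condition at $\delta$ coming from the boundary term in $B_-$, decomposes $g=G_ju$ fiberwise as $(P_ju)\psi_j+z$, applies the spectral theorem transversally together with Proposition~\ref{prop22} (and the orthogonality of $\partial_s z$ to $\psi_j$), and concludes \eqref{ineq0} from $\|P_ju\|\le\|u\|$. No gaps.
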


\begin{proof}
We  drop the index $j$ in the notation.
Denote $g:=G u\in L^2(\Pi_\delta)$, then due to the standard change of variables (Lemma~\ref{change3})
one can find $b_0>0$ and $\beta>0$
to have, for all $u\in H^1(W_\delta)$,
\begin{multline*}
N^W[u,u]\ge B_-[g,g]:=\int_{I_\delta}\int_0^\delta \Big[ \big(1 - b_0\delta\big) g_s^2 +g_t^2\\
\quad {}-\Big( \dfrac{H^2}{4} + b_0\delta\Big) g^2 \Big]\dd t\dd s -\int_{I_{\delta}} \Big(\alpha + \dfrac{H}{2}\Big) g(s,0)^2\dd s
- \beta \int_{I_{\delta}} g(s,\delta)^2\dd s,
\end{multline*}
Denote by $L_N$ the one-dimensional Laplacian in $(0,\delta)$ with the $(\alpha+H/2)$-Robin boundary condition at $0$ and the $\beta$-Robin boundary condition at $\delta$, and let $\psi$ be its eigenfunction for the first eigenvalue, normalized by $\lVert \psi \rVert_{L^2(0,\delta)}=1$. With this choice of $\psi$, define the map $P$ as in \eqref{projection}. 
For shortness we denote $f:=Pu$ and define $z\in L^2(\Pi_\delta)$ by $z(s,t)\coloneqq g(s,t)-f(s)\psi(t)$,
then, with $z_s := \partial z/\partial s$, we have the identities
\begin{gather}
  \label{orto1}
\int_0^\delta \psi(t) z(\cdot,t)\dd t=0,
\quad
\int_0^\delta \psi(t) z_s(\cdot,t)\dd t=0,\\
\|u\|^2_{L^2(W_\delta)}=\|g\|^2_{L^2(\Pi_\delta)}=\|f\|^2_{L^2(I_\delta)}+\|z\|_{L^2(\Pi_\delta)}^2, \nonumber
\end{gather}
and due to the spectral theorem for the operator $L_N$
there holds
\begin{multline*}
\int_{I_\delta} \int_0^\delta  g_t^2\dd t \dd s -\int_{I_{\delta}} \Big(\alpha + \dfrac{H}{2}\Big) g(s,0)^2\dd s -\beta\int_{I_\delta}g(s,\delta)^2\dd s\\
\ge \int_{I_\delta} \int_0^\delta \Big( E_1(L_N) f(s)^2\psi(t)^2 + E_2(L_N) z(s,t)^2\Big)\dd t \dd s\\
= E_1(L_N) \|f\|^2_{L^2(I_\delta)}+E_2(L_N)\|z\|^2_{L^2(\Pi_\delta)},
\end{multline*}
an using the second equality in \eqref{orto1} we also have
\[
\int_{I_\delta} \int_0^\delta g_s^2\dd t \dd s
= \|f'\|^2_{L^2(I_\delta)} + \|z_s\|^2_{L^2(\Pi_\delta)}\ge \|f'\|^2_{L^2(I_\delta)}.
\]
Therefore, 
\begin{multline*}
B_-[g,g]\ge (1-b_0\delta)\|f'\|^2_{L^2(I_\delta)}+\Big(E_1(L_N) -\dfrac{H^2}{4} - b_0\delta\Big)\|f\|^2_{L^2(I_\delta)}\\
+\Big(E_2(L_N)-\dfrac{H^2}{4} - b_0\delta\Big)\|z\|^2_{L^2(\Pi_\delta)}.
\end{multline*}
Using Proposition~\ref{prop22} in order to estimate the eigenvalues of $L_N$ one has then, with a suitable $a_0>0$,
\begin{align*}
E_1(L_N) -\dfrac{H^2}{4} - b_0\delta&=-\Big(\alpha+\dfrac{H}{2}\Big)^2-a_0 \alpha^2e^{-\alpha\delta}-\dfrac{H^2}{4}-b_0\delta\\
&\ge -\alpha^2-\alpha H-\dfrac{H^2}{2} -a_1\big(\delta+ \alpha^2e^{-\alpha\delta}),\\
&\qquad a_1:=\max\{a_0,b_0\},\\
E_2(L_N)-\dfrac{H^2}{4} - b_0\delta&\ge \dfrac{1}{\delta^2}-\dfrac{H^2}{4} - b_0\delta\ge 0,
\end{align*}
and then
\begin{multline*}
B_-[g,g]\ge (1-b_0\delta)\|f'\|^2_{L^2(I_\delta)}-{\Big(\alpha^2+\alpha H+\dfrac{H^2}{2}\Big)}\|f\|^2_{L^2(I_\delta)}\\
{}-a_1\big(\delta+ \alpha^2e^{-\alpha\delta})\|f\|^2_{L^2(I_\delta)}.
\end{multline*}
Hence, one arrives at the sought inequality \eqref{lala1} by taking $b:=\max\{b_0,a_1\}$.
To prove the lower bound \eqref{ineq0} it remains to use the inequality $\|f\|_{L^2(I_\delta)}\le\|u\|_{L^2(W_\delta)}$.
\end{proof}

As in the straight case we will obtain the sought lower bound for the eigenvalues
$E_{K+n}(R^\Omega_\alpha)$  with the help of the Proposition~\ref{prop6} by constructing a suitable identification map.
Nevertheless, the construction involves a number of new components, so we prefer to 
give a sketch. 

\begin{prop}\label{prop-low-rob}
Under the assumptions \eqref{cond-nonres} and  \eqref{hconst} and 
for any fixed $n\in\NN$ one has, as $\alpha\to+\infty$,
\[
E_{K+n}(R^\Omega_\alpha)\ge -\alpha^2-H_*\alpha-\dfrac{H_*^2}{2} + E_n\Big(\bigoplus_{j\in\cJ_*} D_j\Big) + \cO \Big( \dfrac{\log\alpha}{\sqrt{\alpha}}\Big).
\]
\end{prop}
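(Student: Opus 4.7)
The plan is to follow the scheme of Proposition~\ref{prop-low-rob1}, adapted to the curvilinear setting via the form inequalities of Lemmas~\ref{vertices2} and~\ref{wneum}. I will set $\delta:=c_0\log\alpha/\alpha$ with $c_0>0$ large enough so that the asymptotic regime~\eqref{eq-daa2} holds, let $\cH$ be the orthogonal complement of $L$ in $L^2(\Omega)$, and introduce the shifted operator
\[
B:=R^\Omega_\alpha\big|_{\cH}+\alpha^2+H_*\alpha+\tfrac12 H_*^2+\tfrac{C\log\alpha}{\sqrt\alpha}
\]
with a sufficiently large $C>0$, to be compared via Proposition~\ref{prop6} to $B':=\boplus_{j\in\cJ_*} D_{j,\delta}$ acting in $\cH':=\boplus_{j\in\cJ_*} L^2(I_{j,\delta})$. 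The upper bound of Proposition~\ref{prop-up-rob} guarantees $E_n(B)=\cO(1)$ for each fixed $n$, so the smallness hypothesis $(1+E_n(B))\varepsilon_1<1$ in Proposition~\ref{prop6} will be met.

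\textbf{Form decomposition.} Writing $v_j:=\sigma_j u$, $w_j:=u|_{W_{j,\delta}}$, $u_c:=u|_{\Omega^c_\delta}$, I will apply Lemma~\ref{vertices2} with the parameter choice $A:=-H_*$ (so that the natural quantity controlling $v_j$ becomes $N^V_j[v_j,v_j]+(\alpha^2+H_*\alpha)\|v_j\|^2$, matching the shift in $B$), and Lemma~\ref{wneum} on each side. Collecting terms and absorbing the small corrections by the extra $(C\log\alpha/\sqrt\alpha)\|u\|^2$ summand of $B$, this yields a pointwise inequality of the form
\[
\begin{aligned}
B[u,u]&\ge \tfrac{\alpha^2}{C_1\log^2\alpha}\sum_{j=1}^M\|v_j\|^2+\tfrac{\alpha}{C_1\log^2\alpha}\sum_{j=1}^M\int_{\partial_\ext V_{j,\delta}} v_j^2\dd s+\sum_{j=1}^M(1-b\delta)\bigl\|(P_j w_j)'\bigr\|^2\\
&\quad+\alpha\sum_{j\notin\cJ_*}(H_*-H_j)\|P_j w_j\|^2+\alpha^2\sum_{j=1}^M\bigl(\|w_j\|^2-\|P_j w_j\|^2\bigr)+\alpha^2\|u_c\|^2,
\end{aligned}
\]
in which each term is non-negative; the new feature with respect to the straight case is the curvature-gap term $\alpha(H_*-H_j)\|P_j w_j\|^2$ on non-maximal sides, which forces those projections to be small.

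\textbf{Identification map and conclusion.} I will set $(J_j u)(s):=(P_j u)(s)-(P_j u)(\iota_j)\rho^+_j(s)-(P_j u)(\tau_j)\rho^-_j(s)$ for $j\in\cJ_*$ only, with the cutoffs $\rho^\pm_j$ as in the straight case, so that $Ju\in\qdom(B')$. The pointwise trace bound~\eqref{pj002} persists (it uses only Cauchy--Schwarz on $P_j$), and together with the key identity~\eqref{eq-vjwj2} it transfers boundary values of $P_j u$ into integrals on $\partial_\ext V_{j,\delta}$ controlled by the second term of the decomposition above. Following the computation of Proposition~\ref{prop-low-rob1} verbatim with $\varepsilon:=\log\alpha/\sqrt\alpha$, both $\|u\|^2-\|Ju\|^2$ and $B'[Ju,Ju]-B[u,u]$ will be bounded by $(c\log\alpha/\sqrt\alpha)(B[u,u]+\|u\|^2)$. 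Proposition~\ref{prop6} then gives $E_n(B')\le E_n(B)+\cO(\log\alpha/\sqrt\alpha)$, and since $E_n(D_{j,\delta})=E_n(D_j)+\cO(\delta)$, the claim follows.

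\textbf{Main obstacle.} The delicate point is the legitimacy of restricting $\cH'$ to the sides $j\in\cJ_*$: the difference $\|u\|^2-\|Ju\|^2$ contains the full $L^2$-mass $\|w_j\|^2$ over non-maximal sides, which must be controlled by a small multiple of $B[u,u]+\|u\|^2$. Splitting $\|w_j\|^2=\|P_j w_j\|^2+(\|w_j\|^2-\|P_j w_j\|^2)$, the second piece is directly majorized by $\alpha^{-2}B[u,u]$ via the form decomposition, while the first is majorized by $\bigl(\alpha(H_*-H_j)\bigr)^{-1}B[u,u]$ thanks precisely to the new curvature-gap term. Both prefactors are $o(1)$, so the non-maximal sides are absorbed into the $(c\log\alpha/\sqrt\alpha)$-error without degrading the bound, which is the crucial mechanism enabling the proof.
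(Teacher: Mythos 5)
Your proposal is correct and follows essentially the same route as the paper's proof: the same choice $\delta\sim\log\alpha/\alpha$, the same shifted operator $B$ compared with $B'=\boplus_{j\in\cJ_*}D_{j,\delta}$ via Proposition~\ref{prop6}, the same use of Lemmas~\ref{vertices2} and~\ref{wneum}, the identification map $J$ built only over the maximal-curvature sides, and crucially the same mechanism — the curvature-gap term of order $\alpha(H_*-H_j)\|P_j u\|^2$ on non-maximal sides — to absorb their mass into the $\cO(\log\alpha/\sqrt\alpha)$ error. Your minor deviations (taking $A:=-H_*$ instead of $A:=-H_j$ in Lemma~\ref{vertices2}, and a shift of size $\log\alpha/\sqrt\alpha$ rather than $\log\alpha/\alpha$ in $B$) are harmless and do not change the argument.
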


\begin{proof}
With the above preparations and with a suitable redefinition of the main objects,
the proof becomes almost identical to the one of Proposition~\ref{prop-low-rob1}. We are not giving all details, but just introducing the main objects and identifying the main steps.

Assume first that $\delta$ satisfies \eqref{eq-daa2}
and consider the Hilbert spaces
\[
\cH:=\text{ the orthogonal complement of $L$ in $L^2(\Omega)$},
\quad
\cH':=\bigoplus\nolimits_{j\in\cJ_*}L^2(I_{j,\delta}).
\]
During the proof for $u\in \cH$ we denote
\begin{align*}
v_j&:=\text{ the restriction of $u$ to $V_{j,\delta}$},& \|v_j\|&:=\|v_j\|_{L^2(V_{j,\delta})},\\
w_j&:=\text{ the restriction of $u$ to $W_{j,\delta}$},& \|w_j\|&:=\|w_j\|_{L^2(W_{j,\delta})},\\
u_c&:=\text{ the restriction of $u$ to $\Omega^c_\delta$},& \|u_c\|&:=\|u_c\|_{L^2(\Omega^c_\delta)},
\end{align*}
and remark that due to the constructions and the equality~\eqref{eq-vjwj} we have
\begin{equation}
  \label{eq-uvw2}
\sum_{j=1}^M \int_{\partial_\ext V_{j,\delta}} v_j^2\dd s=\sum_{j=1}^M \int_{\partial_\ext W_{j,\delta}} w_j^2\dd s.
\end{equation}
Applying Lemma~\ref{vertices2} with $A:=-H_j$ we obtain, with some $b>0$ and $c>0$,
the inequalities
\begin{align*}
\|v_j\|^2
&\le b\delta^2 \big(N^V_j[v_j,v_j]+ (\alpha^2+H_j\alpha) \|v_j\|^2\big) + b\alpha^2\delta^2e^{-c\alpha\delta} \|u\|^2,\\
\int_{\partial_\ext V_{j,\delta}} v_j^2 \dd s
&\le b \alpha\delta^2 \big(N^V_j[v_j,v_j]+ (\alpha^2+H_j\alpha) \|v_j\|^2\big)
+ b\alpha^3\delta^2 e^{-c\alpha\delta} \|u\|^2.
\end{align*}
Furthermore, by applying Lemma~\ref{wneum} to each $W_{j,\delta}$
we conclude that there are functions $\psi_j\in L^2(0,\delta)$ with $\|\psi_j\|^2_{L^2(0,\delta)}=1$
such that if one defines
\[
 P_j:\cH\to L^2(I_{j,\delta}), \quad
(P_j u)(s):=\int_0^\delta \psi_j(t) \sqrt{1-H_j t} \,w_j\big(\Phi_j(s,t)\big)\dd t,
\]
then, with some $b_1>0$,
\begin{align*}
N^W_j[w_j,w_j]&\ge 
-\Big(\alpha^2+\alpha H_j+\dfrac{H^2_j}{2}\Big) \|P_j u\|^2
+
\big(1 - b_1\delta\big)\big\|(P_j u)'\big\|^2\\
&\quad -b_1\big(\delta+\alpha^2e^{-c\alpha\delta}\big)\|P_j u\|^2,
\end{align*}
and we recall that, using the Cauchy-Schwarz inequality and $\|\psi_j\|_{L^2(0,\delta)}=1$,
\begin{multline}
   \label{pj001}
\|P_j u\|^2=\int_{I_{j,\delta}}\Big( \int_0^\delta \psi(t) \sqrt{1-H_j t} \,w_j\big(\Phi_j(s,t)\big)\dd t\Big)^2\dd s\\
\le \int_{I_{j,\delta}} \int_0^\delta (1-H_j t)w_j\big(\Phi_j(s,t)\big)^2\dd t \dd s=\int_{W_{j,\delta}} w_j^2\dd x=\|w_j\|^2.
\end{multline}

Now let us set $\delta:=(c'\log\alpha)/\alpha$ with $c'\ge 3/c$, then the conditions \eqref{eq-daa2} for the choice
of $\delta$ are satisfied, and $\alpha^2e^{-c\alpha\delta}=o(\delta)$, which implies  $\alpha^2\delta^2e^{-c\alpha\delta}=o(\delta^3)$ and  $\alpha^3\delta^2 e^{-c\alpha\delta}=o(\alpha\delta^3)$.
This simplifies the remainders in the above inequalities, and
one can pick a sufficiently large $a>0$ such that, for the same choice of $\psi_j$,
\begin{align*}
   \|v_j\|^2
&\le \dfrac{a\log^2\alpha}{\alpha^2} \Big(N^V_j[v_j,v_j]+ (\alpha^2+H_j\alpha) \|v_j\|^2\Big) + \dfrac{a\log^3\alpha}{\alpha^3} \|u\|^2,\\
   \label{nv2}
\int_{\partial_\ext V_{j,\delta}} v_j^2 \dd s
&\le \dfrac{a\log^2\alpha}{\alpha} \Big(N^V_j[v_j,v_j]+ (\alpha^2+H_j\alpha) \|v_j\|^2\Big)
+ \dfrac{a\log^3\alpha}{\alpha^2} \|u\|^2,\\
N^W_j[w_j,w_j]&\ge -\Big(\alpha^2+\alpha H_j+\dfrac{H^2_j}{2}\Big) \|P_j u\|^2 + \Big(1 -\dfrac{a\log\alpha}{\alpha}\Big)\big\|(P_j u)'\big\|^2\\
&\quad-\dfrac{a\log\alpha}{\alpha}\,\|P_j u\|^2.
\end{align*}
Consider the self-adjoint operators
\begin{align*}
B&:= R^\Omega_\alpha +\Big(\alpha^2+\alpha H_*+\dfrac{H^2_*}{2}\Big) +\dfrac{(M+a)\log\alpha}{\alpha}\\
&\quad \text{ viewed as an operator in $\cH$,}\\
B'&:=\boplus\nolimits_{j\in\cJ_*} D_{j,\delta} \text{ in } \cH',
\end{align*}
with
$\qdom(B)=H^1(\Omega)\mathop{\cap} \cH$ and $\qdom(B')=\boplus\nolimits_{j\in\cJ_*} H^1_0(I_{j,\delta})$.
By combining the preceding estimates as in the proof of Proposition~\ref{prop-low-rob1} one arrives at the estimate
\begin{equation}
    \label{eq-buu2a}
\begin{aligned}
B[u,u]&\ge \dfrac{\alpha^2}{2a\log^2\alpha}\sum\nolimits_{j=1}^M \|v_j\|^2 + 
\,\dfrac{\alpha}{2a\log^2\alpha} \sum\nolimits_{j=1}^M \int_{\partial_\ext V_{j,\delta}} v_j^2\dd s\\
&\quad +a_0\alpha \sum\nolimits_{j\notin\cJ_*} \|P_j u\|^2+ \Big(1-\dfrac{a \log\alpha}{\alpha}\Big)\sum\nolimits_{j\in\cJ_*}\big\|(P_j u)'\big\|^2\\
&\quad +\dfrac{\alpha^2}{2}\sum\nolimits_{j=1}^M\big( \|w_j\|^2-\|P_j u\|^2\big)+\dfrac{\alpha^2}{2} \|u_c\|^2;
\end{aligned}
\end{equation}
the new summand $\sum\nolimits_{j\notin\cJ_*} \|P_j u\|^2$ is due to the modified definition of $B'$.
Each term on the right-hand side is non-negative and, hence, the left-hand side is an upper bound for each term on the right-hand side.
It also implies that $B$ is positive and then
\begin{equation*}
E_1(B)\equiv E_{K+1}(R^\Omega_\alpha)+\Big(\alpha^2+\alpha H_*+\dfrac{H^2_*}{2}\Big) +\dfrac{(M+a)\log\alpha}{\alpha}\ge 0.
\end{equation*}
By combining with Proposition~\ref{prop-up-rob} we see that
for any fixed $n\in\mathbb N$ one can choose $\lambda_n>0$  which is independent of $\alpha$ and such that
\begin{equation}
   \label{hyp1a}
0\le E_n(B)\le \lambda_n, \quad \big( 1+ E_n(B)\big)^{-1} \ge (1+\lambda_n)^{-1}.
\end{equation}

In order to construct a suitable identification map $J:\qdom(B)\to \qdom(B')$ we pick functions
$\rho^\pm_j\in C^1\big([0,\ell_j]\big)$ such that
\begin{gather*}
\rho^+_j=\begin{cases}
1 & \text{ in a neighborhood of $0$},\\
0 & \text{ in a neighborhood of $\ell_j$},
\end{cases}
\quad j\in \cJ_*,\\
\rho^-_j=\begin{cases}
0 & \text{ in a neighborhood of $0$},\\
1 & \text{ in a neighborhood of $\ell_j$},
\end{cases}
\quad j\in \cJ_*,
\end{gather*}
and then choose a constant $\rho_0>0$ such that
\begin{equation}
   \label{eq-rho0a}
\|\rho^\pm_j\|_{L^\infty(0,\ell_j)}+\|(\rho^\pm_j)'\|_{L^\infty(0,\ell_j)}\le \rho_0 \text{ for all }j\in \cJ_*.
\end{equation}
We have $I_{j,\delta}:=\big(\lambda^+_j(\delta),\ell_j-\lambda^-_{j+1}(\delta)\big)=:(\iota_j,\tau_j)$,
and that due to $\lambda^\pm_j(\delta)=\cO(\delta)$ we have $\iota_j=\cO(\delta)$ and $\tau_j=\ell_j+\cO(\delta)$, hence,
\[
\rho^+(\iota_j)=1, \quad \rho^+(\tau_j)=0, \quad
\rho^-(\iota_j)=0, \quad \rho^-(\tau_j)=1
\]
as $\alpha$ is sufficiently large. Therefore, the following map is well-defined:
\begin{gather*}
J:\qdom(B)\to \qdom(B')\equiv \boplus\nolimits_{j\in\cJ_*} H^1_0(I_{j,\delta}),
\quad J u= (J_j u),\\
(J_j u)(s):=(P_j u)(s)-(P_j u)(\iota_j)\rho^+_j(s) - (P_j u)(\tau_j)\rho^-_j(s)
\end{gather*}
For large $\alpha$ one has $1-H_j t\le 2$ for $t\in(0,\delta)$, therefore, by Cauchy-Schwarz,
\begin{multline}
   \label{pj002a}
\big|P_j u(\iota_j)\big|^2+\big|P_j u(\tau_j)\big|^2\\
\begin{aligned}
&=\Big( \int_0^\delta \psi_j(t) \sqrt{1-H_j t} \,w_j\big(\Phi_j(\iota_j,t)\big)\dd t\Big)^2\\
&\quad +\Big( \int_0^\delta \psi_j(t) \sqrt{1-H_j t} \,w_j\big(\Phi_j(\tau_j,t)\big)\dd t\Big)^2\\
&\le \int_0^\delta (1-H_j t)w_j\big(\Phi_j(\iota_j,t)\big)^2\, dt + \int_0^\delta (1-H_j t)w_j\big(\Phi_j(\tau_j,t)\big)^2\dd t\\
&\le 2\Big( \int_0^\delta w_j\big(\Phi_j(\iota_j,t)\big)^2\, dt+\int_0^\delta w_j\big(\Phi_j(\tau_j,t)\big)^2\dd t
\Big)\\
&\equiv 2\int_{\partial_\ext W_{j,\delta}} w_j^2\dd s.
\end{aligned}
\end{multline}
Using $(x+y)^2\ge(1-\varepsilon)x^2-y^2/\varepsilon$ for any $x,y\in\RR$ and $\varepsilon>0$ we estimate
\begin{align*}
\|J_j u\|^2=&\int_{I_{j,\delta}} \Big|(P_j u)(s)-(P_j u)(\iota_j)\rho^+_j(s) - (P_j u)(\tau_j)\rho^-_j(s)\Big|^2\dd s\\
&\ge (1-\varepsilon)\int_{I_{j,\delta}} \Big|(P_j u)(s)\Big|^2\dd s\\
&\quad-\dfrac{1}{\varepsilon}\int_{I_{j,\delta}}\Big|(P_j u)(\iota_j)\rho^+_j(s) + (P_j u)(\tau_j)\rho^-_j(s)\Big|^2\dd s.
\end{align*}
Using \eqref{pj002a} and the constant $\rho_0$ from \eqref{eq-rho0a} we have
\begin{gather*}
\int_{I_{j,\delta}}\Big|(P_j u)(\iota_j)\rho^+_j(s) + (P_j u)(\tau_j)\rho^-_j(s)\Big|^2\dd s\le 4 \ell_j \rho_0^2 \int_{\partial_\ext W_{j,\delta}} w_j^2\dd s,\\
\|J_j u\|^2\ge (1-\varepsilon)\|P_j u\|^2-\dfrac{4 \ell \rho_0^2}{\varepsilon}\int_{\partial_\ext W_{j,\delta}} w_j^2\dd s,
\quad \ell:=\max_{j\in\cJ_*} \ell_j.
\end{gather*}
Therefore, using \eqref{eq-buu2a},
\begin{align*}
\|u\|^2-\|Ju\|^2&\le \sum_{j=1}^M \|v_j\|^2+\sum_{j=1}^M\big( \|w_j\|^2- \|P_j u\|^2 \big)+
\sum_{j\notin \cJ_*}\|P_j u\|^2\\
&\quad + \varepsilon \|u\|^2 + \dfrac{4 \ell \rho_0^2}{\varepsilon}\sum_{j=1}^M\int_{\partial_\ext V_{j,\delta}} v_j^2\dd s+\|u_c\|^2\\
&\le \Big(\dfrac{2a\log^2\alpha}{\alpha^2}+\dfrac{4}{\alpha^2}+\dfrac{1}{a_0\alpha}+\dfrac{8 \ell \rho_0^2 a\log^2\alpha}{\varepsilon \alpha}\Big) B[u,u]+\varepsilon \|u\|^2.
\end{align*}
Taking $\varepsilon:=\log\alpha/\sqrt{\alpha}$ and choosing $c_1>0$ sufficiently large we obtain
\begin{equation}
    \label{eps1a}
\|u\|^2-\|Ju\|^2\le \dfrac{c_1 \log\alpha}{\sqrt{\alpha}}\Big( B[u,u]+\|u\|^2\Big).
\end{equation}

To study the difference $B'[Ju,Ju]-B[u,u]$ recall that $B'[Ju,Ju]=\sum_{j\in\cJ_*} \|(J_ju)'\|^2$.
Using the elementary inequality $(x+y)^2\le(1+\varepsilon)x^2+2y^2/\varepsilon$ valid for all $x,y\in\RR$ and $\varepsilon\in(0,1)$
we estimate
\begin{align*}
\big\|(J_j u)'\big\|^2=&\int_{I_{j,\delta}} \Big|(P_j u)'(s)-(P_j u)(\iota_j)(\rho^+_j)'(s) - (P_j u)(\tau_j)(\rho^-_j)'(s)\Big|^2\dd s\\
&\le (1+\varepsilon)\int_{I_{j,\delta}} \Big|(P_j u)'(s)\Big|^2\dd s\\
&\quad+ \dfrac{2}{\varepsilon}\int_{I_{j,\delta}}\Big|(P_j u)(\iota_j)(\rho^+_j)'(s) + (P_j u)(\tau_j)(\rho^-_j)'(s)\Big|^2\dd s.
\end{align*}
Using the estimate \eqref{pj002a} for the last term and the constant $\rho_0$ from \eqref{eq-rho0a}
we have
\begin{gather*}
\int_{I_{j,\delta}}\Big|(P_j u)(\iota_j)(\rho^+_j)'(s) + (P_j u)(\tau_j)(\rho^-_j)'(s)\Big|^2\dd s\le 4 \ell \rho_0^2 \int_{\partial_\ext W_{j,\delta}} w_j^2\dd s,\\
\begin{aligned}
B'[Ju,Ju]&\le (1+\varepsilon)\sum_{j\in\cJ_*} \big\|(P_ju)'\big\|^2 + \dfrac{8\ell\rho_0^2}{\varepsilon} \sum_{j\in\cJ_*} \int_{\partial_\ext W_{j,\delta}} w_j^2\dd s\\
&\le (1+\varepsilon)\sum_{j\in\cJ_*} \big\|(P_ju)'\big\|^2+\dfrac{8\ell\rho_0^2}{\varepsilon} \sum_{j=1}^M \int_{\partial_\ext W_{j,\delta}} w_j^2\dd s.
\end{aligned}
\end{gather*}
Recall that due to \eqref{eq-buu2a} we have
\begin{align*}
B[u,u]&\ge \Big(1-\dfrac{a\log\alpha}{\alpha}\Big)\sum_{j\in\cJ_*} \big\|(P_ju)'\big\|^2,\\
B[u,u]&\ge \dfrac{\alpha}{2a\log^2\alpha} \sum_{j=1}^M \int_{\partial_\ext V_{j,\delta}} v_j^2\dd s
\equiv \dfrac{\alpha}{2a\log^2\alpha} \sum_{j=1}^M \int_{\partial_\ext W_{j,\delta}} w_j^2\dd s,
\end{align*}
where we used \eqref{eq-uvw2} on the last step. Therefore,
\begin{align*}
B'[Ju,Ju]-B[u,u]&\le \Big(\varepsilon+\dfrac{a\log\alpha}{\alpha}\Big) \sum_{j\in\cJ_*} \big\|(P_ju)'\big\|^2\\
&\quad+\dfrac{8\ell\rho_0^2}{\varepsilon} \sum_{j=1}^M \int_{\partial_\ext W_{j,\delta}} w_j^2\dd s\\
&\le \left(\dfrac{\varepsilon+\dfrac{a\log\alpha}{\alpha}}{1-\dfrac{a\log\alpha}{\alpha}}+\dfrac{8\ell\rho_0^2}{\varepsilon}
\cdot \dfrac{2a\log^2\alpha}{\alpha}\right)\, B[u,u].
\end{align*}
By setting $\varepsilon=\log\alpha/\sqrt\alpha$ and choosing $c_2>0$ sufficiently large
we arrive at
\begin{equation}
    \label{eps2a}
B'[Ju,Ju]-B[u,u]\le \dfrac{c_2 \log\alpha}{\sqrt{\alpha}}\, B[u,u]\le \dfrac{c_2 \log\alpha}{\sqrt{\alpha}}\,\Big(B[u,u]+\|u\|^2\Big).
\end{equation}
In virtue of~\eqref{eps1a} and \eqref{eps2a} we can apply
Proposition~\ref{prop6}. Remark that for each fixed $n$ the assumption
$\varepsilon_1<1/\big(1+E_n(B)\big)$ is satisfied due to~\eqref{hyp1a}.
Hence, for each fixed $n$ there holds
\[
E_n\big(\boplus\nolimits_{j\in\cJ_*} D_{j,\delta}\big)\equiv E_n(B')\le E_n(B) + \dfrac{\log\alpha}{\sqrt{\alpha}}\,\dfrac{\big(c_1 E_n(B)+c_2\big)\big(1+E_n(B)\big)}{1-c_1\big(1+E_n(B)\big)\log\alpha/\sqrt{\alpha}}.
\]
By \eqref{hyp1a} we have $E_n(B)=\cO(1)$ for each fixed $n$,
and the preceding inequality implies
\[
E_{K+n}(R^\Omega_\alpha)\ge -\alpha^2-\alpha H_*-\dfrac{H^2_*}{2}+E_n\Big(\boplus\nolimits_{j\in\cJ_*} D_{j,\delta}\Big)+\cO\Big( \dfrac{\log\alpha}{\sqrt{\alpha}}\Big).
\]
It remains to remark that $E_n\big(\boplus\nolimits_{j\in\cJ_*} D_{j,\delta}\big)=E_n\big(\boplus\nolimits_{j\in\cJ_*} D_j\big)+\cO(\delta)$,
while $\delta=(c'\log\alpha)/\alpha=o\big( \tfrac{\log\alpha}{\sqrt{\alpha}}\big)$.
\end{proof}

The combination of Propositions \ref{prop-up-rob} and \ref{prop-low-rob} gives Theorem~\ref{thm3}.

\section{Concluding remarks}\label{ssec55}

\subsection{Resonant angles: equilateral triangle}\label{rem-triangle}
As already mentioned in the introduction,
we are going to show that there are some angles which do not satisfy the non-resonance condition. This will be done
in an indirect way. First, remark that if $\Omega$ is a convex polygon (with straight sides) with non-resonant vertices, $K$
corner-induced eigenvalues, and side lengths $\ell_j$, then
\begin{equation}
   \label{posit}
\lim_{\alpha\to+\infty}\big(E_{K+1}(R^\Omega_\alpha)+\alpha^2\big)=E_1(\boplus\nolimits_{j=1}^M D_j)\equiv \pi^2/\ell^2>0,
\end{equation}
where $D_j$ is the Dirichlet Laplacian on $(0,\ell_j)$ and $\ell:=\max \ell_j$. Let us show that this can be violated
for some particular polygons $\Omega$ and lead to a different eigenvalue asymptotics.

The paper by McCartin \cite{mcc4} contains a detailed analysis of the operator $R^\Omega_\alpha$ for the case when $\Omega$
is an equilateral triangle using a separation of variables in a suitably
chosen coordinate system. To be more precise, we assume that the side length of the triangle is $1$. Let us give a short account of the results of \cite{mcc4} concerning the behavior
of the eigenvalues as $\alpha\to+\infty$ (which corresponds  to $\sigma\to-\infty$ in the reference).

One constructs first a complete orthogonal system of eigenfunctions, noted $T_s^{m,n}$ with $n\ge m\ge 0$
and $T_a^{m,n}$ with $n>m\ge 0$ and $m,n\in\NN\mathop{\cup}\{0\}$, and $R^\Omega_\alpha T_\circ^{m,n}=E_\alpha(m,n)T_\circ^{m,n}$
for $\circ\in\{s,a\}$, i.e. $T_s^{m,n}$ and $T_a^{m,n}$ share the same eigenvalue for $n>m$.
It is then shown that $E_\alpha(m,n)\ge 0$ for $m\ge 2$, therefore, only $m\in\{0,1\}$
contribute to the negative spectrum. One shows then the following asymptotics
for $\alpha\to+\infty$ (we cite the respective equations in Subsection 7.2 of \cite{mcc4}):
\begin{align*}
E_\alpha(0,0)&=-4\alpha^2+o(1), & \text{Eq. (37)},\\
E_\alpha(0,1)&=-4\alpha^2+o(1), & \text{Eq. (50)},\\
E_\alpha(0,n)&=-\alpha^2+\dfrac{4}{27}\Big[ \dfrac{\pi}{r}\Big(n-\dfrac{3}{2}\Big)\Big]^2+o(1) \text{ for } n\ge 2,& \text{ Eq. (53)},\\
E_\alpha(1,1)&=-\alpha^2+o(1),& \text{Eq. (67)},\\
E_\alpha(1,n)&=-\alpha^2+\dfrac{4}{27}\Big[ \dfrac{\pi}{r}(n-1)\Big]^2+o(1) \text{ for } n\ge 2,& \text{ Eq. (80)},
\end{align*}
where $r:=1/(2\sqrt{3})$ is the inradius. The eigenvalues $E_\alpha(0,0)$ and $E_\alpha(0,1)$ (twice)
are corner-induced: the half-angle at each corner is $\pi/6$, and $\kappa(\pi/6)=1$ (see Subsection~\ref{sec-sectors}), hence $K=3$ (we remark that a more precise remainder
for the first three eigenvalues was obtained in \cite{hp}). Furthermore, by inspecting the above expressions
and by taking into account the multiplicities one sees that for any fixed $n\in\NN$ one has
the asymptotics $E_{K+n}(R^\Omega_\alpha)=-\alpha^2+z_n +o(1)$, where $z_n$ is the $n$th element (when enumerated in the non-decreasing order) of the \emph{multiset}
$Z:=\big\{\big(2\pi m/3\big)^2: \, m\in\ZZ\big\}$.
In particular, one has $z_1=0$ and $E_{K+1}(R^\Omega)=-\alpha^2+o(1)$, which is in contradiction to \eqref{posit}. Hence, the half-angle $\pi/6$
is resonant. In fact,  in the above multiset $Z$ one easily recognizes the spectrum of the Laplacian on a circle of length
$3$, i.e. on the three sides of the triangles glued to each other without any obstacle at the vertices.
This operator can be then viewed as the effective operator on the boundary.

We remark that the text of the paper \cite{mcc4} is included into McCartin's book \cite{mcc-book} as Chapter~7,
but due to a typesetting error some of the important formulas are missing  on page 105 of~\cite{mcc-book},
which complicates the understanding of the eigenvalue asymptotics. An interested reader should better refer to the original paper \cite{mcc4} for full details.

\subsection{Variable curvature}\label{ssec-var}
By analogy with the works on smooth domains, see e.g. \cite{pp15b}
one might expect the following asymptotics to be valid for general curvilinear polygons (i.e. without assuming that $H_j$
are constant): if all corners are concave or convex non-resonant, then
$E_{K+n}(R^\Omega_\alpha)=-\alpha^2+E_n \big(\bigoplus\nolimits_j (D_j-\alpha H_j)\big)+r(\alpha)$
with a suitable error term $r(\alpha)$. Some steps of the above scheme are still easily transferable,
but the whole machinery appears to fail when trying to prove the lower bound.
The main obstacles, when projected to the proof of Proposition~\ref{prop-low-rob}, are
that the eigenvalues of the comparison operator $B'=\bigoplus_j (D_j-\alpha H_j)+\alpha H_*+\rho(\alpha)$ with suitably chosen constants $\rho(\alpha)$ and $H_*:=\max_j \max H_j$,
may become infinitely large for large $\alpha$, and much smaller value of $\varepsilon_j$ are needed to satisfy the initial assumption of Proposition~\ref{prop6}
and to have a non-trivial resulting estimate. In a sense, the machinery we use implicitly aims at showing that the eigenfunctions are suitably small near vertices
by controlling their norms and traces using the values in the rest of the domain (Lemma~\ref{vertices1} and~\ref{vertices2}).
For non-constant curvatures, the eigenfunctions are localized near the points of maximal curvature, similarly as in the smooth case \cite{HK}.  In particular, 
 if the curvature takes its maximum at one of the corners, then the respective eigenfunctions should be localized near the corner, so the strategy of showing that
it asymptotically vanishes at the corners (which then gives an effective operator
with the Dirichlet boundary conditions) becomes contradictory. One might expect that
a more precise analysis in this case can be done under explicit hypotheses on the curvatures
(e.g. an isolated maximum at a corner) by showing first some
semiclassical localization properties for the eigenfunctions,
which might be a task of a higher complexity.

\subsection{Resonance and non-resonance conditions}\label{ssec-nonres2}
Our non-resonance condition introduced in Definition~\ref{defnonres} and used in the proof
is a slightly naive adaptation of a condition appearing in the spectral analysis of Laplacians
on domain collapsing onto a graph. The topic is presented in a systematic way
e.g. in the papers by Grieser \cite{grieser}, Molchanov and Vainberg \cite{mv}, and in the monograph by Post \cite{postbook}.
Let us recall some basic notions of the theory, mostly following the short presentation given in the paper \cite{kp-jmaa} by Pankrashkin.

\begin{figure}[t]

\centering

\includegraphics[height=18mm]{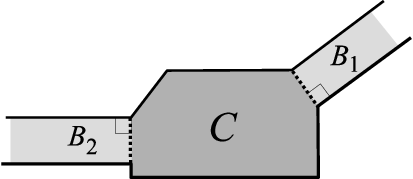}

\caption{Star waveguide $\Lambda$ with two branches and and a dark-shaded center. \label{fig1}}

\end{figure}

Let $d\ge 2$ and $\omega\subset \RR^{d-1}$ be a bounded connected Lipschitz domain. We denote by $\mu$ the first Dirichlet eigenvalue of $\omega$.
By a \emph{star waveguide} we mean a connected Lipschitz domain $\Lambda\subset \RR^d$
for which one can find $n$ non-intersecting half-infinite cylinders $B_1,\dots, B_n \subset \Lambda$,
all isometric to $(0,\infty)\times \omega$, such that $\Lambda$ coincides with the union $B_1\mathop{\cup}\dots \mathop{\cup} B_n$
outside a compact set, see Figure~\ref{fig1}. The cylinders $B_j$ will be called \emph{branches},
  the connected bounded domain
$C:=\Lambda\setminus \overline{\mathstrut B_1\mathop{\cup}\dots \mathop{\cup} B_n}$
will be called \emph{center}, which is also assumed Lipschitz.
We call such a domain $\Lambda$ a \emph{star waveguide}. Remark that centers of star waveguides are not defined uniquely:
one can attach finite pieces of $B_j$ to a given center to obtain a new center.

For small $\varepsilon>0$, let $\Omega_\varepsilon\subset \RR^d$ be a domain composed of finite cylinders  $B_{j,\varepsilon}$
isometric to $I_j\times (\varepsilon \omega)$ with $I_j:=(0,\ell_j)$, $\ell_j>0$, $j\in\{1,\dots,J\}$,
connected to each other through some bounded Lipschitz domains $C_{k,\varepsilon}$, see Figure~\ref{fig-net}(a). 
In the context of the problem, it is natural to refer to $B_{j,\varepsilon}$ as to \emph{edges}
and to $C_{k,\varepsilon}$ as to \emph{vertices}. We assume that the vertices $C_{k,\varepsilon}$
are isometric to $\varepsilon C_{k}$ with some $\varepsilon$-independent domains $C_k$, $k\in\{1,\dots,K\}$,
and that if one considers a vertex $C_{k,\varepsilon}$ and extends the attached cylindrical edges to infinity,
then one obtains a domain isometric to $\varepsilon \Lambda_k$ with some $\varepsilon$-independent
star waveguide $\Lambda_k$ having $C_k$ as its center.

\begin{figure}[b]
\centering
\centering
\begin{tabular}{cc}
\begin{minipage}[c]{52mm}
\begin{center}
\includegraphics[height=20mm]{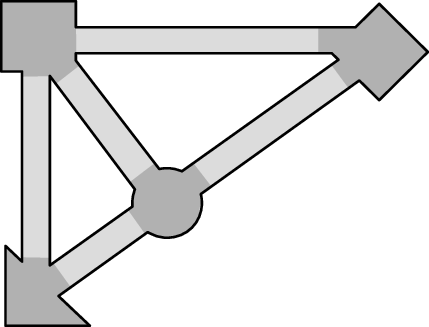}
\end{center}
\end{minipage}
&
\begin{minipage}[c]{52mm}
\begin{center}
\includegraphics[height=18mm]{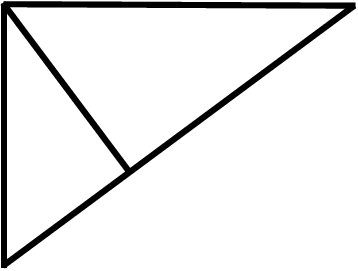}
\end{center}
\end{minipage}\\
(a) & (b)
\end{tabular}
\caption{ (a) An example of a domain $\Omega_\varepsilon$ with dark shaded vertices.
(b)~The associated one-dimensional skeleton $X$. \label{fig-net}}
\end{figure}

In various applications one is interested in the eigenvalues of the Dirichlet laplacian  $-\Delta^{\Omega_\varepsilon}_D$ in $\Omega_\varepsilon$
as $\varepsilon\to 0^+$. As the domain $\Omega_\varepsilon$ collapses onto its one-dimensional skeleton
$X$ composed from the segments $I_j$ coupled at the vertices, see
Figure~\ref{fig-net}(b), it is natural to expect that the behavior of the eigenvalues
should be determined by an effective operator associated with $X$.
The results of~\cite[Theorems 2 and 3]{grieser} can be summarized informally as follows.
Consider the Dirichlet Laplacians  $-\Delta^{\Lambda_k}_D$ in the star waveguides $\Lambda_k$ associated
with each vertex as described above: the spectrum consists of the essential part $[\mu,+\infty)$
and of discrete eigenvalues $E_j(-\Delta^{\Lambda_k}_D)$, $j\in \{1,\dots,N(\Lambda_k)\}$, $k\in\{1,\dots,K\}$.
Then with some $N\ge N(\Lambda_1)+\dots+N(\Lambda_K)$,  $a_n\in (0,\mu]$ and $b>0$ there holds, as $\varepsilon\to 0^+$:
\begin{itemize}
\item for $n\in \{1,\dots,N\}$ there holds $E_n(-\Delta^{\Omega_\varepsilon}_D)=a_n/\varepsilon^2+\cO(e^{-b/\varepsilon})$,
\item for any fixed $n\in\NN$ there holds $E_{N+n}(-\Delta^{\Omega_\varepsilon}_D)=\mu/\varepsilon^2 + E_n(L)+\cO(\varepsilon)$,
where $L$ is a self-adjoint operator in $L^2(X)\simeq \bigoplus_{j=1}^J L^2(0,\ell_j)$ acting as $(f_j)\mapsto (-f''_j)$
with suitable self-adjoint boundary conditions determined by the scattering matrices of $-\Delta^{\Lambda_k}_D$ at the threshold energy~$\mu$ (see e.g. the paper \cite{guip}
by Guilopp\'e for the definition and properties of the scattering matrices).
\end{itemize}
The operator $L$, which is the so-called quantum graph laplacian on $X$ (see the monograph~\cite{bk} by Berkolaiko and Kuchment for an introduction and a review),
represents the sought ''effective operator'' on $X$, and the associated boundary conditions
describe the way how the branches of the network interact through the vertices
in the limit $\varepsilon\to 0$. At the same time, finding explicitly the boundary condition in the general case
represents a very difficult task.

The above general construction admits an important particular case, which can be formulated in simpler terms.
One says that a star waveguide $\Lambda$ admits a \emph{threshold resonance} if there exists a non-zero function $\Phi\in L^\infty(\Lambda)$
satisfying $-\Delta \Phi=\mu \Phi$ in $\Lambda$ and $\Phi=0$ at $\partial\Lambda$, then the following result holds \cite[Section 8]{grieser}:
\begin{prop}\label{prop1}
Assume that none of $\Lambda_k$ admits a threshold resonance, then for $\varepsilon\to 0^+$
the following asymptotics are valid:
\begin{itemize}
\item Denote $N:= N(\Lambda_1)+\dots+N(\Lambda_K)$ and let $a_1,\dots,a_N$ be the family
of the eigenvalues $E_j(-\Delta^{\Lambda_k}_D)$, $j\in\{1,\dots,N(\Lambda_k)\}$, $k\in\{1,\dots,K\}$,
enumerated in the non-decreasing order,
then for $n\in\{1,\dots,N\}$ one has $E_n(-\Delta^{\Omega_\varepsilon}_D)=a_n/\varepsilon^2+\cO(e^{-b/\varepsilon})$,
with some $b>0$,
\item For any fixed $n\ge 1$ there holds $E_{N+n}(-\Delta^{\Omega_\varepsilon}_D)=\mu/\varepsilon^2 + E_j(\oplus_{j=1}^J D_j)+\cO(\varepsilon)$
with $D_j$ being the Dirichlet Laplacians on $(0,\ell_j)$.
\end{itemize}
\end{prop}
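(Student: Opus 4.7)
The plan is to adapt the overall strategy developed in the main body of the paper to this collapsing setting, replacing vertex neighborhoods by the rescaled waveguide centers and side neighborhoods by the thin cylindrical edges. I would fix a length scale $\delta$ with $\varepsilon \ll \delta \to 0$ (eventually of the form $\delta = b_0 \varepsilon |\log \varepsilon|$ or just $\delta \sim \varepsilon$), enlarge each $C_{k,\varepsilon}$ by a piece of length $\delta$ of every attached edge to obtain a ``fattened vertex'' $V_{k,\varepsilon,\delta}$ isometric to $\varepsilon \Lambda_k^{\delta/\varepsilon}$ (a truncated star waveguide), and introduce the trimmed edges $W_{j,\varepsilon,\delta}$ isometric to $I_{j,\delta} \times (\varepsilon \omega)$ with $I_{j,\delta}$ obtained from $I_j$ by cutting off intervals of length $\delta$ near its endpoints. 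Dirichlet--Neumann bracketing along the cut cross-sections then sandwiches $E_n(-\Delta^{\Omega_\varepsilon}_D)$ between eigenvalues of direct sums of the Dirichlet and Neumann realizations in these pieces.

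For the first $N$ eigenvalues, separation of variables on the trimmed edges gives eigenvalues bounded below by $\mu/\varepsilon^2$ plus a strictly positive transverse gap, so only the vertex pieces contribute. Rescaling $V_{k,\varepsilon,\delta}$ by $1/\varepsilon$ yields the Dirichlet Laplacian on $\Lambda_k^{\delta/\varepsilon}$, whose lowest $N(\Lambda_k)$ eigenvalues converge exponentially to those of $-\Delta^{\Lambda_k}_D$ by the very same Agmon-type argument as in Lemma~\ref{dalph}: the assumed absence of threshold resonance guarantees a spectral gap $\mu - E_{N(\Lambda_k)}(-\Delta^{\Lambda_k}_D) > 0$, and the corresponding eigenfunctions decay exponentially along the branches. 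This delivers the first asymptotics $E_n(-\Delta^{\Omega_\varepsilon}_D) = a_n/\varepsilon^2 + \cO(e^{-b/\varepsilon})$ for $n \in \{1,\dots,N\}$ with a suitable $b>0$.

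For the side-induced eigenvalues $E_{N+n}(-\Delta^{\Omega_\varepsilon}_D)$, the upper bound follows at once from Dirichlet bracketing on the trimmed edges: the first transverse mode on $\varepsilon \omega$ gives the contribution $\mu/\varepsilon^2$, while the longitudinal Dirichlet Laplacians on $I_{j,\delta}$ differ from $D_j$ by $\cO(\delta) = \cO(\varepsilon)$. To match this with a lower bound I would follow the scheme of Proposition~\ref{prop-low-rob1}: work in the orthogonal complement $\cH$ of the first $N$ Dirichlet eigenfunctions of $\Omega_\varepsilon$, establish a distance estimate $d(\sigma_k^* L_k, L) = \cO(e^{-b/\varepsilon})$ between the vertex eigenspaces and the global low-lying eigenspace (using the Helffer--Sj\"ostrand proposition~\ref{propdist2}), deduce $L^2$ and trace bounds on each vertex piece in the spirit of Lemma~\ref{vertices1}, and then define the identification map $J : \cH \to \bigoplus_j H^1_0(I_j)$ by projecting $u|_{W_{j,\varepsilon,\delta}}$ onto the first transverse Dirichlet mode and subtracting boundary cutoffs that kill the values at the endpoints of $I_{j,\delta}$. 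Proposition~\ref{prop6} applied to the shifted operator $B = -\Delta^{\Omega_\varepsilon}_D - \mu/\varepsilon^2$ and $B' = \bigoplus_j D_j$ with $\varepsilon_{1,2} = \cO(\varepsilon)$ then yields the claimed lower bound.

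The main obstacle is the quantitative analog of the non-resonance condition: one needs to show that the absence of threshold resonance for each $\Lambda_k$ implies a uniform lower bound of the form
\[
E_{N(\Lambda_k)+1}\!\left(-\Delta^{\Lambda_k^{r}}_{D,N_{\mathrm{ext}}}\right) \ge \mu + C/r^2
\qquad \text{as } r \to +\infty,
\]
on truncated star waveguides with Neumann condition on the truncation cross-sections, mirroring the role played by Definition~\ref{defnonres} in the body of the paper. The heuristic argument is clear (a threshold resonance $\Phi \in L^\infty(\Lambda_k)$ would, after a cutoff, serve as a test function driving this eigenvalue to $\mu$, and conversely its absence forces a $1/r^2$ gap by a Fredholm alternative for the Dirichlet-to-Neumann operator on $\omega$ at the threshold energy), but making it quantitative requires a careful analysis of the scattering matrix of $-\Delta^{\Lambda_k}_D$ at $\mu$ (cf.~\cite{guip}). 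All other steps---Agmon decay of eigenfunctions, the Post--Exner identification machinery, and the final min--max comparison---are direct transcriptions of the arguments already developed in Sections~\ref{sec-trunc}--\ref{sec-thm-poly}.
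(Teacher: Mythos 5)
First, a point of comparison: the paper does not prove Proposition~\ref{prop1} at all --- it is quoted from Grieser \cite[Section 8]{grieser} (with the remark that Post \cite{post} obtained it earlier under the stronger hypothesis $E_1(-\Delta^{C_k}_{DN})>\mu$), and the known proofs go through scattering/matched-asymptotics machinery at the threshold $\mu$, not through the bracketing-plus-identification scheme of Sections~\ref{sec-trunc}--\ref{sec-thm-poly}. So your proposal is an attempt at a genuinely different, self-contained route, and it is a reasonable one in outline; but it has a real gap, which you yourself flag as ``the main obstacle'' and only treat heuristically. The step
\[
E_{N(\Lambda_k)+1}\bigl(-\Delta^{\Lambda_k^{r}}_{D,N_{\mathrm{ext}}}\bigr)\ \ge\ \mu + C/r^2
\]
is precisely the analogue of the non-resonance condition of Definition~\ref{defnonres}, and in the paper this is an \emph{assumption}, verified only for $\theta\in[\pi/4,\pi/2)$ by the explicit separation-of-variables and monotonicity trick of Proposition~\ref{prop-good}; nothing in the paper derives it from a resonance-free condition. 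Deriving it from the absence of a threshold resonance is nontrivial: even using the Bakharev--Nazarov equivalence with the center condition $E_{N(\Lambda)+1}(-\Delta^{C}_{DN})>\mu$ (\cite{kp-jmaa,bn}), the obvious Neumann bracketing of $\Lambda_k^r$ into the center plus finite cylinders with Neumann ends only yields the lower bound $\mu$ (the cylinders contribute eigenvalues $\mu+(k\pi/r)^2$ starting at $k=0$), not $\mu+C/r^2$. Without this quantitative gap your lower bound for $E_{N+n}$ collapses to the Neumann-type bound, leaving an $\cO(1)$ discrepancy with the upper bound --- exactly the gap the paper's Theorem~\ref{thm-poly} is designed to close in the Robin setting, and which your sketch does not close here. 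A Fredholm/Dirichlet-to-Neumann argument at the threshold can indeed be made to work, but that is the substantive content of the scattering analysis in \cite{grieser}, not a transcription of anything already in the paper.

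Two further mismatches with the stated remainders. For the first bullet, two-sided bracketing cannot give $\cO(e^{-b/\varepsilon})$: the Dirichlet-truncated side is exponentially accurate (as in Lemma~\ref{dalph}), but the Neumann-truncated side only gives polynomial errors of order $1/\delta^2$ (as in Lemma~\ref{nalph} and \eqref{eq-ap01}); the paper explicitly notes that upgrading to an exponential remainder requires the more advanced methods of \cite{khalile2}. For the second bullet, the identification-map scheme of Proposition~\ref{prop-low-rob1} loses a square root when balancing the trace terms (the choice $\varepsilon=\log\alpha/\sqrt{\alpha}$ there), so a faithful transcription would deliver a remainder of order $\sqrt{\varepsilon}\,\log(1/\varepsilon)$ rather than the claimed $\cO(\varepsilon)$; asserting $\varepsilon_{1,2}=\cO(\varepsilon)$ in Proposition~\ref{prop6} needs an argument you have not supplied. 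In short: the architecture is sensible, but the key non-resonance-type estimate and the claimed error rates are not established by the proposed arguments.
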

In other word, in the absence of threshold resonances the effective operator $L$ is decoupled and corresponds to
the Dirichlet boundary conditions at the vertices. In view of this result, it is important to be able to identify if 
star waveguides admits no threshold resonance. The following sufficient condition was obtained in~\cite{kp-jmaa}, which was in turn motivated
by the analysis of particular configurations carried out by Bakharev, Nazarov, Matveenko \cite{bmn}, Nazarov \cite{naz-t,naz17}, Nazarov, Ruotsalainen, Uusitalo
\cite{naz-hex}. For a star waveguide $\Lambda$ with a center $C$ we denote by $-\Delta^C_{DN}$ the Laplacian in $C$
with the Dirichlet boundary condition of $\partial C\cap\partial\Lambda$ and the Neumann boundary condition at the remaining boundary,
then if for some center $C$ one has the strict inequality
\begin{equation}
   \label{dcn}
E_{N(\Lambda)+1}(-\Delta^C_{DN})>\mu,
\end{equation}
then $\Lambda$ has no threshold resonance. In the recent preprint \cite{bn} Bakharev and Nazarov prove that the condition \eqref{dcn}
for some center $C$ is also necessary for the absence of threshold resonance (hence, it is a necessary and sufficient condition).

By comparing Proposition~\ref{prop1} with our main Theorem~\ref{thm-poly} one sees
that that role of the star waveguides attached to the vertices is quite similar to the role
of the infinite sectors for the Robin laplacians.
In fact our condition of non-resonance (Definition~\ref{defnonres}) is a translation of the condition \eqref{dcn} into the framework of Robin sectors.
Namely, one may rewrite \eqref{dcn}  using the center $\varepsilon C$ of the scaled  waveguide $\varepsilon \Lambda$ as
$E_{N(\Lambda)+1}(-\Delta^{\varepsilon C}_{DN})= \mu/\varepsilon^2+ c/\varepsilon^2$ with $c:=E_{N(\Lambda)+1}(-\Delta^C_{DN})-\mu>0$
and remark that $\mu/\varepsilon^2$ is the bottom of the essential spectrum of the Dirichlet laplacian on $\varepsilon \Lambda$.
This should be compared with the scaled form of the non-resonance condition $E_{\kappa(\theta)+1}(N^\delta_{\theta,\alpha})\ge -\alpha^2+  c/\delta^2$, $c>0$,
as $\alpha \delta$ is large, by noting that $-\alpha^2$ is the bottom of the spectrum of the $\alpha$-Robin laplacian in the infinite sector.
We also remark that the result of Proposition~\ref{prop1} was obtained earlier by Post \cite{post} under the assumption that each $\Lambda_k$
admits a center $C_k$ such that $E_1(-\Delta^{C_k}_{DN})>\mu$, which is exactly the condition \eqref{dcn} for $N(\Lambda)=0$.
In fact, the final steps of our proof (especially the construction of the identification map $J$) are an adaption of those
from \cite{post}. In view of the preceding analogies with the waveguides, it would be interesting to find alternative reformulations
of our non-resonance condition e.g. in terms of generalized eigenfunctions at the bottom of the essential spectrum, which might help
to extend our result to a larger range of angles. It  would also be of interest to understand the eigenvalue asymptotics for general angles
(i.e. without assuming that the angles are non-resonant), which might involve a development
of the scattering theory in infinite sectors similar to the one for waveguides.

\appendix

\section{Some geometric constructions in curvilinear sectors}\label{appa}

Let us introduce a geometric setting which will be used throughout the whole section.

Let $\Gamma_{\pm}$ be two $C^3$ curves meeting at a point at an angle $2\theta \in(0,\pi)$. In this section we would
like to construct some neighborhoods and cut-off functions near the intersection point.
More precisely, let $s_*>0$ and $\gamma_\pm : [-s_*,s_*] \to \RR^2$ be the arc length parametrizations of $\Gamma_\pm$, i.e. both $\gamma_\pm$ are injective $C^3$ functions
with $|\gamma'_\pm|=1$ and $\Gamma_\pm=\gamma_\pm\big([-s_*,s_*]\big)$. By applying suitable rotations and translations we assume without loss of generality that
\begin{equation}\label{eqn:hyprot}
\gamma_\pm(0) =(0,0), \quad
	\gamma_\pm'(0) = (\cos\theta, \pm \sin\theta), \quad
	\theta\in \big(0, \tfrac{\pi}{2}\big).
\end{equation}
In view of the above assumptions, near the point $(0,0)$ the curves $\Gamma_\pm$ are the graphs of $C^3$ functions $F_\pm$ with $\pm F_+(t)> \pm F_-(t)$ for $\pm t>0$, and we will be interested in some constructions in the curvilinear sector
\[
U:=\big\{ (x_1,x_2): \  0<x_1<b, \ F_-(x_1)<x_2<F_+(x_1)\big\}, \quad b>0,
\]
see Figure~\ref{vois12}. For subsequent use we also introduce unit normal vectors $n_\pm(s)$ to $\Gamma_\pm$ at $\gamma_\pm(s)$
which depend smoothly on $s$ and point to the outside of $U$ for small $s$.
In particular, one has then $n_\pm(0)= ( -\sin\theta, \pm \cos\theta)$.
As $n_\pm$ are unit vectors, one has $n_\pm'(s) = k_\pm(s) \gamma_\pm'(s)$, where $k_\pm$ are $C^1$ functions (which coincide up to the sign with the algebraic curvatures on $\Gamma_\pm$),
and $\gamma'_\pm(s)\wedge n_\pm(s)\equiv\pm 1$.

\begin{figure}

\centering

\includegraphics[height=40mm]{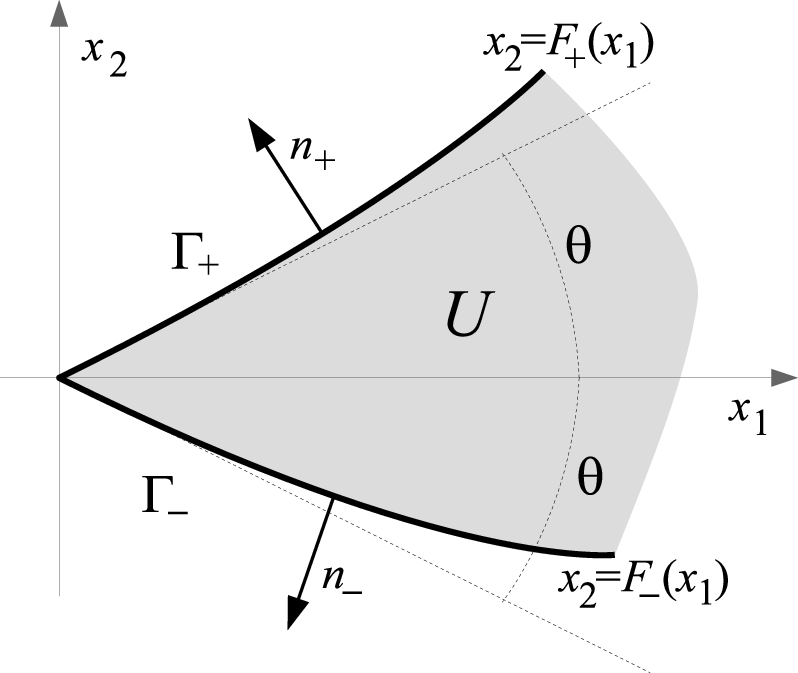}

\caption{The curves $\Gamma_\pm$ and the curvilinear sector $U$. The thin dotted lines correspond to the tangents to $\Gamma_\pm$ at the origin. \label{vois12}}

\end{figure}

%
%
%

\begin{lemma}\label{prox}
There exist $t_1>0$ and a $C^2$ smooth function $Y:(-t_1,t_1)\to \RR^2$
such that for $t\in(0,t_1)$ the point $Y(t)$ is the unique point of $U$ which is at the distance $t$ from both $\Gamma_+$ and $\Gamma_-$,
and the points $A_\pm(t)\in \Gamma_\pm$ satisfying $\big| A_\pm(t) - Y(t)\big|=t$ are uniquely defined.
Furthermore, $A_\pm(t):=\gamma_\pm\big(\lambda_\pm(t)\big)$, where $\lambda_\pm$ are $C^2$ functions defined near $0$,
and
\[
\lambda_\pm(0)=0, \quad \lambda'_\pm(0)=\cotan\theta, \quad Y(0)=\begin{pmatrix} 0 \\ 0 \end{pmatrix},
\quad
Y'(0)=\dfrac{1}{\sin\theta}\begin{pmatrix} 1 \\ 0 \end{pmatrix}.
\]
\end{lemma}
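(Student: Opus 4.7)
The plan is to set up the equidistance condition as an implicit equation and apply the implicit function theorem. Concretely, using the outer unit normals $n_\pm$, any point at Euclidean distance $t$ from $\Gamma_\pm$ lying inside $U$ can (for small $t$ and $s$) be written uniquely as $\gamma_\pm(s)-t\,n_\pm(s)$; this is a consequence of the tubular neighbourhood theorem applied to each $C^3$ curve, and it also ensures that the foot $A_\pm(t):=\gamma_\pm(\lambda_\pm(t))$ of the perpendicular from a point of $U$ close to the origin is unique. The equidistance condition then translates into
\[
F(s_+,s_-,t):=\big(\gamma_+(s_+)-t\,n_+(s_+)\big)-\big(\gamma_-(s_-)-t\,n_-(s_-)\big)=0.
\]

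First I would check the hypotheses of the implicit function theorem at the base point $(s_+,s_-,t)=(0,0,0)$. One has $F(0,0,0)=0$ by~\eqref{eqn:hyprot}, and the partial Jacobian with respect to $(s_+,s_-)$ at this point equals
\[
\partial_{(s_+,s_-)}F(0,0,0)=\bigl(\gamma_+'(0)\ \ {-}\gamma_-'(0)\bigr)=\begin{pmatrix}\cos\theta & -\cos\theta\\ \sin\theta & \sin\theta\end{pmatrix},
\]
whose determinant is $\sin(2\theta)\neq 0$ because $\theta\in(0,\pi/2)$. The theorem then provides $t_1>0$ and $C^2$ functions $\lambda_\pm:(-t_1,t_1)\to\RR$ with $\lambda_\pm(0)=0$ such that $F(\lambda_+(t),\lambda_-(t),t)=0$ and the solution is locally unique. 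Setting $Y(t):=\gamma_+(\lambda_+(t))-t\,n_+(\lambda_+(t))$ yields a $C^2$ curve with $Y(0)=0$, and the discussion above justifies that for small $t>0$ the three points $A_\pm(t),Y(t)$ are exactly those described in the statement, with the required uniqueness.

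Finally I would compute the derivatives at $t=0$ by differentiating the implicit relation. Using $\partial_t F(0,0,0)=n_+(0)-n_-(0)=(0,2\cos\theta)$ and the inverse of the Jacobian above, the system
\[
\begin{pmatrix}\cos\theta & -\cos\theta\\ \sin\theta & \sin\theta\end{pmatrix}\begin{pmatrix}\lambda_+'(0)\\ \lambda_-'(0)\end{pmatrix}=\begin{pmatrix}0\\ 2\cos\theta\end{pmatrix}
\]
gives immediately $\lambda_+'(0)=\lambda_-'(0)=\cotan\theta$. Substituting into $Y'(0)=\gamma_+'(0)\,\lambda_+'(0)-n_+(0)$ yields
\[
Y'(0)=(\cos\theta,\sin\theta)\cotan\theta-(-\sin\theta,\cos\theta)=\Bigl(\tfrac{1}{\sin\theta},0\Bigr),
\]
as claimed. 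The only subtle point, and the one I would be most careful about, is the global uniqueness of the equidistant point and of the feet $A_\pm(t)$ inside $U$: the implicit function theorem only gives a local solution near $(0,0,0)$, so one has to combine it with the standard tubular neighbourhood argument for each $\Gamma_\pm$ (possibly shrinking $t_1$) to rule out spurious equidistant points away from the corner.
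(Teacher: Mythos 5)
Your proposal is correct and follows essentially the same route as the paper: the same implicit equation $\Phi_+(s_+,t)-\Phi_-(s_-,t)=0$, the same Jacobian check with determinant $\sin(2\theta)\neq 0$, the tubular-neighbourhood argument for uniqueness of the equidistant point and of the feet, and the same implicit differentiation giving $\lambda_\pm'(0)=\cotan\theta$ and $Y'(0)=(1/\sin\theta,0)$. Only a cosmetic sign slip: with your $F$ one has $\partial_t F(0,0,0)=n_-(0)-n_+(0)$, but the linear system you actually solve (right-hand side $n_+(0)-n_-(0)=(0,2\cos\theta)$) is the correct one, so the conclusion is unaffected.
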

\noindent The resulting curve 
\begin{align}
\label{anglebisector}
\Sigma:=\big\{ \big(t,Y(t)\big): t\in(-t_1,t_1)\big\},
\end{align}
can be viewed as the curvilinear angle bisector due to its geometric property: each point of $\Sigma$
is at equal distances from the curved sides $\Gamma_\pm$.

\begin{figure}[b]

\includegraphics[width=50mm]{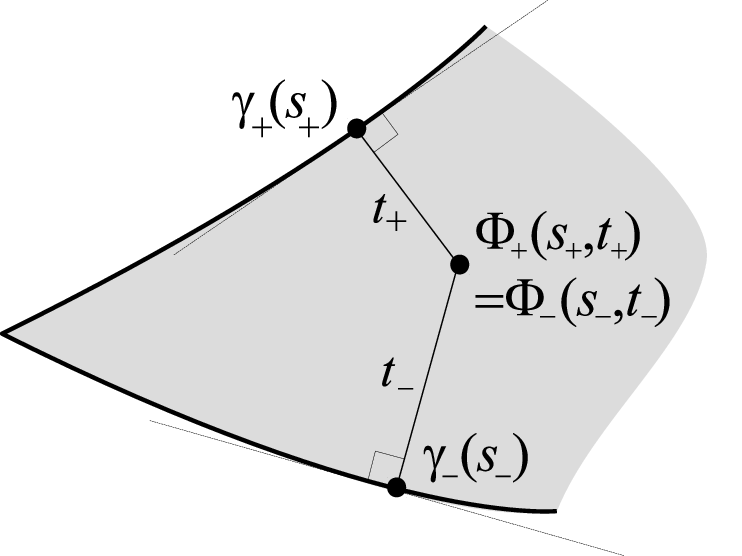}

\caption{The maps $\Phi_\pm$. \label{fig-phis}}

\end{figure}

%

\begin{proof}
For $t_0>0$ and $s_0\in(0,s_*)$ consider the maps (see Figure~\ref{fig-phis})
\[
\Phi_\pm:(-s_0,s_0)\times (-t_0,t_0) \to \RR^2,
	\quad
	\Phi_\pm (s,t)=\gamma_\pm(s) - t n_\pm(s).
\]
It is a well known result from the differential geometry that $\Phi_\pm$ are injective for $t_0>0$ small enough, and that
$\dist\big(\Phi_\pm(s,t),\Gamma_\pm\big) = |t|$ and that they are $C^2$-diffeomorphisms from $(-s_0,s_0)\times (-t_0,t_0)$ to its images under $\Phi_\pm$.
One has
\[
\dfrac{\partial \Phi_\pm}{\partial s}(s,t)=\gamma'_\pm(s)- t n'_\pm(s)=\big(1 - t k_\pm(s)\big)\,\gamma'_\pm(s).
\]
Define $G:(-s_0,s_0) \times (-s_0,s_0) \times (-t_0,t_0)\to \RR^2$ by
\[
G(s_+,s_-,t) := \Phi_+(s_+,t) - \Phi_-(s_-,t),
\]
then $G(0,0,0) = \gamma_+(0) - \gamma_-(0) = (0,0)$ and $\partial G/\partial s_\pm(s_+,s_-,t) =	\pm\big(1 - tk_\pm(s_\pm)\big) \gamma_\pm'(s_\pm)$,
and the two vectors $\partial G/\partial s_\pm(0,0,0)=\pm\gamma'_\pm(0)$
are linearly independent. Hence, it follows by the implicit function theorem
that there exist $t_1>0$ and $s_1>0$ and $C^2$ functions $\lambda_\pm:(-t_1,t_1)\to (-s_1,s_1)$ with $\lambda_\pm(0)=0$
such that for $(s_+,s_-,t)\in (-s_1,s_1)\times (-s_1,s_1)\times (-t_1,t_1)$ one has the equivalence:
$G(s_+,s_-,t) = 0$ if and only if $s_\pm = \lambda_\pm(t)$.
If one defines a $C^2$ function $Y: (-t_1,t_1)\to \RR^2$ by $Y(t):=\Phi_\pm \big(\lambda_\pm(t), t\big)$,
then for any $t\in(0,t_1)$ the point $Y(t)$ is the unique point of $U$ satisfying $\dist \big( Y(t),\Gamma_\pm)=t$,
and the points $A_\pm(t)$ of $\Gamma_\pm$ which are the closest to $Y(t)$ are $A_\pm(t)=\gamma_\pm \big( \lambda_\pm(t)\big)$.
One differentiates $G\big(\lambda_+(t),\lambda_-(t),t\big)=0$ in $t$ to arrive at
\begin{multline*}
	\lambda_+'(t) \Big[1-t k_+\big(\lambda_+(t)\big)\Big] \gamma'_+\big(\lambda_+(t)\big)
	-
	\lambda_-'(t) \Big[1-t k_-\big(\lambda_-(t)\big)\Big] \gamma'_-\big(\lambda_-(t)\big)\\
	- \Big[n_+\big(\lambda_+(t)\big) -n_-\big(\lambda_-(t)\big)\Big]=0.
\end{multline*}
For $t=0$ one has $\lambda_+'(0) \gamma_+'(0) - \lambda_-'(0) \gamma_-'(0) = n_+(0) - n_-(0)$, i.e.
\begin{gather*}
\begin{pmatrix}
\cos \theta & - \cos\theta\\
\sin\theta & \sin \theta
\end{pmatrix}
\begin{pmatrix}
\lambda'_+(0)\\
\lambda'_-(0)
\end{pmatrix}=\begin{pmatrix} 0 \\ 2\cos\theta \end{pmatrix},
\end{gather*}
which gives
\begin{gather*}
\begin{pmatrix}
\lambda'_+(0)\\
\lambda'_-(0)
\end{pmatrix}= \dfrac{1}{2\sin\theta \cos\theta}
\begin{pmatrix}
\sin \theta &  \cos\theta\\
-\sin\theta &  \cos \theta
\end{pmatrix}
\begin{pmatrix} 0 \\ 2\cos\theta \end{pmatrix}
=
\begin{pmatrix}
\cotan \theta \\ \cotan \theta
\end{pmatrix}.
\end{gather*}
Then
\begin{align*}
Y'(t)&= \dfrac{d}{dt} \, \Phi_+\big(\lambda_+(t),t\big)\\
&=\lambda_+'(t) \Big[1-t k_+\big(\lambda_+(t)\big)\Big] \gamma'_+\big(\lambda_+(t)\big)-n_+\big(\lambda_+(t)\big),\\
Y'(0)&= \lambda'_+(0)\gamma'_+(0) - n_+(0)\\
&=\cotan\theta \begin{pmatrix} \cos\theta \\ \sin \theta \end{pmatrix} - \begin{pmatrix} - \sin\theta \\ \cos\theta\end{pmatrix}
=\dfrac{1}{\sin \theta} \begin{pmatrix} 1 \\ 0 \end{pmatrix}.  \qedhere
\end{align*}
\end{proof}

\begin{figure}[b]

\centering

\includegraphics[height=35mm]{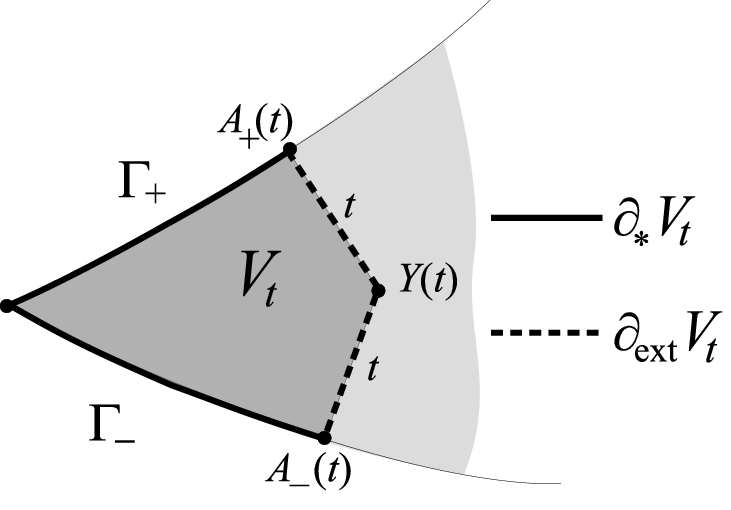}

\caption{Construction of the domain $V_t$ \label{vois12b}}

\end{figure}

%
%
%
%
%
%

Using the objects defined in Lemma~\ref{prox} we introduce the following sets $V_t$:
\begin{defi}
For $t\in(0,t_1)$
denote by $V_t$ the interior of the curvilinear quadrangle bounded by the pieces of $\Gamma_\pm$
enclosed between the points $(0,0)$ and $A_\pm(t)$ and by the straight line segments connecting $Y(t)$ to $A_\pm(t)$. We refer to Figure~\ref{vois12b}
for an illustration. One will distinguish between two parts of its boundary, i.e.
one denotes 
\[
\partial_* V_t:=\partial V_t \mathop{\cap} \,(\Gamma_+\mathop{\cup}\Gamma_-), 
\text{ and } \partial_\ext V_t:=\partial V_t\setminus \partial_* V_t.
\]
\end{defi}
Then, we would like  to ``straighten'' $V_t$ in a controlable way in order to obtain a truncated curvilinear sector $\cS^r_{\theta}$ (see Definition~\ref{def25}).

\begin{lemma}\label{mapfi}
There is a bi-Lipschitz map $\Phi$ between two neighborhoods of the origin 
with $\Phi'(x)=I_2+\cO\big(|x|\big)$ for $x\to 0$
and a $C^2$ smooth function $r$ defined near $0$ with $r(0)=0$ and $r'(0)=\cotan\theta$
such that $\Phi(\cS_{\theta}^{r(t)})=V_t$, $\Phi(\partial_*\cS_{\theta}^{r(t)})=\partial_* V_t$
and $\Phi(\partial_\ext\cS_{\theta}^{r(t)})=\partial_\ext V_t$
for all sufficiently small $t>0$.
\end{lemma}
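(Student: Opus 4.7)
The plan is to build $\Phi$ piecewise, on the upper half $\cS_\theta\cap\{x_2\ge 0\}$ and on the lower half $\cS_\theta\cap\{x_2\le 0\}$ separately, using tubular (Fermi) coordinates adapted to each straight ray on the source and to the corresponding smooth curve $\Gamma_\pm$ on the target, and then glueing the two pieces along the angular bisector $\{x_2=0\}$. I take the simplest admissible choice $r(t):=t\cotan\theta$, which satisfies $r(0)=0$ and $r'(0)=\cotan\theta$.

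On the upper half I use the linear Fermi chart $F_+(s,\tau):=(s\cos\theta+\tau\sin\theta,\,s\sin\theta-\tau\cos\theta)$ coming from the upper ray of $\cS_\theta$, and on the image side the Fermi parametrization $\Phi_+(s_+,\tau_+):=\gamma_+(s_+)-\tau_+ n_+(s_+)$ introduced in the proof of Lemma~\ref{prox}. The definition is
\[
\Phi\circ F_+(s,\tau) := \Phi_+\!\big(\lambda_+(s\tan\theta),\,\tau\big),
\]
and symmetrically on the lower half with $\gamma_-$, $n_-$, $\lambda_-$ and the Fermi chart from the lower ray. On the axis $\{x_2=0,\,x_1>0\}$, both Fermi systems yield $(s,\tau)=(x_1\cos\theta,\,x_1\sin\theta)$, so both one-sided formulas evaluate to $\gamma_\pm\!\big(\lambda_\pm(x_1\sin\theta)\big)-x_1\sin\theta\,n_\pm\!\big(\lambda_\pm(x_1\sin\theta)\big)$; by the defining property of the curvilinear bisector $Y$ in Lemma~\ref{prox}, this common value equals $Y(x_1\sin\theta)$, so $\Phi$ is continuous across the axis.

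The image property is a direct check in Fermi coordinates. With $r(t)=t\cotan\theta$, the upper half of $\cS_\theta^{r(t)}$ in $F_+$-coordinates is the triangle $\{0\le s\le t\cotan\theta,\,0\le\tau\le s\tan\theta\}$; the substitution $u:=s\tan\theta\in[0,t]$ identifies its image under $\Phi$ with the set $\{(s_+,\tau_+)=(\lambda_+(u),\tau):0\le\tau\le u\le t\}$ in $\Phi_+$-coordinates, which is exactly the region bounded by $\Gamma_+$, the segment $A_+(t)Y(t)$, and the arc of $\Sigma$ from $O$ to $Y(t)$, i.e.\ the upper half of $V_t$. Tracking the three sides $\tau=0$, $s=t\cotan\theta$, and $\tau=s\tan\theta$ individually gives the correspondences $\partial_*\cS_\theta^{r(t)}\to\partial_* V_t$ and $\partial_\ext\cS_\theta^{r(t)}\to\partial_\ext V_t$. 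For the derivative at the origin, in Fermi coordinates the middle map has Jacobian $\mathrm{diag}\big(\lambda_+'(s\tan\theta)\tan\theta,\,1\big)$, which equals $I_2$ at the origin because $\lambda_+'(0)=\cotan\theta$; transporting back to Cartesian coordinates via $D\Phi_+(0)=DF_+$ together with the fact that $DF_+$ is an orthogonal reflection (hence equal to its own inverse) yields $\Phi'(0)=I_2$, and since each constituent of the composition is $C^2$ on its open half one obtains $\Phi'(x)=I_2+\cO(|x|)$ there.

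The main obstacle, which is a matter of care rather than a new idea, is verifying that the piecewise construction yields a genuinely bi-Lipschitz map on a full sector-neighborhood of the origin. Values match on the axis and so do the tangential derivatives (both halves parametrize the same curve $x_1\mapsto Y(x_1\sin\theta)$ there), but the one-sided normal derivatives need not agree at second order, so $\Phi$ is $C^2$ on each open half and merely $C^0$ across the axis. This suffices: each half is a $C^2$ diffeomorphism onto its half of the decomposition of $U$ by $\Sigma$, the one-sided Jacobians are uniformly close to $I_2$ near the origin, and $\Phi^{-1}$ is built by the same symmetric recipe; putting these together yields bi-Lipschitz control with $\Phi'(x)=I_2+\cO(|x|)$ holding almost everywhere in a sector-neighborhood of the origin.
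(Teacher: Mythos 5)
Your construction is correct, and it shares the paper's overall strategy for Lemma~\ref{mapfi} -- straighten each half of the curvilinear sector by a $C^2$ chart and glue the two charts along the bisector $\Sigma$, accepting that the glued map is only Lipschitz across the interface and reading $\Phi'=I_2+\cO(|x|)$ one-sidedly -- but your charts are chosen differently. The paper bases its coordinates on the bisector: it arc-length reparametrizes $\Sigma$, uses $\Psi_\pm(\sigma,\tau)=\varepsilon(\sigma)+\tau m_\pm(\sigma)$, and needs the extra straightening map $H$ sending the curvilinear triangle $\{0<\tau<\rho(\sigma)\}$ to the linear one $\{0<\tau<\sigma\sin\theta\}$, which forces the curve-dependent truncation function $r(t)=\sigma(t)\cos\theta$. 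You instead base the coordinates on the boundary: you reuse the tubular maps $\Phi_\pm(s,\tau)=\gamma_\pm(s)-\tau n_\pm(s)$ and the functions $\lambda_\pm$ already produced in Lemma~\ref{prox}, and the identity $Y(t)=\Phi_\pm\big(\lambda_\pm(t),t\big)$ makes the two half-maps match along the axis automatically, with no analogue of $H$ and with the simpler linear choice $r(t)=t\cotan\theta$, which is perfectly admissible since only $r(0)=0$, $r'(0)=\cotan\theta$ and $C^2$ smoothness are used later (e.g.\ in Lemma~\ref{lem-curv}). Your image computation is consistent with the paper's description $V_t^\pm=\Psi_\pm(\overline{\Lambda_t})$: substituting $u=\rho(\sigma)$ and $\tau'=u-\tau$ turns the paper's set into your $\{\Phi_+(\lambda_+(u),\tau'):0\le\tau'\le u\le t\}$; your boundary tracking, the Jacobian computation $\Phi'(0)=I_2$ via $D\Phi_+(0)=DF_+$ with $DF_+^2=I_2$, and the deduction of $\Phi'=I_2+\cO(|x|)$ from $C^2$ regularity on each closed half (where you are in fact slightly more careful than the paper, which only invokes $C^1$) are all sound. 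Two small points to tighten: to get literally a bi-Lipschitz map between two \emph{neighborhoods} of the origin, note that your formulas extend beyond the closed sector (to $\tau<0$ and to $s<0$) and that the gluing identity $Y(t)=\Phi_\pm(\lambda_\pm(t),t)$ holds for negative $t$ as well, so the two halves also agree across the negative axis -- exactly as in the paper's construction; and the injectivity of the glued map deserves one sentence (each half-map sends its half-sector into its side of $U\setminus\Sigma$ and the two agree and are injective on $\Sigma$), though this is at the same level of detail as the paper's own argument.
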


\begin{figure}[t]

\centering

\includegraphics[width=40mm]{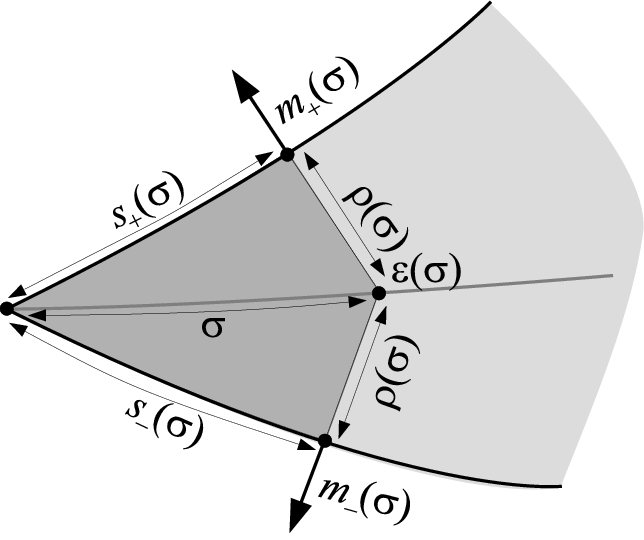}

\caption{Parametrization with the arc-length. The unit vectors $m_\pm$ are orthogonal to the boundary. The arrows indicate the length
of the corresponding arcs. For small $\sigma$
one has $\rho(\sigma)=\sigma \sin \theta +\cO(\sigma^2)$ and $s_\pm(\sigma)=\sigma\cos\theta+\cO(\sigma^2)$.\label{vois3}}

\end{figure}

\begin{proof}
Without loss of generality we may assume that $t_1>0$ is sufficiently small such that $Y'(t)\ne 0$ for $t\in[-t_1,t_1]$.
Let us introduce an arc-length parametrization of the curvilinear angle bisector $\Sigma$ introduced in \eqref{anglebisector}: consider
the function $\sigma$ with $\sigma(0)=0$ and $\sigma'=|Y'|$, i.e. $\sigma(t)$ is the length of $Y\big([0,t]\big)$.
One has $\sigma'(0)=\big|Y'(0)\big|=1/\sin\theta$ and 
 $\sigma'=|Y'|>0$ on $[-t_1,t_1]$. 
Hence, $\sigma:[-t_1,t_1]\to[-\sigma_-,\sigma_+]$ is a $C^2$ diffeomorphism for some $\sigma_\pm>0$.
Denote by $\rho :[-\sigma_-,\sigma_+]\to [-t_1,t_1]$ its inverse, which is then also $C^2$ and satisfies $\rho(0)=0$ and $\rho'(0)=1/\sigma'(0)=\sin\theta$.
Finally, let us pick a small $\delta>0$ and define $\varepsilon:=Y\circ\rho: (-\delta,\delta) \to \RR^2$,
then one has $|\varepsilon'|=1$, $\varepsilon'(0)=(1,0)^T$, and $Y\big([0,t]\big)=\varepsilon\big( \big[0,\sigma(t)\big]\big)$ for small $t>0$,
i.e. $\varepsilon$ is an arc-length parametrization of $\Sigma$ near the origin.
By construction, the point  $\varepsilon(\sigma)$  is then the unique point of $U$ with $\dist\big(\varepsilon(\sigma), \Gamma_\pm\big)=\rho(\sigma)$,
and for small $\sigma$ one has $\rho(\sigma)= \sigma \sin\theta+\cO(\sigma^2)$. Furthermore, if one sets
$s_\pm(\sigma)\coloneqq \lambda_\pm\big(\rho(\sigma)\big)$, then $s_\pm(\cdot)$ are $C^2$ functions
with $s'_\pm(0)=\lambda'_\pm(0)\rho'(0)=\cos\theta$, and the points $B_\pm(\sigma)\coloneqq\gamma_\pm \big(s_\pm(\sigma)\big)$
of $\Gamma_\pm$ are the closest to $\varepsilon(\sigma)$. We also reparametrize the normal vectors to $\Gamma_\pm$ by setting
$m_\pm(\sigma):= n_\pm\big(s_\pm(\sigma)\big)$, then one has $B_\pm(\sigma)=\varepsilon(\sigma)+ \rho(\sigma) m_\pm(\sigma)$.
The above constructions are illustrated in Figure~\ref{vois3}.

For the $C^2$ maps $\Psi_\pm: (\sigma,\tau) \mapsto \varepsilon(\sigma)+\tau m_\pm(\sigma)$ 
one has $\Psi_\pm(0,0)=(0,0)$ and
\[
\Psi'_\pm (0,0)= \begin{pmatrix}
\dfrac{\partial\Psi_\pm}{\partial\sigma}(0,0) & \dfrac{\partial\Psi_\pm}{\partial\tau}(0,0)\end{pmatrix}
=\begin{pmatrix} \varepsilon'(0) &  m_\pm (0)\end{pmatrix} =\begin{pmatrix} 1 & -\sin\theta \\ 0 &\pm \cos\theta\end{pmatrix},
\]
i.e. the Jacobian matrix $\Psi'_\pm (0,0)$ is invertible. Therefore, the maps $\Psi_\pm$ are diffeomorphisms between suitable neighborhoods
of the origin.
Furthermore, if for $t>0$ one introduces the curvilinear triangles $\Lambda_t:=\big\{ (\sigma,\tau): \quad 0<\sigma<\sigma(t), \quad 0<\tau <\rho(\sigma)\big\}$,
then the  image $\Psi_\pm( \overline{\Lambda_t} )$ is exactly the closure of the upper/lower $V^\pm_t$ part of $V_t$, i.e. of the part of $V_t$ lying above/below $\Sigma$,
and $\Psi_+(\cdot,0)=\Psi_-(\cdot,0)$. We now use this observation to construct a map $\Phi$ with the sought properties.
Namely, in addition to the above curvilinear triangles $\Lambda_t$ let us consider its ``straightened'' version
$L_t=\big\{ (\sigma,\tau): \quad 0<\sigma<\sigma(t), \quad 0<\tau < \sigma \sin\theta\big\}$.
obtained by replacing $\rho$ through its linear approximation at $0$.
The map $H: (\sigma,\tau) \mapsto\big( \sigma , \rho(\sigma) \tau/(\sigma\sin\theta)\big)$
satisfies then $H'(0,0)=I_2$, hence, it is a diffeomorphism between suitable neighborhoods of the origin,
and for sufficiently small $t>0$ it is bijective from $\overline{L_t}$ to $\overline{\Lambda_t}$.

\begin{figure}[b]
\includegraphics[width=110mm]{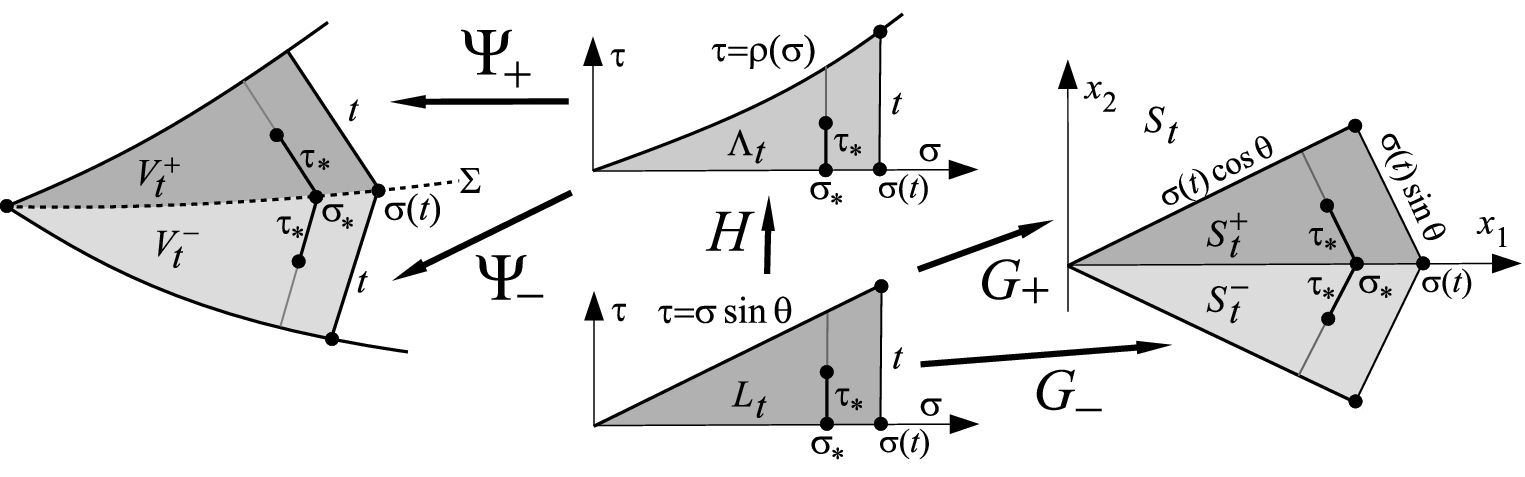}
\caption{The maps $\Psi_\pm$, $H$ and $G_\pm$ in the proof of Lemma~\ref{mapfi}.\label{diffeos}}
\end{figure}

Now let us consider the truncated sector $S_t:=\cS^{\sigma(t)\cos\theta}_{\theta}$
and their upper/lower parts $S_t^\pm:=S_t\mathop{\cap} \{(x_1,x_2):\, \pm x_2>0\}$.
One easily sees that the maps
\[
G_\pm: \RR^2\to \RR^2, \quad (\sigma,\tau)\mapsto \sigma \begin{pmatrix} 1 \\ 0 \end{pmatrix} + \tau \begin{pmatrix} - \sin\theta \\ \pm \cos\theta \end{pmatrix}
\]
are diffeomorphisms, and $\Bar S_t^\pm=G_\pm( \Bar L_t)$ for small $t>0$, and the inverses are given by
\[
G_\pm^{-1}(x_1,x_2)=\begin{pmatrix}
1 & \pm \tan\theta\\
0 & \pm\dfrac{1}{\cos\theta}
\end{pmatrix} \begin{pmatrix} x_1 \\ x_2 \end{pmatrix}.
\]
We refer to Figure~\ref{diffeos} for a graphical representation of the above maps.

Now let us define $\Phi$ by $\Phi(x_1,x_2)= \Psi_\pm \circ H \circ G_\pm^{-1} (x_1,x_2)$ for $\pm x_2>0$,
which then extends by continuity to $x_2=0$ due to
\[
\Psi_\pm \circ H \circ G_\pm^{-1} (x_1,0)= \Psi_\pm \circ H (x_1,0)=\Psi_\pm (x_1,0)= \varepsilon(x_1).
\]
By construction, the map $\Phi$ is $C^2$ on $\{\pm x_2\ge 0\}$ and continuous along $x_2=0$, hence it is Lipschitz.
Furthermore, by construction it defines bijections $S_t^\pm\to V^\pm_t$, $S_t\to V_t$ as well
as $\partial_* S_t\to \partial_* V_t$ and $\partial_\ext S_t\to \partial_\ext V_t$.
To estimate the Jacobian matrix $\Phi'$ we compute
\begin{align*}
(\Psi_\pm \circ H \circ G_\pm^{-1})'(0,0)&=\Psi'_\pm(0,0) H'(0,0) (G_\pm^{-1})'(0,0)\\
&=
\begin{pmatrix} 1 & -\sin\theta \\ 0 &\pm \cos\theta\end{pmatrix} \begin{pmatrix} 1 & 0 \\ 0 & 1\end{pmatrix}
\begin{pmatrix}
1 & \pm \tan\theta\\
0 & \pm\dfrac{1}{\cos\theta}
\end{pmatrix}=\begin{pmatrix} 1 & 0 \\ 0 & 1\end{pmatrix}.
\end{align*}
As $\Phi_\pm$ are $C^1$, it follows that $\Phi'_\pm=I_2+\cO(t)$ in $V_t$, which shows the requested property for $\Phi'$.
As $\Phi^{-1}_\pm$ are $C^1$ near the origin and $\Phi^{-1}$ is continuous by construction, it follows that $\Phi^{-1}$
is Lipschitz, therefore, the map $\Phi$ is bi-Lipschitz. Hence, we obtain the claim with $r(t)=\sigma(t)\cos\theta$, and $r'(0)=\sigma'(0)\cos\theta=\cotan\theta$.
\end{proof}

For later references we mention explicitly the following corollary, which is quite obvious from the geometric point of view:
\begin{coro}\label{lem-length}
There exist $0<a<b$ such that for all sufficiently small $t>0$ there holds
  $|x|<bt$ for $x\in V_t$, and $|x|>at$ for $x\in V_s\setminus \overline{V_t}$ and $s>t$.
\end{coro}

\begin{proof}
Let us use a map $\Phi$ and a function $r$ as in Lemma~\ref{mapfi}.
Remark first that 
\begin{equation}
  \label{yyy1}
\begin{aligned}
&\text{$|y|< \dfrac{r}{\cos\theta}$ for $y\in\cS^r_\theta$ and $r>0$},\\
&\text{$|y|> r$ for $y\in\cS_\theta^R\setminus\overline{\cS^r_\theta}$ and $R>r>0$.}
\end{aligned}
\end{equation}
As $v\in V_t$ iff $v=\Phi(y)$ with $y\in \cS^{r(t)}_\theta$ and $r(t)=\cO(t)$,
by applying the Taylor expansion of $\Phi$ near the origin one obtains $\frac{1}{2}|y|\le|v|\le 2|y|$.
Using the estimates \eqref{yyy1} one arrives at the result.
\end{proof}

We complete this subsection by a construction of cut-off functions with some special properties:

\begin{lemma}\label{cutoff}
Let $0<a<b$, then there exist $\delta_0>0$, $\eta>0$, $K>0$ and $C^2$ functions
$\varphi_\delta:\overline{V_{\eta}}\to \RR$ with $\delta\in(0,\delta_0)$ such that:
\begin{itemize}
\item[(a)] $0\le \varphi_\delta\le 1$, and for all $\beta\in\NN^2$ with $1\le |\beta|\le 2$
there holds $\|\partial^\beta \varphi_\delta\|_\infty\le K \delta^{-|\beta|}$,
\item[(b)] $\varphi_\delta=1$ in $V_{a\delta}$,
\item[(c)] $\varphi_\delta=0$ in $V_\eta\setminus \overline{V_{b\delta}}$,
\item[(d)] the normal derivative of $\varphi_\delta$ at $\Gamma_\pm$ is zero.
\end{itemize}
\end{lemma}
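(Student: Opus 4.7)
The conditions (a), (b), (c) by themselves would be trivially achieved by a radial cut-off $\chi(|x|/\delta)$, and the whole challenge lies in the zero normal-derivative condition (d): no radial function can be normal-constant on curves that are not radial. My approach is to use the tubular coordinates of $\Gamma_\pm$ separately in the definition, writing
\[
\varphi_\delta(x) := \chi\!\bigl(s_+(x)/\delta\bigr)\, \chi\!\bigl(s_-(x)/\delta\bigr),
\]
where $(s_\pm(x), t_\pm(x)) := \Phi_\pm^{-1}(x)$ are the tubular coordinates of $\Gamma_\pm$ and $\chi:\RR\to[0,1]$ is a smooth even cut-off with $\chi\equiv 1$ on $[-c_1,c_1]$ and $\chi\equiv 0$ outside $[-c_2,c_2]$, for constants $0<c_1<c_2$ to be tuned. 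The key virtue is that $s_\pm$ is purely tangential along $\Gamma_\pm$, so $\partial_n s_\pm = 0$ on $\Gamma_\pm$ exactly; thus each factor individually kills the normal derivative on one of the two curves.

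First I would shrink $\eta$ so that $\overline{V_\eta}$ lies in the intersection of both tubular neighborhoods $T_\pm^{h_0}$ on which $\Phi_\pm(s,t)=\gamma_\pm(s)-tn_\pm(s)$ are $C^2$ diffeomorphisms; this is possible since every point of $V_\eta$ is within distance $\cO(\eta)$ of each of $\Gamma_\pm$ (with constants depending on $\theta$). Then $s_\pm$ are $C^2$ functions on $\overline{V_\eta}$, and (a) follows at once from the chain rule, the $\delta^{-|\beta|}$ scaling coming from $\chi^{(k)}(s_\pm/\delta)/\delta^{k}$.

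The verification of (d) is the heart of the argument. By the Leibniz rule and $\partial_n s_+|_{\Gamma_+}=0$,
\[
\partial_n\varphi_\delta\big|_{\Gamma_+}
 = \chi\!\bigl(s_+/\delta\bigr)\,\dfrac{\chi'(s_-/\delta)}{\delta}\,\partial_n s_-,
\]
and this vanishes provided $\chi'(s_-/\delta)=0$ wherever $\chi(s_+/\delta)\ne 0$ on $\Gamma_+$. Using $\gamma_\pm'(0)=(\cos\theta,\pm\sin\theta)$ and Taylor-expanding $\Phi_-^{-1}\circ\gamma_+$ at the origin one obtains
\[
s_-(\gamma_+(s)) = s\cos(2\theta) + \cO(s^2) \qquad \text{as } s \to 0,
\]
so $|s_+/\delta|\le c_2$ forces $|s_-/\delta|\le c_2|\cos(2\theta)|+\cO(\delta)$, which is the plateau $\{\chi\equiv 1\}$ as soon as $c_2|\cos(2\theta)|<c_1$. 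Since $|\cos(2\theta)|<1$ by the nondegeneracy of the angle, this can be arranged; the argument on $\Gamma_-$ is symmetric.

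Conditions (b) and (c) are then verified using Lemma~\ref{prox}: any point of $V_t$ satisfies $|s_\pm|\le\lambda_\pm(t)=t\cotan\theta+\cO(t^2)$ (the maximum being attained at $Y(t)$ or $A_\pm(t)$), while a point outside $V_{b\delta}$ must have $|s_+|$ or $|s_-|$ exceeding $\lambda_\pm(b\delta)$. These translate into the three joint constraints
\[
a\cotan\theta < c_1, \qquad c_2 < b\cotan\theta, \qquad c_2\,|\cos(2\theta)| < c_1,
\]
which are simultaneously solvable because $a<b$ and $|\cos(2\theta)|<1$: for instance, take $c_1=(a+\varepsilon)\cotan\theta$ and $c_2=(b-\varepsilon)\cotan\theta$ with $\varepsilon\in(0,b-a)$ chosen to satisfy $(b-\varepsilon)|\cos(2\theta)|<a+\varepsilon$. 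The main technical point will be to absorb the $\cO(\delta^2)$ remainders from the Taylor expansions by taking $\delta_0$ small enough so that all strict inequalities above survive, which I expect to be the only delicate bookkeeping in the proof.
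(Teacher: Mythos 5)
Your construction is essentially the paper's own proof: the same product $\chi(s_+/\delta)\,\chi(s_-/\delta)$ in the two tangential tubular coordinates, the same use of $\partial_n s_\pm=0$ along $\Gamma_\pm$, the same Taylor expansion $s_\mp(\gamma_\pm(s))=\cos(2\theta)\,s+\cO(s^2)$, and the same tuning of the cut-off thresholds inside $(a\cotan\theta,b\cotan\theta)$ subject to $|\cos 2\theta|$ being strictly less than $1$. The only cosmetic difference is that the paper uses a one-sided cut-off ($\psi\equiv 1$ on $(-\infty,c-\varepsilon)$), which spares the lower bound $s_\pm\ge-\lambda_\pm(t)+\cO(t^2)$ on $V_t$ that your even cut-off implicitly requires, but your argument is correct as stated.
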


\begin{proof}

For small $t_0>0$ and $s_0>0$ consider the maps
\[
\Phi_\pm:(-s_0,s_0)\times (-t_0,t_0) \to \RR^2,
	\quad
	\Phi_\pm (s,t)=\gamma_\pm(s) - t n_\pm(s).
\]
It is a well known result of differential geometry that $\Phi_\pm$ are injective for $t_0>0$ small enough, with
$\dist\big(\Phi_\pm(\cdot,t),\Gamma_\pm\big) = |t|$ for $|t|<t_0$, and that they are $C^2$-diffeomorphisms from $(-s_0,s_0)\times (-t_0,t_0)$ to its images under $\Phi_\pm$.
Remark that one has
\[
\dfrac{\partial \Phi_\pm}{\partial s}(s,t)=\gamma'_\pm(s)- t n'_\pm(s)=\big(1 - t k_\pm(s)\big)\,\gamma'_\pm(s),
\quad
\dfrac{\partial \Phi_\pm}{\partial t}(s,t)=- n_\pm(s),
\]
i.e. if one writes $(\tau^\pm_1,\tau^\pm_2):=\gamma'_\pm$ and $(n^\pm_1,n^\pm_2):=n_\pm$, then
\[
\Phi'_\pm(s,t)=\begin{pmatrix}
\big(1 - t k_\pm(s)\big) \tau^\pm_1(s) & -n^\pm_1(s) \\
\big(1 - t k_\pm(s)\big) \tau^\pm_2(s) & -n^\pm_2(s)
\end{pmatrix}.
\]
By choosing $\eta>0$ sufficiently small one can then invert the maps $(s,t)\mapsto \Phi_\pm(s,t)$ near the origin
in order to obtain $C^2$ functions $s_\pm$ and $t_\pm$ on $V_{\eta}$.
The inverse function theorem gives
\begin{gather}
   \label{grads}
\nabla s_\pm (x)=\pm\dfrac{1}{1-t_\pm(x)K_\pm(x)}\Big( N^\pm_2(x), -N^\pm_1(x)\Big), \\
K_\pm:= k_\pm\circ s_\pm, \quad
N^\pm_j:=n^\pm_j\circ s_\pm.\nonumber
\end{gather}
In particular, $s_\pm(0,0)=0$ and $\nabla s_\pm (0,0)=(\cos\theta,\pm\sin\theta)$, therefore,
\[
s_\pm(x_1,x_2)=(\cos\theta,\pm\sin\theta)\cdot(x_1,x_2) + \cO(x_1^2+x_2^2) \text{ for } (x_1,x_2)\to (0,0).
\]
We further remark that for small $s$ one has obviously $s_\pm\big(\gamma_\pm(s)\big)=s$,
while
\begin{equation}
     \label{tays}
s_\pm\big(\gamma_\mp(s)\big)=(\cos\theta,\pm\sin\theta)\cdot \gamma'_\mp(0) s + \cO(s^2)\equiv
\cos (2\theta) \, s + \cO(s^2) \text{ for } s\to 0.
\end{equation}
Let us pick some $c\in(a\cotan\theta,b\cotan\theta)$ and then a sufficiently small $\varepsilon>0$ satisfying
\begin{equation}
   \label{eq-abc}
	[c-\varepsilon,c+\varepsilon]\subset (a\cotan\theta,b\cotan\theta), \qquad \cos (2\theta) (c+\varepsilon)< c-\varepsilon.
\end{equation}
We remark that the second condition follows from the first one for $\theta\ge\frac{\pi}{4}$.
Let $\psi:\RR\to[0,1]$ be a $C^\infty$ function with $\psi(s) = 1$ for $s<c-\varepsilon$ and $\psi(s) = 0$ for $s>c+\varepsilon$.
For small $\delta>0$ we define then $\varphi_\delta:\overline{V_\eta}\to \RR$ by $\varphi_\delta(x)= \psi\big(s_+(x)/\delta\big) \psi\big(s_-(x)/\delta\big)$.
Note that the property (a) is automatically satisfied due to the the $C^2$ smoothness of the functions $s_\pm$.

In order to see the properties (b) and (c) we first remark that due to Lemma~\ref{prox}
the definition of the domain $V_t$ for small $t$ can be reformulated as
$V_t:=\big\{ x\in V_\eta: \, s_\pm(x)<\lambda_\pm(t)\big\}$,
and for small $\delta$ and a fixed $A>0$ one has $\lambda_\pm(A\delta)=A\delta \cotan\theta +\cO(\delta^2)$.
In particular, for $x\in V_{a\delta}$ one has $s_\pm(x)\le a\delta\cotan\theta +\cO(\delta^2)< (c-\varepsilon) \delta$
as $\delta$ is small, which shows that $\varphi_\delta(x)=1$ and proves the claim (b).
Furthermore, for $x\notin V_{b\beta}$ one of the following two inequalities holds:
$s_\pm(x)>\lambda_\pm(b\delta)$. As $\lambda_\pm(b\delta)=b\delta\cotan\theta+\cO(\delta^2)>(c+\varepsilon)\delta$,
it follows that at least one of the terms $s_\pm(x)/\delta$ is greater than $c+\varepsilon$.
As $\psi$ vanishes in $(c+\varepsilon,+\infty)$, it follows that $\varphi_\delta(x)=0$. This proves the claim (c).

Let us finally show the property (d). For a better readability we give the computation of the normal derivative on $\Gamma_+$ only,
the case of $\Gamma_-$ is handled in a completely similar way. For $x= \gamma_+(s)\in\Gamma_+$ with $s>0$ one has
\begin{multline*}
\dfrac{\partial \varphi_\delta}{\partial n_+} (x)=
n_+(s)\cdot (\nabla \varphi_\delta)\big(\gamma_+(s)\big)\\
=\dfrac{1}{\delta}\, n_+(s)\cdot \Big[
(\nabla s_+)\big(\gamma_+(s)\big) \psi'\Big( \dfrac{s_+\big(\gamma_+(s)\big)}{\delta}\Big)\psi\Big( \dfrac{s_-\big(\gamma_+(s)\big)}{\delta}\Big)\\
+(\nabla s_-)\big(\gamma_+(s)\big) \psi'\Big( \dfrac{s_-\big(\gamma_+(s)\big)}{\delta}\Big)\psi\Big( \dfrac{s_+\big(\gamma_+(s)\big)}{\delta}\Big)
\Big].
\end{multline*}
By \eqref{grads} one has $(\nabla s_+)\big(\gamma_+(s)\big)=\big(n^+_2(s),-n^+_1(s)\big)$, which gives $n_+(s)\cdot(\nabla s_+)\big(\gamma_+(s)\big)=0$,
and the preceding expression simplifies to
\[
\dfrac{\partial \varphi_\delta}{\partial n_+}\big(\gamma_+(s)\big)
=\Big[\dfrac{1}{\delta}\,n_+(s)\cdot (\nabla s_-)\big(\gamma_+(s)\big) \Big]
\psi'\Big( \dfrac{s_-\big(\gamma_+(s)\big)}{\delta}\Big)\psi\Big(\dfrac{s}{\delta}\Big).
\]
Let us show that the product of the last two terms is zero for small $\delta$, i.e.
that $\psi'\big(\xi(s)\big)\psi(s/\delta)=0$ for $\xi(s):=s_-\big(\gamma_+(s)\big)/\delta$.
First, by construction of $\psi$ the second factor vanishes for $s\ge(c+\varepsilon)\delta$.
Therefore, one needs to show that $\psi'\big(\xi(s)\big)=0$ for all $0<s\le (c+\varepsilon)\delta$ as $\delta$ is sufficiently small.
Using the Taylor expansion \eqref{tays} for small~$\delta$ we have
$\xi(s)= \cos(2\theta) s/\delta + \cO(\delta)$. If $\theta\ge\frac{\pi}{4}$, then $\cos(2\theta)\le 0$,
and $\xi(s)\le\cO(\delta)<c-\varepsilon$. If $\theta <\frac{\pi}{4}$, then $\cos(2\theta)>0$,
and due to the choice of $\varepsilon$ made in \eqref{eq-abc} one obtains $\xi(s)\le \cos (2\theta)(c+\varepsilon)+\cO(\delta)<c-\varepsilon$.
Therefore, in both cases one has $\xi(s)<c-\varepsilon$ for all $0<s<(c+\varepsilon)\delta$ as $\delta$ is sufficiently small.
As $\psi$ was chosen constant on $(-\infty,c-\varepsilon)$, we have $\psi'\big(\xi(s)\big)=0$.
\end{proof}

\def\bysame{\leavevmode ---------\thinspace}
\makeatletter\if@francais\providecommand{\og}{<<~}\providecommand{\fg}{~>>}
\else\gdef\og{``}\gdef\fg{''}\fi\makeatother
\def\cdrandname{\&}
\providecommand\cdrnumero{no.~}
\providecommand{\cdredsname}{eds.}
\providecommand{\cdredname}{ed.}
\providecommand{\cdrchapname}{chap.}
\providecommand{\cdrmastersthesisname}{Memoir}
\providecommand{\cdrphdthesisname}{PhD Thesis}


\begin{thebibliography}{10}

\bibitem{bmn}
{\og Discrete spectrum of a cross-shaped waveguide\fg}, \emph{St. Petersburg
  Math. J.} \textbf{28} (2017), p.~171-180.

\bibitem{afk}
{\scshape P.~R.~S. Antunes, P.~Freitas {\normalfont
  \cdrandname}~D.~Krej\v{c}i\v{r}\'{\i}k}, {\og Bounds and extremal domains for
  {R}obin eigenvalues with negative boundary parameter\fg}, \emph{Adv. Calc.
  Var.} \textbf{10} (2017), \cdrnumero 4.

\bibitem{bn}
{\scshape F.~L. Bakharev {\normalfont \cdrandname}~S.~A. Nazarov}, {\og
  Criteria for the absence and existence of bounded solutions at the threshold
  frequency in a junction of quantum waveguides\fg}, Preprint arXiv:1705.10481,
  2017.

\bibitem{bandle}
{\scshape C.~Bandle {\normalfont \cdrandname}~A.~Wagner}, {\og Isoperimetric
  inequalities for the principal eigenvalue of a membrane and the energy of
  problems with {R}obin boundary conditions\fg}, \emph{J. Convex Anal.}
  \textbf{22} (2015), \cdrnumero 3, p.~627-640.

\bibitem{bareket}
{\scshape M.~Bareket}, {\og On an isoperimetric inequality for the first
  eigenvalue of a boundary value problem\fg}, \emph{SIAM J. Math. Anal.}
  \textbf{8} (1977), \cdrnumero 2, p.~280-287.

\bibitem{bk}
{\scshape G.~Berkolaiko {\normalfont \cdrandname}~P.~Kuchment},
  \emph{Introduction to quantum graphs}, Mathematical Surveys and Monographs,
  vol. 186, American Mathematical Society, Providence, RI, 2013, xiv+270~pages.

\bibitem{bs}
{\scshape M.~S. Birman {\normalfont \cdrandname}~M.~Z. Solomjak},
  \emph{Spectral theory of selfadjoint operators in {H}ilbert space},
  Mathematics and its Applications (Soviet Series), D. Reidel Publishing Co.,
  Dordrecht, 1987, xv+301~pages.

\bibitem{BND}
{\scshape V.~Bonnaillie-No\"{e}l {\normalfont \cdrandname}~M.~Dauge}, {\og
  Asymptotics for the low-lying eigenstates of the {S}chr\"{o}dinger operator
  with magnetic field near corners\fg}, \emph{Ann. Henri Poincar\'{e}}
  \textbf{7} (2006), \cdrnumero 5, p.~899-931.

\bibitem{bpp}
{\scshape V.~Bruneau, K.~Pankrashkin {\normalfont \cdrandname}~N.~Popoff}, {\og
  Eigenvalue counting function for {R}obin {L}aplacians on conical domains\fg},
  \emph{J. Geom. Anal.} \textbf{28} (2018), \cdrnumero 1, p.~123-151.

\bibitem{bp}
{\scshape V.~Bruneau {\normalfont \cdrandname}~N.~Popoff}, {\og On the negative
  spectrum of the {R}obin {L}aplacian in corner domains\fg}, \emph{Anal. PDE}
  \textbf{9} (2016), \cdrnumero 5, p.~1259-1283.

\bibitem{DFNT}
{\scshape D.~Bucur, V.~Ferone, C.~Nitsch {\normalfont
  \cdrandname}~C.~Trombetti}, {\og A {S}harp estimate for the first
  {R}obin--{L}aplacian eigenvalue with negative boundary parameter\fg},
  \emph{Atti Accad. Naz. Lincei Rend. Lincei Mat. Appl.} \textbf{30} (2019),
  \cdrnumero 4, p.~665-676.

\bibitem{DFK}
{\scshape D.~Bucur, P.~Freitas {\normalfont \cdrandname}~J.~Kennedy}, {\og The
  {R}obin problem\fg}, in \emph{Shape optimization and spectral theory}, De
  Gruyter Open, Warsaw, 2017, p.~78-119.

\bibitem{cch}
{\scshape F.~Cakoni, N.~Chaulet {\normalfont \cdrandname}~H.~Haddar}, {\og On
  the asymptotics of a {R}obin eigenvalue problem\fg}, \emph{C. R. Math. Acad.
  Sci. Paris} \textbf{351} (2013), \cdrnumero 13-14, p.~517-521.

\bibitem{colgar}
{\scshape E.~Colorado {\normalfont \cdrandname}~J.~Garc\'{\i}a-Meli\'{a}n},
  {\og The behavior of the principal eigenvalue of a mixed elliptic problem
  with respect to a parameter\fg}, \emph{J. Math. Anal. Appl.} \textbf{377}
  (2011), \cdrnumero 1, p.~53-69.

\bibitem{dk}
{\scshape D.~Daners {\normalfont \cdrandname}~J.~B. Kennedy}, {\og On the
  asymptotic behaviour of the eigenvalues of a {R}obin problem\fg},
  \emph{Differential Integral Equations} \textbf{23} (2010), \cdrnumero 7-8,
  p.~659-669.

\bibitem{DS}
{\scshape M.~Dimassi {\normalfont \cdrandname}~J.~Sj\"{o}strand},
  \emph{Spectral asymptotics in the semi-classical limit}, London Mathematical
  Society Lecture Note Series, vol. 268, Cambridge University Press, Cambridge,
  1999.

\bibitem{em}
{\scshape P.~Exner {\normalfont \cdrandname}~A.~Minakov}, {\og
  Curvature-induced bound states in {R}obin waveguides and their asymptotical
  properties\fg}, \emph{J. Math. Phys.} \textbf{55} (2014), \cdrnumero 12,
  p.~122101, 19.

\bibitem{emp}
{\scshape P.~Exner, A.~Minakov {\normalfont \cdrandname}~L.~Parnovski}, {\og
  Asymptotic eigenvalue estimates for a {R}obin problem with a large
  parameter\fg}, \emph{Port. Math.} \textbf{71} (2014), \cdrnumero 2,
  p.~141-156.

\bibitem{ep}
{\scshape P.~Exner {\normalfont \cdrandname}~O.~Post}, {\og Convergence of
  spectra of graph-like thin manifolds\fg}, \emph{J. Geom. Phys.} \textbf{54}
  (2005), \cdrnumero 1, p.~77-115.

\bibitem{trombetti0}
{\scshape V.~Ferone, C.~Nitsch {\normalfont \cdrandname}~C.~Trombetti}, {\og On
  a conjectured reverse {F}aber-{K}rahn inequality for a {S}teklov-type
  {L}aplacian eigenvalue\fg}, \emph{Commun. Pure Appl. Anal.} \textbf{14}
  (2015), \cdrnumero 1, p.~63-82.

\bibitem{trombetti}
\bysame , {\og On the maximal mean curvature of a smooth surface\fg}, \emph{C.
  R. Math. Acad. Sci. Paris} \textbf{354} (2016), \cdrnumero 9, p.~891-895.

\bibitem{filin}
{\scshape A.~V. Filinovskiy}, {\og Estimates of eigenvalues of a boundary value
  problem with a parameter\fg}, \emph{Math. Commun.} \textbf{19} (2014),
  \cdrnumero 3, p.~531-543.

\bibitem{fk}
{\scshape P.~Freitas {\normalfont \cdrandname}~D.~Krej\v{c}i\v{r}\'{\i}k}, {\og
  The first {R}obin eigenvalue with negative boundary parameter\fg}, \emph{Adv.
  Math.} \textbf{280} (2015), p.~322-339.

\bibitem{gs}
{\scshape T.~Giorgi {\normalfont \cdrandname}~R.~Smits}, {\og Eigenvalue
  estimates and critical temperature in zero fields for enhanced surface
  superconductivity\fg}, \emph{Z. Angew. Math. Phys.} \textbf{58} (2007),
  \cdrnumero 2, p.~224-245.

\bibitem{gs2}
\bysame , {\og Bounds and monotonicity for the generalized {R}obin problem\fg},
  \emph{Z. Angew. Math. Phys.} \textbf{59} (2008), \cdrnumero 4, p.~600-618.

\bibitem{grieser}
{\scshape D.~Grieser}, {\og Spectra of graph neighborhoods and scattering\fg},
  \emph{Proc. Lond. Math. Soc. (3)} \textbf{97} (2008), \cdrnumero 3,
  p.~718-752.

\bibitem{gris}
{\scshape P.~Grisvard}, \emph{Elliptic problems in nonsmooth domains},
  Monographs and Studies in Mathematics, vol.~24, Pitman (Advanced Publishing
  Program), Boston, MA, 1985, xiv+410~pages.

\bibitem{guip}
{\scshape L.~Guillop\'{e}}, {\og Th\'{e}orie spectrale de quelques
  vari\'{e}t\'{e}s \`a bouts\fg}, \emph{Ann. Sci. \'{E}cole Norm. Sup. (4)}
  \textbf{22} (1989), \cdrnumero 1, p.~137-160.

\bibitem{hs}
{\scshape B.~Helffer {\normalfont \cdrandname}~J.~Sj\"{o}strand}, {\og Multiple
  wells in the semiclassical limit. {I}\fg}, \emph{Comm. Partial Differential
  Equations} \textbf{9} (1984), \cdrnumero 4, p.~337-408.

\bibitem{bhbook}
{\scshape B.~Helffer}, \emph{Semi-classical analysis for the {S}chr\"{o}dinger
  operator and applications}, Lecture Notes in Mathematics, vol. 1336,
  Springer-Verlag, Berlin, 1988, vi+107~pages.

\bibitem{HK}
{\scshape B.~Helffer {\normalfont \cdrandname}~A.~Kachmar}, {\og Eigenvalues
  for the {R}obin {L}aplacian in domains with variable curvature\fg},
  \emph{Trans. Amer. Math. Soc.} \textbf{369} (2017), \cdrnumero 5,
  p.~3253-3287.

\bibitem{hkr}
{\scshape B.~Helffer, A.~Kachmar {\normalfont \cdrandname}~N.~Raymond}, {\og
  Tunneling for the {R}obin {L}aplacian in smooth planar domains\fg},
  \emph{Commun. Contemp. Math.} \textbf{19} (2017), \cdrnumero 1, p.~1650030,
  38.

\bibitem{hp}
{\scshape B.~Helffer {\normalfont \cdrandname}~K.~Pankrashkin}, {\og Tunneling
  between corners for {R}obin {L}aplacians\fg}, \emph{J. Lond. Math. Soc. (2)}
  \textbf{91} (2015), \cdrnumero 1, p.~225-248.

\bibitem{ivrii}
{\scshape V.~Ivrii}, {\og Spectral asymptotics for {D}irichlet to {N}eumann
  operator\fg}, Preprint arXiv:1802.07524, 2018.

\bibitem{kkr}
{\scshape A.~Kachmar, P.~Keraval {\normalfont \cdrandname}~N.~Raymond}, {\og
  Weyl formulae for the {R}obin {L}aplacian in the semiclassical limit\fg},
  \emph{Confluentes Math.} \textbf{8} (2016), \cdrnumero 2, p.~39-57.

\bibitem{khalile2}
{\scshape M.~Khalile}, {\og Spectral asymptotics for {R}obin {L}aplacians on
  polygonal domains\fg}, \emph{J. Math. Anal. Appl.} \textbf{461} (2018),
  \cdrnumero 2, p.~1498-1543.

\bibitem{KP}
{\scshape M.~Khalile {\normalfont \cdrandname}~K.~Pankrashkin}, {\og
  Eigenvalues of {R}obin {L}aplacians in infinite sectors\fg}, \emph{Math.
  Nachr.} \textbf{291} (2018), \cdrnumero 5-6, p.~928-965.

\bibitem{kop}
{\scshape H.~Kova\v{r}\'{\i}k {\normalfont \cdrandname}~K.~Pankrashkin}, {\og
  On the {$p$}-{L}aplacian with {R}obin boundary conditions and boundary trace
  theorems\fg}, \emph{Calc. Var. Partial Differential Equations} \textbf{56}
  (2017), \cdrnumero 2, p.~Art. 49, 29.

\bibitem{kop2}
\bysame , {\og Robin eigenvalues on domains with peaks\fg}, \emph{J.
  Differential Equations} \textbf{267} (2019), \cdrnumero 3, p.~1600-1630.

\bibitem{kl2}
{\scshape D.~Krej\v{c}i\v{r}\'{\i}k {\normalfont \cdrandname}~V.~Lotoreichik},
  {\og Optimisation of the lowest {R}obin eigenvalue in the exterior of a
  compact set, {II}: non-convex domains and higher dimensions\fg}, Preprint
  arXiv:1707.02269, 2017.

\bibitem{kl1}
\bysame , {\og Optimisation of the lowest {R}obin eigenvalue in the exterior of
  a compact set\fg}, \emph{J. Convex Anal.} \textbf{25} (2018), \cdrnumero 1,
  p.~319-337.

\bibitem{los}
{\scshape A.~A. Lacey, J.~R. Ockendon {\normalfont \cdrandname}~J.~Sabina},
  {\og Multidimensional reaction diffusion equations with nonlinear boundary
  conditions\fg}, \emph{SIAM J. Appl. Math.} \textbf{58} (1998), \cdrnumero 5,
  p.~1622-1647.

\bibitem{LP}
{\scshape M.~Levitin {\normalfont \cdrandname}~L.~Parnovski}, {\og On the
  principal eigenvalue of a {R}obin problem with a large parameter\fg},
  \emph{Math. Nachr.} \textbf{281} (2008), \cdrnumero 2, p.~272-281.

\bibitem{LPPS}
{\scshape M.~Levitin, L.~Parnovski, I.~Polterovich {\normalfont
  \cdrandname}~D.~Sher}, {\og Sloshing, steklov and corners {I}: Asymptotics of
  sloshing eigenvalues.\fg}, Preprint arXiv:1709.01891, 2017.

\bibitem{luzhu}
{\scshape Y.~Lou {\normalfont \cdrandname}~M.~Zhu}, {\og A singularly perturbed
  linear eigenvalue problem in {$C^1$} domains\fg}, \emph{Pacific J. Math.}
  \textbf{214} (2004), \cdrnumero 2, p.~323-334.

\bibitem{mcc4}
{\scshape B.~J. McCartin}, {\og Eigenstructure of the equilateral triangle.
  {IV}. {T}he absorbing boundary\fg}, \emph{Int. J. Pure Appl. Math.}
  \textbf{37} (2007), \cdrnumero 3, p.~395-422.

\bibitem{mcc-book}
\bysame , \emph{Laplacian eigenstructure of the equilateral triangle}, Hikari
  Ltd., Ruse, 2011, x+200~pages.

\bibitem{mv}
{\scshape S.~Molchanov {\normalfont \cdrandname}~B.~Vainberg}, {\og Scattering
  solutions in networks of thin fibers: small diameter asymptotics\fg},
  \emph{Comm. Math. Phys.} \textbf{273} (2007), \cdrnumero 2, p.~533-559.

\bibitem{naz-t}
{\scshape S.~A. Nazarov}, {\og Bounded solutions in a {T}-shaped waveguide and
  the spectral properties of the {D}irichlet ladder\fg}, \emph{Comput. Math.
  Math. Phys.} \textbf{54} (2014), \cdrnumero 8, p.~1261-1279.

\bibitem{naz17}
\bysame , {\og The spectra of rectangular lattices of quantum waveguides\fg},
  \emph{Izv. Math.} \textbf{81} (2017), p.~29-90.

\bibitem{naz-hex}
{\scshape S.~A. Nazarov, K.~Ruotsalainen {\normalfont
  \cdrandname}~P.~Uusitalo}, {\og Asymptotics of the spectrum of the
  {D}irichlet {L}aplacian on a thin carbon nano-structure\fg}, \emph{C. R.
  Mecanique} \textbf{343} (2015), p.~360-364.

\bibitem{mmn}
{\scshape K.~Pankrashkin}, {\og On the discrete spectrum of {R}obin
  {L}aplacians in conical domains\fg}, \emph{Math. Model. Nat. Phenom.}
  \textbf{11} (2016), \cdrnumero 2, p.~100-110.

\bibitem{nano13}
{\scshape K.~Pankrashkin}, {\og On the asymptotics of the principal eigenvalue
  for a {R}obin problem with a large parameter in planar domains\fg},
  \emph{Nanosyst. Phys. Chem. Math.} \textbf{4} (2013), p.~474-483.

\bibitem{nano15}
\bysame , {\og On the {R}obin eigenvalues of the {L}aplacian in the exterior of
  a convex polygon\fg}, \emph{Nanosyst. Phys. Chem. Math.} \textbf{6} (2015),
  p.~46-56.

\bibitem{kp-jmaa}
\bysame , {\og Eigenvalue inequalities and absence of threshold resonances for
  waveguide junctions\fg}, \emph{J. Math. Anal. Appl.} \textbf{449} (2017),
  \cdrnumero 1, p.~907-925.

\bibitem{pp15}
{\scshape K.~Pankrashkin {\normalfont \cdrandname}~N.~Popoff}, {\og Mean
  curvature bounds and eigenvalues of {R}obin {L}aplacians\fg}, \emph{Calc.
  Var. Partial Differential Equations} \textbf{54} (2015), \cdrnumero 2,
  p.~1947-1961.

\bibitem{pp15b}
\bysame , {\og An effective {H}amiltonian for the eigenvalue asymptotics of the
  {R}obin {L}aplacian with a large parameter\fg}, \emph{J. Math. Pures Appl.
  (9)} \textbf{106} (2016), \cdrnumero 4, p.~615-650.

\bibitem{post}
{\scshape O.~Post}, {\og Branched quantum wave guides with {D}irichlet boundary
  conditions: the decoupling case\fg}, \emph{J. Phys. A} \textbf{38} (2005),
  \cdrnumero 22, p.~4917-4931.

\bibitem{postbook}
\bysame , \emph{Spectral analysis on graph-like spaces}, Lecture Notes in
  Mathematics, vol. 2039, Springer, Heidelberg, 2012, xvi+431~pages.

\bibitem{RS}
{\scshape M.~Reed {\normalfont \cdrandname}~B.~Simon}, \emph{Methods of modern
  mathematical physics. {IV}. {A}nalysis of operators}, Academic Press
  [Harcourt Brace Jovanovich, Publishers], New York-London, 1978, xv+396~pages.

\bibitem{savo}
{\scshape A.~Savo}, {\og Optimal eigenvalue estimates for the {R}obin
  {L}aplacian on {R}iemannian manifolds\fg}, Preprint arXiv:1904.07525, 2019.

\bibitem{trani}
{\scshape L.~Trani}, {\og Some remarks on {R}obin-{L}aplacian eigenvalues\fg},
  \emph{Rend. Acc. Sc. Fis. Mat. Napoli} \textbf{84} (2017), p.~87-96.

\end{thebibliography}
\end{document}